\documentclass[11pt]{amsart}
\usepackage{amsmath,amssymb,amsfonts,exscale}

\usepackage[curve,matrix,arrow,cmtip]{xy}
\usepackage{verbatim}

\newdir^{ (}{{}*!/-3pt/\dir^{(}}    
\newdir_{ (}{{}*!/-3pt/\dir_{(}}

\def\Hom{\mathop{\rm Hom}\nolimits}

\def\Spec{\mathop{\rm Spec}\nolimits}

\def\deg{\mathop{\rm deg}\nolimits}

\def\Stab{\mathop{\rm Stab}\nolimits}

\def\rk{\mathop{\rm rk}\nolimits}
\def\spn{\mathop{\rm span}\nolimits}

\def\proj{\mathop{\rm pr}\nolimits}

\def\GL{\mathop{\rm GL}\nolimits}

\def\PGL{\mathop{\rm PGL}\nolimits}

\def\Bun{\mathop{\rm Bun}\nolimits}

\def\Maps{\mathop{\rm Maps}\nolimits}
\def\Dist{\mathop{\rm Dist}\nolimits}

\def\gr{{\rm gr}}

\newbox\starbox 
\setbox\starbox=\hbox{\vphantom{\underline{\ }}\rlap{\kern4pt\small?}\smash{\lower2pt\hbox{\Large$\square$}}}

\def\hatE{{\mathchoice
  {\hbox{\rlap{\smash{\kern1pt\lower1pt\hbox{$\widehat{\phantom{\hbox{$E$}}}$}}}$E$}}
  {\hbox{\rlap{\smash{\kern1pt\lower1pt\hbox{$\widehat{\phantom{\hbox{$E$}}}$}}}$E$}}
  {\widehat E}
  {\widehat E}}}

\def\hatW{\hbox{\rlap{\smash{\lower1pt\hbox{$\widehat{\phantom{\hbox{$W$}}}$}}}$W$}}

\def\tildeW{\hbox{\rlap{\smash{\lower1pt\hbox{$\widetilde{\phantom{\hbox{$W$}}}$}}}$W$}}

\newbox\checkWbox
\setbox\checkWbox\hbox{\rlap{\smash{\kern.8pt\lower4pt\hbox{\huge \v{}}}}$W$}

\def\Fpb{{\overline{\Fp}_P}}
\def\bBun{{\overline{\Bun}}}
\def\pr{{\rm pr}}
\def\proj{{\rm proj}}

\def\circV{{\mathchoice{\circVbig}{\circVbig}{\circVscript}{\circVscriptscript}}}
\def\circVbig{\hbox{\text{\it\r{V}}}}
\def\circVscript{\hbox{\scriptsize\text{\it\r{V}}}}
\def\circVscriptscript{\mbox{\tiny\text{\it\r{V}}}}
\def\circVprime{{\mathchoice{\circV\kern1.8pt{}^\prime}{\circV\kern1.8pt{}^\prime}
                            {\circVscript\kern1.3pt{}^\prime}{\circVscriptscript\kern1pt{}^\prime}}}

\def\circVpprime{{\mathchoice{\circV\kern1.8pt{}^{\prime \prime}}{\circV\kern1.8pt{}^{\prime \prime}}
                            {\circVscript\kern1.3pt{}^{\prime \prime}}{\circVscriptscript\kern1pt{}^{\prime \prime}}}}

\def\lambdach{\check\lambda}
\def\Lambdach{\check\Lambda}
\def\alphach{\check\alpha}

\let\epsilon\varepsilon
\let\setminus\smallsetminus

\let\leq\leqslant
\let\geq\geqslant

\newtheorem{theorem}[subsubsection]{Theorem}

\newtheorem{corollary}[subsubsection]{Corollary}

\newtheorem{proposition-definition}[subsubsection]{Proposition-Definition}
\newtheorem{theorem-definition}[subsubsection]{Theorem-Definition}
\newtheorem{example}[subsubsection]{Example}

\newtheorem{lemma}[subsubsection]{Lemma}

\newtheorem{proposition}[subsubsection]{Proposition}

\newcommand{\BA}{{\mathbb{A}}}

\newcommand{\BG}{{\mathbb{G}}}

\newcommand{\BP}{{\mathbb{P}}}
\newcommand{\BQ}{{\mathbb{Q}}}

\newcommand{\BZ}{{\mathbb{Z}}}

\newcommand{\Fp}{{\mathfrak{p}}}

\newcommand{\CI}{{\mathcal I}}

\newcommand{\CO}{{\mathcal O}}
\newcommand{\CP}{{\mathcal P}}

\newcommand{\CU}{{\mathcal U}}

\newcommand{\CX}{{\mathcal X}}
\newcommand{\CY}{{\mathcal Y}}
\newcommand{\CZ}{{\mathcal Z}}

\newcommand{\ssec}{\subsection}
\newcommand{\sssec}{\subsubsection}

\def\longto{\longrightarrow}
\def\into{\hookrightarrow}
\let\onto\twoheadrightarrow

\def\longinto{\lhook\joinrel\longrightarrow}
\def\longotni{\longleftarrow\joinrel\rhook}
\def\longonto{\ontoover{\ }}

\newbox\mybox
\def\arrover#1{\mathrel{
       \setbox\mybox=\hbox spread 1.4em
              {\hfil$\scriptstyle#1\vphantom{g}$\hfil}
       \vbox{\offinterlineskip\copy\mybox
             \hbox to\wd\mybox{\rightarrowfill}}}}
\def\larrover#1{\mathrel{
       \setbox\mybox=\hbox spread 1.4em{\hfil$\scriptstyle#1$\hfil}
       \vbox{\offinterlineskip\copy\mybox
             \hbox to\wd\mybox{\leftarrowfill}}}}

\def\ontoover#1{\mathrel{
       \setbox\mybox=\hbox spread 1.4em{\hfil$\scriptstyle#1$\hfil}
       \vbox{\offinterlineskip\copy\mybox
             \hbox to\wd\mybox{\rightarrowfill\hskip-2.8mm
                               $\rightarrow$}}}}
\def\leftontoover#1{\mathrel{
       \setbox\mybox=\hbox spread 1.4em{\hfil$\scriptstyle#1$\hfil}
       \vbox{\offinterlineskip\copy\mybox
             \hbox to\wd\mybox{$\leftarrow$\hskip-2.8mm
                               \leftarrowfill}}}}

\newbox\invlimsymbol
\setbox\invlimsymbol=\vtop{\hbox{\rm lim}\vskip-8pt
        \hbox{\hskip1pt$\scriptstyle\longleftarrow$}\vskip-1pt}

\newbox\dirlimsymbol
\setbox\dirlimsymbol=\vtop{\hbox{\rm lim}\vskip-8pt
        \hbox{\hskip1pt$\scriptstyle\longrightarrow$}\vskip-1pt}

\hyphenation{morph-ism}
\hyphenation{homo-morph-ism}
\hyphenation{iso-morph-ism}
\hyphenation{mono-morph-ism}
\hyphenation{epi-morph-ism}
\hyphenation{endo-morph-ism}
\hyphenation{auto-morph-ism}
\hyphenation{morph-isms}
\hyphenation{homo-morph-isms}
\hyphenation{iso-morph-isms}
\hyphenation{mono-morph-isms}
\hyphenation{epi-morph-isms}
\hyphenation{endo-morph-isms}
\hyphenation{auto-morph-isms}

\begin{document}

\title[The Harder-Narasimhan stratification of $\Bun_G$]{The Harder-Narasimhan stratification of the moduli stack of $G$-bundles via Drinfeld's compactifications}

\author{Simon Schieder}
\thanks{Dept. of Mathematics, Harvard University, Cambridge, MA 02138, USA}
\thanks{Supported by the International Fulbright Science and Technology Award of the U.S. Department of State}

\address{Dept. of Mathematics, Harvard University, Cambridge, MA 02138, USA}
\email{schieder@math.harvard.edu}

\maketitle

\begin{abstract}
We use Drinfeld's relative compactifications $\bBun_P$ and the Tannakian viewpoint on principal bundles to construct the Harder-Narasimhan stratification of the moduli stack $\Bun_G$ of $G$-bundles on an algebraic curve in arbitrary characteristic, generalizing the stratification for $G=\GL_n$ due to Harder and Narasimhan to the case of an arbitrary reductive group $G$. To establish the stratification on the set-theoretic level, we exploit a Tannakian interpretation of the Bruhat decomposition and give a new and purely geometric proof of the existence and uniqueness of the canonical reduction in arbitrary characteristic. We furthermore provide a Tannakian interpretation of the canonical reduction in characteristic $0$ which allows to study its behavior in families.
The substack structures on the strata are defined directly in terms of Drinfeld's compactifications~$\bBun_P$, which we generalize to the case where the derived group of $G$ is not necessarily simply connected. Using $\bBun_P$ we establish various properties of the stratification, including finer information about the structure of the individual strata and a simple description of the strata closures. Finally, we introduce a novel notion of slope for principal bundles which allows for a natural formulation of the reduction theory of arbitrary reductive groups in arbitrary characteristic.
\end{abstract}

\bigskip\bigskip\bigskip
\bigskip\bigskip\bigskip
\bigskip\bigskip\bigskip

\hspace{.28in} {\footnotesize {\bf Mathematics Subject Classification (MSC 2010):} }

\hspace{.28in} {\footnotesize Primary: 14D24}

\hspace{.28in} {\footnotesize Secondary: 14D20, 14D23}

\bigskip\bigskip\bigskip

\noindent {\footnotesize Keywords: Harder-Narasimhan stratification, Drinfeld's relative compactifications, canonical reduction in arbitrary characteristic, Tannakian formalism for bundles, Geometric Langlands Program.}

\newpage
\tableofcontents

\newpage

\section{Introduction}
\label{intro}

\medskip

\ssec{Overview}
\label{intro overview}

\mbox{} \\

Let $X$ be a smooth complete curve over an algebraically closed field $k$ of arbitrary characteristic, and let $G$ be a reductive linear algebraic group over $k$. The main goal of this article is to generalize the \textit{Harder-Narasimhan stratification} of the moduli stack of vector bundles to the moduli stack $\Bun_G$ of principal $G$-bundles on $X$, in the form stated in our main theorem, Theorem~\ref{main theorem} below.

\medskip

The main technical tool in our construction of the stratification, as well as in establishing various properties, is Drinfeld's relative compactification~$\bBun_P$. The latter object first appeared in the context of geometric Eisenstein series (see \cite{GeometricEisenstein}) and has since been of great importance in many areas of geometric representation theory (see \cite[Sec. 0.1]{ICofDrinfeldCompactifications} for an overview).
Our interest in stratifying the moduli stack $\Bun_G$ stems from the Geometric Langlands Program (see for example \cite{DG:GL}), and more specifically the study of D-modules on $\Bun_G$. For instance, the main theorem of the present paper, Theorem \ref{main theorem}, is applied by V. Drinfeld and D. Gaitsgory in their forthcoming paper \cite{CompactGenerationBunG}.

\medskip

We now briefly describe the stratification on the set-theoretic level. It is known (and we will also prove) that every $G$-bundle on $X$ possesses a unique \textit{canonical reduction} to a unique parabolic subgroup $P$ of $G$, i.e., a reduction to $P$ such that its corresponding Levi bundle is semistable and such that its coweight degree~$\lambdach_P$ enjoys a certain regularity property. To describe the strata, we associate to any given $G$-bundle the pair $(P,\lambdach_P)$ obtained from its canonical reduction. Then on the level of $k$-points the strata are precisely those loci in $\Bun_G$ on which the pair $(P, \lambdach_P)$ remains constant. The open strata together comprise the locus of semistable $G$-bundles.

\medskip

In the case $G = \GL_n$ the idea of stratifying the moduli stack of vector bundles by loci of instability is due to G. Harder and M. S. Narasimhan. It was carried out set-theoretically by Harder and Narasimhan in \cite{HarderNarasimhan} and scheme-theoretically by S. Shatz in \cite{Shatz}. 
The study in the context of the geometric Langlands program and in the language of stacks was initiated by G.~Laumon in \cite{Laumon Eisenstein}.
The case of a general reductive group~$G$ in arbitrary characteristic has been considered by K. A. Behrend in his thesis \cite{Behrend thesis}, although the parts containing the stratification results obtained by his methods seem to remain unpublished. Our approach to stratifying $\Bun_G$ is however quite different, as will be explained in the next sections in more detail. One the one hand, our use of Drinfeld's compactifications $\bBun_P$ greatly simplifies the construction of the strata and the analysis of their geometry. On the other hand, our approach to the canonical reduction uses only basic algebraic geometry and representation theory and avoids any involved combinatorics; we also introduce and use throughout the article a novel notion of \textit{slope} for principal bundles which allows for natural formulations of all our reduction-theoretic definitions, statements, and proofs.

\bigskip\bigskip

\ssec{Main results}

\medskip

\sssec{Results about the canonical reduction}
\label{Results about the canonical reduction}

To establish the stratification on the set-theoretic level, we give a direct and -- to our knowledge -- new proof of the existence and uniqueness of the canonical reduction in arbitrary characteristic. Recall that in the case $G = \GL_n$ the canonical reduction reduces to the Harder-Narasimhan filtration of a vector bundle, as defined by Harder and Narasimhan in \cite{HarderNarasimhan}. For a general reductive group $G$ in characteristic~$0$, the existence and uniqueness of the canonical reduction were proven by A.~Ramanathan (\cite{Ramanathan1}, \cite{Ramanathan2}).

\medskip

In arbitrary characteristic, the existence and uniqueness of the canonical reduction are more involved, and were established much later by Behrend in~\cite{Behrend thesis} and \cite{Behrend semistability} by first passing from principal bundles to reductive group schemes over $X$ and then using the structure theory for the latter as well as a detailed analysis of certain combinatorial devices called \textit{complementary polyhedra on root systems}.

\medskip

Our present approach to the existence and uniqueness of the canonical reduction is different and proceeds in a fairly pedestrian manner, featuring algebraic geometry and representation theory instead of combinatorics. Namely, we give a direct bundle-theoretic and in some sense purely geometric proof which exploits the Bruhat decomposition of the double quotient $P_1 \backslash G / P_2$ for parabolics $P_1$ and $P_2$ of $G$, and consistently uses the Tannakian perspective on principal bundles.
Our use of the double quotient $P_1 \backslash G / P_2$ is to a certain extent reminiscent of the geometry underlying the ``Geometric Lemma'' of I.~N. Bernstein and A. V. Zelevinsky in the representation theory of reductive groups over non-archimedean local fields (see \cite{BernsteinZelevinsky}). The Tannakian viewpoint is employed to obtain a modular interpretation of the Bruhat decomposition, which yields finer information than the set-theoretic decomposition alone.

\medskip

Furthermore, we exhibit a certain extremal property of the canonical reduction in arbitrary characteristic (the \textit{comparison theorem}, Theorem \ref{comparison theorem}), which strengthens the uniqueness assertion and which also immediately yields a strategy to prove existence.

\medskip

In characteristic $0$, and for reductions to the Borel $B$ of $G$ in arbitrary characteristic, we obtain stronger results: We provide a Tannakian interpretation of the canonical reduction, from which we for example deduce that the canonical reduction is also ``unique in families''. The latter is in turn equivalent to Behrend's conjecture (see Section~\ref{Behrend's conjecture}).

\medskip

Throughout the article we employ a novel notion of \textit{slope} for $G$-bundles and their reductions (see Section \ref{Semistability of $G$-bundles} for the definition). It appears to us that our definitions (such as semistability of $G$-bundles), results (such as the comparison theorem), and proofs are most naturally phrased in terms of this notion. Furthermore, its use makes the more technical constructions in Sections \ref{Comparing two reductions} and \ref{The case of characteristic $0$} more transparent. We believe that our notion of slope is also rather intuitive to use, as it brings out the analogy of the case of an arbitrary reductive group with the $\GL_n$-case without resorting to any Tannakian formalism; the latter is not adequate for dealing with semistability in positive characteristic.

\medskip

\sssec{Construction of the strata via Drinfeld's compactifications}

To define our strata as locally closed substacks and not only on the level of $k$-points as above, we use the relative compactifications $\bBun_P$, which are due to V.~Drinfeld and were introduced by A. Braverman and D. Gaitsgory in \cite{GeometricEisenstein}.

\medskip

To motivate the use of $\bBun_P$ in the present context, let $P$ be a parabolic subgroup of~$G$, let $\Bun_P$ denote the moduli stack of $P$-bundles, and consider the natural projection map $\Fp_P: \Bun_P \to \Bun_G$. Then even though we can identify our desired strata on the level of $k$-points with the set-theoretic images of certain open substacks of $\Bun_P$, these images do a priori not carry a natural stack structure since the map $\Fp_P$ is not proper.

\medskip

This is remedied by Drinfeld's compactification $\bBun_P$, which contains $\Bun_P$ as an open dense substack and comes equipped with a proper map $\Fpb: \bBun_P \to \Bun_G$ extending the projection~$\Fp_P$. Thus we define the strata as the stack-theoretic images under $\Fpb$ of certain substacks of $\bBun_P$ and use some well-known properties of the latter to establish that these images indeed possess the desired $k$-points as described in Section \ref{intro overview} above.

\medskip

The definition of Drinfeld's compactification $\bBun_P$ in \cite{GeometricEisenstein} requires the derived group $[G,G]$ of $G$ to be simply connected in order for $\bBun_P$ to have the desired properties (which are stated in Section \ref{Drinfeld overview}). To avoid having a similar restriction in our main theorem, we show in the last section, Section~\ref{drinfeld}, how the definition of $\bBun_P$ can be modified so that it satisfies the desired properties for an arbitrary reductive group $G$. The same strategy applies to Drinfeld's compactifications $\widetilde{\Bun}_P$, which are however not used in the present article.

\medskip

\sssec{Properties of the stratification}

Under the assumption that the characteristic of $k$ is $0$ or that $P=B$, we use Drinfeld's compactifications $\bBun_P$ to show that each individual stratum is isomorphic to its corresponding locus of reductions in the stack $\Bun_P$. This implies for example that all strata are smooth in these cases. For a general parabolic $P$ in arbitrary characteristic we show that the strata are still almost-isomorphic (in a precise sense, see Section \ref{Almost-isomorphisms}) to their corresponding loci of reductions, which is sufficient for applications in the theory of D-modules or etale cohomology.

\medskip

Apart from various elementary properties, we also provide a formula for the closure of a stratum. This formula follows immediately from the construction of the strata via $\bBun_P$ and some well-known properties of the latter. In general the closure of a stratum need however not be a union of strata, as we illustrate with an example for $G=\GL_3$.

\bigskip\medskip

\ssec{Structure of the article}

\mbox{} \\

We now briefly discuss the content of the individual sections.

\medskip

Section \ref{The main theorem} can be viewed as an extension of the introduction; it contains no proofs. Its goal is to state the main theorem (Theorem \ref{main theorem}) after introducing the necessary notation and definitions; here we also define the aforementioned notion of slope for principal bundles.
We conclude the section with a series of remarks detailing those in the introduction, and provide some very basic examples in the case $G=\GL_n$.

\medskip

Section \ref{Preparations} is preparatory. Here we collect several combinatorial lemmas that are used throughout the article, and record some basic results from the Tannakian formalism for principal bundles.

\medskip

In Section \ref{Comparing two reductions} we first analyze the notion of \textit{relative position} of two reductions to parabolics $P_1$ and $P_2$ of the same $G$-bundle on a scheme, in terms of the Bruhat cells of the double quotient stack $P_1 \backslash G / P_2$ (see for example Corollary \ref{FQ1FQ2}). After specializing to the case of a curve and providing a Tannakian interpretation of the Bruhat decomposition of $P_1 \backslash G / P_2$ (Proposition \ref{proposition-modular-bruhat}), we prove the aforementioned comparison theorem (Theorem~\ref{comparison theorem}). Since in characteristic $0$ the comparison theorem will also be deduced from the results of Section \ref{The case of characteristic $0$}, the reader who is only interested in Theorem~\ref{main theorem} in characteristic $0$ can skip Section~\ref{Comparing two reductions} entirely.

\medskip

In Section \ref{The case of characteristic $0$} we provide the Tannakian characterization of the canonical reduction in characteristic $0$ (Proposition \ref{HN}). We furthermore prove the above-mentioned result about the uniqueness of the canonical reduction in families (Proposition~\ref{monoprop}), and give another proof of the comparison theorem in characteristic $0$.

\medskip

In Section \ref{Proof of the remaining parts of the main theorem} we define the strata using Drinfeld's compactifications $\bBun_P$ and combine the results of the previous sections to complete the proof of Theorem \ref{main theorem}.

\medskip

Finally, Section \ref{drinfeld} contains the aforementioned generalization of Drinfeld's compactifications to the case of an arbitrary reductive group, and the proof that it satisfies the desired properties. This section can be read independently from the rest of the article.

\bigskip\bigskip

\ssec{Acknowledgements}

\mbox{} \\

I would like to thank Vladimir Drinfeld and Dennis Gaitsgory for initiating the present article in parallel to their work \cite{CompactGenerationBunG}, which makes use of our main theorem, Theorem \ref{main theorem}. I~would furthermore like to thank my doctoral advisor Dennis Gaitsgory for numerous suggestions and useful conversations, as well as for helpful comments on an earlier draft of this paper. Finally, I would like to thank the referee for valuable comments, and in particular for bringing to my attention the article \cite{Laumon Eisenstein}.

\bigskip\bigskip\bigskip\bigskip

\section{The main theorem}
\label{The main theorem}

\medskip

\ssec{The setting}
\label{The setting}

\medskip

\sssec{Notation related to the group}

Let $k$ be an algebraically closed field of arbitrary characteristic, and let~$G$ be a connected reductive linear algebraic group over $k$. Fix a Borel subgroup $B$ of $G$ and let $T = B/U(B)$ denote the abstract Cartan, where $U(B)$ is the unipotent radical of $B$. We also fix a splitting of the surjection $B \onto T$, i.e. a realization of $T$ as a maximal torus in $B$. By a parabolic subgroup of $G$ we will always mean a standard parabolic subgroup, i.e., one containing the fixed Borel $B$. The connected component of the center of $G$ will be denoted by $Z_0(G)$, the collection of roots and coroots of $G$ by $R$ and $\check{R}$, their positive parts by $R_+$ and $\check{R}_+$, and the set of vertices of the Dynkin diagram by $\CI$. As usual the Weyl group of~$G$ will be denoted by $W$, and its longest element by $w_0$.

\medskip

Next let $\Lambdach_G$ denote the coweight lattice of $G$, let $\Lambda_G$ denote the weight lattice, and let 
$$\langle.,.\rangle : \ \Lambdach_G \times \Lambda_G \ \longto \ \BZ$$
denote the natural pairing between the two. Let furthermore $\Lambdach^{pos}_G \subset \Lambdach_G$ be the semigroup of positive coweights and let $\Lambdach^+_G \subset \Lambdach_G$ be the semigroup of dominant coweights, and similarly for $\Lambda_G$. Set $\Lambdach_G^\BQ := \Lambdach_G \otimes_\BZ \BQ$, and let~$\Lambdach_{G}^{\BQ,pos}$ and $\Lambdach_{G}^{\BQ,+}$ denote the rational cones of $\Lambdach^{pos}_G$ and $\Lambdach^+_G$, and analogously for the weight lattice. As usual, given two coweights $\lambdach, \check{\mu} \in \Lambdach_G^{\BQ}$ we write $\lambdach \geq \check{\mu}$ if the difference $\lambdach - \check{\mu}$ lies in $\Lambdach_G^{\BQ, pos}$.

\medskip

\sssec{Notation related to a parabolic}
\label{notation-parabolic}

Let $P$ be a parabolic subgroup of $G$, let $U(P)$ be its unipotent radical, and let $M = P/U(P)$ be its Levi quotient. Having fixed a splitting of the surjection $B \onto T$ we also obtain an induced splitting of the surjection $P \onto M$. The subset of vertices of $\CI$ corresponding to $P$ will be denoted by~$\CI_M$, the collection of roots of the Levi $M$ by $R_M$, and the root lattice of $M$ by
$$\BZ R_M \ := \ \spn_{\BZ}(R_M) \ \subset \ \Lambda_G \, .$$
Furthermore, for any vertex $i \in \CI$ we will denote by $P_i$ the corresponding maximal parabolic subgroup of $G$, i.e., the parabolic corresponding to the subset $\CI \setminus \{ i \} \, \subset \CI$.

\medskip

Next define the sublattice $\Lambda_{G,P} \subset \Lambda_G$ as
$$\Lambda_{G,P} := \{ \lambda \in \Lambda_G \mid \langle \alphach_i , \lambda \rangle = 0 \text{ for all } i \in \CI_M \} .$$
Thus for $P = B$ we have $\Lambdach_{G,B} = \Lambdach_G$.
Consider furthermore the sublattice $\Lambdach_{[M,M]_{sc}} \subset \Lambdach_G$ spanned by the simple coroots $\alphach_i$ for $i \in \CI_M$, and set 
$$\Lambdach_{G,P} := \Lambdach_G / \Lambdach_{[M,M]_{sc}}.$$
By definition we have $\Lambda_{G,P} = \Lambda_{M,M}$ and $\Lambdach_{G,P} = \Lambdach_{M,M}$.
Furthermore the pairing $\langle .,. \rangle$ above induces a pairing
$$\langle.,.\rangle : \Lambdach_{G,P} \times \Lambda_{G,P} \to \BZ \, ,$$
which becomes perfect after quotienting out by the torsion part of $\Lambdach_{G,P}$.
As before we let $\Lambdach_{G,P}^{\BQ} := \Lambdach_{G,P} \otimes_\BZ \BQ$ and similarly for $\Lambda_{G,P}^\BQ$. Finally,
we denote by $\Lambdach_{G,P}^{pos}$ the image of $\Lambdach_G^{pos}$ in $\Lambdach_{G,P}$, and analogously for
$\Lambdach_{G,P}^{\BQ, pos} \subset \Lambdach_{G,P}^{\BQ}$.

\medskip

\sssec{The slope map $\phi_P$}
\label{phiP}
We now define a map
$$\phi_P: \ \Lambdach_{G,P} \ \longto \ \Lambdach_G^{\BQ}$$
which we will call the \textit{slope map}. Its importance, as well as our choice of terminology, will become apparent in Sections \ref{Semistability of $G$-bundles} and \ref{Statement of the theorem} below.

\medskip

Let $Z_0(M)$ denote the connected component of the center of $M$. The splitting of the surjection $P \onto M$ gives rise to a natural inclusion
$$\Lambdach_{Z_0(M)}^{\BQ} \ \longinto \ \Lambdach_G^{\BQ} \, ,$$
and the composition of this inclusion with the projection $\Lambdach_G^{\BQ} \onto \Lambdach_{G,P}^{\BQ}$ is an isomorphism
$$\Lambdach_{Z_0(M)}^{\BQ} \ \stackrel{\cong}{\longto} \ \Lambdach_{G,P}^{\BQ} \, .$$
We then define the map $\phi_P$ as the composition
$$\Lambdach_{G,P} \ \longto \ \Lambdach_{G,P}^{\BQ} \cong \Lambdach_{Z_0(M)}^{\BQ} \ \longinto \ \Lambdach_G^{\BQ} \, .$$

\medskip

It follows directly from the definition of $\phi_P$ that for any elements $\lambdach_P \in \Lambdach_{G,P}$ and $\lambda_P \in \Lambda_{G,P}$ we have
\begin{equation}
\label{phi-compatibility}
\langle \lambdach_P, \lambda_P \rangle \ = \ \langle \phi_P(\lambdach_P), \lambda_P \rangle \, , \tag{\ref{phiP}}
\end{equation}
where the pairing on the right hand side is the natural one between $\Lambdach_G^\BQ$ and~$\Lambda_G^\BQ$.

\medskip

Finally, observe that for any element $\lambdach_P \in \Lambdach_{G,P}$ we have $\langle \phi_P(\lambdach_P), \alpha_i \rangle = 0$ for all $i \in \CI_M$. We call $\lambdach_P$ {\it dominant $P$-regular} if $\langle \phi_P(\lambdach_P), \alpha_i \rangle > 0$ for all $i \in \CI \setminus \CI_M$. 

\medskip\medskip\medskip

\ssec{ Slope and semistability of $G$-bundles}
\label{Semistability of $G$-bundles}

\sssec{Moduli stacks of bundles}

Let $X$ be a smooth and complete curve over $k$ and let $\Bun_G$ denote the moduli stack of principal $G$-bundles on $X$, and similarly for other linear algebraic groups. Extension of structure group along the homomorphisms $P \into G$ and $P \onto M$ defines maps of stacks
$$\xymatrix@+10pt{
\Bun_P \ar[r]^{\mathfrak{q}_P} \ar[d]^{\mathfrak{p}_P}   &   \Bun_M   \\
\Bun_G                                  &                   \\
}$$

It is well-known that the map $\mathfrak{q}_P$ induces a bijection on the sets of connected components of $\Bun_P$ and $\Bun_M$, and that
$$\pi_0(\Bun_P) \ \cong \ \pi_0(\Bun_M) \ \cong \ \Lambdach_{G,P} \, .$$

\sssec{Degree and slope of a bundle}
\label{Degree and slope of a bundle}

We denote the connected component of $\Bun_P$ and $\Bun_M$ corresponding to $\lambdach_P \in \Lambdach_{G,P}$ by $\Bun_{P,\lambdach_P}$ and $\Bun_{M,\lambdach_P}$, respectively. If $F_P$ is a $P$-bundle on $X$ which lies in $\Bun_{P,\lambdach_P}$ we will refer to the element $\lambdach_P$ as the \textit{degree} of $F_P$.

\medskip

Furthermore, we propose to call the element $\phi_P(\lambdach_P)$ the \textit{slope} of~$F_P$. We have several reasons for this choice of terminology.
First, for $G=\GL_n$ this notion indeed reduces to the usual notion of slope of vector bundles (see Section \ref{GL_n}).
Second, for an arbitrary reductive group~$G$, the element~$\phi_P(\lambdach_P)$ in fact determines the slopes of certain naturally associated vector bundles (see Proposition \ref{slope-of-associated-bundles} and Remark \ref{slope remark}); in other words, the element~$\phi_P(\lambdach_P)$ yields the correct Tannakian version of slope. Third, by analogy with the case $G = \GL_n$ (see Section \ref{GL_n}), the terminology is motivated by the re-definition of semistability of $G$-bundles that we introduce next.

\medskip

\sssec{Re-definition of semistability}
\label{definition of semistability}

Using the above notion of slope, we propose the following re-definition of semistability of $G$-bundles, which is equivalent, though not tautologically so, to the usual definition.

\medskip

Let $F_G \in \Bun_{G, \lambdach_G}$ be a $G$-bundle on $X$. Then we call~$F_G$ {\it semistable} if one of the following equivalent conditions holds:

\begin{itemize}
\item[]
\item[$(a)$] For every parabolic $P$ and for every element $\lambdach_P \in \Lambdach_{G,P}$ such that~$F_G$ admits a reduction $F_P \in \Bun_{P, \lambdach_P}$ of degree $\lambdach_P$ we have
$$\phi_P (\lambdach_P) \ \leq \ \phi_G (\lambdach_G) \, .$$
\item[$(b)$] For every element $\lambdach_B \in \Lambdach_{G,B} = \Lambdach_G$ such that $F_G$ admits a reduction $F_B \in \Bun_{B, \lambdach_B}$ of degree $\lambdach_B$ we have
$$\lambdach_B \ \leq \ \phi_G (\lambdach_G) \, .$$
\item[$(c)$] For every maximal parabolic $P_i$ and every element $\lambdach_{P_i} \in \Lambdach_{G,{P_i}}$ such that $F_G$ admits a reduction $F_{P_i} \in \Bun_{P_i, \lambdach_{P_i}}$ we have
$$\phi_{P_i} (\lambdach_{P_i}) \ \leq \ \phi_G (\lambdach_G) \, .$$
\end{itemize}

\medskip\medskip

The claimed equivalence of the three conditions follows easily from Proposition \ref{phialpha} below; the argument is carried out in Lemma \ref{semistability equivalence}. In the same lemma we also use Proposition \ref{phialpha} to show that our definition of semistability of $G$-bundles agrees with the usual definition from \cite{Ramanathan1}, \cite{Ramanathan3}, \cite{RamananRamanathan}; in the language of Section \ref{notation-parabolic} the usual definition can be expressed as follows:

\medskip

Note first that for every parabolic $P$ of $G$ the vector space $\Lambdach_{G,P}^\BQ$ is the direct sum of the subspaces $\Lambdach_{Z_0(G)}^\BQ$ and $\sum_{i \in \CI \setminus \CI_M} \BQ \alphach_i$.
We denote by $\proj_P$ the projection onto the second summand, i.e., the composition
$$\proj_P: \ \Lambdach_{G,P}^\BQ \ \longonto \ \sum_{i \in \CI \setminus \CI_M} \BQ \alphach_i \ \longinto \ \Lambdach_{G,P}^\BQ \, .$$

\medskip

Let now $F_G \in \Bun_{G, \lambdach_G}$ be a $G$-bundle on $X$. Then $F_G$ is semistable in the sense of the usual definition if it satisfies the following condition:

\medskip

\begin{itemize}
\item[$(d)$]
For every parabolic $P$ and every element $\lambdach_P \in \Lambdach_{G,P}$ such that $F_G$ admits a reduction $F_P \in \Bun_{P, \lambdach_P}$, the image of $\lambdach_P$ under the composition
$$\Lambdach_{G,P} \ \longto \ \Lambdach_{G,P}^\BQ \ \stackrel{\ \proj_P}{\longonto} \ \Lambdach_{G,P}^\BQ$$

\medskip

\noindent lies in the negative cone $- \Lambdach_{G,P}^{\BQ, pos}$.
\end{itemize}

\medskip\medskip\medskip

Our new definition turns out to be more convenient for our purposes (see for example Theorem \ref{comparison theorem}, Proposition \ref{semistable locus}, Lemma \ref{P-semistability}, Lemma~\ref{semistability difference lemma}, Proposition \ref{existence and uniqueness of the canonical reduction}), and appears to streamline various arguments in the reduction theory of a general reductive group. Furthermore, it closely resembles the well-known definition of slope-semistability of vector bundles, to which it reduces for $G = \GL_n$ and which we now recall.

\medskip

\sssec{The case $G = \GL_n$}
\label{GL_n}

It is easy to see that a $\GL_n$-bundle is semistable in the above sense if and only if the corresponding vector bundle is slope-semistable in the following sense. Recall first that the \textit{slope} $\mu(E)$ of a vector bundle~$E$ on the curve $X$ is defined as
$$\mu(E) := \frac{\deg E}{\rk E} \,$$
where $\deg E$ and $\rk E$ denote the degree and rank of $E$, respectively. Now~$E$ is called {\it semistable} if
$$\mu(F) \leq \mu(E)$$
for every locally free subsheaf $F \subseteq E$, or equivalently for every subbundle $F \subseteq E$.

\medskip

For $G=\GL_n$ the maps $\phi_G$ and $\phi_P$ have the following meaning. Let $E$ be a vector bundle on $X$ corresponding to a $G$-bundle of degree $\lambdach_G$, and identify $\Lambdach_{\GL_n}^\BQ$ with $\BQ^n$ in the canonical way. Then
$$\phi_G(\lambdach_G) \ = \ \bigl( \mu(E), \ldots, \mu(E) \bigr) \, \in \, \BQ^n.$$

\medskip

More generally, let
$$0 \neq E_1 \subsetneq E_2 \subsetneq \ldots \subsetneq E_m = E$$
be the flag of vector bundles corresponding to a $P$-bundle $F_P$ of degree $\lambdach_P$ for some parabolic $P \subset \GL_n$. Then
$$\phi_P(\lambdach_P) \ = \ \bigl( \mu(E_1), \ldots, \mu(E_1), \ldots\ldots, \mu(E_m/E_{m-1}), \ldots, \mu(E_m/E_{m-1}) \bigr) \, \in \, \BQ^n$$
where each $\mu(E_i/E_{i+1})$ is repeated $\rk(E_i)$ times.

\medskip\medskip\medskip
\bigskip\bigskip

\ssec{Statement of the theorem}
\label{Statement of the theorem}

\sssec{Loci of semistability}

It is well-known that for any reductive group $G$ the collection of semistable $G$-bundles constitutes an open substack $\Bun_G^{ss}$ of $\Bun_G$ (see also Proposition \ref{semistable locus} below, where we give a quick proof of this fact using Drinfeld's relative compactifications $\bBun_P$), and that its intersection with each connected component of $\Bun_G$ is quasicompact.

\medskip

For a parabolic $P$ of $G$, we denote by $\Bun_P^{ss}$ the inverse image of $\Bun_M^{ss}$ under the projection $\mathfrak{q}_P$, and by $\Bun_{P,\lambdach_P}^{ss}$ the intersection of $\Bun_P^{ss}$ with a given connected component $\Bun_{P,\lambdach_P}$. As the projection $\mathfrak{q}_P$ is quasicompact we see that the open substacks $\Bun_{P,\lambdach_P}^{ss}$ are quasicompact as well.

\medskip

A direct definition of $\Bun_P^{ss}$ in the spirit of Definition \ref{definition of semistability} can be found in Lemma \ref{P-semistability} below.

\medskip

\sssec{Almost-isomorphisms}
\label{Almost-isomorphisms}

We call a morphism of algebraic stacks $\CX \to \CY$ an \textit{almost-isomorphism} if it is schematic, finite, and if the fiber of every geometric point of $\CY$ consists, as a topological space, of precisely one point.

\medskip

This terminology stems from the fact that in certain contexts, such as in the theory of D-modules or etale cohomology, an almost-isomorphism can play the same role as an actual isomorphism.

\medskip\medskip\medskip

We can now formulate the main theorem:

\medskip

\begin{theorem}
\label{main theorem}

\begin{itemize}
\item[]
\item[]
\item[$(a)$]
Let $P$ be a parabolic in $G$ and let $\lambdach_P \in \Lambdach_{G,P}$ be dominant $P$-regular. Then there exists a unique reduced quasicompact locally closed substack $\Bun_G^{P,\lambdach_P}$ of $\Bun_G$ such that the projection $\Fp_P: \Bun_P \to \Bun_G$ induces an almost-isomorphism between $Bun_{P,\lambdach_P}^{ss}$ and $\Bun_G^{P,\lambdach_P}$.

\medskip

\item[$(b)$]
If the field $k$ is of characteristic $0$, or if $P=B$, then the map $\Fp_P$ in fact induces an isomorphism
$$\Bun_{P,\lambdach_P}^{ss} \ \stackrel{\cong}{\longto} \ \Bun_G^{P,\lambdach_P} \, .$$

\medskip

\item[$(c)$]
The substacks $\Bun_G^{P,\lambdach_P}$ for all pairs $(P, \lambdach_P)$ as in $(a)$ define a stratification of $\Bun_G$ in the sense that every $k$-point of $\Bun_G$ lies in a unique $\Bun_G^{P,\lambdach_P}$.

\medskip

\item[$(d)$]
Let $P'$ be another parabolic and let $\lambdach'_{P'}$ be any element of $\Lambdach_{G,P'}$. If the image of $\Bun_{P',\lambdach'_{P'}}$ in $\Bun_G$ meets a stratum
$\Bun_G^{P, \lambdach_P}$, then we have
$$\phi_P(\lambdach_P) \ \geq \ \phi_{P'}(\lambdach'_{P'}) \, .$$

\medskip

\item[$(e)$]
The stratification of $\Bun_G$ by the strata $\Bun_G^{P,\lambdach_P}$ is locally finite in the sense that there exists a covering of $\Bun_G$ by open substacks, each of which intersects only finitely many strata.

\medskip

\item[$(f)$]
On the level of $k$-points, the closure of the stratum $\Bun_G^{P,\lambdach_P}$ in $\Bun_G$ is equal to the (non-disjoint) union
$$\overline{\Bun_G^{P,\lambdach_P}} \ = \ \bigcup_{\check\theta \in \Lambdach_{G,P}^{pos}} \Fp_P \bigl( \Bun_{P, \lambdach_P + \check\theta} \bigr).$$

\medskip

\item[$(g)$]
The closure of a stratum $\Bun_G^{P,\lambdach_P}$ need not be a union of strata, or equivalently, it need not contain every stratum it meets. However, if the closure of $\Bun_G^{P,\lambdach_P}$ meets a stratum $\Bun_G^{P,\lambdach_P'}$ corresponding to the same parabolic $P$, then it contains that entire stratum.

\medskip

\end{itemize}
\end{theorem}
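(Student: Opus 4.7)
The plan is to leverage the properness of Drinfeld's extension $\Fpb: \bBun_P \to \Bun_G$ together with the existence and uniqueness of the canonical reduction from Sections~\ref{Comparing two reductions} and~\ref{The case of characteristic $0$}. For part~(a), I would \emph{define} $\Bun_G^{P,\lambdach_P}$ as the reduced stack-theoretic image $\Fpb(\Bun_{P,\lambdach_P}^{ss})$ of the quasicompact substack $\Bun_{P,\lambdach_P}^{ss} \subset \Bun_P \subset \bBun_P$. Since $\Fpb$ is proper and the source is quasicompact, the image is a closed substack of the (already locally closed) image of the open stratum $\Bun_P \subset \bBun_P$, hence locally closed in $\Bun_G$. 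The existence and uniqueness of the canonical reduction then show that each $k$-point of $\Bun_G^{P,\lambdach_P}$ lifts uniquely (up to unique isomorphism) to $\Bun_{P,\lambdach_P}^{ss}$; combined with the schematic finiteness furnished by properness plus the resulting one-point fibers this yields the almost-isomorphism. Uniqueness of $\Bun_G^{P,\lambdach_P}$ as a reduced locally closed substack is then automatic from its prescribed set of $k$-points.

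For part~(b) I would upgrade the almost-isomorphism to a genuine isomorphism by showing that $\Fp_P|_{\Bun_{P,\lambdach_P}^{ss}}$ is a monomorphism under the stated hypotheses. In characteristic~$0$, Proposition~\ref{monoprop} provides precisely the uniqueness of the canonical reduction in families; combined with bijectivity on geometric points this gives the isomorphism with the reduced image. For $P=B$ in arbitrary characteristic, the fibers of $\Fp_B$ between a $B$-bundle and its twisted self-reductions collapse: there is no unipotent radical beyond that of $B$ itself to produce continuous families of distinct reductions, and the Tannakian reading of the Bruhat decomposition of $B \backslash G / B$ together with the comparison theorem (Theorem~\ref{comparison theorem}) forces any two semistable reductions of degree $\lambdach_B$ to coincide.

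Parts~(c), (d), (e), (f) and~(g) then follow quickly. Part~(c) is an immediate reformulation of the existence and uniqueness of the canonical reduction. Part~(d) is a direct application of Theorem~\ref{comparison theorem} to a bundle $F_G$ equipped with two reductions: its canonical reduction to $P$ and the given reduction to $P'$ of degree $\lambdach'_{P'}$. Part~(e) follows from the standard boundedness of the slope data: each quasicompact open in $\Bun_G$ meets only finitely many strata. Part~(f) follows from properness of $\Fpb$, which forces the closure of $\Fpb(\Bun_{P,\lambdach_P}^{ss})$ in $\Bun_G$ to equal the image of the closure of $\Bun_{P,\lambdach_P}^{ss}$ in $\bBun_P$; combined with the description of the boundary $\bBun_P \setminus \Bun_P$ by positive coweights $\check\theta \in \Lambdach_{G,P}^{pos}$ established in Section~\ref{drinfeld}, this produces the claimed union formula, from which the positive assertion in~(g) is immediate. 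The negative assertion in~(g) would be exhibited by a one-parameter family of rank-three vector bundles over $X$ whose generic member and special member have different associated parabolic types.

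The main obstacle will be the almost-isomorphism in part~(a) in positive characteristic, and its refinement to a genuine isomorphism in part~(b). Without Proposition~\ref{monoprop} one must directly control, in bounded families, the possible distinct reductions to $P$ producing the same $G$-bundle; this requires combining the comparison theorem, the Tannakian analysis of the Bruhat stratification of $P \backslash G / P$, and properness of $\Fpb$ to concentrate the fiber topologically on the canonical reduction while allowing nonreducedness. Ensuring that the resulting map is actually \emph{finite} (and not merely quasifinite proper) is the technical heart of the argument.
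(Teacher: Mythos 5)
Your overall strategy coincides with the paper's, and parts (c)--(g) are handled essentially as in the text. There is, however, a genuine gap in your construction of the stratum in part (a). You define $\Bun_G^{P,\lambdach_P}$ as the image $\Fp_P(\Bun_{P,\lambdach_P}^{ss})$ and justify its local closedness by saying that, by properness of $\Fpb$, this image is closed inside the ``already locally closed'' image of the open stratum $\Bun_P\subset\bBun_P$. Neither half of that sentence holds: the restriction of $\Fpb$ to the open substack $\Bun_{P,\lambdach_P}^{ss}$ is not proper, and the image of $\Bun_{P,\lambdach_P}$ under $\Fp_P$ is not a priori locally closed --- a $G$-bundle may admit $P$-reductions of degree $\lambdach_P$ and also of degree $\lambdach_P+\check\theta$, so $\Fp_P(\Bun_{P,\lambdach_P})$ is in general neither open nor closed in $\Fpb(\bBun_{P,\lambdach_P})$; this is precisely the difficulty that forces the introduction of $\bBun_P$ in the first place. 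The correct construction is to set $\Bun_G^{P,\lambdach_P}:=\Fpb(\bBun_{P,\lambdach_P})\setminus\Fpb(\bBun_P\setminus\Bun_{P,\lambdach_P}^{ss})$, a difference of two images of closed substacks under a componentwise proper map, hence locally closed. One must then prove that the preimage of this substack in $\bBun_{P,\lambdach_P}$ is exactly $\Bun_{P,\lambdach_P}^{ss}$; this is where the comparison theorem enters in its strong form via the stratification of $\bBun_{P,\lambdach_P}$: a bundle with a canonical reduction of degree $\lambdach_P$ admits no other $P$-reduction of any degree $\lambdach_P+\check\theta$ with $0\neq\check\theta\in\Lambdach_{G,P}^{pos}$, since $\phi_P(\lambdach_P+\check\theta)>\phi_P(\lambdach_P)$ by Proposition \ref{phialpha}. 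Only after this identification is $\Bun_{P,\lambdach_P}^{ss}\to\Bun_G^{P,\lambdach_P}$ a base change of the schematic proper map $\Fpb$, hence proper with one-point fibers, hence finite.

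A smaller point concerns $P=B$ in part (b). Your argument (``no unipotent radical beyond that of $B$ to produce continuous families'') together with the comparison theorem only addresses uniqueness of the reduction at the level of $k$-points; what is needed is that $\Fp_B$ restricted to $\Bun_{B,\lambdach_B}^{ss}$ is a monomorphism of stacks, i.e., uniqueness in families over an arbitrary base. The paper's Proposition \ref{monoprop} covers $P=B$ in arbitrary characteristic for a concrete reason: the Tannakian characterization of the canonical reduction (Proposition \ref{HN}) applies because the graded pieces of the induced filtration are sums of line bundles of equal slope, hence semistable, so the filtration is the Harder--Narasimhan filtration; the required vanishing of homomorphisms into the quotient is then checked fiberwise by cohomology and base change. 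Your sketch should route the $P=B$ case through that argument rather than through the Bruhat decomposition of $B\backslash G/B$.
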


\bigskip

\ssec{Remarks and complements}

\sssec{The canonical reduction}
\label{The canonical reduction}

A reduction $F_P$ of a $G$-bundle on $X$ is called {\it canonical} if $F_P \in \Bun_{P,\lambdach_P}^{ss}$ for $\lambdach_P$ dominant $P$-regular. Thus parts $(a)$ and~$(c)$ of Theorem \ref{main theorem} together imply that every $G$-bundle on $X$ has a unique canonical reduction $F_P$ to a unique parabolic $P$. The strata of the theorem are precisely the loci in $\Bun_G$ obtained by fixing the numerical invariants of the canonical reduction.

\medskip

\sssec{The Harder-Narasimhan filtration}
\label{The Harder-Narasimhan filtration}

One sees easily that in the case $G = \GL_n$ the canonical reduction of a vector bundle $E$ is precisely the {\it Harder-Narasimhan filtration} of $E$, i.e., the unique filtration
$$0 \neq E_1 \subsetneq E_2 \subsetneq \ldots \subsetneq E_m = E$$
of $E$ by subbundles with the property that all quotients $E_{i+1}/E_i$ are semistable and that
$$\mu(E_1) > \mu(E_2/E_1) > \ldots > \mu(E_m/E_{m-1}).$$

\medskip

It is easy to see directly that every vector bundle on $X$ possesses a unique Harder-Narasimhan filtration. Its first term equals the {\it maximal destabilizing subsheaf} of $E$, i.e. the unique subbundle $F \subseteq E$ with the properties that $\mu(F) \geq \mu(F')$ for any other locally free subsheaf $F' \subseteq E$, and that $F' \subseteq F$ if $\mu(F') = \mu(F)$. It is the unique subbundle of $E$ of maximal rank among all subbundles of maximal slope. 

\medskip

For $G = \GL_n$, Theorem \ref{main theorem} yields a stratification of the moduli stack of rank $n$ vector bundles on $X$ by loci on which the ranks and slopes of the subbundles occurring in the Harder-Narasimhan filtration remain constant. In this case the stratification was already defined set-theoretically by G. Harder and M. S. Narasimhan in \cite{HarderNarasimhan}. By analogy with this special case one might call the stratification of $\Bun_G$ in Theorem \ref{main theorem} the \textit{Harder-Narasimhan stratification} of $\Bun_G$, and its strata the \textit{Harder-Narasimhan strata}. As has already been remarked in the introduction, several of the assertions of Theorem \ref{main theorem} can also be extracted, in different language and with different proofs, from Behrend's unpublished thesis \cite{Behrend thesis}.

\medskip

\sssec{Behrend's conjecture}
\label{Behrend's conjecture}

Let $P$ be any parabolic of $G$ and let $F_P$ be the canonical reduction of a $G$-bundle on the curve $X$. Denote the Lie algebras of $G$ and $P$ by $\frak g$ and $\frak p$, respectively, and consider the vector bundle $(\frak g / \frak p)_{F_P}$ obtained by twisting the $P$-representation $\frak g / \frak p$ by $F_P$. Then Behrend conjectured in \cite{Behrend thesis} that the space of global sections of this vector bundle vanishes:
$$H^0(X, (\frak g / \frak p)_{F_P}) \ = \ 0 \, .$$

\medskip

\noindent By considering the differential of the projection $\Fp_P: \Bun_P \to \Bun_G$ and using part $(a)$ of Theorem \ref{main theorem}, it is easy to show that Behrend's conjecture holds for the group $G$ if and only if the conclusion of part $(b)$ of Theorem \ref{main theorem} is valid, i.e., if and only if the map $\Bun_{P,\lambdach_P}^{ss} \to \Bun_G^{P,\lambdach_P}$ is an isomorphism for any parabolic $P$ and any degree $\lambdach_P$.

\medskip

However, in the present approach, part $(b)$ of Theorem \ref{main theorem} is proven directly, and thus Behrend's conjecture in characteristic $0$ follows. Since Behrend's conjecture is known to be false in positive characteristic (see for example \cite{Heinloth Behrend's conjecture}), the restriction to characteristic $0$ in part $(b)$ is necessary.

\medskip

\bigskip

\ssec{Examples for $G=\GL_n$}

\mbox{} \\

We now collect some very basic examples illustrating certain aspects of Theorem \ref{main theorem} in the case $G=\GL_n$.

\medskip

\sssec{Strata closure for vector bundles of rank $2$}

Consider the moduli stack $\Bun_{\GL_2}$ of vector bundles of rank $2$ on any smooth projective curve $X$, and let $\Bun_{\GL_2,d}$ denote the connected component consisting of all bundles of degree $d$. As the group $\GL_2$ has semisimple rank $1$, part $(g)$ of Theorem~\ref{main theorem} shows that the Harder-Narasimhan stratification of $\Bun_{\GL_2,d}$ has the property that the closure of a stratum is a union of strata.

\medskip

Using part $(f)$ of Theorem \ref{main theorem} one easily determines the strata occurring in this union. To do so, recall first that the open stratum $\Bun_{\GL_2,d}^{ss}$ consists of all semistable bundles, and that the remaining strata $\Bun_{GL_2}^{(\ell, d - \ell)}$ are parametrized by integers $\ell$ satisfying $\ell > \frac{d}{2}$. Namely, the stratum $\Bun_{GL_2}^{(\ell, d - \ell)}$ consists of precisely those bundles $E$ whose Harder-Narasimhan flag is of the form $0 \neq L \subsetneq E$ for a line bundle $L$ of degree $\ell$, which must then be larger than the slope $\mu(E)=\frac{d}{2}$ of $E$. Specializing part $(f)$ of Theorem \ref{main theorem} to the case $G=\GL_2$ then yields:

\medskip

\begin{example}
\label{GL2 strata closure}
The closure of the stratum $\Bun_{GL_2}^{(\ell, d - \ell)}$ is equal to the union of all strata $\Bun_{GL_2}^{(\ell', d - \ell')}$ for all $\ell' \geq \ell$.
\end{example}

\bigskip

\sssec{Specialization of vector bundles on the projective line}

Up to isomorphism every vector bundle $E$ on the projective line $\BP^1$ is of the form
$$E \ = \ \CO(d_1) \oplus \CO(d_2) \oplus \ldots \oplus \CO(d_n)$$
for integers $d_1 \geq d_2 \geq \ldots \geq d_n$. A natural deformation-theoretic question is to ask which vector bundles on $\BP^1$ the bundle $E$ can specialize to. This question has the following easy and well-known answer, which for simplicity we state in the case that $d_1 > d_2 > \ldots > d_n$; the general case is analogous.

\medskip

\begin{example}
\label{P1}
Let
$E' = \CO(d_1') \oplus \ldots \oplus \CO(d_n')$
be another vector bundle on~$\BP^1$ of the same rank and degree as $E$. Then $E$ specializes to $E'$ if and only if some permutation of the tuple $(d_1', \ldots, d_n')$ lies in the subset
$$(d_1, \ldots, d_n) \ + \ \sum_{i=1}^{n-1} \, \BZ_{\geq 0} \, (e_i - e_{i+1})$$
of the lattice $\BZ^n$.
\end{example}

\medskip

Though easy to prove directly, this fact is also a very special case of part~$(f)$ of Theorem \ref{main theorem}. Namely, for $X=\BP^1$ we let $G=\GL_n$ and let~$P$ be the standard Borel $B$ of $\GL_n$. Then the claim follows immediately from the formula in part $(f)$ and from the fact that for $G=\GL_n$ the simple coroots are given by $\alphach_i = e_i - e_{i+1}$ in the coweight lattice $\Lambdach_{\GL_n} = \BZ^n$. (The author would like to thank the referee for pointing out that Example \ref{P1} has already been discussed in great detail in Laumon's article \cite{Laumon Eisenstein}.)

\bigskip

\sssec{Counterexample to strata closure}

As asserted in part $(g)$ of Theorem~\ref{main theorem}, the closure of a stratum need not be a union of strata. By the second assertion of part~$(g)$, this however cannot happen if the semisimple rank of $G$ is $1$, as in Example \ref{GL2 strata closure} for $G=\GL_2$ above.

\medskip

In the general case, we provide the following counterexample in Section~\ref{An example of strata closure} below. Let $G=\GL_3$, let $B$ denote the standard Borel subgroup of $\GL_3$, and assume that the genus of the curve $X$ is at least~$2$. Then using part $(f)$ of Theorem \ref{main theorem} we show that the closure of the stratum $\Bun_{\GL_3}^{B,(2,1,0)}$, which consists of those vector bundles $E$ of rank $3$ whose Harder-Narasimhan flag
$$0 \neq L \subsetneq F \subsetneq E$$
is complete and satisfies
$$\deg(L) = 2, \ \ \deg(F/L) = 1, \ \ \text{and} \ \deg(E/F) = 0,$$
is not a union of strata.

\medskip

\sssec{The almost-isomorphism for $G=\GL_n$}

The almost-isomorphism
$$\Bun_{P,\lambdach_P}^{ss} \ \longto \ \Bun_G^{P,\lambdach_P}$$
from part $(a)$ of Theorem \ref{main theorem} is always an isomorphism for $G=\GL_n$, i.e., the assertion of part $(b)$ of the theorem holds for any parabolic and in any characteristic. This follows immediately, and exactly as in the proof of part~$(b)$ of the theorem in Section \ref{Construction of the strata and proof of $(a)$, $(b)$} below, once one establishes that Proposition \ref{monoprop} always holds for $G=\GL_n$; the latter states that the above map is a monomorphism. To prove this one only needs to adapt the usual proof of the uniqueness of the Harder-Narasimhan flag to families using the theorem on cohomology and base change, in the same way it is used in the proof of Proposition \ref{monoprop} below.

\bigskip\bigskip\bigskip\bigskip

\section{Preparations}
\label{Preparations}

\medskip

\ssec{Lemmas about the slope map $\phi_P$}

\mbox{} \\

In this preparatory section we collect several easy lemmas about the slope map $\phi_P: \Lambdach_{G,P} \to \Lambdach_G^\BQ$ defined in Section \ref{phiP} which will be used throughout the article, and expound on the definition of semistability in Section \ref{definition of semistability} above.

\medskip

\sssec{Preservation of the partial ordering}

Part $(a)$ of the following elementary proposition turns out to be of surprising importance and will be used frequently:

\medskip

\begin{proposition}
\label{phialpha}
\begin{itemize}
\item[]
\item[]
\item[$(a)$]
For any $j \in \CI \setminus \CI_M$ the element $\phi_P(\alphach_j)$ lies in~$\Lambdach_G^{\BQ, pos}$. In other words, the map $\phi_P$ preserves the partial orders ``$\leq$'' on $\Lambdach_{G,P}$ and~$\Lambdach_G^\BQ$ in the sense that it maps $\Lambdach_{G,P}^{pos}$ to $\Lambdach_G^{\BQ, pos}$.
\item[]
\item[$(b)$] For any $j \in \CI \setminus \CI_M$ we have $\langle \phi_P(\alphach_j), \alpha_j \rangle > 0$.
\end{itemize}
\end{proposition}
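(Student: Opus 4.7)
The plan is to begin by writing $\phi_P(\alphach_j)$ explicitly. Since $\phi_P(\alphach_j)$ is the unique representative in $\Lambdach_{Z_0(M)}^\BQ$ of the class $\alphach_j \in \Lambdach_{G,P}^\BQ$, one may write $\phi_P(\alphach_j) = \alphach_j + \sum_{i \in \CI_M} d_i \alphach_i$. The characterization $\Lambdach_{Z_0(M)}^\BQ = \{\check{\mu} \in \Lambdach_G^\BQ \mid \langle \check{\mu}, \alpha_{i'}\rangle = 0 \text{ for all } i' \in \CI_M\}$ then translates the defining conditions into the linear system $C_M \cdot d = b$, where $C_M = (\langle \alphach_i, \alpha_{i'} \rangle)_{i, i' \in \CI_M}$ is the Cartan matrix of $M$ and $b_{i'} = -\langle \alphach_j, \alpha_{i'} \rangle$.

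For part $(a)$, the assumption $j \notin \CI_M$ gives $b_{i'} \geq 0$ by the non-positivity of off-diagonal Cartan entries. Combined with the classical fact that the inverse of the Cartan matrix of a semisimple Lie algebra has non-negative entries (equivalently, the fundamental weights are non-negative rational combinations of the simple roots), this yields $d_i \geq 0$, and hence $\phi_P(\alphach_j) \in \Lambdach_G^{\BQ, pos}$. The ``in other words'' reformulation is then purely formal: the cone $\Lambdach_{G,P}^{pos}$ is generated by the classes of the $\alphach_j$ for $j \notin \CI_M$ (the classes for $i \in \CI_M$ vanish), so by $\BQ$-linearity of $\phi_P$ we conclude $\phi_P(\Lambdach_{G,P}^{pos}) \subseteq \Lambdach_G^{\BQ, pos}$.

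For part $(b)$, I would introduce a $W$-invariant positive definite inner product $(\cdot, \cdot)$ on $\Lambdach_G^\BR$. Under the induced identification $\Lambda_G^\BR \cong \Lambdach_G^\BR$, each simple root $\alpha_i$ corresponds to a positive scalar multiple of $\alphach_i$, and in particular the subspaces $\Lambdach_{Z_0(M)}^\BR$ and $\Lambdach_{[M,M]_{sc}}^\BR = \spn_\BR(\alphach_i : i \in \CI_M)$ are orthogonal complements in $\Lambdach_G^\BR$. Thus $p := \phi_P(\alphach_j)$ is simply the orthogonal projection of $\alphach_j$ onto $\Lambdach_{Z_0(M)}^\BR$. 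Writing $\alphach_j = p + q$ with $q \in \Lambdach_{[M,M]_{sc}}^\BR$ orthogonal to $p$, a Pythagorean computation gives $(p, \alphach_j) = (p, p+q) = \|p\|^2$. Since $\langle \cdot, \alpha_j \rangle$ and $(\cdot, \alphach_j)$ differ by a positive scalar on $\Lambdach_G^\BR$, this shows $\langle p, \alpha_j \rangle > 0$ as soon as $p \neq 0$; and $p = 0$ would force $\alphach_j \in \Lambdach_{[M,M]_{sc}}^\BR$, contradicting the linear independence of simple coroots.

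The main, fairly minor, obstacle is to track the normalizations carefully: one needs to verify that under the $W$-invariant inner product, the functionals $\langle \cdot, \alpha_j \rangle$ and $(\cdot, \alphach_j)$ differ only by a positive scalar, and that $\Lambdach_{Z_0(M)}^\BR$ is genuinely the orthogonal complement of $\Lambdach_{[M,M]_{sc}}^\BR$. Once these standard identifications are in place, both statements reduce to elementary facts about Cartan matrix inverses and orthogonal projections.
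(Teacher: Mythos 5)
Your proof is correct, and the two parts have different relationships to the paper's argument. For part $(b)$ you do essentially what the paper does: fix a $W$-invariant form, observe that $\phi_P(\alphach_j)$ is orthogonal to $\spn(\alphach_i : i \in \CI_M)$, and deduce $(\phi_P(\alphach_j), \alphach_j) = \|\phi_P(\alphach_j)\|^2 > 0$ by positive definiteness and the linear independence of simple coroots; the paper phrases this as pairing both sides of $\phi_P(\alphach_j) = \sum_{i'} c_{i'}\alphach_{i'} + \alphach_j$ with the right-hand side, which is exactly your Pythagorean computation. For part $(a)$, however, you take a genuinely different route. You solve the linear system $C_M d = b$ with $b \geq 0$ and invoke the classical fact that the inverse of a Cartan matrix has non-negative entries. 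The paper instead gives a self-contained argument that avoids this fact: it splits the correction term into its positive and negative parts, $\phi_P(\alphach_j) = \sum_{i'} c_{i'}\alphach_{i'} - \sum_{i''} c_{i''}\alphach_{i''} + \alphach_j$ with all $c$'s positive, pairs with $\sum_{i''} c_{i''}\alphach_{i''}$, and uses that distinct simple coroots pair non-positively to force $\bigl(\sum_{i''} c_{i''}\alphach_{i''}, \sum_{i''} c_{i''}\alphach_{i''}\bigr) \leq 0$, hence the negative part vanishes. Your version is shorter modulo the external input (and identifies $\phi_P$ as an honest orthogonal projection, which is a clean way to see part $(b)$ too); the paper's version re-derives the needed positivity from scratch via the same positive-definiteness trick it uses for $(b)$. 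The normalization issues you flag — that $\langle \cdot, \alpha_j\rangle$ is a positive multiple of $(\cdot, \alphach_j)$ and that $\Lambdach_{Z_0(M)}^{\BQ}$ is the orthogonal complement of $\Lambdach_{[M,M]_{sc}}^{\BQ}$ — are indeed the only points requiring care, and both are standard; the paper uses them implicitly as well.
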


\medskip

\begin{proof}[Proof of Proposition \ref{phialpha}]
Fix a $W$-invariant scalar product $(. \, ,.)$ on $\Lambdach_G^\BQ$. By the definition of the map $\phi_P$ we can write $\phi_P(\alphach_j)$ as
$$\phi_P(\alphach_j) \ = \ \sum_{i' \in \CI_M'} c_{i'} \alphach_{i'} \ - \ \sum_{i'' \in \CI_M''} c_{i''} \alphach_{i''} \ + \ \alphach_j$$
where $\CI_M'$ and $\CI_M''$ are disjoint subsets of $\CI_M$ and all $c_{i'}$ and $c_{i''}$ are positive. Pairing both sides of this equation with the sum $\sum_{i''} c_{i''} \alphach_{i''}$, the left hand side becomes
$$\bigl( \phi_P(\alphach_j) , \sum_{i''} c_{i''} \alphach_{i''} \bigr) \ = \ 0.$$
On the right hand side, we find that
$$\bigl( \sum_{i'} c_{i'} \alphach_{i'} , \sum_{i''} c_{i''} \alphach_{i''} \bigr) \ \leq \ 0$$
and
$$\bigl( \alphach_j, \sum_{i''} c_{i''} \alphach_{i''} \bigr) \ \leq \ 0.$$
Together this implies that
$$\bigl( \sum_{i''} c_{i''} \alphach_{i''} , \sum_{i''} c_{i''} \alphach_{i''} \bigr) \ \leq \ 0,$$
but then positive definiteness forces $\sum_{i''} c_{i''} \alphach_{i''} = 0$, as desired.

\medskip

To prove the second assertion, we pair both sides of the equality
$$\phi_P(\alphach_j) \ = \ \sum_{i'} c_{i'} \alphach_{i'} \ + \ \alphach_j$$
with the sum $\sum_{i'} c_{i'} \alphach_{i'} + \alphach_j$. Then the left hand side equals $\bigl( \phi_P(\alphach_j) , \alphach_j \bigr)$ since
$$\bigl( \phi_P(\alphach_j) , \sum_{i'} c_{i'} \alphach_{i'} \bigr) \ = \ 0.$$
But by positive definiteness the right hand side must be positive, and the second assertion of the lemma follows.
\end{proof}

\medskip\medskip

\sssec{Comparison of slope maps}

We now record two simple lemmas relating the slope maps $\phi_P$ and $\phi_{P'}$ for different parabolics $P$ and $P'$ in two specific circumstances which arise frequently. To state the first lemma, we denote by
$$\phi_P^\BQ: \ \Lambdach_{G,P}^\BQ \ \longinto \Lambdach_G^\BQ$$
the natural map induced by $\phi_P$, i.e., the composition
$$\Lambdach_{G,P}^{\BQ} \ \stackrel{\cong}{\longto} \ \Lambdach_{Z_0(M)}^{\BQ} \ \longinto \ \Lambdach_G^{\BQ} \, .$$

\medskip

We then have:

\medskip

\begin{lemma}
\label{compatibility of slope maps}
Let $P$ and $P'$ be parabolics in $G$ and assume that $P \subset P'$. Let furthermore $\lambdach_{P'}$ be an element of $\Lambdach_{G,P'}$, and let the image of $\phi_{P'}(\lambdach_{P'})$ under the projection $\Lambdach_G^\BQ \onto \Lambdach_{G,P}^\BQ$ be denoted by $\overline{\phi_{P'}(\lambdach_{P'})}$. Then we have

$$\phi_P^\BQ \Bigl( \overline{\phi_{P'}(\lambdach_{P'})} \Bigr) \ = \ \phi_{P'}(\lambdach_{P'}) \, .$$
\end{lemma}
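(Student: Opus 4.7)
The plan is to reduce the equality to the elementary observation that $\phi_{P'}(\lambdach_{P'})$ already lies in the subspace $\Lambdach_{Z_0(M)}^\BQ \subset \Lambdach_G^\BQ$, on which the composition $\Lambdach_G^\BQ \onto \Lambdach_{G,P}^\BQ \stackrel{\phi_P^\BQ}{\longto} \Lambdach_G^\BQ$ acts as the identity. First I would note that since $P \subset P'$ one has $\CI_M \subset \CI_{M'}$, and the splittings of $B \onto T$ fixed in Section \ref{The setting} yield compatible inclusions of Levi subgroups $T \subset M \subset M'$. Any element centralizing $M'$ in particular centralizes $M$, so $Z(M') \subset Z(M)$, and passing to connected components gives $Z_0(M') \subset Z_0(M)$; on cocharacter lattices this produces the chain
\[ \Lambdach_{Z_0(M')}^\BQ \ \subset \ \Lambdach_{Z_0(M)}^\BQ \ \subset \ \Lambdach_G^\BQ. \]
By the definition of $\phi_{P'}$, the element $\phi_{P'}(\lambdach_{P'})$ lies in the smaller subspace $\Lambdach_{Z_0(M')}^\BQ$, and hence a fortiori in $\Lambdach_{Z_0(M)}^\BQ$.

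Next I would unwind the construction of $\phi_P$. The isomorphism $\Lambdach_{G,P}^\BQ \stackrel{\cong}{\longto} \Lambdach_{Z_0(M)}^\BQ$ entering $\phi_P$ is by definition the inverse of the composition $\Lambdach_{Z_0(M)}^\BQ \into \Lambdach_G^\BQ \onto \Lambdach_{G,P}^\BQ$. Consequently the endomorphism of $\Lambdach_G^\BQ$ obtained by first projecting to $\Lambdach_{G,P}^\BQ$ and then applying $\phi_P^\BQ$ is precisely the projector onto $\Lambdach_{Z_0(M)}^\BQ$ with kernel $\Lambdach_{[M,M]_{sc}}^\BQ$; in particular it fixes every element of $\Lambdach_{Z_0(M)}^\BQ$.

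Applying this projector to $\phi_{P'}(\lambdach_{P'}) \in \Lambdach_{Z_0(M)}^\BQ$ therefore leaves it invariant, which is the desired equality $\phi_P^\BQ\bigl(\overline{\phi_{P'}(\lambdach_{P'})}\bigr) = \phi_{P'}(\lambdach_{P'})$. The only step requiring any genuine content is the verification of the inclusion $Z_0(M') \subset Z_0(M)$ and of the resulting containment $\Lambdach_{Z_0(M')}^\BQ \subset \Lambdach_{Z_0(M)}^\BQ$ inside $\Lambdach_G^\BQ$; once this is in hand, the lemma reduces to chasing the two definitions of the slope maps.
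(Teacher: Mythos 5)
Your proof is correct and is essentially the paper's own argument: the paper's proof likewise reduces the lemma to the inclusion $\Lambdach_{Z_0(M')}^\BQ \subset \Lambdach_{Z_0(M)}^\BQ$ inside $\Lambdach_G^\BQ$ and the observation that the composite $\Lambdach_G^\BQ \onto \Lambdach_{G,P}^\BQ \to \Lambdach_G^\BQ$ restricts to the identity on $\Lambdach_{Z_0(M)}^\BQ$. You have merely spelled out the definition-chase that the paper leaves implicit.
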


\medskip

\begin{proof}
Follows directly from the definitions and the fact that, denoting the Levi of~$P'$ by $M'$, the inclusion $\Lambdach_{Z_0(M')}^\BQ \subset \Lambdach_{Z_0(M)}^\BQ$ holds in $\Lambdach_G^\BQ$.
\end{proof}

\medskip\medskip

We now record the second lemma:

\medskip

\begin{lemma}
\label{difference lemma for two parabolics}
Let $P$ and $P'$ be parabolic subgroups of $G$, and let $\lambdach_P \in \Lambdach_{G,P}$ and $\lambdach_{P'} \in \Lambdach_{G,P'}$.
\begin{itemize}
\item[$(a)$] Assume that $\lambdach_P$ and $\lambdach_{P'}$ map to the same element of $\Lambdach_{G,G}$ under the natural projections. Then the difference
$$\phi_P(\lambdach_P) - \phi_{P'}(\lambdach_{P'})$$
lies in the subspace
$$\sum_{i \in \CI} \BQ \alphach_i \ \subset \ \Lambdach_G^\BQ \, .$$
\item[$(b)$] Assume now that $P' \subset P$, and that $\lambdach_{P'}$ maps to $\lambdach_P$ under the natural projection. Then the difference
$$\phi_P(\lambdach_P) - \phi_{P'}(\lambdach_{P'})$$
lies in the subspace
$$\sum_{i \in \CI_M} \BQ \alphach_i \ \subset \ \Lambdach_G^\BQ \, ,$$
where $\CI_M \subset \CI$ denotes the subset corresponding to $P$.
\end{itemize}
\end{lemma}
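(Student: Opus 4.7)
The plan is to prove both parts by a short diagram chase in $\Lambdach_G^\BQ$, exploiting the key compatibility that for any parabolic $Q$ with Levi $L$ the composition
$$\Lambdach_{G,Q} \ \stackrel{\phi_Q}{\longto} \ \Lambdach_G^\BQ \ \onto \ \Lambdach_{G,Q}^\BQ$$
coincides with the natural map from $\Lambdach_{G,Q}$ into its rationalization. This is immediate from the definition of $\phi_Q$ as the composite of the isomorphism $\Lambdach_{G,Q}^\BQ \cong \Lambdach_{Z_0(L)}^\BQ$ with the inclusion $\Lambdach_{Z_0(L)}^\BQ \into \Lambdach_G^\BQ$.

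For part $(a)$, I observe that $\sum_{i \in \CI}\BQ\alphach_i$ is precisely $\Lambdach_{[G,G]_{sc}}^\BQ$, which is the kernel of the projection $\Lambdach_G^\BQ \onto \Lambdach_{G,G}^\BQ$. Thus it suffices to verify that $\phi_P(\lambdach_P)$ and $\phi_{P'}(\lambdach_{P'})$ have the same image in $\Lambdach_{G,G}^\BQ$. By the key compatibility, the image of $\phi_P(\lambdach_P)$ in $\Lambdach_{G,P}^\BQ$ is $\lambdach_P$, so its further image in $\Lambdach_{G,G}^\BQ$ is the common element assumed in the hypothesis; the same statement holds for $P'$, and the two images agree. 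For part $(b)$ the same strategy applies, with $\Lambdach_{G,G}^\BQ$ replaced by $\Lambdach_{G,P}^\BQ$, whose kernel in $\Lambdach_G^\BQ$ is exactly $\sum_{i \in \CI_M}\BQ\alphach_i = \Lambdach_{[M,M]_{sc}}^\BQ$. The inclusion $P' \subset P$ forces $\CI_{M'} \subset \CI_M$, so the projection $\Lambdach_G^\BQ \onto \Lambdach_{G,P}^\BQ$ factors as $\Lambdach_G^\BQ \onto \Lambdach_{G,P'}^\BQ \onto \Lambdach_{G,P}^\BQ$. Consequently $\phi_{P'}(\lambdach_{P'})$ first projects to $\lambdach_{P'}$ in $\Lambdach_{G,P'}^\BQ$ and then to $\lambdach_P$ in $\Lambdach_{G,P}^\BQ$, matching the image of $\phi_P(\lambdach_P)$.

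I do not anticipate any genuine obstacle: both assertions are formal consequences of the defining property of $\phi_Q$ together with the compatibility of the quotient maps among the various $\Lambdach_{G,Q}$. The only small bookkeeping point that deserves care is identifying $\sum_{i \in \CI_M}\BQ\alphach_i$ with the kernel of $\Lambdach_G^\BQ \onto \Lambdach_{G,P}^\BQ$, which is immediate from the definition $\Lambdach_{G,P} = \Lambdach_G / \Lambdach_{[M,M]_{sc}}$ and the fact that $\Lambdach_{[M,M]_{sc}}$ is spanned by the simple coroots $\alphach_i$ for $i \in \CI_M$.
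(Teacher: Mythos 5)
Your proposal is correct and follows essentially the same route as the paper: part $(a)$ is proved by showing both slopes have the same image under the projection $\Lambdach_G^\BQ \onto \Lambdach_{G,G}^\BQ$ (your ``key compatibility'' is exactly the commutative square the paper invokes), and part $(b)$ is the analogous verification with $\Lambdach_{G,P}^\BQ$ in place of $\Lambdach_{G,G}^\BQ$, which the paper explicitly offers as one of its two options. No gaps.
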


\medskip

\begin{proof}
Since the kernel of the projection $\pr: \Lambdach_G^\BQ \onto \Lambdach_{G,G}^\BQ$ is precisely the subspace $\sum_{i \in \CI} \BQ \alphach_i$, we can prove part $(a)$ by showing that
$$\pr (\phi_P(\lambdach_P)) \ = \ \pr (\phi_{P'}(\lambdach_{P'})).$$
Denoting the image of $\lambdach_G$ under the natural map $\Lambdach_{G,G} \to \Lambdach_{G,G}^\BQ$ by $\lambdach_G \otimes 1$, the commutativity of the diagram
$$\xymatrix{
\Lambdach_{G,P} \ar^{\phi_P}[r] \ar@{>>}[d]   &   \Lambdach_G^\BQ \ar@{>>}^\pr[d]   \\
\Lambdach_{G,G} \ar[r]   &   \Lambdach_{G,G}^\BQ   \\   
}$$
shows that $\pr (\phi_P(\lambdach_P)) = \lambdach_G \otimes 1$. The exact same reasoning for $P'$ instead of~$P$ yields that
$\pr (\phi_{P'}(\lambdach_{P'})) \ = \lambdach_G \otimes 1$ as well, finishing the proof of part~$(a)$.

\medskip

Part $(b)$ can be verified analogously. Alternatively, part $(b)$ also follows from part $(a)$. To see this, consider the parabolic subgroup $P'/U(P)$ of the Levi quotient $M$. We then have natural identifications
$$\Lambdach_{G,P} = \Lambdach_{M,M} \ \ \ \text{and} \ \ \ \Lambdach_{G,P'} = \Lambdach_{M, P'/U(P)} \, ,$$
and furthermore the slope maps $\phi_P$ and $\phi_{P'}$ for the group $G$ agree with the slope maps~$\phi_M$ and $\phi_{P'/U(P)}$ for the group $M$ under these identifications. Thus part $(b)$ follows from part $(a)$ applied to the reductive group $M$ and its parabolic subgroups $M$ and $P'/U(P)$.
\end{proof}

\medskip

\sssec{Lemmas about semistability}
\label{Lemmas about semistability}

We now establish the claimed equivalence of conditions $(a)$, $(b)$, $(c)$ in our re-definition of semistability in Section~\ref{definition of semistability} above, and furthermore show that our definition agrees with the usual definition of semistability, condition $(d)$, from \cite{Ramanathan1}, \cite{Ramanathan3}, \cite{RamananRamanathan}.

\medskip

\begin{lemma}
\label{semistability equivalence}
The four conditions $(a)$, $(b)$, $(c)$, $(d)$ of Definition~\ref{definition of semistability} are equivalent.
\end{lemma}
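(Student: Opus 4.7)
The plan is to reduce all four conditions to a single system of sign conditions on certain coefficients, and then extract those conditions via pairings with fundamental weights. First I would set up the following canonical expansion: using that $\phi_P$ identifies $\Lambdach_{G,P}^\BQ$ isomorphically with $\Lambdach_{Z_0(M)}^\BQ \subset \Lambdach_G^\BQ$ and transports the direct sum decomposition of $\Lambdach_{G,P}^\BQ$ recorded in Section \ref{Lemmas about semistability} into
$$\Lambdach_{Z_0(M)}^\BQ \ = \ \Lambdach_{Z_0(G)}^\BQ \ \oplus \ \sum_{j \in \CI \setminus \CI_M} \BQ\, \phi_P(\alphach_j)\, ,$$
every $\phi_P(\lambdach_P)$ admits a unique expansion
$$\phi_P(\lambdach_P) \ = \ \phi_G(\lambdach_G) \ + \ \sum_{j \in \CI \setminus \CI_M} c_j\, \phi_P(\alphach_j)$$
with $c_j = c_j(\lambdach_P) \in \BQ$; the central summand is identified with $\phi_G(\lambdach_G)$ via compatibility with the projection $\Lambdach_{G,P} \to \Lambdach_{G,G}$. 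The claim will be that each of $(a), (b), (c), (d)$ is equivalent to the inequalities $c_j \leq 0$ for all $j \in \CI \setminus \CI_M$. To extract these coefficients cleanly I would use, for each $j \in \CI$, the fundamental weight $\omega_j \in \Lambda_G^\BQ$ characterized by $\langle \alphach_k, \omega_j \rangle = \delta_{kj}$ and $\omega_j|_{\Lambdach_{Z_0(G)}^\BQ} = 0$; this is well defined by perfectness of the rational pairing $\Lambdach_G^\BQ \times \Lambda_G^\BQ \to \BQ$. The explicit form $\phi_P(\alphach_k) = \alphach_k + \sum_{i \in \CI_M} (\text{nonnegative})\, \alphach_i$ obtained inside the proof of Proposition \ref{phialpha} then yields $\langle \phi_P(\alphach_k), \omega_j \rangle = \delta_{kj}$ for $k \in \CI \setminus \CI_M$, and hence $c_j = \langle \phi_P(\lambdach_P), \omega_j \rangle$.

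With this machinery the equivalence $(a) \Leftrightarrow$ ``all $c_j \leq 0$'' is immediate: Proposition \ref{phialpha}$(a)$ gives $\phi_P(\alphach_j) \in \Lambdach_G^{\BQ, pos}$, so $c_j \leq 0$ makes $\phi_G(\lambdach_G) - \phi_P(\lambdach_P) = -\sum_j c_j \phi_P(\alphach_j)$ lie in $\Lambdach_G^{\BQ, pos}$; conversely, pairing $\phi_G(\lambdach_G) - \phi_P(\lambdach_P) \in \Lambdach_G^{\BQ, pos}$ with $\omega_j$ yields $-c_j \geq 0$. The equivalence $(d) \Leftrightarrow$ ``all $c_j \leq 0$'' is obtained by unwinding the decomposition $\Lambdach_{G,P}^\BQ = \Lambdach_{Z_0(G)}^\BQ \oplus \sum_{j \in \CI \setminus \CI_M} \BQ\, \overline{\alphach_j}$ and observing that $\Lambdach_{G,P}^{\BQ, pos}$ equals the cone $\sum_j \BQ_{\geq 0}\, \overline{\alphach_j}$.

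The implications $(a) \Rightarrow (b)$ and $(a) \Rightarrow (c)$ are tautological specializations to $P = B$ and $P = P_i$ respectively (noting that $\phi_B$ is the rationalization of the identity on $\Lambdach_G$). For $(b) \Rightarrow (a)$ I would refine the given reduction $F_P$ to a $B$-reduction $F_B$ of some degree $\lambdach_B$; such a refinement exists because $P/B$ is projective, so the associated fiber bundle on $X$ admits sections. Since $\lambdach_B$ and $\phi_P(\lambdach_P)$ both project to $\lambdach_P$ in $\Lambdach_{G,P}^\BQ$, their difference lies in $\sum_{k \in \CI_M} \BQ \alphach_k$ and therefore pairs trivially with $\omega_j$ for every $j \in \CI \setminus \CI_M$; pairing the $(b)$-inequality $\phi_G(\lambdach_G) - \lambdach_B \in \Lambdach_G^{\BQ, pos}$ with $\omega_j$ then delivers $-c_j \geq 0$. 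For $(c) \Rightarrow (a)$ the strategy is entirely analogous with refinement replaced by extension: for each $j \in \CI \setminus \CI_M$ extend $F_P$ to a $P_j$-reduction $F_{P_j}$ (automatic from $P \subset P_j$); the difference $\phi_P(\lambdach_P) - \phi_{P_j}(\lambdach_{P_j})$ lies in $\sum_{k \neq j} \BQ \alphach_k$, pairs trivially with $\omega_j$, and pairing the $(c)$-inequality with $\omega_j$ once more yields $c_j \leq 0$.

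I expect the main obstacle to be purely notational---namely verifying the claimed direct sum decomposition of $\Lambdach_{Z_0(M)}^\BQ$, the identification of the central summand of $\phi_P(\lambdach_P)$ with $\phi_G(\lambdach_G)$, and the pairing identity $\langle \phi_P(\alphach_k), \omega_j \rangle = \delta_{kj}$. Once these ingredients are assembled, each of the six implications reduces to a one-line pairing computation, and no deeper input is needed beyond Proposition \ref{phialpha}.
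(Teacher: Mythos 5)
Your proposal is correct and follows essentially the same route as the paper's proof: both arguments rest on Proposition \ref{phialpha}$(a)$, on the expansion of $\phi_P(\lambdach_P) - \phi_G(\lambdach_G)$ in simple coroots (Lemma \ref{difference lemma for two parabolics}$(a)$), and on comparing a given $P$-reduction with a refinement to $B$ (for $(b)\Rightarrow(a)$) and with the induced maximal-parabolic reductions (for the condition-$(c)$ implication). Your packaging via pairings with the fundamental weights $\omega_j$, and your proving $(c)\Rightarrow(a)$ directly rather than $(c)\Rightarrow(b)$, are only cosmetic variations on the paper's use of the projections $\Lambdach_G^\BQ \onto \Lambdach_{G,P_j}^\BQ$ and the sections $\phi_{P_j}^\BQ$.
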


\medskip

\begin{proof}
The implications $(a) \Rightarrow (b)$ and $(a) \Rightarrow (c)$ are formal. We now prove the implication $(b) \Rightarrow (a)$. Thus assuming condition $(b)$, we need to show that given a reduction $F_P \in \Bun_{P, \lambdach_P}$ of $F_G$, the difference $\phi_G(\lambdach_G) - \phi_P(\lambdach_P)$ lies in the positive cone $\Lambdach_G^{\BQ, pos}$. To do so, choose a reduction $F_B$ of $F_P$ to the Borel $B$ and let $\lambdach_B \in \Lambdach_G$ denote its degree. Then the difference $\phi_G(\lambdach_G) - \lambdach_B$ lies in the positive cone $\Lambdach_G^{\BQ, pos}$, and thus its image in $\Lambdach_{G,P}^\BQ$ lies in $\Lambdach_{G,P}^{\BQ, pos}$. But since applying the slope map $\phi_P^\BQ$ to this image yields precisely the difference $\phi_G(\lambdach_G) - \phi_P(\lambdach_P)$ by Lemma \ref{compatibility of slope maps}, the claim follows from part $(a)$ of Proposition \ref{phialpha}.

\medskip

Next we prove the implication $(c) \Rightarrow (b)$. Thus assuming condition $(c)$, we have to prove that given a reduction $F_B \in \Bun_{B, \lambdach_B}$ of $F_G$, the difference $\phi_G(\lambdach_G) - \lambdach_B$ lies in $\Lambdach_G^{\BQ, pos}$. By part $(a)$ of Lemma \ref{difference lemma for two parabolics} we have that
$$\phi_G(\lambdach_G) - \lambdach_B \ \, = \ \ \sum_{i \in \CI} c_i \alphach_i$$
for certain rational numbers $c_i \in \BQ$. We hence need to show that $c_i \geq 0$ for any~$i$. To do so, consider the $P_i$-bundle $F_{P_i}$ induced from $F_B$, and let $\lambdach_{P_i} \in \Lambdach_{G,P_i}$ denote its degree. We now first project both sides of the last equality to $\Lambdach_{G,P_i}^\BQ$ and then apply the slope map $\phi_{P_i}$ to both projections. Using lemma \ref{compatibility of slope maps} this yields the equality
$$\phi_G(\lambdach_G) - \phi_{P_i}(\lambdach_{P_i}) \ \ = \ \ c_i \cdot \phi_{P_i} (\alphach_i).$$
But then condition $(c)$ applied to the reduction $F_{P_i}$ of $F_G$ and part $(a)$ of Proposition \ref{phialpha} together show that $c_i \geq 0$, as desired.

\medskip

We have now established the equivalence of the conditions $(a)$, $(b)$, $(c)$. We finish the proof by showing that conditions $(a)$ and $(d)$ are equivalent. Let~$P$ and $\lambdach_P$ be as in the formulation of these conditions.
Using part~$(a)$ of Lemma \ref{difference lemma for two parabolics} one checks easily that
$$\phi_P(\proj_P(\lambdach_P)) \ = \ \phi_P(\lambdach_P) - \phi_G(\lambdach_G).$$
Hence part $(a)$ of Proposition \ref{phialpha} implies that the element $\proj_P(\lambdach_P)$ lies in the negative cone $-\Lambdach_{G,P}^{\BQ,pos}$ if and only if the element $\phi_P(\lambdach_P) - \phi_G(\lambdach_G)$ lies in the negative cone $-\Lambdach_G^{\BQ,pos}$, and the equivalence follows.
\end{proof}

\medskip\medskip

We also record the following easy characterization of those $P$-bundles on the curve $X$ whose induced $M$-bundles are semistable, i.e., of those lying in~$\Bun_P^{ss}$.

\medskip

\begin{lemma}
\label{P-semistability}
A $P$-bundle $F_P \in \Bun_{P, \lambdach_P}$ on $X$ lies in $\Bun_P^{ss}$ if and only if for any smaller parabolic $P' \subset P$ and any element $\lambdach_{P'} \in \Lambdach_{G,P'}$ such that~$F_P$ admits a reduction $F_{P'} \in \Bun_{P', \lambdach_{P'}}$ we have
$$\phi_{P'} (\lambdach_{P'}) \ \leq \ \phi_P (\lambdach_P) \, .$$
In fact, it suffices to check this condition for reductions $F_{P'}$ to maximal proper sub-parabolics $P' \subset P$, i.e., to those parabolics corresponding to the subsets $\CI_M \setminus \{i\}$ for any $i \in \CI_M$.
\end{lemma}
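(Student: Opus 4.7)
The plan is to reduce the statement to Definition~\ref{definition of semistability} applied to the Levi quotient $M$ in place of $G$ and to the induced $M$-bundle $F_M := \Fq_P(F_P)$. By definition of $\Bun_P^{ss}$ we have $F_P \in \Bun_P^{ss}$ if and only if $F_M \in \Bun_M^{ss}$, so the task reduces to translating the semistability inequalities for $F_M$ (as a bundle for $M$) into inequalities for reductions of $F_P$ (as a bundle for $G$).

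The key dictionary is as follows. Standard parabolic subgroups $P' \subset P$ of $G$ correspond bijectively, via $P' \mapsto P'/U(P)$, to standard parabolic subgroups of $M$ (note that $U(P) \subset P'$ automatically, since $P' \supset B$ and hence contains every root subgroup of $U(P)$). Reductions of $F_P$ to such a $P'$ are in bijection with reductions of $F_M$ to $P'/U(P)$: a reduction $F_{P'}$ of $F_P$ induces the reduction $F_{P'}/U(P)$ of $F_M$, and conversely a section of $F_M/(P'/U(P))$ is the same as a section of $F_P/P'$ under the quotient $F_P \twoheadrightarrow F_M$. Under the identifications $\Lambdach_{G,P'} = \Lambdach_{M,P'/U(P)}$ and $\Lambdach_{G,P} = \Lambdach_{M,M}$ of Section~\ref{notation-parabolic}, the degrees of corresponding reductions agree, and $\lambdach_P$ is identified with the degree of $F_M$.

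With this dictionary in hand, I would apply condition $(a)$ of Definition~\ref{definition of semistability} to $F_M$: the $M$-bundle $F_M$ is semistable iff for every such $P'$ and every $\lambdach_{P'}$ arising from a reduction one has $\phi^M_{P'/U(P)}(\lambdach_{P'}) \leq \phi^M_M(\lambdach_P)$, where $\phi^M$ denotes the slope map for $M$. To match this with the inequality of the lemma it suffices to invoke the observation already recorded at the end of the proof of Lemma~\ref{difference lemma for two parabolics}, namely that the slope map $\phi^G_{P'}$ for $G$ agrees with $\phi^M_{P'/U(P)}$ for $M$, and similarly $\phi^G_P = \phi^M_M$, under the embeddings of $\Lambdach^\BQ_{Z_0(M)}$ into $\Lambdach_G^\BQ$ respectively $\Lambdach_M^\BQ$. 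Both embeddings are compatible with the positive cones, so the two inequalities transport verbatim.

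The ``in fact it suffices'' assertion follows by the exact same translation but applying condition $(c)$ of Definition~\ref{definition of semistability} to $M$ instead of $(a)$: the maximal parabolics of $M$ are indexed by the vertices $i \in \CI_M$, and they correspond under $P' \mapsto P'/U(P)$ precisely to the maximal proper sub-parabolics of $P$, i.e., those corresponding to the subsets $\CI_M \setminus \{i\}$. I expect the only real obstacle to be the careful bookkeeping among the slope maps $\phi^G_P, \phi^G_{P'}, \phi^M_M, \phi^M_{P'/U(P)}$ and among the various embeddings of coweight lattices; no new techniques beyond the formalism of Section~\ref{Combinatorics of the slope map} should be required.
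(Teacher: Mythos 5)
Your proposal is correct and follows essentially the same route as the paper: both reduce the statement to Definition~\ref{definition of semistability}, parts $(a)$ and $(c)$, applied to the Levi bundle $F_M$, using the bijection $P' \mapsto P'/U(P)$ between sub-parabolics of $P$ and parabolics of $M$, the matching of degrees under $\Lambdach_{G,P'} = \Lambdach_{M,P'/U(P)}$, and the compatibility of the slope maps for $G$ and $M$ (the paper invokes Lemma~\ref{difference lemma for two parabolics}$(b)$ for exactly the transport of the inequality that you justify via the positive cones). No gaps.
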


\medskip

\begin{proof}
Recall that the assignment $P' \mapsto P'/U(P)$ defines a bijection between the collection of parabolic subgroups of $G$ contained in $P$ and the collection of parabolic subgroups of the Levi $M$, and furthermore that there is a natural one-to-one correspondence between reductions of $F_P$ to $P'$ and reductions of the induced Levi bundle $F_M$ to $P'/U(P)$. Finally, this correspondence induces the identity map
$$\Lambdach_{G,P'} \ = \ \Lambdach_{M,P'/U(P)}$$
on the level of degrees. Thus using Lemma \ref{difference lemma for two parabolics} $(b)$ we see that the first assertion of the lemma is nothing but a restatement of part $(a)$ of the definition of semistability \ref{definition of semistability} for the Levi bundle $F_M$. Similarly, the second assertion is a restatement of part $(c)$ of the same definition.
\end{proof}

\medskip
\bigskip

\ssec{Associated bundles and Tannakian formalism for reductions}
\label{Tannakian formalism for reductions}

\mbox{} \\

In this section we record some basic but frequently used lemmas about certain associated vector bundles of a given $G$-bundle $F_G$, as well as about certain subbundles induced by a given reduction of $F_G$ to a parabolic subgroup. For the majority of the article we will use bundles associated to an arbitrary $G$-representation of highest weight $\lambda$. However, in certain applications in Section \ref{The case of characteristic $0$} it will be essential to use specifically the Weyl modules of $G$, due to Proposition~\ref{Plucker redundancy} below.

\sssec{Notation}
\label{notation-reps-bundles}

For any linear algebraic group $H$ we will denote its algebra of distributions at the element $1 \in H$ by $\Dist(H)$. Its use in the sequel stems from the fact that in arbitrary characteristic the algebra of distributions of~$H$ retains many of the features that the Lie algebra of $H$ enjoys only in characteristic $0$. For an exposition of its basic properties see \cite[Sec. I.7 and Sec. II.1.12]{Jantzen representations of reductive groups}.

\medskip

Next, given a $G$-bundle $F_G$ on a scheme $S$ and a finite-dimensional representation $V$ of $G$ we denote by
$$V_{F_G} \ := \ V \overset{G}{\times} F_G$$
the associated vector bundle on $S$, and similarly for any linear algebraic group over $k$. For any weight $\lambda \in \Lambda_G$ we let $k_\lambda$ be the corresponding $1$-dimensional representation of the maximal torus $T$, and we let
$$\CO(\lambda) \ := \ k_\lambda \overset{B}{\times} G$$
be the associated line bundle on the flag variety $G/B$. Finally, for a dominant weight $\lambda \in \Lambda_G^+$ we denote by $V^\lambda$ the corresponding Weyl $G$-module
$$V^\lambda \ := \ H^0 (G/B, \CO(- w_0 \lambda))^* \, .$$

\medskip

\sssec{Representations of highest weight $\lambda$}

Fix a dominant weight $\lambda \in \Lambda_G^+$ and let $V$ be any finite-dimensional $G$-representation of highest weight~$\lambda$,~i.e., if
$$V \ = \ \, \bigoplus_\nu V[\nu]$$
is the weight decomposition of $V$, then $\lambda \geq \nu$ for all~$\nu$. Given any parabolic~$P$ of $G$, we define the subspace $V[\lambda + \BZ R_M] \subset V$ as the sum of weight spaces
$$V[\lambda + \BZ R_M] \ \ := \ \ \bigoplus_{\nu \in \, \lambda + \BZ R_M} V[\nu] \, .$$

\medskip

We record the following basic fact:

\medskip

\begin{lemma}
\label{the usual sum of weight spaces}
\begin{itemize}
\item[]
\item[]
\item[$(a)$] The subspace $V[\lambda + \BZ R_M]$ is a $P$-subrepresentation of $V$. Furthermore, the unipotent radical $U(P)$ of $P$ acts trivially on $V[\lambda + \BZ R_M]$, and hence the action of $P$ descends to an action of the Levi $M$.
\item[]
\item[$(b)$] Let now $V=V^\lambda$ be the Weyl module corresponding to the dominant weight $\lambda$. Then the space $V^\lambda[\lambda + \BZ R_M]$ is $1$-dimensional if and only if $\lambda$ lies in~$\Lambda_{G,P}$. In this case this space is then equal to the highest weight line $V^\lambda[\lambda] \subset V^\lambda$.
\end{itemize}
\end{lemma}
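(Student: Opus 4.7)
For part $(a)$, the plan is to verify $P$-invariance by analyzing each root group of $P$ separately. The subspace $V[\lambda + \BZ R_M]$ is tautologically $T$-stable as a sum of weight spaces. For a root $\alpha \in R_M$, the group $U_\alpha$ shifts weights by multiples of $\alpha$, which lie in $\BZ R_M$, so the subset $\lambda + \BZ R_M$ is preserved; it will follow that the Levi $M$ acts on $V[\lambda + \BZ R_M]$. For $\alpha \in R_+ \setminus R_M$, the key observation is that such an $\alpha$ has a strictly positive coefficient at some simple root $\alpha_j$ with $j \in \CI \setminus \CI_M$. Any weight $\nu$ of $V$ lying in $\lambda + \BZ R_M$ must then satisfy $\lambda - \nu \in \sum_{i \in \CI_M} \BZ_{\geq 0} \alpha_i$, since $\lambda - \nu$ is simultaneously a non-negative combination of simple roots (as $\nu \leq \lambda$) and an element of $\BZ R_M$. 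Adding any positive multiple of $\alpha$ to $\nu$ therefore produces an element whose difference from $\lambda$ has a strictly negative coefficient at $\alpha_j$, which cannot occur for a weight of $V$. Hence $U_\alpha$ annihilates $V[\lambda + \BZ R_M]$, and in particular $U(P)$ acts trivially.

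For part $(b)$, I would first reduce to characteristic zero. This uses the fact that the character of the Weyl module $V^\lambda$ is given by the Weyl character formula in arbitrary characteristic, so $\dim V^\lambda[\nu]$ is independent of $\charact(k)$. Hence one may assume $V^\lambda$ is the irreducible $G$-module in characteristic zero. The next step is to identify $V^\lambda[\lambda + \BZ R_M]$ with the $M$-subrepresentation of $V^\lambda$ generated by a highest weight vector $v_\lambda$. One inclusion is immediate from part $(a)$, since $v_\lambda \in V^\lambda[\lambda] \subset V^\lambda[\lambda + \BZ R_M]$ and the latter is $M$-stable. For the reverse inclusion, I would use the triangular decomposition: every element of $V^\lambda$ is obtained from $v_\lambda$ by applying lowering operators in the negative Borel, and such operators factor as products of lowering operators inside the Levi $M$ followed by lowering operators in the opposite of $U(P)$. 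The same positivity argument as in part $(a)$ then shows that any product involving at least one negative root from outside $R_M$ yields a weight not in $\lambda + \BZ R_M$, so only the Levi lowering operators contribute to $V^\lambda[\lambda + \BZ R_M]$. This identifies the subspace with the irreducible $M$-module of highest weight $\lambda$.

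To finish, I would invoke the classical fact that an irreducible $M$-representation of dominant highest weight $\lambda$ is one-dimensional precisely when $\lambda$ is fixed by the Weyl group $W_M$, equivalently when $\langle \alphach_i, \lambda \rangle = 0$ for all $i \in \CI_M$; by definition this is precisely the condition $\lambda \in \Lambda_{G,P}$. In that case the only weight of $V^\lambda$ appearing in $\lambda + \BZ R_M$ is $\lambda$ itself, and the subspace reduces to the highest weight line $V^\lambda[\lambda]$. I expect the main subtlety to be the characteristic-zero reduction, since a direct argument in positive characteristic would have to be phrased in terms of the distribution algebra of $G$ with its divided-power structure. Using the Weyl character formula neatly sidesteps this and allows the positivity argument from part $(a)$ to do all the real work.
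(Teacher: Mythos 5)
Your argument is correct. Part $(a)$ is essentially the paper's proof made explicit: the paper dismisses it as immediate at the level of the distribution algebras $\Dist(P)$ and $\Dist(U(P))$, and your root-group-by-root-group verification (keyed to the observation that any $\alpha\in R_+\setminus R_M$ has a strictly positive coefficient at some $\alpha_j$ with $j\notin\CI_M$) is exactly the computation being alluded to. For part $(b)$, however, you take a genuinely different and somewhat heavier route. The paper argues in a characteristic-free way purely on the level of characters: the multiset of weights of the $M$-representation $V^\lambda[\lambda+\BZ R_M]$ is $W_M$-stable, every weight $\nu$ of it satisfies $\lambda\geq\nu\geq w_{0,M}(\lambda)$, and $\lambda$ occurs with multiplicity one; hence the space is one-dimensional iff $w_{0,M}(\lambda)=\lambda$, iff $s_{\alpha_i}(\lambda)=\lambda$ for all $i\in\CI_M$, iff $\lambda\in\Lambda_{G,P}$. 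You instead reduce to characteristic zero via the characteristic-independence of the character of the Weyl module, identify $V^\lambda[\lambda+\BZ R_M]$ with the irreducible $M$-module of highest weight $\lambda$ via the triangular decomposition, and then quote the one-dimensionality criterion for irreducible highest-weight modules. This works, but note that the criterion you invoke at the end is itself proved by the same $W_M$-symmetry-of-weights argument that the paper applies directly to $V^\lambda[\lambda+\BZ R_M]$; the detour through characteristic zero and irreducibility is therefore avoidable, and the paper's version has the advantage of never needing the Weyl character formula or complete reducibility. (One terminological nit: the root groups $U_\alpha$ for $\alpha\in R_+\setminus R_M$ act trivially on, rather than annihilate, the subspace.)
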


\medskip

\begin{proof}
It suffices to check the two assertions of part $(a)$ on the level of the algebras of distributions $\Dist(P)$ and $\Dist(U(P))$, for which they are immediate.

\medskip

To prove part $(b)$, note first that every weight $\nu$ of the $M$-representation $V^\lambda[\lambda + \BZ R_M]$ satisfies
$$\lambda \ \geq \ \nu \ \geq \ w_{0,M}(\lambda)$$
where $w_{0,M}$ denotes the longest element of the Weyl group $W_M$ of $M$. As the weight $\lambda$ occurs with multiplicity $1$ in $V^\lambda[\lambda + \BZ R_M]$, it follows that $V^\lambda[\lambda + \BZ R_M]$ is $1$-dimensional if and only if $w_{0,M}(\lambda) = \lambda$.

\medskip

If now $\lambda$ lies in $\Lambda_{G,P}$, then every simple reflection $s_{\alpha_i}$ with $i \in \CI_M$ leaves $\lambda$ invariant, and thus $w_{0,M}(\lambda) = \lambda$, proving one direction. Conversely, assume that $V^\lambda[\lambda + \BZ R_M]$ is $1$-dimensional. Then since $s_{\alpha_i}(\lambda)$ is again a weight of $V^\lambda[\lambda + \BZ R_M]$ for any $i \in \CI_M$, we conclude that $s_{\alpha_i}(\lambda) = \lambda$ and thus $\langle \alphach_i, \lambda \rangle = 0$ as desired.
\end{proof}

\medskip

\sssec{Subbundles induced by reductions}
\label{Subbundles induced by reductions}

Let $F_P$ be a reduction of a $G$-bundle $F_G$ on a scheme $S$ to a parabolic $P$, and let $F_M$ denote the corresponding Levi bundle. Then for any dominant weight $\lambda \in \Lambda_G^+$ and any $G$-representation $V$ of highest weight $\lambda$, the inclusion map $V[\lambda + \BZ R_M] \into V$ gives rise to a subbundle map
$$\kappa^\lambda: \ V[\lambda + \BZ R_M]_{F_M} \ \longinto \ V_{F_G}$$
between associated vector bundles.

\medskip

The maps $\kappa^\lambda$ will play a prominent role in the study of the collection of reductions of a given $G$-bundle. One instance of this is the use of Proposition~\ref{Plucker redundancy} below in Section \ref{The case of characteristic $0$}. Another instance is that we will use the subbundles $V[\lambda + \BZ R_M]_{F_M}$ to compare numerical data attached to two reductions of a $G$-bundle on the curve $X$ in Section \ref{Relative position in the curve case and a key inequality} below. In that section, as well as in other parts of the article, the following calculation will be used frequently:

\medskip

\begin{proposition}
\label{slope-of-associated-bundles}
Let $V$ be a finite-dimensional representation of $G$, let $V = \bigoplus_{\nu} V[\nu]$ be its weight decomposition with weights $\nu \in \Lambda_G$, and let $m_\nu$ denote the multiplicity of $\nu$. Furthermore let $F_G \in \Bun_{G,\lambdach_G}$ be a $G$-bundle on the curve~$X$ of degree $\lambdach_G \in \Lambdach_{G,G}$. Then we have:
\begin{itemize}
\item[]
\item[$(a)$]
The sum $\sum_\nu m_\nu \nu$ lies in $\Lambda_{G,G}$, and the degree of the associated vector bundle $V_{F_G}$ is
$$\deg V_{F_G} \ = \ \langle \lambdach_G, \sum_\nu m_\nu \nu \rangle \ = \ \langle \phi_G(\lambdach_G), \sum_\nu m_\nu \nu \rangle \, .$$

\item[$(b)$]
If $V$ is of highest weight $\lambda \in \Lambda_G^+$, then the slope of the associated vector bundle $V_{F_G}$ is
$$\mu(V_{F_G}) \ = \ \langle \phi_G(\lambdach_G) , \lambda \rangle \, .$$
\end{itemize}

\medskip

\noindent Next let $P$ be a parabolic in $G$ and let $F_P \in \Bun_{P,\lambdach_P}$ be a $P$-bundle of degree $\lambdach_P \in \Lambdach_{G,P}$ with corresponding Levi bundle $F_M$. Consider the $M$-representation $V[\lambda + \BZ R_M]$ obtained from a $G$-representation $V$ of highest weight $\lambda$. Then we have:
\begin{itemize}
\item[]
\item[$(c)$]
The slope of the associated vector bundle $V[\lambda + \BZ R_M]_{F_M}$ is
$$\mu \bigl( V[\lambda + \BZ R_M]_{F_M} \bigr) \ = \ \langle \phi_P(\lambdach_P) , \lambda \rangle \, .$$
\end{itemize}
\end{proposition}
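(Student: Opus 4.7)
The approach I would take is to prove $(a)$ first and then deduce $(b)$ and $(c)$ as short consequences. In essence the entire proposition is a numerical computation showing that the invariants of an associated bundle are determined by the character of the representation together with the degree of the underlying $G$-bundle. Accordingly, the only step requiring genuine thought is the identification of $\Lambda_{G,G}$ with the character group of $G$ used in $(a)$; everything else should be bookkeeping.

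For $(a)$, I would observe that the sum $\chi := \sum_\nu m_\nu \nu$ is precisely the weight of the $T$-action on the top exterior power $\det V$. Since $\det V$ is a one-dimensional $G$-representation, it is a character of $G$, and its restriction to $T$ is $\chi$. I would then verify the identification $\Lambda_{G,G} = \Hom(G, \BG_m)$: a weight of $T$ extends from $T$ to $G$ if and only if it is trivial on each one-parameter subgroup $\alphach_i(\BG_m) \subset T$, which is the defining condition of $\Lambda_{G,G}$. This proves $\chi \in \Lambda_{G,G}$. For the degree formula, I would use the canonical isomorphism $\det(V_{F_G}) \cong (\det V)_{F_G}$ together with the fact that for any character $\chi \in \Lambda_{G,G}$ the degree of the line bundle $(\chi)_{F_G}$ equals $\langle \lambdach_G, \chi \rangle$---this pairing is precisely how the identification $\pi_0(\Bun_G) = \Lambdach_{G,G}$ is set up. The second equality in $(a)$ is then immediate from (\ref{phi-compatibility}).

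For $(b)$, since $\rk V_{F_G} = \dim V = \sum_\nu m_\nu$, it suffices to establish the pointwise identity $\langle \phi_G(\lambdach_G), \nu \rangle = \langle \phi_G(\lambdach_G), \lambda \rangle$ for every weight $\nu$ of $V$. The key observation, and in my view the only semi-subtle point of the proof, is that $\phi_G(\lambdach_G)$ factors through $\Lambdach_{Z_0(G)}^\BQ \subset \Lambdach_G^\BQ$, and every root vanishes on $Z_0(G) \subset Z(G)$; consequently $\langle \phi_G(\lambdach_G), \alpha_i \rangle = 0$ for all simple roots. Because $V$ has highest weight $\lambda$, each $\nu$ differs from $\lambda$ by a non-negative integral combination of simple roots, so the pairings agree. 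Summing over $\nu$ with multiplicities and applying $(a)$ yields $\deg V_{F_G} = \rk V_{F_G} \cdot \langle \phi_G(\lambdach_G), \lambda \rangle$, giving the claimed slope formula.

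For $(c)$, my plan is to reduce directly to $(b)$ applied to the reductive group $M$, the $M$-bundle $F_M$, and the $M$-representation $V[\lambda + \BZ R_M]$. First I would verify that $V[\lambda + \BZ R_M]$ has highest weight $\lambda$ as an $M$-representation: for any weight $\nu$ appearing in it, the difference $\lambda - \nu$ lies both in $\sum_{i \in \CI} \BZ_{\geq 0}\, \alpha_i$ (since $V$ has highest weight $\lambda$ as a $G$-representation) and in $\BZ R_M = \sum_{i \in \CI_M} \BZ\, \alpha_i$; by uniqueness of the expansion in the simple root basis, the coefficients outside $\CI_M$ must vanish, so $\lambda - \nu \in \sum_{i \in \CI_M} \BZ_{\geq 0}\, \alpha_i$, as required. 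Next I would note that the slope map $\phi_M : \Lambdach_{M,M} \to \Lambdach_M^\BQ$ of the group $M$ coincides with $\phi_P : \Lambdach_{G,P} \to \Lambdach_G^\BQ$ under the tautological identifications $\Lambdach_{M,M} = \Lambdach_{G,P}$ and $\Lambdach_M^\BQ = \Lambdach_G^\BQ$, since both maps are defined through the same embedding $\Lambdach_{Z_0(M)}^\BQ \hookrightarrow \Lambdach_T^\BQ$. Applying $(b)$ to the triple $(M, F_M, V[\lambda + \BZ R_M])$ then yields the formula, completing the proof.
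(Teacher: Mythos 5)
Your proposal is correct and follows essentially the same route as the paper: part $(c)$ rests on the observation that $\phi_P(\lambdach_P)$ pairs to zero with the simple roots in $\CI_M$ while $\lambda-\nu$ lies in $\Lambda_M^{pos}$ for every weight $\nu$ of $V[\lambda+\BZ R_M]$, combined with part $(a)$ applied to the Levi $M$ (the paper packages this by deducing $(b)$ from $(c)$ with $P=G$, you go the other way, but the computation is identical). The only real divergence is in the degree formula of $(a)$: you invoke $\det(V_{F_G})\cong(\det V)_{F_G}$ together with the normalization of the identification $\pi_0(\Bun_G)\cong\Lambdach_{G,G}$ via pairing with characters of $G$, whereas the paper chooses a reduction $F_B$ to the Borel, filters $V$ by one-dimensional $B$-subquotients, and sums the degrees $\langle\lambdach,\nu\rangle$ of the resulting line bundles; your version is slightly slicker but leans on the convention for $\pi_0(\Bun_G)$, while the paper's only uses the more elementary statement for $T$-weights of a $B$-reduction. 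Both are complete.
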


\medskip

\begin{proof}
The first assertion of part $(a)$ holds since the weight $\sum_\nu m_\nu \nu$ corresponds to the action of the torus $T$ on the determinant representation~$\det(V)$ of $G$.
For the second assertion of part $(a)$, choose a reduction $F_B$ of $F_G$ to the Borel $B$, and let $\lambdach~\in~\Lambdach_G=\Lambdach_{G,B}$ denote its degree. Recall that if $k_\lambda$ denotes the $1$-dimensional representation of~$B$ corresponding to a character $\lambda~\in~\Lambda_G$, then the degree of the associated line bundle $(k_\lambda)_{F_B}$ is
$$\deg ((k_\lambda)_{F_B}) \ = \ \langle \lambdach, \lambda \rangle \, .$$

\medskip

Considering $V$ as a $B$-representation it possesses a filtration by $B$-sub- representations such that each successive quotient is $1$-dimensional. Each such quotient must be isomorphic to some $k^\nu$ for $\nu$ a weight of $V$, and the number of times $k^\nu$ appears among these quotients is precisely $m_\nu$. Hence
\begin{eqnarray}
\nonumber \deg (V_{F_G}) & = & \deg (V_{F_B}) \\
\nonumber & = & \sum_\nu m_\nu \deg (k^\nu_{F_B}) \\
\nonumber & = & \langle \lambdach , \sum_\nu m_\nu \nu \rangle \\
\nonumber & = & \langle \lambdach_G , \sum_\nu m_\nu \nu \rangle \, ,
\end{eqnarray}
proving the first half of the formula. The second half is just equation (\ref{phi-compatibility}) above.

\medskip

To prove part $(c)$, note that $\lambda - \nu$ lies in $\Lambda_M^{pos}$ for every weight $\nu$ of $V[\lambda + \BZ R_M]$, and hence
$$\langle  \phi_P (\lambdach_P) , \lambda \rangle \ = \ \langle \phi_P (\lambdach_P) , \nu \rangle$$
as $\langle \phi_P(\lambdach_P), \alpha_i \rangle = 0$ for all $i \in \CI_M$. Combined with part $(a)$ applied to the reductive group $M$ we obtain
$$\deg (V[\lambda + \BZ R_M]_{F_M}) \ = \ \bigl( \sum_\nu m_\nu \bigr) \cdot \langle  \phi_P (\lambdach_P) , \lambda \rangle \, ,$$
giving rise to the desired formula for the slope $\mu (V[\lambda + \BZ R_M]_{F_M})$.

\medskip

Finally, part $(b)$ is just the case $P=G$ in part $(c)$.
\end{proof}

\medskip

\sssec{Remark}
\label{slope remark}

Proposition \ref{slope-of-associated-bundles} provides another reason for calling the element $\phi_P(\lambdach_P)$ the ``slope'' of a $P$-bundle $F_P$ of degree $\lambdach_P$. Namely, by part~$(c)$ the function
$$\langle \phi_P(\lambdach_P) , - \rangle: \ \Lambda_G^\BQ \ \longto \ \BQ$$
sends any weight $\lambda \in \Lambda_G$ to the slope of the subbundle corresponding to the inclusion $\kappa^\lambda$.

\medskip

\sssec{The case of Weyl modules}

We now consider the induced subbundles~$\kappa^\lambda$ in the case where the representation $V$ of highest weight $\lambda$ is specifically the Weyl module $V^\lambda$ of $G$. First recall from Lemma \ref{the usual sum of weight spaces} $(b)$ above that $V^\lambda[\lambda + \BZ R_M] = V^\lambda[\lambda]$ if the weight $\lambda$ lies in $\Lambda_{G,P}$, and hence the inclusions $\kappa^\lambda$ correspond to line subbundles for such $\lambda$. These line subbundles are of great importance in the study of reductions of $G$-bundles to parabolic subgroups (see \cite[Ch. 1]{GeometricEisenstein}, \cite[Ch. 1]{ICofDrinfeldCompactifications}). We will only need the following fact:

\medskip

\begin{proposition}
\label{Plucker redundancy}
Let $F_G$ be a $G$-bundle on a scheme $S$ and let $F_P$ and $\tilde{F}_P$ be two reductions of $F_G$ to the same parabolic $P$. Then if the line subbundles
$$V^\lambda[\lambda]_{F_M} \ \stackrel{\kappa^{\lambda}}{\longinto} \ V^\lambda_{F_G} \ \stackrel{\tilde \kappa^{\lambda}}\longotni \ V^\lambda[\lambda]_{\tilde F_M}$$
are equal for all $\lambda \in \Lambda_G^+ \, \cap \, \Lambda_{G,P}$, then the reductions $F_P$ and $\tilde{F}_P$ are already equal.
\end{proposition}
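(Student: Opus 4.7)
The plan is to reduce this to the classical Plücker embedding of $G/P$. Since the claim is local on $S$, I may assume that $F_G$ is the trivial $G$-bundle; reductions of $F_G$ to $P$ then correspond to morphisms $S \to G/P$, and I denote the morphisms associated to $F_P$ and $\tilde F_P$ by $s$ and $\tilde s$ respectively. For each $\lambda \in \Lambda_G^+ \cap \Lambda_{G,P}$, Lemma \ref{the usual sum of weight spaces}(b) shows that the line $V^\lambda[\lambda] \subset V^\lambda$ is $P$-stable, so that the assignment $gP \mapsto g \cdot V^\lambda[\lambda]$ defines a $G$-equivariant morphism
\[
\iota_\lambda: \ G/P \ \longto \ \BP(V^\lambda).
\]
Under the identification of reductions with morphisms to $G/P$, the line subbundle $V^\lambda[\lambda]_{F_M} \longinto V^\lambda_{F_G} = V^\lambda \otimes \CO_S$ corresponds to the pullback of the tautological line subbundle on $\BP(V^\lambda)$ along $\iota_\lambda \circ s$, and similarly for $\tilde s$. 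Hence the hypothesis of the proposition translates into the equality $\iota_\lambda \circ s = \iota_\lambda \circ \tilde s$ for every $\lambda \in \Lambda_G^+ \cap \Lambda_{G,P}$.

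To conclude that $s = \tilde s$, it suffices to exhibit a single weight $\lambda_0 \in \Lambda_G^+ \cap \Lambda_{G,P}$ for which $\iota_{\lambda_0}$ is a monomorphism of schemes. I would choose $\lambda_0$ to be dominant $P$-regular, i.e., $\langle \alphach_i, \lambda_0 \rangle = 0$ for $i \in \CI_M$ and $\langle \alphach_i, \lambda_0 \rangle > 0$ for $i \in \CI \setminus \CI_M$; such a $\lambda_0$ exists in $\Lambda_G$ by taking a sufficiently divisible positive integer combination of the fundamental weights indexed by $\CI \setminus \CI_M$. For such $\lambda_0$, the classical fact -- valid in arbitrary characteristic when phrased via Weyl modules and the distribution algebra $\Dist(G)$ -- is that the stabilizer in $G$ of the line $V^{\lambda_0}[\lambda_0] \subset V^{\lambda_0}$ is exactly $P$. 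Combining this with the properness of $G/P$ yields that $\iota_{\lambda_0}$ is a closed immersion, in particular a monomorphism, so $s = \tilde s$ as required.

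The main obstacle I anticipate is the stabilizer computation in positive characteristic: one must verify using $\Dist(G)$ that no infinitesimal element outside $\Dist(P)$, and no $k$-point of $G$ outside $P$, preserves the highest weight line $V^{\lambda_0}[\lambda_0]$. Both parts are standard but rely essentially on the $P$-regularity of $\lambda_0$; they are carried out in Jantzen's \emph{Representations of algebraic groups}. Apart from this input, the argument consists only of weight-space bookkeeping already packaged in Lemma \ref{the usual sum of weight spaces}, together with the elementary translation between reductions of the trivial $G$-bundle and morphisms to $G/P$.
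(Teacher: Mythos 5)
Your proposal is correct, but it takes a different (and more self-contained) route than the paper. The paper's proof is essentially a citation: it invokes the equivalence between reductions of $F_G$ to $P$ and Pl\"ucker data from \cite[Sec. 1.3.2]{GeometricEisenstein} and \cite[Sec. 1.1]{ICofDrinfeldCompactifications}, noting only that the induced $M/[M,M]$-bundle is itself recovered from the line subbundles $\kappa^\lambda$, so that the full system $\{\kappa^\lambda\}_{\lambda \in \Lambda_G^+ \cap \Lambda_{G,P}}$ determines the reduction. You instead trivialize $F_G$ \'etale-locally, identify reductions with maps $S \to G/P$, and reduce everything to the statement that $\iota_{\lambda_0}: G/P \to \BP(V^{\lambda_0})$ is a monomorphism for a single dominant $P$-regular weight $\lambda_0$, which follows from the scheme-theoretic stabilizer of the highest weight line being exactly $P$ (Jantzen, II.8.5; the key point in positive characteristic is that $f_{\alpha_i} v_{\lambda_0} \neq 0$ in the Weyl module for $i \in \CI \setminus \CI_M$, so the infinitesimal stabilizer is no larger than $\Fp$ and the stabilizer is smooth). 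What your approach buys: it avoids the black box of the Pl\"ucker-data equivalence, and it shows the sharper statement that equality of $\kappa^{\lambda_0}$ for a \emph{single} $P$-regular $\lambda_0$ already forces $F_P = \tilde F_P$; what it costs is the external input on stabilizers of highest weight lines in arbitrary characteristic. Two small points to tighten: say explicitly that "local on $S$" means \'etale- or fppf-local (a $G$-bundle need not be Zariski-locally trivial), and that equality of the two sections of the separated $S$-scheme $F_G/P$ descends along such a cover; and note that the existence of $\lambda_0 \in \Lambda_G$ uses that the pairing map $\Lambda_G \to \BZ^{\CI}$, $\lambda \mapsto (\langle \alphach_i,\lambda\rangle)_i$, has finite-index image (its image contains that of the root lattice), so a sufficiently divisible multiple of the desired tuple lifts.
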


\begin{proof}
Follows from the description of Pl\"ucker data in for example \cite[Sec. 1.3.2]{GeometricEisenstein} or \cite[Sec. 1.1]{ICofDrinfeldCompactifications}, taking into account that the action of $M$ on $V^\lambda[\lambda]$ factors through the torus $M/[M,M]$ and that the induced $M/[M,M]$-bundle $F_{M/[M,M]}$ is uniquely determined
by the inclusions of line subbundles $\kappa^\lambda$
for $\lambda \in \Lambda_G^+ \, \cap \, \Lambda_{G,P}$.
\end{proof}

\medskip

According to the proposition, a reduction of $F_G$ to a parabolic $P$ is already uniquely determined by the subbundle maps $\kappa^\lambda$ for $\lambda \in \Lambda_G^+ \, \cap \, \Lambda_{G,P}$. However, unlike in Proposition \ref{Plucker redundancy}, we will be interested in comparing reductions to possibly different parabolics of $G$, and hence it is important to consider the subbundles $\kappa^\lambda$ for all $\lambda \in \Lambda_G^+$ above.

\bigskip\bigskip\bigskip\bigskip\bigskip\bigskip

\section{Comparing two reductions}
\label{Comparing two reductions}

\medskip

Fix two parabolic subgroups $P_1$ and $P_2$ of $G$. The purpose of this section is to compare two given reductions $F_{P_1}$ and $F_{P_2}$ of the same $G$-bundle on the curve $X$. Its main goal, and its sole application towards the proof of Theorem \ref{main theorem}, is to prove the comparison theorem, Theorem \ref{comparison theorem} below.

\medskip

The comparison theorem is used in the proofs of almost all parts of Theorem \ref{main theorem} in section \ref{Proof of the remaining parts of the main theorem} below. For example, it immediately implies the uniqueness of the canonical reduction and thus establishes half of part $(c)$. It furthermore directly implies part $(d)$, and is also used in the proofs of parts $(a)$ and $(g)$.
As we include a simpler proof of the comparison theorem in characteristic $0$ in Section \ref{The case of characteristic $0$} below, the reader only interested in that case can skip the present Section \ref{Comparing two reductions} entirely.

\bigskip\bigskip\bigskip

\ssec{Relative position of two reductions}
\label{Relative position of two reductions}

\mbox{} \\

\medskip

Consider the double quotient stack $P_1 \backslash G / P_2$. By definition, it parametrizes triples $(F_{P_1}, F_{P_2}, \gamma)$ consisting of a $P_1$-bundle $F_{P_1}$, a $P_2$-bundle $F_{P_2}$, and an isomorphism
$$\gamma: \ (F_{P_1})_G \ \cong \ (F_{P_2})_G$$
of their induced $G$-bundles. Thus given two reductions $F_{P_1}$ and $F_{P_2}$ of the same $G$-bundle on a scheme $S$, we obtain a map
$$S \ \longto \ P_1 \backslash G / P_2 \, .$$
The main idea of the present Section \ref{Comparing two reductions} is to use the geometry of the stack $P_1 \backslash G / P_2$ -- namely, a Tannakian interpretation of its Bruhat stratification -- to compare the two reductions. We begin by introducing the notion of relative position of two reductions.

\medskip

\sssec{Notation}

The Levi quotients of $P_1$ and $P_2$ will be denoted by $M_1$ and $M_2$ and will also be considered as subgroups via the fixed splitting of $B \onto T$, see Section \ref{notation-parabolic} above. Furthermore, the subsets of the set of vertices of the Dynkin diagram $\CI$ corresponding to $M_1$ and $M_2$ will be denoted by
$$\CI_{M_1} \ \subset \ \CI \ \supset \ \CI_{M_2} \, ,$$
the roots of $M_1$ and $M_2$ by
$$R_{M_1} \ \subset \ R \ \supset \ R_{M_2} \, ,$$
and their corresponding subgroups of the Weyl group $W$ of $G$ by
$$W_{M_1} \ \subset W \ \supset W_{M_2} \, .$$

\medskip

Now consider the set of double cosets $W_{M_1} \backslash W / W_{M_2}$ for the action of $W_{M_1}$ on $W$ from the left and of $W_{M_2}$ from the right. We obtain a canonical set of representatives by taking the unique shortest element in each coset.
In other words, we consider the subset
$$W_{1,2} \ = \ \{ w \in W \mid \ell (w) \leq \ell (w_1 w w_2) \ \text{for all} \ w_1 \in W_{M_1}, w_2 \in W_{M_2} \}$$
of $W$. Furthermore we also choose representatives in $G$ for each $w \in W_{1,2}$ and again denote them by $w$ by abuse of notation.

\medskip

\sssec{Stratification of $P_1 \backslash G / P_2$}
\label{Bruhat-stratification}

By the Bruhat decomposition, the group~$G$ is the disjoint union of double cosets
$$G \ = \ \bigcup_{w \in W_{1,2}} P_1 w P_2 \, ,$$
and the cosets $P_1 w P_2$ are locally closed subvarieties of $G$. Quotienting out by the left action of $P_1$ and the right action of $P_2$ thus yields a stratification of the double quotient stack
$$
P_1 \backslash G / P_2 \ = \ \bigcup_{w \in W_{1,2}} (P_1 \backslash G / P_2)_w
$$
by locally closed substacks
$$(P_1 \backslash G / P_2)_w \ := \ P_1 \backslash (P_1 w P_2) / P_2 \, .$$

\medskip

\sssec{Definition of relative position}

Let $F_{P_1}$ and $F_{P_2}$ be two reductions of the same $G$-bundle on a scheme $S$ and let
$$\psi: \ S \ \longto \ P_1 \backslash G / P_2$$
be the induced map. Moreover let $w \in W_{1,2}$. Then we say that $F_{P_1}$ \textit{is in relative position $w$ with respect to} $F_{P_2}$ if the map $\psi$ factors through the substack $(P_1 \backslash G / P_2)_w$ of $P_1 \backslash G / P_2$.

\medskip

We furthermore say that $F_{P_1}$ \textit{is generically in relative position $w$ with respect to} $F_{P_2}$ if there exists an open dense subscheme $U$ of $S$ such that the restriction of $\psi$ to $U$ factors through $(P_1 \backslash G / P_2)_w$.

\medskip

For example we have the following immediate lemma:

\medskip

\begin{lemma}
\label{generic-relative-position}
Let $F_{P_1}$ and $F_{P_2}$ be two reductions of the same $G$-bundle on an integral scheme $S$. Then there exists a unique element $w \in W_{1,2}$ such that $F_{P_1}$ is generically in relative position $w$ with respect to $F_{P_2}$.
\end{lemma}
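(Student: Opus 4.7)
The plan is to pull back the Bruhat stratification of $P_1 \backslash G / P_2$ along $\psi$ and then exploit the irreducibility of $S$. Concretely, for each $w \in W_{1,2}$ set
$$S_w \ := \ \psi^{-1}\bigl((P_1 \backslash G / P_2)_w\bigr).$$
Since $(P_1 \backslash G / P_2)_w$ is a locally closed substack of $P_1 \backslash G / P_2$ and pullback preserves locally closed substacks, each $S_w$ is a locally closed subscheme of $S$. Moreover the Bruhat cells $P_1 w P_2$ partition $G$ set-theoretically, so the substacks $(P_1 \backslash G / P_2)_w$ partition the stack $P_1 \backslash G / P_2$ as a set; hence the finite family $\{S_w\}_{w \in W_{1,2}}$ forms a set-theoretic partition of $S$.

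For existence, let $\eta$ denote the unique generic point of the irreducible scheme $S$. It lies in exactly one stratum, say $S_{w_0}$. Since $S_{w_0}$ is locally closed in $S$ and its closure contains $\eta$, that closure must be all of $S$; but a locally closed subset which is dense is automatically open (it is open in its own closure). Hence $U := S_{w_0}$ is a dense open subscheme of $S$ on which $\psi$ factors through $(P_1 \backslash G / P_2)_{w_0}$, giving the existence claim with $w := w_0$.

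For uniqueness, suppose $w' \in W_{1,2}$ also has the property that $\psi|_{U'}$ factors through $(P_1 \backslash G / P_2)_{w'}$ for some dense open $U' \subseteq S$. Then $U' \subseteq S_{w'}$, so the closure of $S_{w'}$ contains $\overline{U'} = S$; since $S_{w'}$ is open in its closure, this forces $\eta \in S_{w'}$, and then disjointness of the partition $\{S_w\}_{w \in W_{1,2}}$ yields $w' = w_0$. The whole argument is rather soft, with the only point requiring mild care being the verification that pullbacks of locally closed substacks of $P_1 \backslash G / P_2$ along $\psi$ genuinely give locally closed subschemes of $S$, which is standard for morphisms from a scheme to an algebraic stack.
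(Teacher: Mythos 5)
The paper states this lemma without proof (it calls it ``immediate''), and your argument is precisely the intended one: pull back the finite Bruhat stratification of $P_1 \backslash G / P_2$ to a partition of $S$ into locally closed pieces, observe that the piece containing the generic point is dense and hence open, and use disjointness of the pieces for uniqueness. The only caveat, which is really a defect of the statement for arbitrary irreducible $S$ rather than of your proof, is that when $S$ is non-reduced the scheme-theoretic preimage of a stratum can have open underlying set without being an open subscheme (so $\psi$ need not factor through the stratum on any dense open); this is harmless in the paper, where the lemma is only ever applied with $S$ equal to the integral scheme $X$.
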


\bigskip

\ssec{Deeper reductions}
\label{Deeper reductions}

\sssec{Overview}

Let $F_{P_1}$ and $F_{P_2}$ be two reductions of the same $G$-bundle $F_G$ on a scheme $S$ and assume that $F_{P_1}$ is in relative position $w \in W_{1,2}$ with respect to $F_{P_2}$. In this section we construct certain ``deeper'' reductions, i.e., reductions $F_{Q_1}$ of $F_{P_1}$ and $F_{Q_2}$ of $F_{P_2}$ to smaller parabolics $Q_1 \subset P_1$ and $Q_2 \subset P_2$.

\medskip

The parabolics $Q_1$ and $Q_2$ only depend on the element $w$ and have the property that their corresponding Levi subgroups $L_1$ and $L_2$ are $w$-conjugate inside the group $G$. Furthermore, the reductions~$F_{Q_1}$ and $F_{Q_2}$ have the property that their induced Levi bundles $F_{L_1}$ and $F_{L_2}$ are naturally isomorphic when $L_1$ and $L_2$ are identified via the element $w$ (see Corollary \ref{FQ1FQ2} below):

$$\xymatrix@+10pt{
F_{Q_1} \ar@{|->}[r]\ar@{|->}[d] & F_{L_1} \cong F_{L_2} & F_{Q_2}\ar@{|->}[l]\ar@{|->}[d] \\
F_{P_1} \ar@{|->}[dr]    &                                                                &   F_{P_2} \ar@{|->}[dl] \\
            &                              F_G                               &      \\
}$$

\medskip

The geometry we employ in this section to construct the deeper reductions~$F_{Q_1}$ and $F_{Q_2}$ is to a certain extent reminiscent of the geometry underlying the ``Geometric Lemma'' of I.~N. Bernstein and A. V. Zelevinsky in the representation theory of reductive groups over non-archimedean local fields (see \cite{BernsteinZelevinsky}).

\medskip

\sssec{Lemmas about the Weyl group}

We start with the following two easy lemmas about the subset $W_{1,2}$ of the Weyl group $W$:

\medskip

\begin{lemma}
\label{W-positivity-lemma}
Let $w \in W_{1,2}$. Then we have:
\begin{align*}
\forall i \in \CI_{M_1} & : \ w^{-1}(\alpha_i) \in R_+ \\
\forall i \in \CI_{M_2} & : \ w(\alpha_i) \in R_+
\end{align*}
\end{lemma}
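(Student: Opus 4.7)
The plan is to prove both assertions by contradiction, using the standard length criterion that for any $w \in W$ and any simple reflection $s_i$, we have $\ell(ws_i) < \ell(w)$ if and only if $w(\alpha_i) \in R_-$, and similarly $\ell(s_iw) < \ell(w)$ if and only if $w^{-1}(\alpha_i) \in R_-$. This is a standard fact about Coxeter groups and will not be reproved.

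For the second assertion, suppose for the sake of contradiction that there exists some $i \in \CI_{M_2}$ with $w(\alpha_i) \in R_-$. By the length criterion, this means $\ell(ws_i) < \ell(w)$. Since $i \in \CI_{M_2}$, the simple reflection $s_i$ lies in $W_{M_2}$, so the element $ws_i$ lies in the same double coset $W_{M_1} w W_{M_2}$ as $w$. But then $ws_i$ (written as $w_1 w w_2$ with $w_1 = 1$ and $w_2 = s_i$) has strictly smaller length than $w$, contradicting the minimality that defines membership in $W_{1,2}$.

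The first assertion follows by an entirely symmetric argument: if $w^{-1}(\alpha_i) \in R_-$ for some $i \in \CI_{M_1}$, then the length criterion gives $\ell(s_i w) < \ell(w)$, and since $s_i \in W_{M_1}$, the element $s_i w$ lies in the same double coset as $w$ and has smaller length, again contradicting minimality.

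There is no substantive obstacle here; the lemma is essentially a direct unpacking of the definition of $W_{1,2}$ as the set of minimal length double coset representatives. The only ingredient beyond the definition is the standard length criterion for multiplication by simple reflections, which is a basic fact in the theory of Coxeter groups and can be cited from any standard reference (e.g., Humphreys' book on reflection groups and Coxeter groups).
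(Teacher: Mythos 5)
Your proof is correct and follows essentially the same route as the paper: assume $w(\alpha_i)$ (resp.\ $w^{-1}(\alpha_i)$) is negative, conclude via the length criterion that $ws_i$ (resp.\ $s_iw$) is shorter, and contradict the minimality defining $W_{1,2}$. The paper merely inlines the justification of the length criterion (that $s_{\alpha_i}$ permutes $R_+\setminus\{\alpha_i\}$) where you cite it as a standard fact.
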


\medskip

\begin{proof}
To prove the second assertion, suppose there exists some $i \in \CI_{M_2}$ such that $w(\alpha_i) \in -R_+$. Then as the simple reflection $s_{\alpha_i} \in W_{M_2}$ permutes the set $R_+ \setminus \{ \alpha_i \}$, the element $w \, s_{\alpha_i}$ is shorter than $w$, in contradiction to the fact that $w \in W_{1,2}$. The first assertion is checked analogously.
\end{proof}

\medskip

\begin{lemma}
\label{W-simple-roots-lemma}
Let $w \in W_{1,2}$. If $i \in \CI_{M_1}$ and $w^{-1}(\alpha_i) \in R_{\CI_{M_2}}$, then $w^{-1}(\alpha_i)$ is again a simple root. Similarly, if $i \in \CI_{M_2}$ and $w(\alpha_i) \in R_{\CI_{M_1}}$, then $w(\alpha_i)$ is again a simple root.
\end{lemma}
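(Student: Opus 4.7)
The plan is to reduce the second assertion to the first, and then to prove the first by a short height-counting argument built on Lemma~\ref{W-positivity-lemma}. For the reduction, I would note that length on $W$ is invariant under inversion, so the defining condition for $W_{1,2}$ translates under $w \mapsto w^{-1}$ into the defining condition for $W_{2,1}$; hence $w \in W_{1,2}$ if and only if $w^{-1} \in W_{2,1}$. The second assertion is then just the first applied to $w^{-1}$ with the roles of $\CI_{M_1}$ and $\CI_{M_2}$ swapped.

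To prove the first assertion, set $\beta := w^{-1}(\alpha_i)$. The first half of Lemma~\ref{W-positivity-lemma} yields $\beta \in R_+$, and by hypothesis $\beta \in R_{\CI_{M_2}}$, so I may expand
$$\beta \ = \ \sum_{j \in \CI_{M_2}} n_j \, \alpha_j$$
with nonnegative integer coefficients $n_j$. Applying $w$ to both sides gives
$$\alpha_i \ = \ \sum_{j \in \CI_{M_2}} n_j \, w(\alpha_j) \, ,$$
and by the second half of Lemma~\ref{W-positivity-lemma} every summand $w(\alpha_j)$ on the right is itself a positive root of $G$.

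The proof then concludes with the elementary observation that a simple root cannot be written as a nonnegative integer combination of positive roots unless the combination consists of a single positive root, itself equal to that simple root, with coefficient one. Concretely, comparing heights of both sides of the displayed equation and using nonnegativity forces exactly one $n_j$ to equal one and the corresponding $w(\alpha_j)$ to have height one; linear independence of the simple roots of $G$ then identifies that root with $\alpha_i$. Therefore $\beta = \alpha_j$ is itself a simple root, as desired. I do not foresee any real obstacle here: the statement is essentially a combinatorial consequence of Lemma~\ref{W-positivity-lemma} together with linear independence of simple roots.
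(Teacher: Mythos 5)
Your proof is correct and follows essentially the same route as the paper: both use the two halves of Lemma~\ref{W-positivity-lemma} to write $\alpha_i$ as a nonnegative integer combination of positive roots $w(\alpha_j)$ and conclude by the elementary fact that a simple root admits no such decomposition of height greater than one (the paper phrases this as the contrapositive "if $w^{-1}(\alpha_i)$ were not simple, its image under $w$ could not be simple either"). Your reduction of the second assertion to the first via $w \mapsto w^{-1}$ is a harmless variant of the paper's "verified analogously."
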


\medskip

\begin{proof}
For $i \in \CI_{M_1}$ the assumption together with the first half of Lemma \ref{W-positivity-lemma} implies that $w^{-1}(\alpha_i)$ lies in the positive part $R_{\CI_{M_2},+}$. Now if $w^{-1}(\alpha_i)$ was not simple, then by the second half of Lemma \ref{W-positivity-lemma} its image under $w$ could not be simple either, but of course $w(w^{-1}(\alpha_i)) = \alpha_i$. The second statement is verified analogously.
\end{proof}

\medskip\medskip

\sssec{The parabolics $Q_1$ and $Q_2$}
\label{The parabolics $Q_1$ and $Q_2$}

Fix an element $w \in W_{1,2}$. We now define the parabolics $Q_1 \subset P_1$ and $Q_2 \subset P_2$. Namely, we define the following sets of vertices:
\begin{align*}
\CI_{L_1} \ & := \ \{ i \in \CI_{M_1} \mid \exists j \in \CI_{M_2} : w(\alpha_j) = \alpha_i \}   \\
\CI_{L_2} \ & := \ \{ i \in \CI_{M_2} \mid \exists j \in \CI_{M_1} : w^{-1}(\alpha_j) = \alpha_i \}
\end{align*}

\medskip

Let $Q_1$ and $Q_2$ denote the parabolic subgroups of $G$ corresponding to the sets of vertices $\CI_{L_1}$ and $\CI_{L_2}$, and let $L_1$ and $L_2$ denote their Levi quotients, which as before we simultaneously regard as subgroups. Since $\CI_{L_1} \subset \CI_{M_1}$ and $\CI_{L_2} \subset \CI_{M_2}$ we have $Q_1 \subset P_1$ and $Q_2 \subset P_2$. Furthermore, we denote by $R_{L_1} \subset R_{M_1}$ and $R_{L_2} \subset R_{M_2}$ the corresponding sets of roots. Finally, it follows directly from the definition of $\CI_{L_1}$ and $\CI_{L_2}$ that conjugation by $w$ maps $L_2$ isomorphically onto $L_1$:
$$w L_2 w^{-1} \ = \ L_1$$

\medskip

\sssec{Lemmas about $Q_1$ and $Q_2$}

Next we record two lemmas about the parabolics $Q_1$ and $Q_2$ that will be needed for the construction of the deeper reductions $F_{Q_1}$ and $F_{Q_2}$ in Propositions \ref{Q1Q2P1P2 relative positions} and \ref{Q1Q2 factorization} below.

\medskip

First, the element $w$ was chosen from the set of coset representatives $W_{1,2} \subset W$ in order to make the following lemma hold true:

\medskip

\begin{lemma}
\label{RL1RL2}
\begin{align*}
R_{L_1} \ & = \ R_{M_1} \ \cap \ w(R_{M_2})   \\
R_{L_2} \ & = \ w^{-1}(R_{M_1}) \ \cap \ R_{M_2}
\end{align*}
\end{lemma}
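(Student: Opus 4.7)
The plan is to prove the two equalities symmetrically: the second follows from the first by swapping the roles of $P_1, M_1, L_1, \CI_{M_1}$ with $P_2, M_2, L_2, \CI_{M_2}$ and replacing $w$ by $w^{-1}$ (which lies in the analogous coset-representative set $W_{2,1}$). So it suffices to prove $R_{L_1} = R_{M_1} \cap w(R_{M_2})$.

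The inclusion ``$\subset$'' is immediate from the definitions. If $\alpha \in R_{L_1}$, then $\alpha$ is a $\BZ$-linear combination of simple roots $\alpha_i$ with $i \in \CI_{L_1}$. Since $\CI_{L_1} \subset \CI_{M_1}$, we have $\alpha \in R_{M_1}$. Moreover, for each such $i$, the definition of $\CI_{L_1}$ supplies a $j_i \in \CI_{M_2}$ with $w(\alpha_{j_i}) = \alpha_i$, so $w^{-1}(\alpha)$ is an integer combination of simple roots in $\CI_{M_2}$, hence lies in $R_{M_2}$, i.e., $\alpha \in w(R_{M_2})$.

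For the reverse inclusion, let $\alpha \in R_{M_1} \cap w(R_{M_2})$. Since the right-hand side is stable under negation, we may assume $\alpha$ is positive, and write $\alpha = \sum_{i \in \CI_{M_1}} n_i \alpha_i$ with $n_i \in \BZ_{\geq 0}$. Applying $w^{-1}$, we get $w^{-1}(\alpha) = \sum_{i \in \CI_{M_1}} n_i\, w^{-1}(\alpha_i)$. By the first half of Lemma \ref{W-positivity-lemma}, each $w^{-1}(\alpha_i)$ lies in $R_+$, so each of these is a nonnegative integer combination $w^{-1}(\alpha_i) = \sum_\ell m_{i\ell} \alpha_\ell$ of simple roots. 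The key step is that $w^{-1}(\alpha) \in R_{M_2}$ by hypothesis, so for every simple root $\alpha_\ell$ with $\ell \notin \CI_{M_2}$ the coefficient $\sum_i n_i m_{i\ell}$ vanishes. As this is a sum of non-negative integers, this forces $n_i m_{i\ell} = 0$ for all such $i, \ell$.

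Therefore, for every $i \in \CI_{M_1}$ with $n_i > 0$, the positive root $w^{-1}(\alpha_i)$ is in fact a $\BZ_{\geq 0}$-combination of simple roots in $\CI_{M_2}$, hence lies in $R_{M_2}$. Now Lemma \ref{W-simple-roots-lemma} applies to conclude that $w^{-1}(\alpha_i)$ is itself a simple root $\alpha_{j_i}$ with $j_i \in \CI_{M_2}$, so by the definition of $\CI_{L_1}$ we get $i \in \CI_{L_1}$. Thus $\alpha = \sum_{i \in \CI_{L_1}} n_i \alpha_i \in R_{L_1}$, completing the proof. The main obstacle is really just the pigeonhole-style argument that positivity forces each individual $w^{-1}(\alpha_i)$ (for $n_i > 0$) to lie in $R_{M_2}$; once that is in hand, Lemma \ref{W-simple-roots-lemma} gives exactly the simplicity needed to match the definition of $\CI_{L_1}$.
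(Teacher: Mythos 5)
Your proof is correct and follows essentially the same route as the paper's: the forward inclusion from the definition of $\CI_{L_1}$, and the reverse inclusion by writing a positive root $\alpha = \sum n_i \alpha_i$, using Lemma \ref{W-positivity-lemma} to place each $w^{-1}(\alpha_i)$ in $R_+$, concluding each must lie in $R_{M_2}$, and then invoking Lemma \ref{W-simple-roots-lemma} to get simplicity. You merely spell out more explicitly (via the coefficients $m_{i\ell}$) the positivity argument that the paper leaves implicit.
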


\medskip

\begin{proof}
We prove only the first assertion, the proof of the second one being similar. It is clear from the definition of $\CI_{L_1}$ that $R_{L_1}$ is contained in the intersection $R_{M_1} \cap \, w(R_{M_2})$. To prove the converse inclusion, it suffices
to show that every positive root $\alpha$ in this intersection can be written as a sum of simple roots corresponding to elements $i \in \CI_{L_1}$. So let $\alpha = \sum_{i \in \CI_{M_1}} n_i \alpha_i$ with integers $n_i \geq 0$. Then
$$w^{-1}(\alpha) \ = \ \sum_{i \in \CI_{M_1}} n_i \, w^{-1}(\alpha_i)$$
lies in $R_{M_2}$ by assumption, but at the same time all $w^{-1}(\alpha_i)$ lie in $R_+$ by Lemma \ref{W-positivity-lemma}. Hence all $w^{-1}(\alpha_i)$ have to lie in $R_{M_2} \cap R_+$, and thus have to be simple roots by Lemma \ref{W-simple-roots-lemma}. Applying $w$ again we obtain a presentation of $\alpha$ as a sum of simple roots of the desired form.
\end{proof}

\medskip

Next, let $U(Q_1)$ and $U(Q_2)$ denote the unipotent radicals of $Q_1$ and $Q_2$. For the proofs of Propositions \ref{Q1Q2P1P2 relative positions} and \ref{Q1Q2 factorization} below we will furthermore need the following technical lemma:

\medskip

\begin{lemma}
\label{technical-lemma-about-Q1Q2}
The scheme-theoretic intersections
$$P_1 \cap (w P_2 w^{-1}), \ \ P_2\cap(w^{-1} P_1 w), \ \ Q_1 \cap (w Q_2 w^{-1}), \ \ (w^{-1} U(Q_1) w) \cap Q_2$$
are reduced and connected. We furthermore have the following inclusions:
\begin{itemize}

\medskip

\item[$(a)$] \ \ $P_1 \cap (w P_2 w^{-1}) \ \ \subset \ Q_1$

\medskip

\item[$(b)$] \ \ $P_2 \cap (w^{-1} P_1 w) \ \ \subset \ Q_2$

\medskip

\item[$(c)$] \ \ $(w^{-1} U(Q_1) w) \cap Q_2 \ \ \subset \ U(Q_2)$
\end{itemize}
\end{lemma}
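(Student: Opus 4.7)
The plan is to reduce the entire lemma to a combinatorial check on root subsets, using the standard theory of reductive groups with a fixed maximal torus. Every scheme appearing in the statement is $T$-invariant: the first three actually contain $T$ (since $P_1,P_2,Q_1,Q_2$ all do, and $wTw^{-1}=T$ for the representative $w\in N_G(T)$), while $w^{-1}U(Q_1)w$ is a $T$-invariant unipotent subgroup of $G$. By the standard structure theory (e.g.\ Borel, \textit{Linear Algebraic Groups}, Ch.~14), any intersection of two parabolic subgroups of $G$ containing a common maximal torus is a connected smooth subgroup, directly spanned by the root subgroups $U_\alpha$ for $\alpha\in\Phi(P_1)\cap\Phi(P_2)$; and the intersection of a $T$-invariant unipotent subgroup directly spanned by root subgroups with a parabolic containing $T$ is itself a $T$-invariant unipotent group directly spanned by the root subgroups whose labels lie in the intersection of the two sets of roots. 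This immediately gives reducedness and connectedness of all four schemes, and reduces the inclusions (a)--(c) to the corresponding inclusions of root sets.

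For (a), I recall that $\Phi(P_1)=R_{M_1}\cup R_+$ and $\Phi(Q_1)=R_{L_1}\cup R_+$, and similarly for $P_2, Q_2$. Given $\alpha\in\Phi(P_1)\cap w\Phi(P_2)$, if $\alpha\in R_+$ then $\alpha\in\Phi(Q_1)$ automatically. Otherwise $\alpha\in R_{M_1,-}$; writing $\alpha$ as a non-positive combination of simple roots indexed by $\CI_{M_1}$ and invoking Lemma~\ref{W-positivity-lemma} shows $w^{-1}(\alpha)\in R_-$, which forces $w^{-1}(\alpha)\in R_{M_2}$ (the alternative $w^{-1}(\alpha)\in R_+$ being impossible). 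Lemma~\ref{RL1RL2} then places $\alpha\in R_{M_1}\cap w(R_{M_2})=R_{L_1}\subset\Phi(Q_1)$, as desired. Statement (b) is symmetric. For (c), the root set of $(w^{-1}U(Q_1)w)\cap Q_2$ is $w^{-1}(R_+\setminus R_{L_1,+})\cap(R_{L_2}\cup R_+)$, and the task is to show it lies in $R_+\setminus R_{L_2,+}=\Phi(U(Q_2))$. I first rule out $\alpha\in R_-$: such an $\alpha$ would lie in $R_{L_2,-}\subset R_{M_2,-}$, but the positivity of $w$ on simple roots of $M_2$ (Lemma~\ref{W-positivity-lemma}), extended to all of $R_{M_2,+}$ and thence to $R_{M_2,-}\subset w^{-1}(R_-)$, contradicts $w(\alpha)\in R_+$. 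Next I rule out $\alpha\in R_{L_2,+}$: in that case Lemma~\ref{RL1RL2} gives $w(\alpha)\in R_{L_1}$, while the same positivity argument gives $w(\alpha)\in R_+$, so $w(\alpha)\in R_{L_1,+}$, again contradicting $w(\alpha)\in R_+\setminus R_{L_1,+}$.

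The main obstacle is not the root combinatorics, which follow cleanly from Lemmas~\ref{W-positivity-lemma} and \ref{RL1RL2}, but rather the rigorous invocation of the root-subgroup decomposition at the scheme-theoretic level, particularly for $(w^{-1}U(Q_1)w)\cap Q_2$: one must verify that $w^{-1}U(Q_1)w$, although not contained in any Borel of $G$ in general, is still a $T$-invariant unipotent subgroup directly spanned by $U_\beta$ for $\beta\in w^{-1}(R_+\setminus R_{L_1,+})$, and that its intersection with the parabolic $Q_2$ admits a root-subgroup direct-product decomposition closed under the root ordering. Once this structural input is in place, the short case analyses above complete the proof of the three inclusions, and reducedness and connectedness follow directly from the directness of the root-subgroup decompositions.
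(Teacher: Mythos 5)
Your proposal is correct and follows essentially the same route as the paper: reduce everything to root-set combinatorics via the combinatorial (root-subgroup) description of all the subgroups involved, then settle the three inclusions using Lemmas~\ref{W-positivity-lemma} and~\ref{RL1RL2}. The structural step you flag as the "main obstacle" is handled in the paper exactly as you suggest — reducedness via a dimension count using the open Bruhat cell $\prod U_\alpha \times T \times \prod U_{-\alpha} \into G$, and connectedness via \cite[Prop.~14.22]{Borel linear algebraic groups} for the parabolic intersections and \cite[Prop.~14.4]{Borel linear algebraic groups} applied to the conjugate Borel $w^{-1}Bw$ for the fourth intersection.
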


\medskip

\begin{proof}
For reducedness we need to show that in each of the cases the Lie algebra of the intersection has the minimal dimension it can possibly have, namely the dimension of the intersection itself. This is in turn a consequence of the fact that all subgroups under consideration are defined combinatorially, i.e., on the level of roots.
Namely, for every root $\alpha$ occurring in the root decomposition of the Lie algebra of an intersection as above, the intersection contains the root subgroup $U_\alpha \cong \BG_a$ corresponding to the root~$\alpha$. Thus the claim about the dimension follows from the fact that the multiplication map
$$\prod_{\alpha \in R_+} U_{\alpha} \, \times \, T \, \times \prod_{\alpha \in R_+} U_{-\alpha} \ \ \longto \ G$$
is an open immersion, namely an isomorphism onto the open Bruhat cell of~$G$.

\medskip

Next, the intersection of any two parabolics is connected (see for example \cite[Prop. 14.22]{Borel linear algebraic groups}), establishing the claimed connectedness for the first three intersections. The connectedness of the fourth intersection follows from the following general fact about reductive groups (see for example \cite[Prop. 14.4]{Borel linear algebraic groups}), applied to the conjugate Borel subgroup $w^{-1} B w$:
If a closed subgroup of the unipotent part of a Borel subgroup is stable under conjugation by a maximal torus of this Borel, then it must be connected.

\medskip

We now show that the three inclusions $(a)$, $(b)$, $(c)$ hold. Since all subgroups under consideration are connected, the inclusions can be checked on the level of algebras of distributions. For this in turn it suffices to prove the three inclusions on the level of roots.
The assertion of part $(a)$ then translates to the claim that
$$(R_{M_1} \cup R_+) \cap w(R_{M_2} \cup R_+) \ \ \ \subset \ \ R_+ \cup R_{L_1}.$$
Proving this inclusion reduces to showing that the sets
$R_{M_1} \cap w(R_{M_2})$ and $R_{M_1} \cap w(R_+)$ are both contained in the right hand side. But the first set is equal to $R_{L_1}$ by Lemma \ref{RL1RL2}, and the second set is equal to $R_{M_1} \cap R_+$ by Lemma \ref{W-positivity-lemma}, thus completing the proof of part $(a)$. Part $(b)$ is proven analogously.

\medskip

Similarly, the assertion of part $(c)$ translates on the level of roots to the claim that
$$w^{-1}(R_+ \setminus R_{L_1}) \cap (R_+ \cup R_{L_2}) \ \ \ \subset \ \ R_+ \setminus R_{L_2},$$
which in turn follows immediately from the fact that $w^{-1}(R_{L_1}) = R_{L_2}$.
\end{proof}

\medskip

\sssec{Construction of deeper reductions}

We now turn to the construction of the deeper reductions $F_{Q_1}$ and $F_{Q_2}$. We begin by proving:

\medskip

\begin{proposition}
\label{Q1Q2P1P2 relative positions}
The natural map of stacks
$$Q_1 \backslash (Q_1 w Q_2) / Q_2 \ \ \longto \ \ P_1 \backslash (P_1 w P_2) / P_2$$
is an isomorphism.
\end{proposition}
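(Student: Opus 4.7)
The plan is to identify both quotient stacks with the classifying stack of a common stabilizer group and check that the morphism of the proposition corresponds to the identity under these identifications.

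First, I would observe that $P_1 w P_2$ is a single orbit for the action of $P_1 \times P_2$ on $G$ via $(p_1, p_2) \cdot x = p_1 x p_2^{-1}$, with basepoint $w$. The scheme-theoretic stabilizer of $w$ is $\{(p_1, p_2) : p_1 = w p_2 w^{-1}\}$, and it projects isomorphically onto $P_1 \cap w P_2 w^{-1}$ via the first coordinate. This yields a canonical identification
$$P_1 \backslash (P_1 w P_2)/P_2 \ \cong \ B\bigl(P_1 \cap w P_2 w^{-1}\bigr),$$
and the exact same argument applied to $(Q_1,Q_2)$ gives
$$Q_1 \backslash (Q_1 w Q_2)/Q_2 \ \cong \ B\bigl(Q_1 \cap w Q_2 w^{-1}\bigr).$$

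Second, I would show that these two stabilizer subgroups of $G \times G$ coincide by combining parts $(a)$ and $(b)$ of Lemma \ref{technical-lemma-about-Q1Q2}. Conjugating the inclusion $P_2 \cap w^{-1} P_1 w \subset Q_2$ from part $(b)$ by $w$ yields $P_1 \cap w P_2 w^{-1} \subset w Q_2 w^{-1}$; combining this with $P_1 \cap w P_2 w^{-1} \subset Q_1$ from part $(a)$ gives $P_1 \cap w P_2 w^{-1} \subset Q_1 \cap w Q_2 w^{-1}$. The reverse inclusion is obvious from $Q_i \subset P_i$. Setting $H := P_1 \cap w P_2 w^{-1} = Q_1 \cap w Q_2 w^{-1}$, both quotient stacks are naturally identified with $BH$.

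Finally, because the morphism in question is induced functorially by the inclusions $Q_i \hookrightarrow P_i$ and by $Q_1 w Q_2 \hookrightarrow P_1 w P_2$, the resulting morphism $BH \to BH$ corresponds to the identity homomorphism on $H$, so the map of the proposition is an isomorphism. I do not anticipate a serious obstacle: the combinatorial content has already been packaged into Lemma \ref{technical-lemma-about-Q1Q2}, and the reducedness and connectedness assertions there are exactly what is needed to identify the scheme-theoretic stabilizers with the honest group-theoretic intersections appearing on both sides.
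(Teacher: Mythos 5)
Your proposal is correct and follows essentially the same route as the paper: both identify the two double quotients with classifying stacks of the scheme-theoretic stabilizers of $w$, and both reduce the claim to the equality $Q_1 \cap (w Q_2 w^{-1}) = P_1 \cap (w P_2 w^{-1})$, established via parts $(a)$ and $(b)$ of Lemma \ref{technical-lemma-about-Q1Q2} together with its reducedness assertion.
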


\medskip

\begin{proof}
The double quotient $P_1 \backslash (P_1 w P_2) / P_2$ is equal to the classifying stack of the scheme-theoretic stabilizer $\Stab_{P_1 \times P_2} (w)$ of $w$ under the action of the product $P_1 \times P_2$, and similarly the stack $Q_1 \backslash (Q_1 w Q_2) / Q_2$ is equal to the classifying stack of the scheme-theoretic stabilizer $\Stab_{Q_1 \times Q_2}(w)$. Furthermore both stacks are naturally pointed, i.e., equipped with a map from the point $\Spec (k)$, and the natural map between them is in fact a map of pointed stacks. Thus to show that this map is an isomorphism reduces to showing that the inclusion of stabilizers
$$\Stab_{Q_1 \times Q_2} (w) \ \longinto \ \Stab_{P_1 \times P_2} (w)$$
is an isomorphism. By definition, this is equivalent to showing that the inclusion of intersections
$$Q_1 \cap (w Q_2 w^{-1}) \ \longinto \ P_1 \cap (w P_2 w^{-1})$$
is an isomorphism. As both intersections are reduced by Lemma \ref{technical-lemma-about-Q1Q2}, we only need to show that the inclusion map is surjective on $k$-points. But this follows immediately from parts $(a)$ and $(b)$ of the same lemma.
\end{proof}

\medskip

Next, recall that the groups $L_1$ and $L_2$ are identified with each other under conjugation by $w$. Thus we occasionally denote both groups simply by $L$. For example, in the following proposition we consider the fiber product $$\cdot / Q_1 \underset{\cdot / L}{\times} \cdot / Q_2$$ of the classifying stacks of $Q_1$, $Q_2$, and $L$.

\medskip

\begin{proposition}
\label{Q1Q2 factorization}
The forgetful map
$$Q_1 \backslash (Q_1 w Q_2) / Q_2 \ \ \longto \ \ \cdot / Q_1 \times \cdot / Q_2$$
canonically factors as
$$\xymatrix@+10pt{
Q_1 \backslash (Q_1 w Q_2) / Q_2 \ar[rr] \ar@{..>}[dr]  & &    \cdot / Q_1 \times \cdot / Q_2   \\
   &   \cdot / Q_1 \underset{\cdot / L}{\times} \cdot / Q_2 \ar[ur]  &   \\
}$$
\end{proposition}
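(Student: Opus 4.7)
The plan is to construct the factorization by providing, for the universal triple $(F_{Q_1}, F_{Q_2}, \gamma)$ parametrized by $Q_1 \backslash (Q_1 w Q_2) / Q_2$, a canonical isomorphism between the induced $L$-bundles $F_{Q_1} \times^{Q_1} L_1$ and $F_{Q_2} \times^{Q_2} L_2$, where $L_2$ is identified with $L := L_1$ via conjugation by $w$ (using $w L_2 w^{-1} = L_1$ from Section \ref{The parabolics $Q_1$ and $Q_2$}). The essential ingredient will be a morphism of schemes
$$
\mu: \ Q_1 w Q_2 \longto L_1, \qquad q_1 w q_2 \longmapsto \ell_1 \cdot w \ell_2 w^{-1},
$$
where $\ell_i \in L_i$ denotes the Levi component of $q_i \in Q_i$ under the semidirect product decomposition $Q_i = U(Q_i) \rtimes L_i$.

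The presentation $q_1 w q_2$ of a given element is unique only up to replacing $(q_1, q_2)$ by $(q_1 g, w^{-1} g^{-1} w q_2)$ for some $g \in Q_1 \cap w Q_2 w^{-1}$. To show that $\mu$ is independent of this ambiguity, I would write $g = u \bar g$ with $u \in U(Q_1)$ and $\bar g \in L_1$; a direct calculation then yields that the Levi part of $q_1 g$ equals $\ell_1 \bar g$. For the second factor, note that $w^{-1} g w = (w^{-1} u w)(w^{-1} \bar g w)$ with $w^{-1} \bar g w \in L_2$, and part $(c)$ of Lemma \ref{technical-lemma-about-Q1Q2} gives $w^{-1} u w \in (w^{-1} U(Q_1) w) \cap Q_2 \subset U(Q_2)$, so that the Levi part of $w^{-1} g^{-1} w \cdot q_2$ equals $w^{-1} \bar g^{-1} w \cdot \ell_2$. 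Substituting into the formula for $\mu$ shows that the two $\bar g$-contributions cancel as desired. Next one checks that $\mu$ is equivariant for the action $(q_1', q_2') \cdot x = q_1' x {q_2'}^{-1}$ on $Q_1 w Q_2$ and the action $(q_1', q_2') \cdot \ell = \bar q_1' \, \ell \, (w \bar q_2' w^{-1})^{-1}$ on $L_1$; this is immediate from the definition of $\mu$ together with the computation of Levi parts of products.

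Descending $\mu$ to quotient stacks, and observing that the $(Q_1 \times Q_2)$-action on $L_1$ is transitive (both $Q_i \onto L_i \to L$ being surjective) with stabilizer $Q_1 \times_L Q_2$, I obtain a morphism
$$
Q_1 \backslash (Q_1 w Q_2) / Q_2 \ \longto \ B(Q_1 \times_L Q_2) \ \simeq \ \cdot/Q_1 \underset{\cdot/L}{\times} \cdot/Q_2,
$$
the last equivalence being a standard consequence of the surjectivity of the maps $Q_i \to L$. Composing with the projection $\cdot/Q_1 \times_{\cdot/L} \cdot/Q_2 \to \cdot/Q_1 \times \cdot/Q_2$ reproduces the original forgetful map by construction, yielding the desired factorization. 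The main obstacle is the well-definedness of $\mu$, which is the combinatorial heart of the argument and depends crucially on part $(c)$ of Lemma \ref{technical-lemma-about-Q1Q2}; all other steps are essentially formal once this calculation is in place.
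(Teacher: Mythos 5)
Your proof is correct and is essentially the paper's argument in different packaging: the paper identifies $Q_1 \backslash (Q_1 w Q_2) / Q_2$ with the classifying stack of $\Stab_{Q_1 \times Q_2}(w)$ and shows this stabilizer lands in $Q_1 \times_L Q_2$, which is exactly your well-definedness check for $\mu$ (the ambiguity group $Q_1 \cap w Q_2 w^{-1}$ \emph{is} that stabilizer), and both hinge on the same Levi-decomposition computation via part $(c)$ of Lemma \ref{technical-lemma-about-Q1Q2}. The only point worth making explicit is the paper's observation that the relevant groups and intersections are reduced (first part of Lemma \ref{technical-lemma-about-Q1Q2}), so that these checks on $k$-points suffice — in your setup this is what justifies descending $\mu$ as a morphism of schemes along the multiplication map $Q_1 \times Q_2 \to Q_1 w Q_2$.
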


\medskip

\begin{proof}
All three stacks being naturally pointed, we in fact claim that a canonical factorization as above exists {\it as pointed stacks}. Since the double quotient $Q_1 \backslash (Q_1 w Q_2) / Q_2$ is equal to the classifying stack of the stabilizer $\Stab_{Q_1 \times Q_2} (w)$,
this assertion in turn follows once we show that the following diagram of groups factors:

$$\xymatrix@+10pt{
\Stab_{Q_1 \times Q_2} (w)  \ar@{^{ (}->}[rr] \ar@{..>}[dr]  & &   Q_1 \times Q_2   \\
   &   Q_1 \underset{L}{\times} Q_2 \ar@{^{ (}->}[ur]  &   \\
}$$

\medskip

\noindent Here we are using that the classifying stack of the fiber product of groups $Q_1\underset{L}{\times}Q_2$ is equal to the fiber product of their classifying stacks since $Q_1$ and~$Q_2$ surject onto $L$.

\medskip

As all three groups are reduced, it suffices to prove that the diagram of groups factors as claimed on the level of $k$-points. Thus let $(x_1, x_2)$ be an element of $\Stab_{Q_1 \times Q_2}(w)$, i.e., let $x_1 \in Q_1$ and $x_2 \in Q_2$ such that $x_1 = w x_2 w^{-1}$. Then we need to show that if
$$x_1 = \ell_1 \cdot u_1 \ \ \ \ \text{and} \ \ \ \ x_2 = \ell_2 \cdot u_2$$
are the Levi decompositions of $x_1$ in $Q_1 = L_1 \cdot U(Q_1)$ and $x_2$ in $Q_2 = L_2 \cdot U(Q_2)$, then
$$w^{-1} \ell_1 w \ = \ \ell_2 \, .$$

\medskip

\noindent To see this we write the element $x_2 = w^{-1} x_1 w$ as

$$x_2 \ = \ w^{-1} \ell_1 w \cdot w^{-1} u_1 w \, .$$

\medskip

\noindent We claim that this is in fact the Levi decomposition of $x_2$ in $Q_2$. Indeed, the first factor lies in $L_2$, and thus also in $Q_2$. Since the product lies in $Q_2$ as well, the same must hold for the second factor $w^{-1} u_1 w$. Hence $w^{-1} u_1 w$ lies in the intersection $(w^{-1} U(Q_1) w) \cap Q_2$ and therefore in $U(Q_2)$ by part~$(c)$ of Lemma \ref{technical-lemma-about-Q1Q2}, proving the claim about the Levi decomposition of $x_2$. By uniqueness of the latter we conclude that $w^{-1} \ell_1 w = \ell_2$ as desired.
\end{proof}

\medskip

Combining Propositions \ref{Q1Q2P1P2 relative positions} and \ref{Q1Q2 factorization} yields the desired construction of the deeper reductions:

\medskip

\begin{corollary}
\label{FQ1FQ2}
The forgetful map
$$(P_1 \backslash G / P_2)_w \ = \ P_1 \backslash (P_1 w P_2) / P_2 \ \ \longto \ \ \cdot / P_1 \times \cdot / P_2$$
canonically factors as

$$\xymatrix@+10pt{
\ \ P_1 \backslash (P_1 w P_2) / P_2 \ \ \ar^{\cong}[rr] \ar[d] & & \ \ Q_1 \backslash (Q_1 w Q_2) / Q_2 \ \ \ar[d]   \\
\ \ \cdot / P_1 \times \cdot / P_2 \ \ & &  \ \ \cdot / Q_1 \underset{\cdot / L}{\times} \cdot / Q_2 \ \ \ar[ll]   \\
}$$

\medskip

\noindent In other words, if $F_{P_1}$ and $F_{P_2}$ are two reductions of the same $G$-bundle on a scheme $S$ such that $F_{P_1}$ is in relative position $w$ with respect to $F_{P_2}$, then there exist naturally defined reductions $F_{Q_1}$ of ${F_{P_1}}$ and $F_{Q_2}$ of $F_{P_2}$ to $Q_1$ and $Q_2$ such that $F_{Q_1}$ is still in relative position $w$ with respect to $F_{Q_2}$ and such that their Levi bundles $F_{L_1}$ and $F_{L_2}$ are naturally isomorphic when $L_1$ and $L_2$ are identified via conjugation by $w$.
\end{corollary}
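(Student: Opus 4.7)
The plan is simply to concatenate Propositions \ref{Q1Q2P1P2 relative positions} and \ref{Q1Q2 factorization}, and then to verify that the resulting composite is compatible with the forgetful maps to $\cdot / P_1 \times \cdot / P_2$ and $\cdot / Q_1 \underset{\cdot / L}{\times} \cdot / Q_2$.

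First, Proposition \ref{Q1Q2P1P2 relative positions} supplies the horizontal isomorphism on top, identifying $P_1 \backslash (P_1 w P_2) / P_2$ with $Q_1 \backslash (Q_1 w Q_2) / Q_2$ as pointed stacks. Proposition \ref{Q1Q2 factorization} then factors the forgetful map from $Q_1 \backslash (Q_1 w Q_2) / Q_2$ through $\cdot / Q_1 \underset{\cdot / L}{\times} \cdot / Q_2$. Composing these two gives the diagonal arrow in the corollary, so only the commutativity of the square remains.

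For the commutativity, both stacks in question are pointed (the base point on the upper row corresponds to the triple $(F_{P_i})_{i=1,2}$ together with the tautological isomorphism obtained from $w \in G$), and all morphisms under consideration are canonically pointed maps. Both compositions $P_1 \backslash (P_1 w P_2) / P_2 \to \cdot / P_1 \times \cdot / P_2$ — the direct forgetful map, and the one going through $Q_1 \backslash (Q_1 w Q_2) / Q_2$ and $\cdot/Q_1 \underset{\cdot/L}{\times} \cdot/Q_2$ followed by extension of structure group $Q_i \hookrightarrow P_i$ — arise from forgetting data, hence are determined by their value at the base point; thus they coincide. This establishes the diagram.

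For the bundle-theoretic reformulation, suppose $F_{P_1}$ and $F_{P_2}$ are two reductions of the same $G$-bundle on $S$ such that $F_{P_1}$ is in relative position $w$ with respect to $F_{P_2}$. By definition this amounts to a map $\psi: S \to (P_1 \backslash G / P_2)_w$. Composing $\psi$ with the diagonal arrow in the diagram yields a map $S \to \cdot / Q_1 \underset{\cdot / L}{\times} \cdot / Q_2$, which by definition of the fiber product is the datum of a $Q_1$-bundle $F_{Q_1}$, a $Q_2$-bundle $F_{Q_2}$, and an identification of the induced $L$-bundles $F_{L_1}$ and $F_{L_2}$ once $L_1$ and $L_2$ are identified via conjugation by $w$. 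The commutativity of the square shows that $F_{Q_i}$ induces $F_{P_i}$ for $i=1,2$, and by construction $F_{Q_1}$ is still in relative position $w$ with respect to $F_{Q_2}$ (the map $\psi$ factors through $Q_1 \backslash (Q_1 w Q_2) / Q_2$). I do not expect any step here to be a real obstacle: the entire content has been packaged into the previous two propositions, and what remains is only bookkeeping of pointed stacks and fiber products.
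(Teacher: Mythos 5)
Your proposal is correct and matches the paper exactly: the corollary is stated there as the immediate combination of Propositions \ref{Q1Q2P1P2 relative positions} and \ref{Q1Q2 factorization}, with no further argument given. Your added verification of the commutativity of the square (via the identification of both quotient stacks with the classifying stack of the common stabilizer $\Stab_{Q_1\times Q_2}(w)=\Stab_{P_1\times P_2}(w)$, so that both composites are induced by the same inclusion of groups) and the unwinding into bundle language are exactly the bookkeeping the paper leaves implicit.
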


\bigskip\bigskip

\ssec{Tannakian interpretation of Bruhat decomposition}
\label{Tannakian-Bruhat}

\mbox{} \\

In this section we give a Tannakian description of the Bruhat decomposition of the stack $P_1 \backslash G / P_2$. More precisely, we analyze what the notion of relative position of two reductions $F_{P_1}$ and $F_{P_2}$ translates to on the level of vector bundles associated to $G$-representations of highest weight $\lambda$. If the parabolics $P_1$ and $P_2$ are both equal to the Borel $B$, our result is stated directly in terms of the associated bundles of $F_{P_1}$ and $F_{P_2}$. For two arbitrary parabolics $P_1$ and $P_2$, our result is phrased in terms of the associated bundles of the deeper reductions~$F_{Q_1}$ and $F_{Q_2}$ from the previous Section \ref{Deeper reductions}.

\medskip

We emphasize again that while it is essential to use Weyl modules in Proposition \ref{Plucker redundancy} above, all assertions in the current section, as well as in its application to the proof of the comparison theorem via Proposition \ref{w Q_1 Q_2 proposition} below, hold for an arbitrary finite-dimensional $G$-representation of highest weight $\lambda$, and will thus be stated and proved in this generality.

\medskip

\sssec{The case $P_1 = B = P_2$}
\label{The case $P_1 = B = P_2$}

We shall first state the result in the special case $P_1 = B = P_2$, which is significantly simpler since then $Q_1 = B = Q_2$ and $L_1 = T = L_2$ as well as $W_{1,2} = W$, so that the deeper reductions of Section \ref{Deeper reductions} above do not appear explicitly.

\medskip

Let $V$ be any finite-dimensional $G$-representation of highest weight $\lambda~\in~\Lambda_G^+$ and let
$V = \bigoplus_{\nu} V[\nu]$
be its weight decomposition.
Then the direct sum of weight spaces
$$V[\geq w\lambda] \ := \ \bigoplus_{\nu \, \geq \, w\lambda} V[\nu]$$
is a submodule of $V$ for the algebra of distributions $\Dist(B)$ of the Borel $B$
and is hence
a $B$-subrepresentation of $V$. We denote the quotient of $V[\geq w\lambda]$ by the analogously defined $B$-subrepresentation $V[>w\lambda]$ by $V[w \lambda]$, since $w \lambda$ is its unique weight.

\medskip

In addition we also consider the highest weight space $V[\lambda] \subset V$. Since the algebra of distributions $\Dist(U(B))$ of the unipotent radical $U(B)$ of~$B$ acts trivially on the subspace $V[\lambda]$ and the quotient space $V[w \lambda]$, the same holds true for $U(B)$ itself and thus both $B$-representations descend to representations of the torus $T$.

\medskip

Let now $F_B$ and $\tilde{F}_B$ be two reductions to the Borel~$B$ of the same $G$-bundle on a scheme $S$, and let $F_T$ and $\tilde{F}_T$ denote their corresponding $T$-bundles. Then the above quotient and inclusion maps of $B$-representations induce the following diagram on associated vector bundles:

$$\xymatrix{
V[\lambda]_{\tilde{F}_T} \ar@{^{ (}->}^{ \ \ \ \ \kappa^\lambda}[r]   &   V_{F_G}   &   \\
   &   V[\geq w\lambda]_{F_B} \ \ar@{^{ (}->}[u] \ar@{>>}^{ \ \pi^\lambda}[r]   &   \ V[w\lambda]_{F_T}   \\
}
$$

\medskip

With this notation, the result is the following:

\begin{proposition}
Let $F_B$ and $\tilde{F}_B$ be two reductions to the Borel $B$ of the same $G$-bundle $F_G$ on a scheme $S$, and assume that $F_B$ is in relative position~$w$ with respect to $\tilde{F}_B$. Then for any $G$-representation $V$ of highest weight $\lambda \in \Lambda_G^+$ the inclusion $\kappa^{\lambda}$ factors through the subbundle $V[\geq w\lambda]_{F_B}$, and the composition of $\kappa^\lambda$ with the surjection $\pi^\lambda$ is an isomorphism of vector bundles
$$\pi^\lambda \circ \kappa^{\lambda}: \ \ V[\lambda]_{\tilde{F}_T} \ \stackrel{\cong}{\longto} \ V[w\lambda]_{F_T} \, .$$

\medskip

Diagrammatically:
$$\xymatrix{
V[\lambda]_{\tilde{F}_T} \ar@{^{ (}->}^{ \ \ \ \ \kappa^\lambda}[r] \ar@{..>}[dr] \ar@{..>}@/_5pc/[drr]^{\cong}   &   V_{F_G}   &   \\
   &   V[\geq w\lambda]_{F_B} \ \ar@{^{ (}->}[u] \ar@{>>}^{ \ \pi^\lambda}[r]   &   \ V[w\lambda]_{F_T}   \\
}
$$
\end{proposition}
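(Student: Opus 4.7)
The plan is to reduce both assertions to a fiberwise check at geometric points, and then to use the Bruhat factorization directly. The containment of $V[\lambda]_{\tilde F_T}$ inside $V[\geq w\lambda]_{F_B}$ as subbundles of $V_{F_G}$ is equivalent to the vanishing of the composite map $V[\lambda]_{\tilde F_T} \to V_{F_G}/V[\geq w\lambda]_{F_B}$, and for a map of coherent sheaves vanishing can be checked on geometric fibers. Similarly, whether $\pi^\lambda \circ \kappa^\lambda$ is an isomorphism between locally free sheaves of the same rank $\dim V[\lambda]$ can be checked fiberwise (the determinant must be nowhere vanishing). So I may work at an arbitrary geometric point $s \in S$.

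After choosing a trivialization $F_{G,s} \cong G$, the reductions $F_B$ and $\tilde F_B$ at $s$ become $B$-cosets $g_1 B$ and $g_2 B$ of $G$, and the fibers of the three associated vector bundles in question become $V_{F_G,s} = V$, $V[\geq w\lambda]_{F_B,s} = g_1 \cdot V[\geq w\lambda] \subset V$, and $V[\lambda]_{\tilde F_T,s} = g_2 \cdot V[\lambda] \subset V$. The hypothesis that $F_B$ is in relative position $w$ with respect to $\tilde F_B$ translates to $g_1^{-1} g_2 \in B w B$, so one can write $g_2 = g_1 \cdot b_1 \cdot w \cdot b_2$ for some $b_1, b_2 \in B$.

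The representation-theoretic core now consists of three observations. First, $b_2 \in B$ preserves $V[\lambda]$: since $V$ has no weights strictly higher than $\lambda$, the subgroup $U(B)$ acts trivially on the highest weight line, and $T$ scales it by the character $\lambda$. Second, the chosen Weyl representative $w$ sends $V[\lambda]$ isomorphically onto $V[w\lambda]$, as $w$ normalizes $T$. Third, the action of $b_1 \in B$ sends $V[w\lambda]$ into $V[\geq w\lambda]$, because $T$ preserves weight spaces while each positive-root subgroup in $U(B)$ can only raise the weight of a vector. Combining these three facts gives $g_2 V[\lambda] \subset g_1 V[\geq w\lambda]$, which is the desired containment.

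For the isomorphism, decompose $b_1 = t_1 u_1$ and note that for $v \in V[w\lambda]$ one has $b_1 v = (w\lambda)(t_1) \cdot v + (\text{terms in } V[{>}w\lambda])$, so the projection $V[\geq w\lambda] \twoheadrightarrow V[w\lambda]$ sends $b_1 v$ to $(w\lambda)(t_1) \cdot v$. Tracking the analogous scalar $\lambda(t_2)$ coming from the action of $b_2$ on $V[\lambda]$, the fiberwise map $V[\lambda]_{\tilde F_T,s} \to V[w\lambda]_{F_T,s}$ is identified with $v \mapsto \lambda(t_2)(w\lambda)(t_1) \cdot (w v)$, namely the composition of the linear isomorphism $V[\lambda] \cong V[w\lambda]$, $v \mapsto wv$, with multiplication by the nonzero scalar $\lambda(t_2)(w\lambda)(t_1)$. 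This is an isomorphism at every geometric point, hence an isomorphism of vector bundles. The only real subtlety in the argument is the bookkeeping required to track how the change of trivialization of $F_G$ affects the identification of each associated bundle fiber with a subspace of $V$, so that the Bruhat factorization $g_2 = g_1 b_1 w b_2$ can be used cleanly; once that is in place the argument is immediate.
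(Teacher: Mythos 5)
Your representation-theoretic core is correct, and it is essentially the computation the paper performs (the paper proves only the general two-parabolic version, Proposition \ref{proposition-modular-bruhat}, of which this statement is the case $P_1=P_2=B$, $Q_1=Q_2=B$, $L_1=L_2=T$): at a point one writes $g_2=g_1b_1wb_2$, uses that $b_2$ scales $V[\lambda]$, that $w$ carries $V[\lambda]$ onto $V[w\lambda]$, and that $b_1$ preserves $V[\geq w\lambda]$ and acts on the quotient $V[w\lambda]$ by the scalar $(w\lambda)(t_1)$. The gap is in your very first reduction. The containment of $V[\lambda]_{\tilde F_T}$ in $V[\geq w\lambda]_{F_B}$ — equivalently the vanishing of the composite $V[\lambda]_{\tilde F_T}\to V_{F_G}/V[\geq w\lambda]_{F_B}$ — cannot be checked on geometric fibers when $S$ is non-reduced, and the proposition is stated for an arbitrary scheme $S$. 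For instance, on $S=\Spec k[\epsilon]/(\epsilon^2)$ the line subbundles $\CO_S\cdot e_1$ and $\CO_S\cdot(e_1+\epsilon e_2)$ of $\CO_S^{\oplus 2}$ have the same fiber at the unique point but neither contains the other; likewise the nonzero map "multiplication by $\epsilon$" vanishes on all fibers. (Your fiberwise criterion for the isomorphism assertion is, by contrast, legitimate even over a non-reduced base — but that assertion only makes sense once the factorization is in place.)

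The paper circumvents this by first reducing to the universal pair of reductions over the Bruhat stratum of $B\backslash G/B$ and then pulling back along the smooth surjection $G/B\onto B\backslash G/B$: there the base is a variety, hence reduced, so the pointwise computation you carry out does imply the sheaf-level containment, which then pulls back to an arbitrary $S$. Alternatively, you could repair your argument directly by running it on $R$-points rather than geometric points, using that $(u,b)\mapsto u\,w\,b$ identifies $(U(B)\cap wU(B)^{-}w^{-1})\times B$ with the cell $BwB$, so that the factorization $g_2=g_1b_1wb_2$ is available over any ring after a suitable localization trivializing $F_G$. Some such device is needed; as written, the reduction to geometric fibers does not suffice.
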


\bigskip\medskip

\sssec{The general case}

We now let $P_1$ and $P_2$ be any parabolic subgroups of $G$, and let $w \in W_{1,2}$. In this setting we have already defined the groups $Q_1$, $Q_2$, $L_1$, and $L_2$ in Section \ref{The parabolics $Q_1$ and $Q_2$} above. Let
$$\BZ R_{L_1} \ := \ \spn_{\BZ}(R_{L_1}) \ \subset \ \Lambda_G$$
denote the root lattice of $L_1$ and $\BZ R_{L_2}$ the root lattice of $L_2$. Given any $G$-representation $V$ of highest weight $\lambda \in \Lambda_G^+$ with weight decomposition
$V = \bigoplus_{\nu} V[\nu]$
as before, we define the following sums of weight spaces:

\begin{align*}
V[\lambda + \BZ R_{L_2}] \ \ & := \ \ \bigoplus_{\nu \, \in \, \lambda + \BZ R_{L_2}} V[\nu]   \\
V[\geq(w\lambda + \BZ R_{L_1})] \ \ & := \ \ \sum_{\nu' \in \, w\lambda + \BZ R_{L_1}} \ \bigoplus_{\nu \geq \nu'} V[\nu]   \\
V[>(w\lambda + \BZ R_{L_1})] \ \ & := \ \ \sum_{\nu' \in \, w\lambda + \BZ R_{L_1}} \ \bigoplus_{\substack{\nu > \nu' \\ \nu \notin \, w\lambda + \BZ R_{L_1}}} V[\nu]
\end{align*}

\medskip\medskip

We record some immediate facts in the following lemma:

\medskip

\begin{lemma}
\label{subrepresentations}
\begin{itemize}
\item[]
\item[]
\item[$(a)$] The subspace $V[\lambda + \BZ R_{L_2}]$ is a $Q_2$-subrepresentation of $V$. Furthermore the unipotent radical $U(Q_2)$ acts trivially, and thus the action of $Q_2$ descends to an action of $L_2$.
\medskip
\item[$(b)$] The subspaces $V[\geq(w\lambda + \BZ R_{L_1})]$ and $V[>(w\lambda + \BZ R_{L_1})]$ are $Q_1$-subrepresentations of $V$.
\medskip
\item[$(c)$] The unipotent radical $U(Q_1)$ of $Q_1$ acts trivially on the quotient of $V[\geq(w\lambda + \BZ R_{L_1})]$ by $V[>(w\lambda + \BZ R_{L_1})]$, so that this quotient is naturally an $L_1$-representation. It will be denoted by
$V[w\lambda + \BZ R_{L_1}]$
since its weight decomposition is
$$V[w\lambda + \BZ R_{L_1}] \ \ = \ \ \bigoplus_{\nu \in \, w\lambda + \BZ R_{L_1}} V[\nu].$$
\end{itemize}
\end{lemma}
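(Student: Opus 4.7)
The plan is to reduce all three assertions to elementary combinatorial checks on the weight lattice $\Lambda_G$, in the spirit of the proof of Lemma \ref{the usual sum of weight spaces}(a). Since the subgroups $Q_1$, $Q_2$, $U(Q_1)$, $U(Q_2)$, and $L_1$ are all connected, it suffices to work with their algebras of distributions; these are generated by $\Dist(T)$ together with the one-parameter root subgroup distributions $\Dist(U_\alpha)$, each of which shifts weights by $\alpha$. The roots of $Q_i$, $U(Q_i)$, and $L_i$ are respectively $R_+ \cup R_{L_i,-}$, $R_+ \setminus R_{L_i,+}$, and $R_{L_i}$, so each claim translates into a statement about stability (or annihilation) of certain subsets of weights under translation by these roots.

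For part $(a)$, the roots $\alpha \in R_{L_2} \subset \BZ R_{L_2}$ visibly preserve $\lambda + \BZ R_{L_2}$. For $\alpha \in R_+ \setminus R_{L_2,+}$, I would use the highest weight property of $V$: any weight $\nu$ of $V$ with $\nu \in \lambda + \BZ R_{L_2}$ has $\lambda - \nu$ simultaneously in $\Lambda_G^{pos}$ and in $\BZ R_{L_2}$, so its coefficient on every simple root $\alpha_j$ with $j \notin \CI_{M_2}$ must vanish. Since $\alpha$ has a strictly positive coefficient on some such $\alpha_j$, the difference $\lambda - (\nu + \alpha)$ ceases to lie in $\Lambda_G^{pos}$, forcing $V[\nu + \alpha] = 0$. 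This single observation yields both the $Q_2$-stability of $V[\lambda + \BZ R_{L_2}]$ and the triviality of the induced $U(Q_2)$-action.

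For parts $(b)$ and $(c)$, I expect $Q_1$-stability of the $\geq$-space to come essentially for free: for $\alpha \in R_+$ the witness $\nu'$ of $\nu \geq \nu'$ is untouched and one has $\nu + \alpha > \nu \geq \nu'$, while for $\alpha \in R_{L_1,-}$ one replaces $\nu'$ by $\nu' + \alpha$, which still lies in $w\lambda + \BZ R_{L_1}$ since $\alpha \in \BZ R_{L_1}$. Stability of the $>$-space requires the additional input that $\nu \notin w\lambda + \BZ R_{L_1}$ implies $\nu + \alpha \notin w\lambda + \BZ R_{L_1}$ for every root $\alpha$ of $Q_1$. This is immediate when $\alpha \in \BZ R_{L_1}$, and for $\alpha \in R_+ \setminus R_{L_1,+}$ it follows by expanding $\nu - \nu' = \sum_i c_i \alpha_i$ and $\alpha = \sum_i a_i \alpha_i$ in simple roots: all $c_i, a_i \geq 0$ and some $c_j$ with $j \notin \CI_{L_1}$ is strictly positive, so $\nu + \alpha - \nu'$ retains a strictly positive coefficient on $\alpha_j$.

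Finally, for the triviality of the $U(Q_1)$-action on the quotient in part $(c)$, note that for $\alpha \in R_+ \setminus R_{L_1,+}$ and $\nu \in w\lambda + \BZ R_{L_1}$ one has $\alpha \notin \BZ R_{L_1}$, so $\nu + \alpha \notin w\lambda + \BZ R_{L_1}$ while also $\nu + \alpha > \nu$; hence $V[\nu + \alpha]$, if nonzero, lies in $V[>(w\lambda + \BZ R_{L_1})]$ and maps to zero in the quotient. The claimed weight decomposition of $V[w\lambda + \BZ R_{L_1}]$ follows at once from the observation that a weight $\nu$ in the $\geq$-space fails to lie in the $>$-space precisely when $\nu \in w\lambda + \BZ R_{L_1}$. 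The only real obstacle in the argument is bookkeeping — keeping track of which positivity and integrality constraints are being combined at each step — with the only nontrivial input being the highest weight argument used in part $(a)$.
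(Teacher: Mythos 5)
Your approach is exactly the paper's: the paper's entire proof is the one-line remark that all three parts are verified on the level of algebras of distributions, as in the earlier weight-space lemmas, and your write-up is a correct expansion of that reduction. Parts $(b)$ and $(c)$ are fine as argued.

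There is one slip in part $(a)$ worth fixing. From $\lambda - \nu \in \Lambda_G^{pos} \cap \BZ R_{L_2}$ you only extract that the coefficients of $\lambda-\nu$ vanish on the simple roots $\alpha_j$ with $j \notin \CI_{M_2}$, and you then assert that every $\alpha \in R_+ \setminus R_{L_2,+}$ has a strictly positive coefficient on some such $\alpha_j$. That assertion is false for $\alpha \in R_{M_2,+} \setminus R_{L_2,+}$ (e.g.\ $\alpha = \alpha_j$ with $j \in \CI_{M_2} \setminus \CI_{L_2}$), and these roots do occur in $U(Q_2)$, which is strictly larger than $U(P_2)$ in general. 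The repair is immediate and uses only what you already have: membership in $\BZ R_{L_2}$ forces the coefficients of $\lambda - \nu$ to vanish on \emph{every} $\alpha_j$ with $j \notin \CI_{L_2}$, and every $\alpha \in R_+ \setminus R_{L_2,+}$ does have a strictly positive coefficient on some $\alpha_j$ with $j \notin \CI_{L_2}$, so $V[\nu+\alpha]=0$ as you intended. With $\CI_{M_2}$ replaced by $\CI_{L_2}$ throughout that step, the proof is complete.
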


\medskip

\begin{proof}
Using algebras of distributions, all three parts are verified similarly to the analogous statements in Section \ref{The case $P_1 = B = P_2$} or in Lemma \ref{the usual sum of weight spaces} above.
\end{proof}

\medskip

\sssec{The Tannakian interpretation in the general case}

Let now $F_{P_1}$ and $F_{P_2}$ be two reductions of the same $G$-bundle on a scheme $S$, and assume that $F_{P_1}$ is in relative position $w \in W_{1,2}$ with respect to $F_{P_2}$. Let then $F_{Q_1}$ and $F_{Q_2}$ be the reductions yielded by Corollary \ref{FQ1FQ2} in this situation.

\medskip

In this setting we find the following natural maps between associated vector bundles. First, the inclusion of $Q_2$-representations
$$V[\lambda + \BZ R_{L_2}] \ \longinto \ V$$

\noindent yields a subbundle map

$$\kappa^\lambda_{Q_2}: \ V[\lambda + \BZ R_{L_2}]_{F_{L_2}} \ \longinto \ V_{F_G} $$

\noindent as in Section \ref{Subbundles induced by reductions}. Furthermore, the inclusion and quotient maps of $Q_1$-representations
$$\xymatrix{
V   &   \\
V[\geq(w\lambda + \BZ R_{L_1})] \ \ar@{^{ (}->}[u] \ar@{>>}[r]   &   \ V[w\lambda + \BZ R_{L_1}] \\
}
$$

\medskip

\noindent yield maps of associated vector bundles
$$\xymatrix{
V_{F_G}   &   \\
V[\geq(w\lambda + \BZ R_{L_1})]_{F_{Q_1}} \ \ar@{^{ (}->}[u] \ar@{>>}^{ \ \ \pi^\lambda_{Q_1}}[r]   &   \ V[w\lambda + \BZ R_{L_1}]_{F_{L_1}}. \\
}
$$

\medskip
\medskip

With this notation, we can finally state the result in the general case:

\medskip

\begin{proposition}
\label{proposition-modular-bruhat}
Let $F_{P_1}$ and $F_{P_2}$ be two reductions of the same $G$-bundle $F_G$ on a scheme $S$, and assume that $F_{P_1}$ is in relative position $w$ with respect to $F_{P_2}$. Then for any $G$-representation $V$ of highest weight $\lambda \in \Lambda_G^+$ the inclusion $\kappa^{\lambda}_{Q_2}$ factors through the subbundle $V[\geq(w\lambda + \BZ R_{L_1})]_{F_{Q_1}}$, and the composition of $\kappa^\lambda_{Q_2}$ with the surjection $\pi^\lambda_{Q_1}$ is an isomorphism of vector bundles
$$\pi^\lambda_{Q_1} \circ \kappa^{\lambda}_{Q_2}: \ \ V[\lambda + \BZ R_{L_2}]_{F_{L_2}} \ \stackrel{\cong}{\longto} \ V[w\lambda + \BZ R_{L_1}]_{F_{L_1}} \, .$$

\medskip

Diagrammatically:
$$\xymatrix{
V[\lambda + \BZ R_{L_2}]_{F_{L_2}} \ar@{^{ (}->}^{ \ \ \ \ \ \ \kappa^\lambda_{Q_2}}[r] \ar@{..>}[dr] \ar@{..>}@/_5pc/[drr]^{\cong}   &   V_{F_G}   &   \\
   &   V[\geq(w\lambda + \BZ R_{L_1})]_{F_{Q_1}} \ar@{^{ (}->}[u] \ar@{>>}^{ \ \ \pi^\lambda_{Q_1}}[r]   &   \ V[w\lambda + \BZ R_{L_1}]_{F_{L_1}}   \\
}
$$
\end{proposition}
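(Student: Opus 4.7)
The plan is to reduce the statement to a pointwise representation-theoretic calculation at the basepoint of the stack $Q_1\backslash(Q_1 w Q_2)/Q_2$. First, every object appearing in the diagram---the bundles $V[\lambda+\BZ R_{L_2}]_{F_{L_2}}$, $V[\geq(w\lambda+\BZ R_{L_1})]_{F_{Q_1}}$, $V[w\lambda+\BZ R_{L_1}]_{F_{L_1}}$, and $V_{F_G}$, together with the maps $\kappa^\lambda_{Q_2}$ and $\pi^\lambda_{Q_1}$---is constructed functorially from the triple $(F_{Q_1},F_{Q_2},\gamma)$ provided by Corollary~\ref{FQ1FQ2}, which is classified by a morphism $\psi \colon S\to Q_1\backslash(Q_1 w Q_2)/Q_2$. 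The two conclusions to be verified---the factorization and the fact that the composition is an isomorphism---are fppf-local on the base, so it suffices to check them after base change to a smooth atlas of this stack.

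By the proof of Proposition~\ref{Q1Q2P1P2 relative positions}, $Q_1\backslash(Q_1 w Q_2)/Q_2$ equals the classifying stack $B\Stab_{Q_1\times Q_2}(w)$, which admits $\Spec k$ (pointed by $w$) as a smooth atlas. At this universal basepoint $F_{Q_1}$ and $F_{Q_2}$ are trivial bundles and $\gamma$ is induced by the element $w\in G$; carefully unwinding conventions, the two resulting trivializations of $V_{F_G}\cong V$ are related by the action of $w^{-1}$ on $V$. In the $F_{Q_2}$-trivialization $V[\lambda+\BZ R_{L_2}]_{F_{L_2}}$ becomes the constant subspace $V[\lambda+\BZ R_{L_2}]\subset V$, while $V[\geq(w\lambda+\BZ R_{L_1})]_{F_{Q_1}}$ becomes $w^{-1}\cdot V[\geq(w\lambda+\BZ R_{L_1})]\subset V$. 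The proposition thereby reduces to the two purely representation-theoretic assertions that the inclusion $V[\lambda+\BZ R_{L_2}]\subset w^{-1}\cdot V[\geq(w\lambda+\BZ R_{L_1})]$ holds and that the projection modulo $w^{-1}\cdot V[>(w\lambda+\BZ R_{L_1})]$ restricts on it to an isomorphism onto $w^{-1}\cdot V[w\lambda+\BZ R_{L_1}]$.

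Both reduce to the identity $w^{-1}(\BZ R_{L_1})=\BZ R_{L_2}$, which follows directly from $wL_2w^{-1}=L_1$ established in Section~\ref{The parabolics $Q_1$ and $Q_2$}. For any $\mu\in\lambda+\BZ R_{L_2}$ one has $w\mu\in w\lambda+\BZ R_{L_1}$, so $V[\mu]=w^{-1}V[w\mu]\subset w^{-1}\cdot V[\geq(w\lambda+\BZ R_{L_1})]$, establishing the inclusion. The same relabeling yields $w^{-1}\cdot V[w\lambda+\BZ R_{L_1}]=V[\lambda+\BZ R_{L_2}]$ as subspaces of $V$, and identifies $w^{-1}\cdot V[>(w\lambda+\BZ R_{L_1})]$ with the sum of those weight spaces $V[\mu]$ of $w^{-1}\cdot V[\geq(w\lambda+\BZ R_{L_1})]$ for which $\mu\notin\lambda+\BZ R_{L_2}$. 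Thus $w^{-1}\cdot V[\geq(w\lambda+\BZ R_{L_1})]$ splits as the direct sum $V[\lambda+\BZ R_{L_2}]\oplus w^{-1}\cdot V[>(w\lambda+\BZ R_{L_1})]$, and the induced composition is the identity on $V[\lambda+\BZ R_{L_2}]$, hence an isomorphism. The main obstacle I anticipate is the careful determination at the basepoint of which element---$w^{-1}$ rather than $w$---relates the two trivializations of $V_{F_G}$: the wrong choice produces a subspace of $V$ that fails to contain $V[\lambda+\BZ R_{L_2}]$ (as one sees already in the toy case $G=\SL_3$, $w=s_1 s_2$, $\lambda=\omega_1$), so this convention check is essential.
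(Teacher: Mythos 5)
Your proof is correct and follows essentially the same route as the paper's: reduce to the universal triple over the Bruhat stratum and verify the two assertions by a pointwise weight-space computation resting on $w(\BZ R_{L_2})=\BZ R_{L_1}$. The only (harmless) difference is the choice of smooth atlas — you use $\Spec k \to Q_1\backslash(Q_1wQ_2)/Q_2 = B\Stab_{Q_1\times Q_2}(w)$ (legitimate since the stabilizer is reduced, hence smooth, by Lemma~\ref{technical-lemma-about-Q1Q2}), whereas the paper pulls back along $G/Q_2 \to Q_1\backslash G/Q_2$ and checks at the points $\overline{q_1w}$ of the cell.
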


\medskip

\begin{proof}
By Corollary \ref{FQ1FQ2} it suffices to prove that the two assertions hold for the universal family $(F_{Q_1}, F_{Q_2})$ on $Q_1 \backslash G / Q_2$ over the locus $Q_1 \backslash (Q_1 w Q_2) / Q_2$. As both assertions can be checked locally in the smooth topology, it furthermore suffices to check them after pulling back the universal family along the smooth surjection
$$G / Q_2 \ \stackrel{p \, }{\longonto} \ Q_1 \backslash G / Q_2.$$

\noindent The pullback of $F_{Q_1}$ along $p$ possesses a canonical trivialization, which in turn also induces a trivialization of the pullback of $F_G$. Moreover, the pullback of $F_{Q_2}$ along $p$ is canonically isomorphic to the tautological $Q_2$-bundle $\CP_{Q_2}$ on $G / Q_2$, i.e., the $Q_2$-bundle corresponding to the quotient map $G \onto G / Q_2$.

\medskip

We thus obtain the diagram of vector bundles

$$\xymatrix{
V[\lambda + \BZ R_{L_2}]_{\CP_{Q_2}} \ar@{^{ (}->}^{ \ \ \ \ \ \ \kappa^\lambda_{Q_2}}[r] \ar@{..>}[dr] \ar@{..>}@/_5pc/[drr]^{\cong}   &   V \otimes \CO_{G/Q_2} & \\
 & V[\geq(w\lambda + \BZ R_{L_1})] \otimes \CO_{G/Q_2} \ar@{^{ (}->}[u] \ar@{>>}^{ \ \ \pi^\lambda_{Q_1}}[r]   &   \ V[w\lambda + \BZ R_{L_1}] \otimes \CO_{G/Q_2} \\
}
\medskip
$$

\medskip

\noindent on $G/Q_2$ and need to verify the assertions (see dotted arrows) over the locus $Q_1 w Q_2 / Q_2$. As $G/Q_2$ is a variety, it suffices to check the assertions on the fibers of the vector bundles under consideration. Now if $\bar{x}$ is a point of $G/Q_2$, then by construction of $\CP_{Q_2}$ the map $\kappa^\lambda_{Q_2}$ maps the fiber of $V[\lambda + \BZ R_{L_2}]_{\CP_{Q_2}}$ at ${\bar{x}}$ isomorphically onto the (well-defined) subspace
$$\bar{x} \cdot V[\lambda + \BZ R_{L_2}] \ \subset \ V.$$

\medskip

But if $\bar{x} =: \overline{q_1 w}$ lies in the locus $Q_1 w Q_2 / Q_2$, this subspace equals
$$q_1 \cdot V[w\lambda + \BZ R_{L_1}] \ \subset \ V$$
since $w(\BZ R_{L_2}) = \BZ R_{L_1}$. It is thus contained in the subspace $V[\geq(w\lambda + \BZ R_{L_1})]$ as the latter is a $Q_1$-representation, proving the first claim. Furthermore, in this case the quotient map of $Q_1$-representations
$$V[\geq(w\lambda + \BZ R_{L_1})] \ \longonto \ V[w\lambda + \BZ R_{L_1}]$$
of course maps the subspace $q_1 \cdot V[w\lambda + \BZ R_{L_1}]$ isomorphically onto the target space, establishing the second assertion.
\end{proof}

\bigskip

\ssec{Relative position in the curve case and a key inequality}
\label{Relative position in the curve case and a key inequality}

\mbox{} \\

We now apply the results of Section \ref{Tannakian-Bruhat} in the case where the base scheme is the curve $X$. In this situation we prove a technical inequality of certain slopes (Proposition \ref{w Q_1 Q_2 proposition}) which will form the key ingredient in the proof of the comparison theorem, Theorem \ref{comparison theorem} below.

\medskip

\sssec{The curve case}

For the remainder of this section we let $F_{P_1}$ and $F_{P_2}$ be two reductions of the same $G$-bundle on the curve $X$. By Lemma~\ref{generic-relative-position} there exists a unique $w \in W_{1,2}$ such that $F_{P_1}$ is generically in relative position~$w$ with respect to $F_{P_2}$, i.e., there exists an open dense subset $U \subset X$ such that the restriction $F_{P_1}|_U$ is in relative position $w$ with respect to $F_{P_2}|_U$. Thus Corollary \ref{FQ1FQ2} yields the reductions $F_{Q_1}|_U$ and $F_{Q_2}|_U$ only on the open subset $U$. It is however easy to see:

\medskip

\begin{proposition}
\label{factorization-diagram-on-X}
The reductions $F_{Q_1}|_U$ and $F_{Q_2}|_U$ on $U$ extend uniquely to reductions $F_{Q_1}$ and $F_{Q_2}$ of $F_{P_1}$ and $F_{P_2}$ on the entire curve $X$. Furthermore, in the situation of Proposition \ref{proposition-modular-bruhat}, the factorization of the inclusion~$\kappa^\lambda_{Q_2}$ of associated vector bundles
$$\xymatrix{
V[\lambda + \BZ R_{L_2}]_{F_{L_2}} \ar@{^{ (}->}^{ \ \ \ \ \ \ \kappa^\lambda_{Q_2}}[r] \ar@{..>}[dr] \ar@{^{(}..>}@/_5pc/[drr]   &   V_{F_G}   &   \\
   &   V[\geq(w\lambda + \BZ R_{L_1})]_{F_{Q_1}} \ar@{^{ (}->}[u] \ar@{>>}^{ \ \ \pi^\lambda_{Q_1}}[r]   &   \ V[w\lambda + \BZ R_{L_1}]_{F_{L_1}}. \\
}
$$

\medskip
\medskip
\medskip
\medskip
\medskip
\medskip

\noindent holds not only on $U$ but on the entire curve $X$, for all $\lambda \in \Lambda_G^{+}$. Finally, although the composition $\pi^\lambda_{Q_1} \circ \kappa^{\lambda}_{Q_2}$ is in general only an isomorphism on $U$, it is still an injection of locally free sheaves on $X$.
\end{proposition}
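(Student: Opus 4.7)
The plan is as follows. To extend the reductions, I would observe that giving a reduction of $F_{P_1}$ to $Q_1$ on a subscheme $S \subseteq X$ is the same as giving a section over $S$ of the twisted partial flag scheme $\CY_1 := F_{P_1} \overset{P_1}{\times} (P_1/Q_1)$. Since $\CI_{L_1} \subseteq \CI_{M_1}$, the subgroup $Q_1/U(P_1)$ is a standard parabolic of the Levi $M_1$, so $Q_1$ is parabolic in $P_1$ and $P_1/Q_1$ is smooth and projective; consequently $\CY_1 \to X$ is smooth and proper. The valuative criterion of properness, together with separatedness for uniqueness, then lets one extend the section on $U$ to a section on all of $X$, yielding $F_{Q_1}$. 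The same argument applied to $\CY_2 := F_{P_2} \overset{P_2}{\times} (P_2/Q_2)$ produces $F_{Q_2}$.

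Once $F_{Q_1}$ and $F_{Q_2}$ are defined on all of $X$, every object appearing in the diagram of Proposition \ref{proposition-modular-bruhat} is defined on $X$, and we may argue as follows. The inclusion $V[\geq(w\lambda+\BZ R_{L_1})]_{F_{Q_1}} \hookrightarrow V_{F_G}$ is a subbundle inclusion, because its cokernel is the bundle associated to the $Q_1$-representation $V / V[\geq(w\lambda+\BZ R_{L_1})]$ and hence locally free. To verify that $\kappa^\lambda_{Q_2}$ factors through this subbundle over all of $X$, I would compose $\kappa^\lambda_{Q_2}$ with the projection onto the cokernel: by Proposition \ref{proposition-modular-bruhat} this composition vanishes on $U$, and since its source $V[\lambda+\BZ R_{L_2}]_{F_{L_2}}$ is locally free, and hence torsion-free on the smooth curve $X$, the composition must vanish identically.

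For the final assertion, the composed map $\pi^\lambda_{Q_1} \circ \kappa^\lambda_{Q_2}$ is a morphism between locally free sheaves of the same rank on $X$ that becomes an isomorphism on $U$ by Proposition \ref{proposition-modular-bruhat}; its kernel is therefore a torsion subsheaf of a torsion-free sheaf, which must vanish. The only step demanding genuine attention is the first, where one has to identify $Q_1$ as a parabolic of $P_1$ (so that $P_1/Q_1$ is projective) in order to invoke the valuative criterion; after that, the remaining claims reduce to routine torsion-free arguments on the smooth curve $X$.
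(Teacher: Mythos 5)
Your proposal is correct and follows essentially the same route as the paper: the extension of $F_{Q_1}$ and $F_{Q_2}$ via properness of $P_i/Q_i$ and smoothness of $X$ (the paper's ``standard arguments''), the factorization checked generically by composing with the projection to the locally free cokernel, and the injectivity of $\pi^\lambda_{Q_1}\circ\kappa^\lambda_{Q_2}$ from the vanishing of its generically trivial, torsion-free kernel. The paper merely states these steps more tersely; your filling-in of the valuative-criterion argument and the identification of $Q_i$ as a parabolic of $P_i$ is accurate.
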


\medskip

\begin{proof}
The statement about uniquely extending $F_{Q_1}$ and $F_{Q_2}$ to all of $X$ follows via standard arguments from the facts that $X$ is a smooth curve and that the quotient varieties $P_1/Q_1$ and $P_2/Q_2$ are proper. By a similar argument, the relation of containment among subbundles of a vector bundle on a smooth curve can be checked generically, proving the factorization of the inclusion $\kappa^\lambda_{Q_2}$ on all of $X$. Finally, the kernel of the map $\pi^\lambda_{Q_1} \circ \kappa^{\lambda}_{Q_2}$ is locally free because $X$ is smooth, but since it vanishes generically, it must vanish entirely.
\end{proof}

\medskip

\sssec{The key inequality}

By Proposition \ref{factorization-diagram-on-X}, the reductions $F_{Q_1}$ of $F_{P_1}$ and $F_{Q_2}$ of $F_{P_2}$ are defined on the entire curve $X$. Let $\lambdach_{P_1}$, $\lambdach_{P_2}$, $\lambdach_{Q_1}$, and~$\lambdach_{Q_2}$ denote their respective degrees. Then we have the following technical inequality of slopes:

\medskip

\begin{proposition}
\label{w Q_1 Q_2 proposition}
$$w^{-1} \phi_{Q_1}(\lambdach_{Q_1}) \ \geq \ \phi_{Q_2}(\lambdach_{Q_2})$$
\end{proposition}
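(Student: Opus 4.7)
The plan is to extract the inequality by testing against an arbitrary dominant weight $\lambda \in \Lambda_G^+$ and exploiting the Tannakian comparison of Proposition~\ref{factorization-diagram-on-X}. For any finite-dimensional $G$-representation $V$ of highest weight $\lambda$, that proposition supplies an injection of locally free sheaves
$$V[\lambda + \BZ R_{L_2}]_{F_{L_2}} \ \longinto \ V[w\lambda + \BZ R_{L_1}]_{F_{L_1}}$$
on the smooth curve $X$. Since this map is an isomorphism over the open dense $U \subset X$, source and target share the same generic rank, so the cokernel is torsion and hence has non-negative length on a smooth curve; consequently
$$\mu\bigl(V[\lambda + \BZ R_{L_2}]_{F_{L_2}}\bigr) \ \leq \ \mu\bigl(V[w\lambda + \BZ R_{L_1}]_{F_{L_1}}\bigr).$$

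The next step is to identify both slopes in terms of $\phi_{Q_1}$ and $\phi_{Q_2}$. The left-hand slope is a direct instance of Proposition~\ref{slope-of-associated-bundles}(c) applied to the parabolic $Q_2 \subset G$ with Levi $L_2$, giving $\mu\bigl(V[\lambda + \BZ R_{L_2}]_{F_{L_2}}\bigr) = \langle \phi_{Q_2}(\lambdach_{Q_2}), \lambda \rangle$. For the right-hand slope, I would mimic the derivation of part~$(c)$ from part~$(a)$ in that same proposition applied to the reductive group $L_1$: every weight $\nu$ of the $L_1$-representation $V[w\lambda + \BZ R_{L_1}]$ lies in $w\lambda + \BZ R_{L_1}$, and since $\phi_{Q_1}(\lambdach_{Q_1}) \in \Lambdach_{Z_0(L_1)}^\BQ$ annihilates every root of $L_1$, all such $\nu$ contribute the same value $\langle \phi_{Q_1}(\lambdach_{Q_1}), w\lambda \rangle$; dividing degree by rank yields
$$\mu\bigl(V[w\lambda + \BZ R_{L_1}]_{F_{L_1}}\bigr) \ = \ \langle \phi_{Q_1}(\lambdach_{Q_1}), w\lambda \rangle \ = \ \langle w^{-1}\phi_{Q_1}(\lambdach_{Q_1}), \lambda \rangle,$$
the last equality by $W$-invariance of the natural pairing. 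Combining with the slope inequality gives
$$\langle w^{-1}\phi_{Q_1}(\lambdach_{Q_1}) - \phi_{Q_2}(\lambdach_{Q_2}), \, \lambda \rangle \ \geq \ 0 \qquad \text{for every } \lambda \in \Lambda_G^+.$$

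It remains to promote this family of inequalities to membership of $\eta := w^{-1}\phi_{Q_1}(\lambdach_{Q_1}) - \phi_{Q_2}(\lambdach_{Q_2})$ in the cone $\Lambdach_G^{\BQ,pos}$ of non-negative $\BQ$-combinations of simple coroots. I would first check that $\eta$ already lies in $\sum_{i \in \CI} \BQ\, \alphach_i$: Lemma~\ref{difference lemma for two parabolics}(a) handles the difference $\phi_{Q_1}(\lambdach_{Q_1}) - \phi_{Q_2}(\lambdach_{Q_2})$, since $\lambdach_{Q_1}$ and $\lambdach_{Q_2}$ have the same image in $\Lambdach_{G,G}$, while $(w^{-1}-1)\phi_{Q_1}(\lambdach_{Q_1})$ lies in the coroot span because $w$ fixes $\Lambdach_{Z_0(G)}^\BQ$ pointwise. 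Writing $\eta = \sum_i c_i \alphach_i$ and pairing with (rational multiples of) the fundamental weights separates the coefficients, and the hypothesis forces each $c_i \geq 0$. The main step demanding attention is the right-hand slope computation, where the target is an $L_1$-representation rather than a subrepresentation attached to a highest-weight $G$-representation with weight $w\lambda$; this is resolved by invoking Proposition~\ref{slope-of-associated-bundles}(a) together with the vanishing of $\phi_{Q_1}(\lambdach_{Q_1})$ on roots of $L_1$, after which the remaining passage from dominance-pairing non-negativity to the coroot cone is routine.
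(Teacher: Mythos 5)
Your proposal is correct and follows essentially the same route as the paper: the generic isomorphism $\pi^\lambda_{Q_1}\circ\kappa^\lambda_{Q_2}$ from Proposition~\ref{factorization-diagram-on-X} gives the slope inequality, Proposition~\ref{slope-of-associated-bundles} identifies both slopes as $\langle \phi_{Q_2}(\lambdach_{Q_2}),\lambda\rangle$ and $\langle \phi_{Q_1}(\lambdach_{Q_1}), w\lambda\rangle$, and one concludes by pairing against all dominant weights. Your final step is in fact slightly more careful than the paper's, which cites Lemma~\ref{difference lemma for two parabolics}(a) without spelling out that the extra term $(w^{-1}-1)\phi_{Q_1}(\lambdach_{Q_1})$ also lies in the coroot span.
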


\medskip

\begin{proof}
By part $(a)$ of Lemma \ref{difference lemma for two parabolics} it suffices to prove that
$$\langle \phi_{Q_1}(\lambdach_{Q_1}) , w \lambda \rangle \ \geq \ \langle \phi_{Q_2}(\lambdach_{Q_2}), \lambda \rangle$$
for every dominant weight $\lambda \in \Lambda_G^+$.
To prove it, consider the map of associated vector bundles
$$\pi^\lambda_{Q_1} \circ \kappa^{\lambda}_{Q_2}: \ V[\lambda + \BZ R_{L_2}]_{F_{L_2}} \ \longto \ V[w\lambda + \BZ R_{L_1}]_{F_{L_1}}$$
in Proposition \ref{factorization-diagram-on-X} above. According to that proposition the map $\pi^\lambda_{Q_1} \circ \kappa^{\lambda}_{Q_2}$ is generically an isomorphism and thus injective. The same then also holds for the induced map of top exterior powers, and hence
$$\mu(V[w\lambda + \BZ R_{L_1}]_{F_{L_1}}) \ \geq \ \mu(V[\lambda + \BZ R_{L_2}]_{F_{L_2}}) \, .$$

\medskip

\noindent Now on the one hand we have
$$\mu(V[\lambda + \BZ R_{L_2}]_{F_{L_2}}) \ = \ \langle \phi_{Q_2}(\lambdach_{Q_2}), \lambda \rangle$$
by part $(c)$ of Proposition \ref{slope-of-associated-bundles}. On the other hand, using part $(a)$ of Proposition \ref{slope-of-associated-bundles} one computes that
$$\mu(V[w\lambda + \BZ R_{L_1}]_{F_{L_1}}) \ = \ \langle \phi_{Q_1}(\lambdach_{Q_1}), w \lambda \rangle \, ,$$
and the desired inequality follows.
\end{proof}

\medskip\medskip\medskip

\ssec{The comparison theorem}

\mbox{} \\

This section is devoted to the proof of the following \textit{comparison theorem}, which should be regarded as the primary application of sections \ref{Relative position of two reductions}--\ref{Tannakian-Bruhat}. It shows that the canonical reduction is not only unique, but that it moreover enjoys a certain extremal property:

\medskip

\begin{theorem}
\label{comparison theorem}
Let $F_{P_1} \in \Bun_{P_1,\lambdach_{P_1}}$ and $F_{P_2} \in \Bun_{P_2, \lambdach_{P_2}}$ be two reductions of the same $G$-bundle on the curve $X$. Assume furthermore that $F_{P_1}$ lies in fact in $\Bun_{P_1,\lambdach_{P_1}}^{ss}$ and that $\lambdach_{P_1}$ is dominant $P_1$-regular. Then we have
$$\phi_{P_1}(\lambdach_{P_1}) \ \geq \ \phi_{P_2}(\lambdach_{P_2}) \, .$$

\noindent If this inequality is in fact an equality, then $P_2$ is already contained in $P_1$ and~$F_{P_1}$ is obtained from $F_{P_2}$ by extension of structure group along the inclusion $P_2 \subset P_1$.
\end{theorem}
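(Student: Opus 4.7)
The plan is to reduce the main inequality to Proposition \ref{w Q_1 Q_2 proposition} via the deeper reductions $F_{Q_1}, F_{Q_2}$ of Corollary \ref{FQ1FQ2}, and then bootstrap the resulting bound on $\phi_{Q_2}$ to a bound on $\phi_{P_2}$ by pairing the difference against fundamental weights and simple roots.

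First, by Lemma \ref{generic-relative-position}, let $w \in W_{1,2}$ be the unique element in whose relative position $F_{P_1}$ lies generically with respect to $F_{P_2}$, and extend $F_{Q_1}, F_{Q_2}$ to the entire curve $X$ via Proposition \ref{factorization-diagram-on-X}. Proposition \ref{w Q_1 Q_2 proposition} then yields $w^{-1}\phi_{Q_1}(\lambdach_{Q_1}) \geq \phi_{Q_2}(\lambdach_{Q_2})$, while the semistability of $F_{P_1}$, combined with Lemma \ref{P-semistability} and Lemma \ref{difference lemma for two parabolics}(b), gives $\phi_{Q_1}(\lambdach_{Q_1}) \leq \phi_{P_1}(\lambdach_{P_1})$ with difference in $\sum_{i \in \CI_{M_1}} \BQ_{\geq 0} \alphach_i$. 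Since by Lemma \ref{W-positivity-lemma} each $w^{-1}\alphach_i$ for $i \in \CI_{M_1}$ is a positive coroot, applying $w^{-1}$ preserves the latter inequality. Combining with the standard fact that a dominant coweight dominates its entire Weyl orbit (applied to $\phi_{P_1}(\lambdach_{P_1})$, which is dominant by $P_1$-regularity) I obtain $\phi_{Q_2}(\lambdach_{Q_2}) \leq \phi_{P_1}(\lambdach_{P_1})$.

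Upgrading this to $\phi_{P_2}(\lambdach_{P_2}) \leq \phi_{P_1}(\lambdach_{P_1})$ is the crux of the argument. By Lemma \ref{difference lemma for two parabolics}(a), $\mu := \phi_{P_1}(\lambdach_{P_1}) - \phi_{P_2}(\lambdach_{P_2})$ lies in $\sum_j \BQ \alphach_j$, so writing $\mu = \sum_j n_j \alphach_j$ the inequality $\mu \geq 0$ reduces to $n_j = \langle \mu, \omega_j \rangle \geq 0$ for every fundamental weight $\omega_j$. For $j \notin \CI_{M_2}$, $\omega_j$ lies in $\Lambda_{G, P_2}$, so the discrepancy $\phi_{P_2} - \phi_{Q_2} \in \sum_{i \in \CI_{M_2}} \BQ \alphach_i$ pairs trivially with it, giving $n_j = \langle \phi_{P_1} - \phi_{Q_2}, \omega_j \rangle \geq 0$ by the previous paragraph. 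For the harder case $j \in \CI_{M_2}$, I instead pair $\mu$ with the simple root $\alpha_j$: since $\phi_{P_2} \in \Lambdach_{Z_0(M_2)}^\BQ$ is orthogonal to $\alpha_j$ and $\phi_{P_1}$ is dominant, $\langle \mu, \alpha_j \rangle = \langle \phi_{P_1}, \alpha_j \rangle \geq 0$. Expanding $\langle \mu, \alpha_j \rangle = \sum_k n_k \langle \alphach_k, \alpha_j \rangle$ and isolating the $k \notin \CI_{M_2}$ part, whose contribution is non-positive (since we already have $n_k \geq 0$ and the off-diagonal Cartan entries $\langle \alphach_k, \alpha_j \rangle$ for $k \neq j$ are $\leq 0$), yields $\sum_{k \in \CI_{M_2}} n_k \langle \alphach_k, \alpha_j \rangle \geq 0$ for every $j \in \CI_{M_2}$. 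Since the inverse of the Cartan matrix of $[M_2,M_2]$ has non-negative entries, this system forces $n_k \geq 0$ for all $k \in \CI_{M_2}$ as well.

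In the equality case $\phi_{P_1}(\lambdach_{P_1}) = \phi_{P_2}(\lambdach_{P_2})$, pairing with $\alpha_j$ for $j \in \CI_{M_2}$ yields $\langle \phi_{P_1}, \alpha_j \rangle = 0$; dominant $P_1$-regularity then forces $\CI_{M_2} \subset \CI_{M_1}$, i.e.\ $P_2 \subset P_1$. Collapse of the chain of inequalities above also forces $w^{-1}\phi_{P_1} = \phi_{P_1}$, placing $w$ in $\Stab_W(\phi_{P_1}) = W_{M_1}$; since $w$ is the shortest representative in its $W_{M_1}$--$W_{M_2}$ double coset, this yields $w = e$. Consequently $Q_1 = Q_2 = P_1 \cap P_2 = P_2$, and the coinciding reductions $F_{Q_1} = F_{Q_2} = F_{P_2}$ extend along $P_2 \subset P_1$ to recover $F_{P_1}$. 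The main obstacle throughout is the $j \in \CI_{M_2}$ subcase of the upgrade step; overcoming it requires the Cartan-matrix positivity input in addition to all previously established ingredients.
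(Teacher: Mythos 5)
Your argument has the same skeleton as the paper's (deeper reductions $F_{Q_1},F_{Q_2}$, the key inequality of Proposition \ref{w Q_1 Q_2 proposition}, the semistability bound $\phi_{Q_1}\le\phi_{P_1}$, and dominance of $\phi_{P_1}$), but the final combinatorial step is organized differently, and in an interesting way. The paper proves the sharper intermediate inequality $w^{-1}\phi_{P_1}(\lambdach_{P_1})\ge\phi_{P_2}(\lambdach_{P_2})$ by decomposing the \emph{test weight} $\lambda=\mu+\tau$ with $\mu\in\Lambda_G^{\BQ,+}\cap\Lambda_{G,P_2}^\BQ$ and $\tau\in\sum_{i\in\CI_{M_2}}\BQ_{\ge0}\alpha_i$ (Lemma \ref{dominant positive lemma}); you instead decompose the \emph{coweight difference} $\mu=\sum_j n_j\alphach_j$ and show $n_j\ge 0$ coefficient by coefficient, using fundamental weights for $j\notin\CI_{M_2}$ (where the discrepancy $\phi_{P_2}-\phi_{Q_2}$ is invisible) and the non-negativity of the inverse Cartan matrix of $[M_2,M_2]$ for $j\in\CI_{M_2}$. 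These are essentially dual formulations resting on the same convexity fact (a dominant element of the root lattice of a semisimple group is a non-negative combination of simple roots), and both are correct; the paper's version has the advantage that it produces the linear chain $\phi_{P_1}\ge w^{-1}\phi_{P_1}\ge\phi_{P_2}$, which is exactly what makes the equality case immediate.

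That is where your write-up has its one real soft spot: you assert that ``collapse of the chain of inequalities forces $w^{-1}\phi_{P_1}=\phi_{P_1}$,'' but your argument never establishes $w^{-1}\phi_{P_1}\ge\phi_{P_2}$, so there is no single chain between $\phi_{P_1}$ and $\phi_{P_2}$ passing through $w^{-1}\phi_{P_1}$ that could collapse. The claim is nevertheless true and fillable: in the equality case $A+B+C=\phi_{P_2}-\phi_{Q_2}\in\sum_{i\in\CI_{M_2}}\BQ\,\alphach_i$, where $A=\phi_{P_1}-w^{-1}\phi_{P_1}$, $B=w^{-1}(\phi_{P_1}-\phi_{Q_1})$, $C=w^{-1}\phi_{Q_1}-\phi_{Q_2}$ are each in $\Lambdach_G^{\BQ,pos}$; pairing with $\omega_j$ for $j\notin\CI_{M_2}$ shows each of $A,B,C$ lies in $\sum_{i\in\CI_{M_2}}\BQ_{\ge0}\alphach_i\subset\sum_{i\in\CI_{M_1}}\BQ_{\ge0}\alphach_i$, which is orthogonal to $\phi_{P_1}$ for a $W$-invariant form since $\langle\phi_{P_1},\alpha_i\rangle=0$ for $i\in\CI_{M_1}$; then $(A,A)=(A,\phi_{P_1})-(A,w^{-1}\phi_{P_1})=0$, so $A=0$ and your stabilizer argument gives $w=1$. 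You should also say a word on why $F_{Q_1}=F_{Q_2}$ holds on all of $X$ and not just generically (two reductions to the same parabolic agreeing on a dense open subset of a smooth curve agree everywhere; the paper argues via closedness of the $w=1$ stratum). With these two points supplied, the proof is complete.
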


\medskip

\sssec{Strategy of proof}

To prove the comparison theorem, we study how the hypotheses of the theorem affect the relative position of the bundles $F_{P_1}$ and $F_{P_2}$, and relate their slopes via the deeper reductions $F_{Q_1}$ and $F_{Q_2}$ from Section~\ref{Relative position in the curve case and a key inequality} above. More precisely, to prove the asserted inequality, we first relate the slopes $\phi_{P_1}(\lambdach_{P_1})$ of $F_{P_1}$ and $\phi_{P_2}(\lambdach_{P_2})$ of $F_{P_2}$ to the slopes $\phi_{Q_1}(\lambdach_{Q_1})$ of $F_{Q_1}$ and $\phi_{Q_2}(\lambdach_{Q_2})$ of $F_{Q_2}$, respectively, and then use the key inequality~\ref{w Q_1 Q_2 proposition} to relate $\phi_{Q_1}(\lambdach_{Q_1})$ and $\phi_{Q_2}(\lambdach_{Q_2})$ to each other. If the asserted inequality is in fact an equality, we again use an analysis of the relative position to show that this forces $Q_1 = Q_2 = P_2$ and $F_{Q_1} = F_{Q_2} = F_{P_2}$, from which the second claim of the theorem follows.

\medskip

\sssec{Three easy lemmas}

We first establish the following three easy lemmas needed in the proof of the comparison theorem below. Let the hypotheses and notation be as in Section \ref{Relative position in the curve case and a key inequality} above. The first two lemmas simply spell out the implications of the hypotheses of the comparison theorem in this context:

\medskip

\begin{lemma}
\label{semistability difference lemma}
Assume that $F_{P_1}$ lies in $\Bun_{P_1}^{ss}$. Then we have not only
$$\phi_{P_1}(\lambdach_{P_1}) \ \geq \ \phi_{Q_1}(\lambdach_{Q_1})$$
but also
$$w^{-1} \, \phi_{P_1}(\lambdach_{P_1}) \ \geq \ w^{-1} \, \phi_{Q_1}(\lambdach_{Q_1}).$$
\end{lemma}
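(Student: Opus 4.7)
The plan is to derive the first inequality directly from Lemma \ref{P-semistability} and then obtain the second inequality from the first via a short combinatorial argument involving Lemma \ref{W-positivity-lemma}.

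First, I would apply Lemma \ref{P-semistability} to the semistable bundle $F_{P_1}$. Since $Q_1 \subset P_1$ is a smaller parabolic and $F_{Q_1} \in \Bun_{Q_1,\lambdach_{Q_1}}$ is a reduction of $F_{P_1}$, that lemma immediately yields
$$\phi_{Q_1}(\lambdach_{Q_1}) \ \leq \ \phi_{P_1}(\lambdach_{P_1}),$$
so that the difference $\phi_{P_1}(\lambdach_{P_1}) - \phi_{Q_1}(\lambdach_{Q_1})$ lies in $\Lambdach_G^{\BQ,pos}$. This establishes the first displayed inequality of the lemma.

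Next I would pin down the form of this difference more precisely. Since $\lambdach_{Q_1}$ maps to $\lambdach_{P_1}$ under the projection $\Lambdach_{G,Q_1} \onto \Lambdach_{G,P_1}$, part $(b)$ of Lemma \ref{difference lemma for two parabolics} places the difference in the subspace $\sum_{i \in \CI_{M_1}} \BQ \, \alphach_i$. Combined with the first inequality and the linear independence of the simple coroots, one then obtains an expression
$$\phi_{P_1}(\lambdach_{P_1}) - \phi_{Q_1}(\lambdach_{Q_1}) \ = \ \sum_{i \in \CI_{M_1}} c_i \, \alphach_i$$
with all coefficients $c_i$ non-negative.

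To deduce the second inequality, I would apply $w^{-1}$ to this equality and observe that each $w^{-1}(\alphach_i)$ with $i \in \CI_{M_1}$ is a positive coroot. This is the coroot version of the first half of Lemma \ref{W-positivity-lemma}, which holds because the $W$-action on roots and coroots is compatible: $w^{-1}(\alpha_i) \in R_+$ if and only if $w^{-1}(\alphach_i) \in \check{R}_+$. Consequently $w^{-1}\phi_{P_1}(\lambdach_{P_1}) - w^{-1}\phi_{Q_1}(\lambdach_{Q_1})$ is a non-negative rational combination of positive coroots and thus lies in $\Lambdach_G^{\BQ,pos}$, giving the second inequality. I do not anticipate any serious obstacle here; the argument simply assembles ingredients already established in Sections~\ref{Combinatorics of the slope map} and \ref{Deeper reductions}.
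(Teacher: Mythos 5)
Your proposal is correct and follows essentially the same route as the paper: Lemma \ref{P-semistability} for the first inequality, part $(b)$ of Lemma \ref{difference lemma for two parabolics} to place the difference in $\sum_{i \in \CI_{M_1}} \BQ_{\geq 0}\,\alphach_i$, and Lemma \ref{W-positivity-lemma} (transferred to coroots) to conclude after applying $w^{-1}$. No issues.
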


\medskip

\begin{proof}
For the first inequality see Lemma \ref{P-semistability} above. Combining this inequality with part $(b)$ of Lemma \ref{difference lemma for two parabolics} applied to $P_1$ and $Q_1$ we see that
$$\phi_{P_1}(\lambdach_{P_1}) \ - \ \phi_{Q_1}(\lambdach_{Q_1}) \ \ \ \in \ \sum_{i \in \CI_{M_1}} \BQ_{\geq 0} \, \alphach_i \, .$$
Applying $w^{-1}$ to this difference, Lemma \ref{W-positivity-lemma} shows that
$$w^{-1} \, \phi_{P_1}(\lambdach_{P_1}) \ - \ w^{-1} \, \phi_{Q_1}(\lambdach_{Q_1}) \ \ \ \in \ \sum_{i \in \CI} \BQ_{\geq 0} \, \alphach_i$$
as desired.
\end{proof}

\medskip

\begin{lemma}
\label{w=1 lemma}
Assume that $\lambdach_{P_1}$ is dominant $P_1$-regular. Then
$$\phi_{P_1}(\lambdach_{P_1}) \ \geq \ w^{-1} \, \phi_{P_1}(\lambdach_{P_1}) \, ,$$
and equality holds if and only if $w = 1$.
\end{lemma}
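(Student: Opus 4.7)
\textbf{Proof plan for Lemma \ref{w=1 lemma}.} The strategy is to reduce the statement to the classical Weyl-group fact that for a dominant element $\phi$ of $\Lambdach_G^\BQ$ and any $v \in W$, the difference $\phi - v\phi$ lies in $\Lambdach_G^{\BQ,pos}$, with equality precisely when $v$ fixes $\phi$. First I would check that $\phi := \phi_{P_1}(\lambdach_{P_1})$ is indeed dominant in the required sense, i.e., $\langle \phi,\alpha_i\rangle \geq 0$ for every simple root $\alpha_i$: for $i \in \CI_{M_1}$ this pairing is zero by construction of the slope map (Section \ref{phiP}), and for $i \in \CI \setminus \CI_{M_1}$ it is strictly positive by the hypothesis that $\lambdach_{P_1}$ is dominant $P_1$-regular. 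Moreover, this shows that the stabilizer of $\phi$ in $W$ is exactly $W_{M_1}$.

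Next I would establish the auxiliary Weyl-group inequality by induction on $\ell(v)$, taking $v = w^{-1}$. Choosing a reduced expression $w^{-1} = s_{i_1}\cdots s_{i_k}$ and setting $v_j := s_{i_1}\cdots s_{i_j}$, a telescoping argument gives
\[
\phi - w^{-1}\phi \ = \ \sum_{j=1}^k \bigl(v_{j-1}\phi - v_j\phi\bigr) \ = \ \sum_{j=1}^k \langle \phi,\alpha_{i_j}\rangle \cdot v_{j-1}(\alphach_{i_j}),
\]
using the identity $\phi - s_i\phi = \langle \phi,\alpha_i\rangle\,\alphach_i$ for coweights. Since the expression is reduced, each $v_{j-1}(\alpha_{i_j})$ is a positive root, so $v_{j-1}(\alphach_{i_j}) \in \Lambdach_G^{\BQ,pos}$; combined with $\langle \phi,\alpha_{i_j}\rangle \geq 0$ this yields the desired inequality $\phi \geq w^{-1}\phi$.

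For the equality case, the displayed sum vanishes only if every summand vanishes, which (by the strict positivity for $i \notin \CI_{M_1}$ established in the first step) forces $i_j \in \CI_{M_1}$ for all $j$, hence $w^{-1} \in W_{M_1}$, i.e.\ $w \in W_{M_1}$. Since $w \in W_{1,2}$ is the shortest representative of its $W_{M_1}$-$W_{M_2}$ double coset, the only way to have $w \in W_{M_1}$ is $w = 1$. No step presents a real obstacle; the only point requiring care is the bookkeeping for the telescoping identity with coweights versus roots, which is why I made the convention $s_\alpha\mu = \mu - \langle\mu,\alpha\rangle\alphach$ explicit above.
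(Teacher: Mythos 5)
Your proof is correct, and it fills in more detail than the paper does while taking a slightly different route in the equality case. For the inequality, the paper simply invokes the fact that $\phi_{P_1}(\lambdach_{P_1})$ lies in $\Lambdach_G^{\BQ,+}$ (the classical statement that $\phi \geq v^{-1}\phi$ for dominant $\phi$), whereas you prove that classical fact from scratch via the telescoping identity over a reduced word; this is a legitimate expansion rather than a divergence. For the equality case the two arguments genuinely differ: the paper assumes $w \neq 1$, picks a simple root $\alpha_i$ with $w^{-1}\alpha_i \in -R_+$, observes that $i \notin \CI_{M_1}$ by Lemma \ref{W-positivity-lemma}, and gets a contradiction by pairing the equality with $w^{-1}\alpha_i$ and using $W$-invariance of the pairing together with dominant $P_1$-regularity. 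You instead read off from the vanishing of your telescoping sum that every letter of a reduced word for $w^{-1}$ lies in $\CI_{M_1}$ (the summands are nonnegative multiples of positive coroots, so each must vanish), conclude $w \in W_{M_1}$, and then invoke minimality of $w$ in its $W_{M_1}$--$W_{M_2}$ double coset to force $w=1$. Both arguments use the same two inputs — dominant $P_1$-regularity and the choice of $w$ as a minimal double coset representative — but your version has the small advantage of identifying exactly which reflections can occur in $w$, while the paper's is shorter once the classical inequality is taken for granted. One minor bookkeeping point in your write-up: the step "each summand vanishes" deserves the one-line justification that the positive cone $\Lambdach_G^{\BQ,pos}$ is pointed (simple coroots are linearly independent), but this is standard.
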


\medskip

\begin{proof}
The inequality holds since $\phi_{P_1}(\lambdach_{P_1})$ lies in $\Lambdach_G^{\BQ,+}$. Now assume that equality holds, and assume that $w \neq 1$. Then there exists a simple root $\alpha_i$ such that $w^{-1} \alpha_i$ lies in the negative part $-R_+$ of $R$, and since $w \in W_{1,2}$ we conclude that $i \notin \CI_{M_1}$. Pairing both sides of the equality
with $w^{-1} \alpha_i$ we obtain the desired contradiction
$$0 \ \geq \ \langle \phi_{P_1}(\lambdach_{P_1}), w^{-1} \alpha_i \rangle \ = \ \langle \phi_{P_1}(\lambdach_{P_1}), \alpha_i \rangle \ > \ 0.$$
\end{proof}

\medskip

In addition to the last two lemmas about slopes, we will also need the following combinatorial lemma, which generalizes the fact that for a semisimple group the dominant cone $\Lambda_G^{\BQ,+}$ is contained in the positive cone $\Lambda_G^{\BQ,pos}$.

\medskip

\begin{lemma}
\label{dominant positive lemma}
The dominant cone $\Lambda_G^{\BQ,+}$ is contained in the cone
$$\Lambda_G^{\BQ,+} \cap \Lambda_{G,P_2}^\BQ \ \ + \ \sum_{i \in \CI_{M_2}} \BQ_{\geq 0} \, \alpha_i \, .$$
\end{lemma}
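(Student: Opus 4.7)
The plan is to decompose any dominant weight $\lambda \in \Lambda_G^{\BQ,+}$ uniquely as $\lambda = \mu + \nu$ with $\mu \in \Lambda_{G,P_2}^\BQ$ and $\nu \in \sum_{i \in \CI_{M_2}} \BQ \, \alpha_i$, and then verify that $\mu$ is still dominant and that $\nu$ has non-negative coefficients. This reduces the lemma to three elementary facts about the root system of $G$.

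First I would establish the direct sum decomposition
$$\Lambda_G^\BQ \ = \ \Lambda_{G,P_2}^\BQ \ \oplus \ \sum_{i \in \CI_{M_2}} \BQ \, \alpha_i \, .$$
The dimensions on both sides agree (the left summand on the right has dimension $\rk G - \rk [M_2,M_2]$, the second summand has dimension $|\CI_{M_2}|$), so it is enough to verify that the intersection is zero. If $x = \sum_{i \in \CI_{M_2}} c_i \alpha_i$ satisfies $\langle \alphach_j, x \rangle = 0$ for all $j \in \CI_{M_2}$, then the vector $(c_i)$ lies in the kernel of the Cartan matrix of $M_2$, which is invertible over $\BQ$, forcing all $c_i = 0$.

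Next, given $\lambda \in \Lambda_G^{\BQ,+}$, write $\lambda = \mu + \nu$ according to this decomposition. Pairing with $\alphach_j$ for $j \in \CI_{M_2}$ and using $\langle \alphach_j, \mu \rangle = 0$ gives
$$\langle \alphach_j, \lambda \rangle \ = \ \sum_{i \in \CI_{M_2}} c_i \, \langle \alphach_j, \alpha_i \rangle \, ,$$
so the $c_i$ are obtained by applying the inverse of the Cartan matrix of $M_2$ to the non-negative vector $(\langle \alphach_j, \lambda \rangle)_{j \in \CI_{M_2}}$. Since the inverse of the Cartan matrix of a finite root system has non-negative entries, we conclude $c_i \geq 0$ for all $i \in \CI_{M_2}$.

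Finally I would check that $\mu$ is dominant. For $j \in \CI_{M_2}$ this is automatic from the defining property of $\Lambda_{G,P_2}^\BQ$. For $j \in \CI \setminus \CI_{M_2}$, I compute
$$\langle \alphach_j, \mu \rangle \ = \ \langle \alphach_j, \lambda \rangle \ - \ \sum_{i \in \CI_{M_2}} c_i \, \langle \alphach_j, \alpha_i \rangle \, ;$$
the first term is non-negative because $\lambda$ is dominant, and each term in the sum is non-positive because $c_i \geq 0$ and $\langle \alphach_j, \alpha_i \rangle \leq 0$ for distinct simple roots. Hence $\langle \alphach_j, \mu \rangle \geq 0$, which proves $\mu \in \Lambda_G^{\BQ,+} \cap \Lambda_{G,P_2}^\BQ$ and completes the decomposition. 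No step here looks delicate; the only substantive input beyond linear algebra is the sign statement about the inverse Cartan matrix, which is a standard fact about finite root systems.
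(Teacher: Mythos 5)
Your proposal is correct and follows essentially the same route as the paper: decompose $\lambda = \mu + \sum_{i \in \CI_{M_2}} c_i \alpha_i$, check $c_i \geq 0$, then check dominance of $\mu$ by pairing with $\alphach_j$ for $j \in \CI \setminus \CI_{M_2}$ exactly as you do. The only cosmetic difference is in the middle step: the paper projects to $\Lambda_{[M_2,M_2]}^\BQ$ and invokes ``dominant $\subset$ positive'' for the semisimple group $[M_2,M_2]$, whereas you invert the Cartan matrix of $M_2$ and cite non-negativity of its inverse --- two phrasings of the same standard fact.
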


\medskip

\begin{proof}
First note that the subspaces $\Lambda_{G,P_2}^\BQ$ and $\sum_{i \in \CI_{M_2}}\BQ \alpha_i$ together span~$\Lambda_G^\BQ$. Thus given an element $\lambda \in \Lambda_G^{\BQ,+}$ we write
$$\lambda \ = \ \mu + \sum_{i \in \CI_{M_2}} c_i \alpha_i$$
with $\mu \in \Lambda_{G,P_2}^\BQ$ and $c_i \in \BQ$, and prove that $\mu \in \Lambda_G^{\BQ,+}$ and that $c_i \geq 0$ for all~$i$.
To do so, consider the natural map
$$p: \ \Lambda_G^\BQ \ \longto \ \Lambda_{[M_2, M_2]}^\BQ \, .$$

\noindent Then $p$ maps the dominant cone $\Lambda_G^{\BQ,+}$ into the dominant cone $\Lambda_{[M_2, M_2]}^{\BQ,+}$,
and thus in particular into the positive cone $\Lambda_{[M_2,M_2]}^{\BQ,pos}$ as the group $[M_2, M_2]$ is semisimple. Furthermore, by construction the image of $\lambda$ under $p$ is equal to the sum
$$\sum_{i \in \CI_{M_2}} c_i \alpha_i \ \in \ \Lambda_{[M_2, M_2]}^\BQ \, .$$
Together these two facts yield that $c_i \geq 0$ for all $i$.

\medskip

To show that $\mu$ is dominant, we only need to evaluate it on simple coroots~$\alphach_j$ for $j \in \CI \setminus \CI_{M_2}$ since it by definition already vanishes on the remaining ones. But for $j \in \CI \setminus \CI_{M_2}$ we have
$$\langle \alphach_j , \sum_{i \in \CI_{M_2}} c_i \alpha_i \rangle \ \leq \ 0$$ and hence
$$\langle \alphach_j , \mu \rangle \ = \ \langle \alphach_j , \lambda \rangle - \langle \alphach_j , \sum_{i \in \CI_{M_2}} c_i \alpha_i \rangle \ \geq \ 0$$
as desired.
\end{proof}

\medskip\medskip

\sssec{The proof}

We can now finally prove the comparison theorem:

\medskip

\begin{proof}[Proof of Theorem \ref{comparison theorem}]
We prove the claimed inequality by showing that
$$\phi_{P_1}(\lambdach_{P_1}) \ \geq \ w^{-1} \phi_{P_1}(\lambdach_{P_1}) \ \geq \ \phi_{P_2}(\lambdach_{P_2}) \, .$$
The first inequality is immediate from the fact that $\phi_{P_1}(\lambdach_{P_1})$ is dominant, as was already recorded in Lemma \ref{w=1 lemma} above. In proving the second inequality, part $(a)$ of Lemma \ref{difference lemma for two parabolics} implies that it suffices to show that
$$\langle w^{-1} \phi_{P_1}(\lambdach_{P_1}) , \lambda \rangle \ \geq \ \langle \phi_{P_2}(\lambdach_{P_2}) , \lambda \rangle$$
for all $\lambda \in \Lambda_G^{\BQ,+}$. Using Lemma \ref{dominant positive lemma} we let $\lambda = \mu + \tau$ for $\mu \in \Lambda_G^{\BQ,+} \cap \Lambda_{G,P_2}^\BQ$ and $\tau \in \sum_{i \in \CI_{M_2}}\BQ_{\geq 0} \, \alpha_i$, and verify this last inequality for $\mu$ and $\tau$ separately.

\medskip

\noindent For $\tau$ Lemma \ref{W-positivity-lemma} implies that $w \tau \in \Lambda_G^{\BQ, pos}$ and thus
$$\langle w^{-1} \phi_{P_1}(\lambdach_{P_1}) , \tau \rangle \ = \ \langle \phi_{P_1}(\lambdach_{P_1}) , w \tau \rangle \ \geq \ 0 \ = \ \langle \phi_{P_2}(\lambdach_{P_2}) , \tau \rangle.$$

\medskip

\noindent For $\mu$ we find the sequence of inequalities
\begin{align*}
\langle w^{-1} \phi_{P_1}(\lambdach_{P_1}) , \mu \rangle \ & \stackrel{\ref{semistability difference lemma}}{\geq} \ \langle w^{-1} \phi_{Q_1}(\lambdach_{Q_1}) , \mu \rangle \\
 \ & \stackrel{\ref{w Q_1 Q_2 proposition}}{\geq} \ \ \langle \phi_{Q_2}(\lambdach_{Q_2}) , \mu \rangle \\
  \ & \stackrel{\ref{difference lemma for two parabolics}(b)}{= \ \ \, } \ \ \langle \phi_{P_2}(\lambdach_{P_2}) , \mu \rangle ,
\end{align*}

\noindent finishing the proof of the first assertion of the theorem.

\medskip

To prove the second assertion we now assume that $\phi_{P_1}(\lambdach_{P_1}) = \phi_{P_2}(\lambdach_{P_2})$. Then the sequence of inequalities
$$\phi_{P_1}(\lambdach_{P_1}) \ \geq \ w^{-1} \phi_{P_1}(\lambdach_{P_1}) \ \geq \ \phi_{P_2}(\lambdach_{P_2})$$
obtained in the first part of the proof forces that $\phi_{P_1}(\lambdach_{P_1}) = w^{-1} \phi_{P_1}(\lambdach_{P_1})$. This implies that $w=1$ by Lemma \ref{w=1 lemma} and hence $Q_1=Q_2$ by Definition~\ref{The parabolics $Q_1$ and $Q_2$}.

\medskip

Next, since for $w=1$ and $Q_1 = Q_2$ the substack
$$(Q_1 \backslash (Q_1 w Q_2) / Q_2)_{w=1} \ \longinto \ Q_1 \backslash G / Q_2$$
is in fact a closed substack, the bundle $F_{Q_1}$ has relative position $w=1$ with respect to $F_{Q_2}$ not only generically, but on the entire curve $X$, and since $w=1$ we therefore obtain that $F_{Q_1} = F_{Q_2}$ on all of $X$.

\medskip

We now claim that $\CI_{M_2} \subset \CI_{M_1}$. Indeed, if $i \in \CI_{M_2}$, then
$$\langle \phi_{P_1}(\lambdach_{P_1}) , \alpha_i \rangle \ = \ \langle \phi_{P_2}(\lambdach_{P_2}) , \alpha_i \rangle \ = \ 0,$$
and since $\lambdach_{P_1}$ is dominant $P_1$-regular this forces $i \in \CI_{M_1}$. But from the facts that $\CI_{M_2} \subset \CI_{M_1}$ and that $w=1$ we conclude that $\CI_{L_1} = \CI_{L_2} = \CI_{M_2}$. Combined with the above this in turn implies that $Q_1 = Q_2 = P_2$ and $F_{Q_1} = F_{Q_2} = F_{P_2}$, proving the second assertion of the theorem.
\end{proof}

\bigskip\bigskip\bigskip\bigskip

\section{The case of characteristic $0$ and the case $P=B$}
\label{The case of characteristic $0$}

\medskip

This section is independent from the previous Section \ref{Comparing two reductions}. Under the hypothesis that the characteristic of the ground field~$k$ is $0$, we give a Tannakian interpretation of the notion of canonical reduction, from which all other results of this section are deduced. This Tannakian interpretation is also valid in arbitrary characteristic if the canonical reduction has as its structure group the Borel~$B$ of $G$ (see Proposition \ref{HN}). Thus we make no restriction on the characteristic of $k$ unless otherwise stated.

\medskip

We then use this Tannakian interpretation to prove that under the same hypotheses as above, the projection map $\Bun_{P,\lambdach_P}^{ss} \to \Bun_G$ is a monomorphism, which will in turn immediately imply the assertion of part~$(b)$ of Theorem \ref{main theorem} in Section \ref{Proof of the remaining parts of the main theorem} below. Finally, we give a quick proof of the comparison theorem in characteristic $0$, so that a reader only interested in the case of characteristic~$0$ can skip Section \ref{Comparing two reductions} altogether.

\medskip\medskip

\ssec{Tannakian interpretation of the canonical reduction}

\mbox{} \\

In this section we give an interpretation of the canonical reduction on the level of the associated subbundles $\kappa^\lambda$ from Section \ref{Subbundles induced by reductions} above. To do so we first use the slope maps $\phi_P$ from Section \ref{phiP} above to construct the following filtrations:

\sssec{Filtrations on representations of highest weight $\lambda$}
\label{Filtrations on representations of highest weight}

Let $V$ be any $G$-representation of highest weight $\lambda \in \Lambda_G^+$ and let $V = \bigoplus_{\nu} V[\nu]$ be its weight decomposition. Furthermore fix a parabolic $P$ and an element $\lambdach_P \in \Lambdach_{G,P}$. We define a filtration $V_\bullet$ on the vector space $V$ which depends on the element~$\lambdach_P$ as follows.

\medskip

For any rational number $q \in \BQ$ we define the subspace $V_q$ as the sum of weight spaces
$$V_q \ := \ \bigoplus_{\langle \phi_P(\lambdach_P), \nu \rangle \, \geq \, q} V[\nu].$$

\medskip

\noindent Clearly $V_{q'} \subseteq V_{q}$ whenever $q' \geq q$. We will consider the subspaces $V_q$ only for the finitely many $q \in \BQ$ where a jump occurs, i.e., only for those $q$ such that $V_{q'} \subsetneq V_{q}$ for all $q' > q$. Let $q_0$ be the smallest and $q_1$ the largest rational number occurring among such~$q$. Then $V_{q_1}$ is the smallest non-zero filtration step, and $V_{q_0}$ equals $V$.

\medskip

We will in fact consider the filtration $V_\bullet$ only in the following case:

\medskip

\begin{lemma}
\label{P-filtration}
Suppose $\lambdach_P \in \Lambdach_{G,P}$ is dominant $P$-regular. Then the filtration $V_\bullet$ is a filtration of $V$ by $P$-subrepresentations. Furthermore, the unipotent radical $U(P)$ acts trivially on each successive quotient
$$\gr_q V_\bullet = \bigoplus_{\langle \phi_P(\lambdach_P), \nu \rangle \, = \, q} V[\nu] \, ,$$
and hence the $P$-action on each such quotient descends to an action of the Levi $M$. Finally, the smallest step of the filtration $V_{q_1}$ is equal to the subspace $V[\lambda + \BZ R_M]$ of $V$.
\end{lemma}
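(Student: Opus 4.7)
The plan is to read off all three assertions from the sign behavior of the function $\nu \mapsto \langle \phi_P(\lambdach_P), \nu \rangle$ under translation by a root of $P$. The key inputs are the two extremes: $\langle \phi_P(\lambdach_P), \alpha_i \rangle = 0$ for $i \in \CI_M$, which is built into the definition of $\phi_P$, and $\langle \phi_P(\lambdach_P), \alpha_i \rangle > 0$ for $i \in \CI \setminus \CI_M$, which uses dominant $P$-regularity together with part $(b)$ of Proposition \ref{phialpha}.

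First I would verify that each $V_q$ is a $P$-subrepresentation. Since $V_q$ is defined as a direct sum of weight spaces, I would do this on the level of $\Dist(P)$ -- exactly as in the proof of part $(a)$ of Lemma \ref{the usual sum of weight spaces} -- so that the argument is valid in arbitrary characteristic. It then reduces to the observation that a root operator $e_\alpha$ for $\alpha$ a root of $P$ sends $V[\nu]$ into $V[\nu + \alpha]$, and that $\langle \phi_P(\lambdach_P), \nu + \alpha \rangle \geq \langle \phi_P(\lambdach_P), \nu \rangle$ because $\langle \phi_P(\lambdach_P), \alpha \rangle \geq 0$ for every such $\alpha$.

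The second assertion will fall out of the same calculation: for $\alpha$ a root of $U(P)$, the inequality $\langle \phi_P(\lambdach_P), \nu + \alpha \rangle > \langle \phi_P(\lambdach_P), \nu \rangle$ is strict, so $e_\alpha$ moves $V_q$ into the strictly smaller step $V_{q'}$ with $q' > q$. Passing to the associated graded, $\Dist(U(P))$ hence acts by zero on each $\gr_q V_\bullet$, so the $P$-action descends to an $M$-action as claimed.

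For the third assertion I would maximize $\langle \phi_P(\lambdach_P), \nu \rangle$ over the weights $\nu$ of $V$. Writing $\lambda - \nu = \sum_i n_i \alpha_i$ with $n_i \in \BZ_{\geq 0}$, the pairing reads
$$\langle \phi_P(\lambdach_P), \nu \rangle \ = \ \langle \phi_P(\lambdach_P), \lambda \rangle \ - \ \sum_{i \in \CI \setminus \CI_M} n_i \langle \phi_P(\lambdach_P), \alpha_i \rangle \, ,$$
and strict positivity of each coefficient in the sum forces $n_i = 0$ for all $i \in \CI \setminus \CI_M$ in order to attain the maximum. By linear independence of the simple roots this is equivalent to $\lambda - \nu \in \sum_{i \in \CI_M} \BZ_{\geq 0}\, \alpha_i \subset \BZ R_M$, identifying $V_{q_1}$ with $V[\lambda + \BZ R_M]$. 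I do not anticipate any real obstacle; the only point requiring care is to work with $\Dist(P)$ in place of the Lie algebra, but this is already standard in the analogous earlier lemmas.
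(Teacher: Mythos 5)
Your proof is correct and follows essentially the same route as the paper, which simply asserts that the first two claims are "easily verified" on the level of $\Dist(P)$ and $\Dist(U(P))$ using dominant $P$-regularity and that the last claim follows from $\lambda \geq \nu$; you are filling in exactly those details. One small misattribution: the positivity $\langle \phi_P(\lambdach_P), \alpha_i \rangle > 0$ for $i \in \CI \setminus \CI_M$ is literally the definition of dominant $P$-regularity (end of Section \ref{phiP}) and does not require part $(b)$ of Proposition \ref{phialpha}, which concerns the different quantity $\langle \phi_P(\alphach_j), \alpha_j \rangle$.
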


\medskip

\begin{proof}
It suffices to check the first two assertions on the level of the algebras of distributions $\Dist(P)$ and $\Dist(U(P))$, respectively, in which case they are easily verified using the fact that $\lambdach_P$ is dominant $P$-regular. As $\lambda \geq \nu$ for all weights $\nu$ of $V$, the last assertion of the lemma follows from $\lambdach_P$ being dominant $P$-regular as well.
\end{proof}

\medskip

\sssec{Filtrations on associated bundles}

The filtrations of the previous section induce filtrations on associated bundles in the following setting.
Let~$S$ be any scheme over $k$ and let $F_G \in \Bun_G(S)$ be a $G$-bundle on $X \times S$. Let furthermore $F_P \in \Bun_{P,\lambdach_P}(S)$ be a reduction of $F_G$ to $P$ on $X \times S$ such that~$\lambdach_P$ is dominant $P$-regular, and as before let $F_M$ denote its corresponding Levi bundle. Then by twisting the $P$-subrepresentations $V_q$ above by~$F_P$, we obtain a filtration~$V_{\bullet, F_P}$ of the vector bundle $V_{F_G}$ by subbundles
$$0 \neq V[\lambda + \BZ R_M]_{F_M} = V_{q_1, F_P} \ \subsetneq \ldots \subsetneq \ V_{q, F_P} \ \subsetneq \ldots \subsetneq \ V_{q_0, F_P} = V_{F_G} \, .$$

\medskip

We will use this filtration in the case of an arbitrary base scheme $S$ over $k$ in Section \ref{in families} below. Before doing so, we specialize to the case $S = \Spec (k)$ and prove the above-mentioned Tannakian interpretation of the canonical reduction:

\medskip

\begin{proposition}
\label{HN}
Assume that the characteristic of the ground field $k$ is~$0$, or assume that P=B. Let $F_P$ be a reduction to $P$ of a $G$-bundle $F_G$ on the curve $X$ and suppose that $F_P$ lies in $\Bun_{P,\lambdach_P}^{ss}$ for $\lambdach_P$ dominant $P$-regular.
Then the filtration $V_{\bullet, F_P}$ of $V_{F_G}$ is the Harder-Narasimhan filtration. In particular, the subbundle
$$\kappa^\lambda: \ V[\lambda + \BZ R_M]_{F_M} \ \longinto \ V_{F_G}$$
from Section \ref{Subbundles induced by reductions} is precisely the maximal destabilizing subsheaf of $V_{F_G}$.
\end{proposition}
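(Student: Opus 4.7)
The strategy is to verify that $V_{\bullet, F_P}$ satisfies the two defining properties of the Harder-Narasimhan filtration—its successive quotients are semistable and their slopes strictly decrease from bottom to top—and then invoke uniqueness of the Harder-Narasimhan filtration to conclude.

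First I will compute the slopes of the successive quotients. By Lemma~\ref{P-filtration} each $\gr_q V_\bullet$ is an $M$-representation whose weights $\nu$ all satisfy $\langle \phi_P(\lambdach_P), \nu \rangle = q$. Running the computation of Proposition~\ref{slope-of-associated-bundles}(c) now for the group $M$, the Levi bundle $F_M$, and the representation $\gr_q V_\bullet$ yields
$$\mu \bigl( (\gr_q V_\bullet)_{F_M} \bigr) \ = \ q \, .$$
Since the jump values satisfy $q_1 > q_2 > \ldots > q_0$ by definition, the slopes of the successive quotients of
$$0 \ \subsetneq \ V_{q_1, F_P} \ \subsetneq \ V_{q_2, F_P} \ \subsetneq \ \ldots \ \subsetneq \ V_{q_0, F_P} \ = \ V_{F_G}$$
strictly decrease from bottom to top, which is the slope condition of the Harder-Narasimhan filtration.

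The central step is the semistability of each quotient $(\gr_q V_\bullet)_{F_M}$. If $P = B$, then $M = T$ is a torus and $\gr_q V_\bullet$ splits as a direct sum of one-dimensional weight spaces; hence $(\gr_q V_\bullet)_{F_T}$ is a direct sum of line bundles all of slope~$q$, and is semistable in any characteristic. Suppose now $\charact(k) = 0$. By complete reducibility I decompose $\gr_q V_\bullet$ into irreducible $M$-subrepresentations; for any irreducible summand $W$ with highest weight $\mu_W$, all weights of $W$ lie in $\mu_W + \BZ R_M$, whence $\langle \phi_P(\lambdach_P), \mu_W \rangle = q$ and $\mu(W_{F_M}) = q$ by the same slope calculation. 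The hypothesis $F_P \in \Bun_{P, \lambdach_P}^{ss}$ forces the Levi bundle $F_M$ to be semistable, so Ramanathan's theorem in characteristic~$0$ implies that $W_{F_M}$ is a semistable vector bundle. Thus $(\gr_q V_\bullet)_{F_M}$ is a direct sum of semistable bundles of the common slope $q$, and is therefore semistable.

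Together these steps show that $V_{\bullet, F_P}$ is the Harder-Narasimhan filtration of $V_{F_G}$. The final assertion about $\kappa^\lambda$ is then immediate: by the last clause of Lemma~\ref{P-filtration} the first nonzero step $V_{q_1, F_P}$ of the filtration is precisely $V[\lambda + \BZ R_M]_{F_M}$, and the first step of the Harder-Narasimhan filtration is by definition the maximal destabilizing subsheaf. The main obstacle, and the reason both hypotheses are imposed, lies in the invocation of Ramanathan's theorem: in positive characteristic for a general parabolic $P$ the analogous implication fails, which is consistent with the failure of Behrend's conjecture recorded in Section~\ref{Behrend's conjecture}; when $P = B$, irreducible $T$-representations are automatically one-dimensional, and no such input is required.
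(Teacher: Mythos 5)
Your proposal is correct and follows essentially the same route as the paper: compute that each $(\gr_q V_\bullet)_{F_M}$ has slope $q$ via Proposition \ref{slope-of-associated-bundles} applied to $M$, establish semistability of the subquotients by decomposing into irreducibles (trivially for $P=B$, via Ramanathan's extension-of-structure-group theorem in characteristic $0$), and conclude by uniqueness of the Harder--Narasimhan filtration. The only cosmetic difference is that you cite part $(c)$ of Proposition \ref{slope-of-associated-bundles} where the paper applies part $(a)$ to the group $M$, and you leave implicit the (easy, Schur's lemma) check that $M \to \GL(W)$ carries $Z_0(M)$ into the center, as required by the cited semistability result.
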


\medskip

\begin{proof}
Since twisting by a bundle is an exact operation and since each $\gr_q V_\bullet$ is a representation of the Levi quotient $M$ by Lemma \ref{P-filtration}, we see that
$$\gr_q (V_{\bullet, F_P}) \ = \ (\gr_q V_\bullet)_{F_M} \, .$$
To determine its slope we apply Proposition \ref{slope-of-associated-bundles} $(a)$ to the group $M$, the $M$-representation
$$\gr_q V_\bullet \ = \bigoplus_{\langle \phi_P(\lambdach_P), \nu \rangle \, = \, q} V[\nu] \, ,$$
and the $M$-bundle $F_M$. 
Namely, we compute
$$\deg((\gr_q V_\bullet)_{F_M}) \ = \ \langle \phi_P(\lambdach_P), \sum_\nu m_\nu \nu \rangle \ = \  q \cdot \sum_\nu m_\nu \nu \, .$$
It follows that 
$$\mu((\gr_q V_\bullet)_{F_M}) = q$$
and hence in particular
$$\mu((\gr_q V_\bullet)_{F_M}) \ > \ \mu((\gr_{q'} V_\bullet)_{F_M})$$
whenever $q > q'$. This proves the first property of the Harder-Narasimhan filtration (see Section \ref{The Harder-Narasimhan filtration}).

\medskip

To verify the second property, we need to show that the subquotients $(\gr_q V_\bullet)_{F_M}$ of the filtration are semistable vector bundles.
To prove this, we will use the assumption that either the characteristic of the base field $k$ is~$0$, or that $P=B$. First, in both cases the category of finite-dimensional representations of the Levi $M$ is semisimple, and thus we can decompose the $M$-representation $\gr_q V_\bullet$ into irreducible components $(\gr_q V_\bullet)_i$. Then the exact same computation as above shows that each individual associated vector bundle $((\gr_q V_\bullet)_i)_{F_M}$ has slope $q$.

\medskip

Since a direct sum of semistable vector bundles of the same slope is again semistable, it now suffices to prove that each summand $((\gr_q V_\bullet)_i)_{F_M}$ is semistable.
In the case $P=B$ every irreducible representation of $M=T$ is $1$-dimensional. Thus the summands are line bundles and hence automatically semistable. In the case that the characteristic of $k$ is $0$ the semistability of the summands follows by applying the following well-known proposition to the groups $M$ and $\GL ((\gr_q V_\bullet)_i)$.
\end{proof}

\medskip

\begin{proposition}
Let $H_1$ and $H_2$ be reductive groups over an algebraically closed field $k$ of characteristic $0$ and let $H_1 \to H_2$ be a homomorphism of algebraic groups which maps $Z_0(H_1)$ to~$Z_0(H_2)$. Let $F_{H_1}$ be a semistable $H_1$-bundle on a smooth complete curve $X$ over $k$. Then the $H_2$-bundle $F_{H_2}$ obtained from $F_{H_1}$ by extension of structure group is again semistable.
\end{proposition}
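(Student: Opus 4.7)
The plan is to verify condition $(a)$ of Definition \ref{definition of semistability} for $F_{H_2}$ by transferring, via the pushforward $f_* \colon \Lambdach_{H_1}^\BQ \to \Lambdach_{H_2}^\BQ$, the corresponding semistability inequalities for $F_{H_1}$. I would first fix compatible data: Borels and maximal tori $T_1 \subset B_1 \subset H_1$ and $T_2 \subset B_2 \subset H_2$ with $f(B_1) \subset B_2$ and $f(T_1) \subset T_2$. The containment $f(B_1) \subset B_2$ then forces $f^*$ to send $\Lambda_{H_2}^+$ into $\Lambda_{H_1}^+$: given a highest weight vector $v$ of an irreducible $H_2$-module $V^\lambda$, the vector $v$ is $U(B_1)$-fixed and is a $T_1$-weight vector of weight $f^*\lambda$, so the $H_1$-submodule it generates has $f^*\lambda$ as its $B_1$-highest weight, which must be dominant. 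Dually, $f_*$ maps $\Lambdach_{H_1}^{\BQ, pos}$ into $\Lambdach_{H_2}^{\BQ, pos}$, hence preserves the partial order $\le$.

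Given any reduction $F_{P_2}$ of $F_{H_2}$ to a parabolic $P_2 \supset B_2$ of degree $\lambdach_{P_2}$, I would set $P_1 := f^{-1}(P_2) \subset H_1$. Then $P_1 \supset B_1$, so its identity component $P_1^0$ is parabolic in $H_1$; since parabolic subgroups are self-normalizing and $P_1$ normalizes $P_1^0$, we must have $P_1 = P_1^0$, so $P_1$ is parabolic. The fiber product of $F_{H_1}$ and $F_{P_2}$ over $F_{H_2}$ yields a canonical reduction $F_{P_1}$ of $F_{H_1}$ to $P_1$, of some degree $\lambdach_{P_1}$, whose extension of structure group to $H_1$ recovers $F_{H_1}$ and whose extension along $f|_{P_1}\colon P_1 \to P_2$ recovers $F_{P_2}$. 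The semistability of $F_{H_1}$ applied to $F_{P_1}$ gives $\phi_{P_1}(\lambdach_{P_1}) \le \phi_{H_1}(\lambdach_{H_1})$ in $\Lambdach_{H_1}^\BQ$, and applying $f_*$ yields
$$f_*\phi_{P_1}(\lambdach_{P_1}) \ \le \ f_*\phi_{H_1}(\lambdach_{H_1}) \ \text{in} \ \Lambdach_{H_2}^\BQ.$$

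The final step is to identify $f_*\phi_{H_1}(\lambdach_{H_1}) = \phi_{H_2}(\lambdach_{H_2})$ and $f_*\phi_{P_1}(\lambdach_{P_1}) = \phi_{P_2}(\lambdach_{P_2})$ in $\Lambdach_{H_2}^\BQ$. For the first equality, the hypothesis $f(Z_0(H_1)) \subset Z_0(H_2)$ places both elements in $\Lambdach_{Z_0(H_2)}^\BQ$, which is in perfect duality modulo torsion with $\Lambda_{H_2, H_2} \otimes_\BZ \BQ$. For any character $\chi \in \Lambda_{H_2, H_2}$, viewed as a $1$-dimensional representation of $H_2$, one has $\chi_* F_{H_2} = (\chi \circ f)_* F_{H_1}$, and Proposition \ref{slope-of-associated-bundles}$(a)$ then shows that both sides pair identically against $\chi$; since this holds for all such $\chi$, equality follows. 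The second equality is established analogously, using the induced map of Levi quotients $M_1 \to M_2$ and pairing with characters in $\Lambda_{H_2, P_2} = \Lambda_{M_2, M_2}$. Combined with the transferred inequality, this yields $\phi_{P_2}(\lambdach_{P_2}) \le \phi_{H_2}(\lambdach_{H_2})$, as required.

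I expect the main obstacle to be the Levi compatibility underlying the second slope identity: namely, verifying that $f$ induces a well-defined homomorphism $M_1 \to M_2$ (so in particular $f(U(P_1)) \subset U(P_2)$) sending $Z_0(M_1)$ into $Z_0(M_2)$. In characteristic zero this follows from a root-space analysis: since $f(B_1) \subset B_2$, the differential $df$ maps each positive root space of $H_1$ into a direct sum of positive root spaces of $H_2$, and the very definition $P_1 = f^{-1}(P_2)$ then identifies a Levi of $P_1$ whose image under $f$ lies in $M_2$ and whose connected center is carried into that of $M_2$.
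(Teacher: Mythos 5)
The paper does not actually prove this proposition: its ``proof'' is the single line ``See \cite{RamananRamanathan}, Theorem 3.18.'' You are therefore attempting to supply an argument the paper deliberately outsources, and your argument has a genuine gap at exactly the point where the real difficulty lies. The problematic step is the claim that the fiber product of $F_{H_1}$ and $F_{P_2}$ over $F_{H_2}$ is a reduction of $F_{H_1}$ to $P_1 = f^{-1}(P_2)$. Unwinding the definitions, a reduction of $F_{H_2}$ to $P_2$ is a section of $F_{H_1} \overset{H_1}{\times} (H_2/P_2) \to X$, and the fiber product you describe is nonempty over $x \in X$ precisely when that section lands, at $x$, in the locally closed sub-bundle $F_{H_1} \overset{H_1}{\times} \bigl(H_1 \cdot eP_2\bigr)$ cut out by the $H_1$-orbit of the base point of $H_2/P_2$. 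When $f$ is surjective this is automatic, since then $H_1/f^{-1}(P_2) \to H_2/P_2$ is an isomorphism, and your argument does go through. But for a general $f$ the section generically lands in some other $H_1$-orbit of $H_2/P_2$, whose stabilizer is usually not parabolic. For instance, take $H_1 = \SL_2 \hookrightarrow \SL_3 = H_2$ acting on $k^2 \oplus k$ and $P_2$ the stabilizer of a line in $k^3$: the open $\SL_2$-orbit on $\BP^2$ has stabilizer a one-dimensional unipotent subgroup. A destabilizing $P_2$-reduction of $F_{H_2}$ therefore transfers only to a reduction of $F_{H_1}$ to a non-parabolic subgroup, and the slope bookkeeping via condition $(a)$ of Definition \ref{definition of semistability} no longer applies. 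This is precisely why the characteristic-$0$ proof requires serious input --- either Kempf's instability flag as in \cite{RamananRamanathan}, or a reduction to $H_2 = \GL(V)$ together with the theorem that tensor products of semistable bundles are semistable in characteristic $0$. (That some such input is unavoidable is confirmed by the failure of the statement in positive characteristic.)

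The peripheral steps of your proposal are essentially correct: one can choose $B_2 \supset f(B_1)$ and $T_2 \supset f(T_1)$ since $f(B_1)$ is connected solvable; $f^*$ then preserves dominance and hence, by duality of the cones $\Lambda^{\BQ,+}$ and $\Lambdach^{\BQ,pos}$, $f_*$ preserves the partial order; $f^{-1}(P_2)$ is parabolic since it is closed and contains $B_1$; and the identification of slopes by pairing against $\Lambda_{H_2,P_2}$ via Proposition \ref{slope-of-associated-bundles}, using the hypothesis on connected centers, is fine. So your argument constitutes a valid proof in the special case that $f$ is surjective, but the general case cannot be obtained this way.
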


\begin{proof}
See for example \cite{RamananRamanathan}, Theorem 3.18.
\end{proof}

\medskip

\sssec{Remark}

In view of Lemma \ref{the usual sum of weight spaces} $(b)$ and Proposition \ref{Plucker redundancy}, taking the representations $V$ to be the Weyl modules $V^\lambda$ in Proposition \ref{HN} immediately implies the uniqueness of the canonical reduction in characteristic $0$. In fact, in Section \ref{Proof of the comparison theorem in characteristic 0} below we deduce from Proposition \ref{HN} the comparison theorem, a stronger result.

\medskip\medskip

\ssec{The canonical reduction in families}
\label{in families}

\mbox{} \\

Under the assumption that either $P=B$ or that the characteristic of $k$ is $0$ as above, we now prove that the canonical reduction is also unique ``in families''. Recall first that a morphism of algebraic stacks is a \textit{monomorphism} if it is representable by algebraic spaces and becomes a monomorphism of algebraic spaces after any base change to an algebraic space. One can check
that a morphism of algebraic stacks $\CX \to \CY$ is a monomorphism if and only if for every scheme $S$ the functor $\CX(S) \to \CY(S)$ is fully faithful. Then the result is:

\medskip

\begin{proposition}
\label{monoprop}
Assume that the characteristic of $k$ is $0$ or that $P=B$, and let $\lambdach_P \in \Lambdach_{G,P}$ be dominant $P$-regular. Then the projection map $$\Fp_P:  \ \Bun_{P,\lambdach_P}^{ss} \ \longto \ \Bun_G$$ is a monomorphism.
\end{proposition}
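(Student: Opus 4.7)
The plan is to check fully faithfulness of the induced functor $\Bun_{P,\lambdach_P}^{ss}(S) \to \Bun_G(S)$ for every scheme $S$, which is equivalent to the assertion that $\Fp_P$ is a monomorphism. Uniqueness of lifts of a given isomorphism $\gamma: F_G \to \tilde F_G$ reduces (by transport via $\gamma$) to injectivity of $\Aut(F_P) \to \Aut(F_G)$, which follows from the fact that the natural inclusion $F_P \times^P P \hookrightarrow F_P \times^P G = F_G \times^G G$ is injective. Thus the heart of the matter is to prove: given two reductions $F_P, \tilde F_P \in \Bun_{P,\lambdach_P}^{ss}(S)$ of the same $G$-bundle $F_G$ on $X \times S$, one has $F_P = \tilde F_P$ as reductions of $F_G$.

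By Proposition \ref{Plucker redundancy}, and since $V^\lambda[\lambda + \BZ R_M] = V^\lambda[\lambda]$ for $\lambda \in \Lambda_G^+ \cap \Lambda_{G,P}$ by Lemma \ref{the usual sum of weight spaces}$(b)$, it suffices to show that for each such $\lambda$ the two line subbundles $V^\lambda[\lambda]_{F_M}$ and $V^\lambda[\lambda]_{\tilde F_M}$ coincide as subbundles of $V^\lambda_{F_G}$. Equivalently, we must show that the composite
$$\alpha^\lambda: \ V^\lambda[\lambda]_{F_M} \ \longinto \ V^\lambda_{F_G} \ \longonto \ V^\lambda_{F_G}/V^\lambda[\lambda]_{\tilde F_M}$$
vanishes on $X \times S$.

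By Proposition \ref{HN}, applied on each geometric fiber $X_s$ for $s \in S$, both $V^\lambda[\lambda]_{F_M}|_{X_s}$ and $V^\lambda[\lambda]_{\tilde F_M}|_{X_s}$ equal the maximal destabilizing subsheaf of $V^\lambda_{F_G}|_{X_s}$. In particular the Harder--Narasimhan slopes of the quotient $(V^\lambda_{F_G}/V^\lambda[\lambda]_{\tilde F_M})|_{X_s}$ are all strictly less than the slope of the line bundle $V^\lambda[\lambda]_{F_M}|_{X_s}$. Since any nonzero map from a line bundle $L$ to a coherent sheaf with $\mu_{\max} < \mu(L)$ would produce a quotient of the target of slope $\geq \mu(L)$, contradicting Harder--Narasimhan, we conclude $\alpha^\lambda|_{X_s} = 0$ for every $s$.

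To upgrade this fiberwise vanishing to global vanishing, set $\CG := \CH om\bigl(V^\lambda[\lambda]_{F_M}, V^\lambda_{F_G}/V^\lambda[\lambda]_{\tilde F_M}\bigr)$ on $X \times S$. Since $V^\lambda[\lambda]_{\tilde F_M}$ is fibrewise a subbundle, the quotient $V^\lambda_{F_G}/V^\lambda[\lambda]_{\tilde F_M}$ is locally free, and hence so is $\CG$; in particular $\CG$ is $S$-flat. The previous paragraph yields $H^0(X_s, \CG_s) = 0$ for every $s \in S$, so by upper semicontinuity $h^0(X_s, \CG_s)$ is identically zero on $S$, and Grauert's base-change theorem gives $\pi_*\CG = 0$ for the projection $\pi: X \times S \to S$. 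Since $\alpha^\lambda$ is a global section of $\CG$, and hence of $\pi_*\CG$, this forces $\alpha^\lambda = 0$, as required.

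The main technical step is precisely this last passage from fiberwise to global vanishing; the remaining ingredients (Plücker redundancy and the Tannakian characterization of the canonical reduction via Proposition \ref{HN}) have already been established, and the hypothesis on the characteristic or on $P = B$ enters only through Proposition \ref{HN}.
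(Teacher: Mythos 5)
Your proposal is correct and follows essentially the same route as the paper: reduce via Proposition \ref{Plucker redundancy} to the equality of the induced line subbundles for $\lambda \in \Lambda_G^+ \cap \Lambda_{G,P}$, prove this by establishing vanishing of the relevant $\mathcalHom$-sheaf's pushforward via cohomology and base change, and handle the fiberwise case by Proposition \ref{HN} together with the standard slope argument (the paper's Lemma \ref{MDS lemma}; note your phrase ``quotient of the target'' should read ``subsheaf of the target,'' namely the image of $L$). The only additions are your explicit treatment of faithfulness on automorphisms, which the paper leaves implicit.
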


\medskip

\begin{proof}
Let $F_P \in \Bun_{P,\lambdach_P}^{ss}(S)$ and $\tilde{F}_P \in \Bun_{P,\lambdach_P}^{ss}(S)$ be two reductions to $P$ of the same $G$-bundle $F_G$ on $X \times S$. We need to prove that $F_P = \tilde{F}_P$.
We first show that the induced subbundles
$$V[\lambda + \BZ R_M]_{F_M} \ \stackrel{\kappa^{\lambda}}{\longinto} \ V_{F_G} \ \stackrel{\tilde{\kappa}_{\lambda}}\longotni \ V[\lambda + \BZ R_M]_{\tilde{F}_M}$$
are equal for all $G$-representations $V$ of highest weight $\lambda \in \Lambda_G^+$, where the notation follows Section \ref{Subbundles induced by reductions} above.

\medskip

To prove the equality of the two subbundles, we prove the stronger assertion that the vector space of homomorphisms from $V[\lambda + \BZ R_M]_{\tilde F_M}$ to the quotient bundle $V_{F_G} / V[\lambda + \BZ R_M]_{F_M}$ is trivial:
\begin{equation} 
\Hom_{\CO_{X \times S}} \bigl(V[\lambda + \BZ R_M]_{\tilde F_M}, V_{F_G} / V[\lambda + \BZ R_M]_{F_M} \bigr) \ = \ 0 \, . \tag{$\star$}
\end{equation}

\medskip

\noindent As this vector space is the space of global sections $H^0(X \times S, E)$ of the vector bundle
$$E \ := \ \bigl( V_{F_G} / V[\lambda + \BZ R_M]_{F_M} \bigr) \otimes \bigl( V[\lambda + \BZ R_M]_{\tilde F_M} \bigr)^*$$

\medskip

\noindent on $X \times S$, we can prove $(\star)$ by showing that the pushforward $p_*(E)$ of $E$ along the projection $p: X \times S \to S$ is equal to $0$. By the theorem on cohomology and base change,
the latter can be checked on the fibers of the projection~$p$, i.e., by showing that $H^0(X \times \bar s, E_{\bar s}) = 0$ for every geometric point $\bar s$ of~$S$. We thus only need to prove $(\star)$ for $S = \Spec (k)$.

\medskip

In the case $S = \Spec (k)$, Proposition \ref{HN} shows that the subbundles $V[\lambda + \BZ R_M]_{F_M}$ and $V[\lambda + \BZ R_M]_{\tilde F_M}$ agree as they are both equal to the maximal destabilizing subsheaf of $V_{F_G}$. Now $(\star)$ follows from the next lemma, Lemma \ref{MDS lemma} below.

\medskip

Having established the equality of the two subbundles, the assertion of the proposition follows by taking the representations $V$ to be the Weyl modules~$V^\lambda$ and using Proposition \ref{Plucker redundancy} above.
\end{proof}

\medskip

\begin{lemma}
\label{MDS lemma}
Let $F$ be a vector bundle on $X$ and let $D \subset F$ denote the maximal destabilizing subsheaf of $F$. Then the vector space of homomorphisms from $D$ to the quotient $F/D$ is trivial:
$$\Hom_{\CO_X} (D, F/D) \ = \ 0 \, .$$
\end{lemma}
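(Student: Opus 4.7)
The plan is to argue by contradiction: assuming a nonzero homomorphism $\phi \colon D \to F/D$ exists, I will construct a locally free subsheaf $D' \subseteq F$ strictly containing $D$ with $\mu(D') \geq \mu(D)$, contradicting the defining properties of the maximal destabilizing subsheaf recalled in Section \ref{The Harder-Narasimhan filtration}.

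First I would observe that $D$ is itself semistable: any locally free subsheaf $D_1 \subseteq D$ is also a locally free subsheaf of $F$, so the maximality of $\mu(D)$ among locally free subsheaves of $F$ forces $\mu(D_1) \leq \mu(D)$. Consequently every nonzero quotient of $D$ has slope at least $\mu(D)$. Letting $I \subseteq F/D$ denote the image of $\phi$, we therefore have $\mu(I) \geq \mu(D)$, and passing to the saturation $\bar I \subseteq F/D$ (which is a subbundle and has slope at least that of $I$) gives $\mu(\bar I) \geq \mu(D)$.

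Next I would let $D' \subseteq F$ be the preimage of $\bar I$ under the quotient map $F \twoheadrightarrow F/D$. Because $\bar I$ is saturated, the quotient $(F/D)/\bar I$ is locally free, so $D'$ is the kernel of the composition $F \to (F/D)/\bar I$ and is hence a subbundle of $F$. From the exact sequence
$$0 \longto D \longto D' \longto \bar I \longto 0$$
the slope $\mu(D')$ is the rank-weighted average of $\mu(D)$ and $\mu(\bar I)$, so $\mu(D') \geq \mu(D)$, with equality if and only if $\mu(\bar I) = \mu(D)$.

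Finally I would split into cases. If $\mu(D') > \mu(D)$, this directly contradicts the fact that $D$ has maximal slope among locally free subsheaves of $F$. If $\mu(D') = \mu(D)$, then the tie-breaking clause in the definition of the maximal destabilizing subsheaf forces $D' \subseteq D$, which by the exact sequence means $\bar I = 0$ and hence $I = 0$, contradicting $\phi \neq 0$. The only subtlety is ensuring one works consistently with locally free subsheaves when invoking the defining properties of $D$, and this is precisely what the initial passage to the saturation $\bar I$ accomplishes; beyond that, this is a standard semistability argument and I do not anticipate any serious obstacle.
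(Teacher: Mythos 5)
Your proof is correct, but it takes a slightly different route from the paper's. The paper's argument stays entirely on the quotient side: it lets $H$ be the image of the hypothetical nonzero map, invokes the maximal destabilizing subsheaf $K$ of $F/D$ together with the strict inequality $\mu(K) < \mu(D)$ (i.e., the strict decrease of slopes along the Harder--Narasimhan filtration of $F$, recalled in Section \ref{The Harder-Narasimhan filtration}), and concludes $\mu(H) \leq \mu(K) < \mu(D)$, contradicting the semistability of $D$ since $H$ is a quotient of $D$. You instead pull the image back: saturating $I$ inside $F/D$ and forming the preimage $D' \supsetneq D$ lets you contradict the two defining properties of the maximal destabilizing subsheaf directly (maximal slope, then maximal rank among subsheaves of maximal slope), without ever invoking the HN filtration of $F/D$ or the strict slope inequality. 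In effect your extension argument re-proves the fact $\mu(K) < \mu(D)$ that the paper simply quotes, so your version is more self-contained and elementary, at the cost of being longer; the paper's is shorter because it leans on standard HN theory already recalled earlier. Both arguments are complete and the bookkeeping in yours (local freeness of $D'$, the rank-weighted average, the tie-breaking case) is handled correctly.
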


\begin{proof}
Assume there exists a non-zero map $D \to F/D$ and let $H$ denote its image. Then the maximal destabilizing subsheaf $K$ of the quotient $F/D$ must have strictly smaller slope than $D$, and hence
$$\mu (H) \ \leq \ \mu (K) \ < \ \mu (D) \, .$$
But since $H$ is also a quotient of $D$ this contradicts the semistability of $D$ and finishes the proof.
\end{proof}

\medskip\medskip\medskip

\ssec{Proof of the comparison theorem in characteristic $0$}
\label{Proof of the comparison theorem in characteristic 0}
\mbox{} \\

We conclude this section by demonstrating how in characteristic $0$ the comparison theorem, Theorem \ref{comparison theorem}, follows directly from Proposition \ref{HN} above.

\medskip

\begin{proof}[Proof of Theorem \ref{comparison theorem} in characteristic $0$]
As in the proof of Proposition \ref{monoprop} above, consider the inclusions of subbundles $\kappa^{\lambda,1}$, $\kappa^{\lambda,2}$ induced by $F_{P_1}$ and $F_{P_2}$, for any $G$-representation $V$ of highest weight $\lambda \in \Lambda_G^+$. Then since the maximal destabilizing subsheaf of a vector bundle has maximal slope among all subbundles, the inequality
$$\phi_{P_1}(\lambdach_{P_1}) \ \geq \ \phi_{P_2}(\lambdach_{P_2})$$
follows immediately from Proposition \ref{HN} together with Proposition \ref{slope-of-associated-bundles}~$(c)$ and Lemma \ref{difference lemma for two parabolics} $(a)$.

\medskip

Next assume that $\phi_{P_1}(\lambdach_{P_1}) = \phi_{P_2}(\lambdach_{P_2})$. As $\lambdach_{P_1}$ is dominant $P_1$-regular this equality forces $\CI_{M_2} \subset \CI_{M_1}$ and thus $P_2 \subset P_1$. We need to show that the $P_1$-bundle $\tilde F_{P_1}$ obtained from $F_{P_2}$ by extension of structure group along $P_2 \subset P_1$ agrees with the reduction $F_{P_1}$. As before it suffices to show that the subbundles
$$V[\lambda + \BZ R_{M_1}]_{\tilde F_{M_1}} \ \longinto \ V_{F_G} \ \longotni \ V[\lambda + \BZ R_{M_1}]_{F_{M_1}}$$
agree for any $G$-representation $V$ of highest weight $\lambda \in \Lambda_G^+$, since we can then take the representations $V$ to be the Weyl modules $V^\lambda$ and invoke Proposition~\ref{Plucker redundancy} above.

\medskip

To prove this, note first that the slope of $\tilde F_{P_1}$ is again equal to $\phi_{P_1}(\lambdach_{P_1})$, and let $V_\bullet$ denote the filtration of $V$ from Section \ref{Filtrations on representations of highest weight} corresponding to the element $\phi_{P_1}(\lambdach_{P_1})$. By Lemma \ref{P-filtration}, the equality of the above subbundles will follow once we show that in fact all terms of the filtrations $(V_\bullet)_{\tilde F_{P_1}}$ and~$(V_\bullet)_{F_{P_1}}$ of $V_{F_G}$ agree. To see the latter, recall from the proof of Proposition \ref{HN} that
$$\mu((\gr_q V_\bullet)_{\tilde F_{M_1}}) \ = \ q \ = \ \mu((\gr_q V_\bullet)_{F_{M_1}})$$
for all $q$. This and Proposition \ref{HN} together show that the two filtrations satisfy the hypotheses of the next lemma and are thus equal.
\end{proof}

\medskip

\begin{lemma}
Let $E$ be a vector bundle on $X$ and let
$$
E_\bullet = ( 0 \neq E_1 \subsetneq E_2 \subsetneq \ldots \subsetneq E_m = E )
$$
be its Harder-Narasimhan filtration. Let 
$$
F_\bullet = ( 0 \neq F_1 \subsetneq F_2 \subsetneq \ldots \subsetneq F_m = E )
$$
be another filtration of $E$ by subbundles, with the same numerical data as $E^\bullet$, i.e., of the same length as $E^\bullet$ and such that $\rk(F_i) = \rk(E_i)$ and $\mu(F_i/F_{i+1}) = \mu(E_i/E_{i+1})$ for all $i$. Then $F^\bullet = E^\bullet$. 
\end{lemma}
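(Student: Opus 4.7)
The plan is to proceed by induction on the length $m$, the case $m=1$ being trivial. The heart of the argument is to show that $F_1 = E_1$; once this is established, one applies the inductive hypothesis to $E/E_1$ equipped with the filtration $F_\bullet/F_1$, whose Harder-Narasimhan counterpart is $E_\bullet/E_1$. Indeed, the latter filtration has successive quotients $E_i/E_{i-1}$ for $i \geq 2$, which are semistable with strictly decreasing slopes, so it is genuinely the Harder-Narasimhan filtration of $E/E_1$, and the numerical data of $F_\bullet/F_1$ matches it by construction.

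To prove $F_1 = E_1$, the idea is to exploit the fact that the numerical hypotheses together with the Harder-Narasimhan property of $E_\bullet$ force $\mu(F_1) = \mu(E_1)$, and that this slope is strictly larger than the slope $\mu(E_2/E_1)$ of the maximal destabilizing subsheaf of $E/E_1$. I would split $F_1$ via the short exact sequence
$$0 \ \longto \ F_1 \cap E_1 \ \longto \ F_1 \ \longto \ \bar F_1 \ \longto \ 0,$$
where $\bar F_1$ denotes the image of $F_1$ in $E/E_1$. Since $E_1$ is semistable of slope $\mu(F_1)$, one has $\mu(F_1 \cap E_1) \leq \mu(F_1)$, and if $\bar F_1 \neq 0$, then since the maximal destabilizing subsheaf of $E/E_1$ is $E_2/E_1$, one has $\mu(\bar F_1) \leq \mu(E_2/E_1) < \mu(F_1)$. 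Additivity of degree and rank in the short exact sequence then leads to a contradiction unless $\bar F_1 = 0$, forcing $F_1 \subseteq E_1$. Equality then follows from the equality of ranks $\rk F_1 = \rk E_1$ together with the observation that $F_1$, being saturated in $E$, is also saturated in $E_1$.

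The main potential pitfall is the case analysis in the degree-and-rank balancing step: one has to treat separately the degenerate cases in which either $F_1 \cap E_1 = 0$ (so that $F_1$ injects into $E/E_1$ and its slope would be bounded above by $\mu(E_2/E_1) < \mu(F_1)$, a contradiction) or $\bar F_1$ has full rank $\rk F_1$ (which reduces to the previous case after saturating). Once $F_1 = E_1$ is settled, the inductive step proceeds without further difficulty.
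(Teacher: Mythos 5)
Your proof is correct and takes essentially the route the paper intends: the paper's own proof is a one-line appeal to "repeatedly using the properties of the maximal destabilizing subsheaf," which is precisely the induction you carry out. Note only that the containment $F_1 \subseteq E_1$ follows immediately from the stated defining property of the maximal destabilizing subsheaf (any subsheaf $F'$ with $\mu(F') = \mu(E_1)$ is contained in $E_1$), so your splitting argument is in effect a proof of that property rather than an application of it.
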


\begin{proof}
Follows easily by repeatedly using the properties of the maximal destabilizing subsheaf stated in Section \ref{The Harder-Narasimhan filtration}.
\end{proof}

\bigskip\bigskip\bigskip\bigskip\bigskip\bigskip

\section{Construction of the strata and conclusion of proof}
\label{Proof of the remaining parts of the main theorem}

\medskip

In this section we first define the strata $\Bun_G^{P, \lambdach_P}$ using Drinfeld's compactifications $\bBun_P$ and then prove the main theorem, Theorem \ref{main theorem}, using the results of the previous sections. We begin by reviewing:

\medskip

\ssec{Drinfeld's compactification $\bBun_P$}
\label{Drinfeld overview}

\sssec{Overview}

In \cite{GeometricEisenstein}, D. Gaitsgory and A. Braverman construct, following V. Drinfeld, a relative compactification $\bBun_P$ of $\Bun_P$ along the fibers of the projection map
$$\Fp_P: \ \Bun_P \ \longto \ \Bun_G$$
which will be used in the present article to construct the strata $\Bun_G^{P,\lambdach_P}$.

\medskip

Below we summarize the properties of $\bBun_P$ relevant to the proof of Theorem \ref{main theorem}. A more extensive discussion can be found in \cite{GeometricEisenstein}, \cite{ICofDrinfeldCompactifications}, and also in Section \ref{drinfeld} of the present article. There we also show how $\bBun_P$ can be constructed for an arbitrary reductive group $G$, whereas the definition in~\cite{GeometricEisenstein} yields the ``correct'' object only in the case when the derived group~$[G,G]$ of $G$ is simply connected.

\medskip

\sssec{First properties}

Drinfeld's compactification $\bBun_P$ is an algebraic stack containing $\Bun_P$ as an open dense substack, and is equipped with a schematic map
$$\Fpb: \bBun_P \ \longto \ \Bun_G$$
which extends the projection $\Fp_P$ and which is proper after restriction to any connected component of $\bBun_P$. Furthermore the inclusion of $\Bun_P$ into $\bBun_P$ induces a bijection on the level of connected components:
$$\pi_0 (\bBun_P) \ = \ \pi_0 (\Bun_P) \ = \ \Lambdach_{G,P}$$

\medskip

\sssec{The maps $j_{\check\theta}$}
\label{the maps j theta quick}

The stack $\bBun_P$ possesses a natural stratification which we now describe, for a fixed component $\bBun_{P, \lambdach_P}$.
Given an element
$$\check\theta \ \ = \ \sum_{i \in \CI \setminus \CI_M} n_i \alphach_i$$
of $\Lambdach_{G,P}^{pos}$, we define $X^{\check\theta}$ to be the partially symmetrized power of the curve
$$X^{\check\theta} \ := \ \prod_{i \in \CI \setminus \CI_M} X^{(n_i)} \, .$$

\noindent Then for each $\check\theta$ there exists a naturally defined locally closed immersion
$$j_{\check\theta}: \ X^{\check\theta} \times \Bun_{P, \lambdach_P + \check\theta} \ \longinto \ \bBun_{P, \lambdach_P}$$
which renders the diagram
\begin{equation}
\begin{aligned}
\xymatrix@+10pt{
  X^{\check\theta} \times \Bun_{P, \lambdach_P + \check\theta}    \ar[d]^{pr_2}  \ar@{^{ (}->}[r]^{ \ \ \ j_{\check\theta}}  &  \bBun_{{P, \lambdach_P}} \ar[d]^{\Fpb} \\ 
  \Bun_{P, \lambdach_P + \check\theta}      \ar[r]^{\Fp_P}                  &    \Bun_G \\
}
\end{aligned}
\tag{\ref{the maps j theta quick}}
\end{equation}
commutative. If $\check\theta = 0$ the map $j_0$ is just the inclusion $\Bun_{P, \lambdach_P} \into \bBun_{P, \lambdach_P}$.

\medskip

\sssec{The stratification}
\label{bBunP stratification quick}

The collection of locally closed substacks $\bBun_{P, \lambdach_P}^{\check\theta}$ corresponding to the immersions $j_{\check{\theta}}$ defines a stratification of $\bBun_{P, \lambdach_P}$ in the sense that on the level of $k$-points $\bBun_{P, \lambdach_P}$ is equal to the disjoint union
$$\bBun_{P, \lambdach_P} \ = \ \bigcup_{\check\theta \in \Lambdach_{G,P}^{pos}} \bBun_{P, \lambdach_P}^{\check\theta} \, .$$

\medskip

\sssec{The semistable locus}

In the next proposition we demonstrate that Drinfeld's compactifications are well-suited to the study of semistability by giving a quick proof of the well-known fact that the semistable locus in~$\Bun_G$ is open. This yields precisely the open strata of the stratification in Theorem~\ref{main theorem}, and will also be used in the construction of all other strata via Drinfeld's compactifications in the next section.

\medskip

\begin{proposition}
\label{semistable locus}
The semistable locus $\Bun_G^{ss}$ is open in $\Bun_G$.
\end{proposition}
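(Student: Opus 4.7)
My plan is to realize, on each connected component $\Bun_{G,\lambdach_G}$, the complement of $\Bun_{G,\lambdach_G}^{ss}$ as a \emph{finite} union---indexed by the maximal parabolics $P_i$---of images of connected components of $\bBun_{P_i}$ under the proper map $\Fpb$. Finite unions of closed substacks are closed, and this will give the result on each component and hence on $\Bun_G$.

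First I would invoke condition $(c)$ of Section \ref{definition of semistability}: $F_G \in \Bun_{G,\lambdach_G}$ fails to be semistable precisely when there exist a maximal parabolic $P_i$ and a degree $\lambdach_{P_i} \in \Lambdach_{G,P_i}$ with $\phi_{P_i}(\lambdach_{P_i}) \not\leq \phi_G(\lambdach_G)$ such that $F_G$ admits a reduction in $\Bun_{P_i,\lambdach_{P_i}}$. Fix such an $i$. Using Lemma \ref{compatibility of slope maps} I would pick a rational lift $\widetilde{\lambdach_G} \in \Lambdach_{G,P_i}^\BQ$ of $\lambdach_G$ satisfying $\phi_{P_i}^\BQ(\widetilde{\lambdach_G}) = \phi_G(\lambdach_G)$; then any $\lambdach_{P_i}$ projecting to $\lambdach_G$ differs from $\widetilde{\lambdach_G}$ by a rational multiple of $\alphach_i$, yielding
$$\phi_{P_i}(\lambdach_{P_i}) - \phi_G(\lambdach_G) \ = \ c \cdot \phi_{P_i}(\alphach_i)$$
for some $c \in \BQ$. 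By Proposition \ref{phialpha} the element $\phi_{P_i}(\alphach_i)$ is a nonzero member of $\Lambdach_G^{\BQ,pos}$, so $\phi_{P_i}(\lambdach_{P_i})$ is always comparable to $\phi_G(\lambdach_G)$, and the ``bad'' condition $\not\leq$ reads precisely $c > 0$. Since the preimage of $\lambdach_G$ in $\Lambdach_{G,P_i}$ is a coset of the free rank-one kernel generated by the image of $\alphach_i$, the possible $c$-values form a discrete arithmetic progression, so (provided bad degrees exist for this $P_i$) there is a unique smallest bad degree $\lambdach_{P_i}^{(i,\min)}$, and every other bad degree takes the form $\lambdach_{P_i}^{(i,\min)} + \check\theta$ with $\check\theta \in \Lambdach_{G,P_i}^{pos}$.

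Next I would apply the stratification of $\bBun_{P_i,\lambdach_{P_i}^{(i,\min)}}$ recalled in Sections \ref{the maps j theta quick} and \ref{bBunP stratification quick}: the commutativity of diagram \ref{the maps j theta quick} together with the disjoint decomposition of $\bBun_{P_i,\lambdach_{P_i}^{(i,\min)}}$ into the strata $\bBun_{P_i,\lambdach_{P_i}^{(i,\min)}}^{\check\theta}$ yields, on $k$-points,
$$\Fpb\bigl(\bBun_{P_i,\lambdach_{P_i}^{(i,\min)}}\bigr) \ = \ \bigcup_{\check\theta \in \Lambdach_{G,P_i}^{pos}} \Fp_{P_i}\bigl(\Bun_{P_i,\, \lambdach_{P_i}^{(i,\min)} + \check\theta}\bigr),$$
which by the previous paragraph is precisely the locus in $\Bun_{G,\lambdach_G}$ of $G$-bundles admitting a bad reduction to $P_i$. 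Since $\Fpb$ is proper on each connected component of $\bBun_{P_i}$, this image is a closed substack of $\Bun_G$.

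Taking the union over the finitely many $i \in \CI$ would then exhibit the complement of $\Bun_{G,\lambdach_G}^{ss}$ as a finite union of closed substacks, as required. The only subtle step is the combinatorial collapse in the second paragraph: a priori the collection of bad degrees for a given $P_i$ is infinite, and they are gathered into the image of a \emph{single} proper map only by combining the existence of a minimal bad degree (a consequence of the positivity statement in Proposition \ref{phialpha}) with the $\Lambdach_{G,P}^{pos}$-indexed stratification of $\bBun_P$---that is, precisely the features for which Drinfeld's compactifications were designed.
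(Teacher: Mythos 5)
Your proposal is correct and follows essentially the same route as the paper's proof: reduce to condition $(c)$ of the definition of semistability, use Proposition \ref{phialpha} and the fact that $\Lambdach_{G,P_i}^{\BQ,pos}$ is generated by $\alphach_i$ to identify the bad degrees for each maximal parabolic as a single translate $\check\mu_{P_i} + \Lambdach_{G,P_i}^{pos}$, and then absorb them all into the image of the one proper map $\Fpb|_{\bBun_{P_i,\check\mu_{P_i}}}$ via the stratification of Section \ref{bBunP stratification quick}. The "subtle step" you flag at the end is exactly the point the paper's argument turns on as well.
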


\medskip

\begin{proof}
We show that $\Bun_{G,\lambdach_G}^{ss}$ is open in $\Bun_{G,\lambdach_G}$ for a fixed $\lambdach_G \in \Lambdach_{G,G}$. For every $i \in \CI$ let $\Upsilon_i$ denote the subset of $\Lambdach_{G,P_i}$ consisting of those elements~$\lambdach_{P_i}$ that map to $\lambdach_G$ under the natural projection and which satisfy that
$$\phi_{P_i}(\lambdach_{P_i}) \ \nleq \ \phi_G(\lambdach_G) \, .$$
Then by the definition of semistability in \ref{definition of semistability} $(c)$, a $G$-bundle in $\Bun_{G,\lambdach_G}$ is semistable if and only if it does not lie in the infinite union of $k$-points
$$\bigcup_{i \in \CI} \bigcup_{\lambdach_{P_i} \in \Upsilon_i} \Fp_{P_i}(\Bun_{P_i, \lambdach_{P_i}}) \, .$$
We prove the proposition by showing that this union is the collection of $k$-points of a closed substack of $\Bun_G$.

\medskip

We first describe the sets $\Upsilon_i$ in a different way, taking into account the fact that the parabolics $P_i$ are maximal. Namely, we claim that an element $\lambdach_{P_i} \in \Lambdach_{G, P_i}$ which maps to $\lambdach_G$ under the natural projection satisfies the above condition
$$\phi_{P_i}(\lambdach_{P_i}) \ \nleq \ \phi_G(\lambdach_G)$$
in $\Lambdach_G^\BQ$ if and only if the inequality
$$\lambdach_{P_i} \ > \ \phi_G(\lambdach_G)$$
holds in $\Lambdach_{G,P_i}^\BQ$, i.e., if and only if
$$\lambdach_{P_i} \ \in \ \ \bigl( \phi_G(\lambdach_G) + \Lambdach_{G,P_i}^{\BQ, pos} \bigr) \, .$$
This assertion follows easily from part $(a)$ of Proposition \ref{phialpha} and the fact that the positive cone $\Lambdach_{G,P_i}^{\BQ,pos}$ is generated by the single coroot $\alphach_i$.

\medskip

The assertion implies that each subset $\Upsilon_i \subset \Lambdach_{G,P_i}$ contains a unique minimal element $\check\mu_{P_i}$ with respect to the partial order $\leq$ on $\Lambdach_{G,P_i}$ induced by $\Lambdach_{G,P_i}^{pos}$. We can then write the set $\Upsilon_i$ as
$$\Upsilon_i \ = \ \check\mu_{P_i} + \Lambdach_{G,P_i}^{pos} \, .$$

\medskip

We now claim that the finite union
$$\bigcup_{i \in \CI} \bar\Fp_{P_i}(\bBun_{P_i, \check\mu_{P_i}})$$
naturally forms a closed substack of $\Bun_G$ which exhibits the desired collection of $k$-points. Indeed, since the projection map $\bar\Fp_{P_i}$ is proper when restricted to any connected component of $\bBun_{P_i}$, the above union naturally carries a closed substack structure. To see that it possesses the desired collection of $k$-points, observe first that the stratification result \ref{bBunP stratification quick} and the commutativity of diagram \ref{the maps j theta quick} together imply that
$$\bar\Fp_{P_i}(\bBun_{P_i, \check\mu_{P_i}}) \ = \ \bigcup_{\check\theta \in \Lambdach_{G,P_i}^{pos}} \Fp_{P_i}(\Bun_{P_i, \check\mu_{P_i} + \check\theta})$$
on the level of $k$-points. Combined with the above fact that $\Upsilon_i = \check\mu_{P_i} + \Lambdach_{G,P_i}^{pos}$ we conclude that
$$\bigcup_{i \in \CI} \bar\Fp_{P_i}(\bBun_{P_i, \check\mu_{P_i}}) \ = \ \bigcup_{i \in \CI} \bigcup_{\lambdach_{P_i} \in \Upsilon_i} \Fp_{P_i}(\Bun_{P_i, \lambdach_{P_i}})$$
on $k$-points, finishing the proof.
\end{proof}

\medskip\medskip\medskip\medskip

\ssec{Construction of the strata and proof of $(a)$, $(b)$}
\label{Construction of the strata and proof of $(a)$, $(b)$}

\mbox{} \\

In this section we define the substacks $\Bun_G^{P,\lambdach_P}$ which form the strata of the stratification in Theorem \ref{main theorem}, and then complete the proofs of parts~$(a)$ and $(b)$ of the theorem.

\medskip

\sssec{Definition of the substacks $\Bun_G^{P,\lambdach_P}$}

Let $\lambdach_P \in \Lambdach_{G,P}$ be dominant $P$-regular, and consider Drinfeld's relative compactification
$$\Fpb: \ \bBun_P \ \longto \ \Bun_G$$
from Section \ref{Drinfeld overview}. Since the map $\Fpb$ is proper when restricted to any connected component and since $\Bun_P^{ss}$ is open in $\bBun_P$ by Proposition \ref{semistable locus} applied to the Levi $M$, the images of $\bBun_{P, \lambdach_P}$ and $\bBun_P \setminus \Bun_{P, \lambdach_P}^{ss}$ under~$\Fpb$ are closed substacks of $\Bun_G$. We then define $\Bun_G^{P,\lambdach_P}$ to be the locally closed substack
$$\Bun_G^{P,\lambdach_P} \ := \ \Fpb (\bBun_{P, \lambdach_P}) \ \setminus \ \Fpb (\bBun_P \setminus \Bun_{P, \lambdach_P}^{ss}) \, .$$

\medskip\medskip

\sssec{Proof of parts $(a)$ and $(b)$ of the main theorem}

We now use the properties of Drinfeld's compactifications from Section \ref{Drinfeld overview} above and the comparison theorem, Theorem \ref{comparison theorem}, to prove part~$(a)$ of the main theorem.

\medskip

\begin{proof}[Proof of Theorem \ref{main theorem} $(a)$]

Let $\CU$ be the open substack of $\bBun_{P, \lambdach_P}$ defined as the following fiber product:
$$\xymatrix@+10pt{
\CU \  \ar[d]  \ar@{^{ (}->}[r] & \bBun_{P, \lambdach_P} \ar[d]^{\Fpb} \\
\Bun_G^{P,\lambdach_P} \ar@{^{ (}->}[r]  &   \Fpb (\bBun_{P, \lambdach_P})   \\
}$$

\medskip\medskip

\noindent We claim that in fact $\CU = \Bun_{P, \lambdach_P}^{ss}$. To prove this, note first that both $\Bun_{P, \lambdach_P}^{ss}$ and $\CU$ are open substacks of $\overline{\Bun}_{P, \lambdach_P}$. Indeed, for $\Bun_{P, \lambdach_P}^{ss}$ this follows from Proposition \ref{semistable locus} applied to the Levi $M$, and for $\CU$ this holds by construction. Hence to prove the above claim it suffices to show that $\CU$ and $\Bun_{P, \lambdach_P}^{ss}$ coincide on the level of $k$-points.

\medskip

The inclusion $\CU \subset \Bun_{P,\lambdach_P}^{ss}$ is immediate from the definition of $\Bun_G^{P,\lambdach_P}$. We prove the converse by showing that given a $k$-point of $\Bun_{P, \lambdach_P}^{ss}$, there exists no other $k$-point of $\bBun_{P, \lambdach_P}$ with the same image under $\Fpb$. By the stratification of $\bBun_{P,\lambdach_P}$ in Section \ref{bBunP stratification quick} and the commutative diagram~\ref{the maps j theta quick}, the last assertion is equivalent to the following claim: If a $G$-bundle on $X$ admits a reduction to $P$ lying in $\Bun_{P,\lambdach_P}^{ss}$, then this reduction is in fact the only reduction to $P$ which lies in any of the connected components $\Bun_{P, \lambdach_P + \check\theta}$ for $\check\theta \in \Lambdach_{G,P}^{pos}$.

\medskip

To prove this claim, note first that for any non-zero $\check\theta \in \Lambdach_{G,P}^{pos}$ we have
$$\phi_P (\lambdach_P + \check\theta) \ > \ \phi_P (\lambdach_P)$$
by part $(a)$ of Proposition \ref{phialpha}. Thus the claim follows from the comparison theorem, Theorem \ref{comparison theorem}, completing the proof that $\CU = \Bun_{P, \lambdach_P}^{ss}$.

\medskip

Next, the fact that $\CU = \Bun_{P, \lambdach_P}^{ss}$ implies that the map
$$\Bun_{P,\lambdach_P}^{ss} \ \longto \ \Bun_G^{P,\lambdach_P}$$
is schematic and proper as it is the base change of a schematic and proper map. Furthermore, Theorem \ref{comparison theorem} shows that the canonical reduction is unique, and hence every geometric fiber of this map must consist of precisely one point as a topological space. But since the map is already proper, it has to be finite, and is hence an almost-isomorphism.

\medskip

Again since $\CU = \Bun_{P, \lambdach_P}^{ss}$ in the fiber square above, the substack $\Bun_G^{P,\lambdach_P}$ is the stack-theoretic image of the stack $\Bun_{P, \lambdach_P}^{ss}$. As the latter is reduced and quasicompact, the same holds true for the former.
Finally, the uniqueness of the substacks $\Bun_G^{P, \lambdach_P}$ is immediate from the required properties.
\end{proof}

\medskip\medskip

Part $(b)$ of the main theorem now follows immediately from part $(a)$ and the results of Section \ref{The case of characteristic $0$} above:

\medskip

\begin{proof}[Proof of Theorem \ref{main theorem} $(b)$]
The map is proper by part $(a)$ and a monomorphism by Proposition \ref{monoprop}, and hence a closed immersion. As both stacks are reduced, this and part $(a)$ together imply that the map is an isomorphism.
\end{proof}

\medskip

\ssec{Existence of the canonical reduction and proof of $(c)$}
\label{Proof of part $(c)$}

\mbox{} \\

By Theorem \ref{comparison theorem} the canonical reduction is unique. Next we prove its existence, and thus establish part~$(c)$ of the main theorem. In fact, the extremal property of the canonical reduction established in Theorem \ref{comparison theorem} suggests a strategy for its construction, which we now carry out.

\medskip

\sssec{Notation}
\label{omega notation}

As usual let $(\omega_i)_{i \in \CI}$ be the basis of $\Lambda_{[G,G]}^\BQ$ which is dual to the basis $(\alphach_i)_{i \in \CI}$ of $\Lambdach_{[G,G]}^\BQ$, and define $(\check\omega_i)_{i \in \CI}$ analogously. Furthermore for each $i \in \CI$ let $\lambda_i \in \Lambda_G^+$ be some fixed integral multiple of $\omega_i$.

\medskip\medskip

\begin{proposition}
\label{existence and uniqueness of the canonical reduction}
Every $G$-bundle on $X$ possesses a unique canonical reduction to a unique parabolic subgroup of $G$. 
\end{proposition}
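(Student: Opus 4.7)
Uniqueness of both the parabolic $P$ and the reduction $F_P$ follows immediately from the comparison theorem: applying Theorem \ref{comparison theorem} to two putative canonical reductions forces equality of their slopes, and the equality case of that theorem then forces the parabolics to coincide (using that both degrees are dominant and regular) and the reductions themselves to coincide. The real work is existence, and the plan is to exploit the extremal characterization suggested by Theorem \ref{comparison theorem}: to find a reduction whose slope $\phi_P(\lambdach_P)$ is maximal with respect to the partial order $\leq$ on $\Lambdach_G^\BQ$ among all slopes of reductions of $F_G$ to any parabolic of $G$.

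The plan is to proceed by induction on the semisimple rank of $G$, the base case $G$ a torus being trivial. For the inductive step, let $\mathcal{S}$ denote the subset of $\Lambdach_G^\BQ$ consisting of the slopes of all reductions of $F_G$. The first task is to show that $\mathcal{S}$ admits a maximum. Boundedness comes from two inputs: by Lemma \ref{difference lemma for two parabolics}$(a)$, $\mathcal{S}$ is contained in a single coset of $\sum_{i \in \CI} \BQ \alphach_i$; and for each $i \in \CI$ and each $\lambda_i$ as in Section \ref{omega notation}, Proposition \ref{slope-of-associated-bundles}$(c)$ identifies the pairing $\langle \phi_{P'}(\lambdach_{P'}), \lambda_i \rangle$ with the slope of a subbundle of the fixed vector bundle $V^{\lambda_i}_{F_G}$, and the standard boundedness of slopes of subbundles of a fixed bundle on $X$ provides uniform upper bounds. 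Since only finitely many parabolics $P'$ occur, the elements of $\mathcal{S}$ lie in a common lattice of $\Lambdach_G^\BQ$; combined with boundedness this makes $\mathcal{S}$ finite, hence equipped with a maximum.

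Picking a maximizer $(P, F_P)$, the next step is to arrange that $\lambdach_P$ is dominant $P$-regular. Suppose $\langle \phi_P(\lambdach_P), \alpha_j \rangle \leq 0$ for some $j \in \CI \setminus \CI_M$, and let $P'$ be the parabolic with $\CI_{M'} = \CI_M \cup \{j\}$ and $F_{P'}$ the bundle induced from $F_P$. The difference $\phi_{P'}(\lambdach_{P'}) - \phi_P(\lambdach_P)$ lies in $\sum_{i \in \CI_{M'}} \BQ \alphach_i$ by Lemma \ref{difference lemma for two parabolics}$(b)$; pairing with $\alpha_k$ for each $k \in \CI_{M'}$ and using that both slopes annihilate the roots of their respective Levis yields a linear system whose coefficient matrix is the (transposed) Cartan matrix on $\CI_{M'}$ and whose right-hand side is $-\langle \phi_P(\lambdach_P), \alpha_j \rangle \, e_j$. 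Since inverses of finite-type Cartan matrices have entries $\geq 0$, the difference is $\geq 0$ in the partial order, and strictly positive when $\langle \phi_P(\lambdach_P), \alpha_j \rangle < 0$, contradicting maximality; in the remaining case of equality the slopes agree and we may replace $(P, F_P)$ by $(P', F_{P'})$. Iterating terminates since $|\CI_M|$ strictly increases, yielding a maximizer with $\lambdach_P$ dominant $P$-regular.

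The main obstacle is showing semistability of $F_P$, and this is where the inductive hypothesis enters. If $F_M$ were not semistable, the inductive hypothesis applied inside the Levi $M$ (of smaller semisimple rank) produces a canonical reduction of $F_M$ to some proper parabolic $\tilde P \subsetneq M$. This reduction lifts uniquely to a reduction $F_{P'}$ of $F_G$ to the preimage $P' \subset P$ of $\tilde P$, with Levi $\tilde M$. Under the canonical inclusions $\Lambdach_{Z_0(\tilde M)}^\BQ \subset \Lambdach_M^\BQ \subset \Lambdach_G^\BQ$, the $G$-slope $\phi_{P'}(\lambdach_{P'})$ agrees with the $M$-slope of the canonical reduction of $F_M$, and $\phi_P(\lambdach_P)$ agrees with the $M$-slope of $F_M$ itself. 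Applying Theorem \ref{comparison theorem} inside $M$ yields strict inequality between these $M$-slopes, strict because $\tilde P \neq M$. Since the simple coroots of $M$ are exactly the simple coroots $\alphach_i$ of $G$ for $i \in \CI_M$, the partial orders on $\Lambdach_M^\BQ$ and $\Lambdach_G^\BQ$ are compatible along the inclusion, so this translates to $\phi_{P'}(\lambdach_{P'}) > \phi_P(\lambdach_P)$, contradicting the maximality of $\phi_P(\lambdach_P)$ and completing the induction.
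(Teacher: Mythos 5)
Your overall strategy coincides with the paper's: extremize the slope $\phi_P(\lambdach_P)$ over all reductions, then verify dominant $P$-regularity and semistability of the extremizer. The uniqueness step and the regularity step are sound (your Cartan-matrix computation is a legitimate substitute for the paper's one-coefficient reduction via Lemma \ref{compatibility of slope maps}, and your iteration replaces the paper's choice of $P$ of maximal dimension among maximizers). However, two steps are wrong as written. First, the set $\mathcal{S}$ of slopes is in general \emph{infinite}: your bounds are only upper bounds, and there is no lower bound (already for $\GL_2$ a rank-$2$ bundle admits line subbundles of arbitrarily negative degree). So ``finite, hence equipped with a maximum'' fails twice over --- and even a finite poset has only maximal elements, not a greatest element. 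What is true, and is what the paper claims, is that $\mathcal{S}$ is bounded above and lies in a discrete subset of a fixed coset of $\sum_i \BQ\alphach_i$, hence contains \emph{maximal} elements. Fortunately every subsequent step of your argument invokes only maximality, so this is repairable, but the distinction matters below.

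Second, and more seriously, your semistability argument is an induction on semisimple rank that breaks down precisely when the maximizer is $P=G$: then $M=G$ has the same semisimple rank and the inductive hypothesis is unavailable, yet this is exactly the case in which you must show that a slope-maximal bundle is semistable. And because you only have a maximal element rather than a maximum, semistability in the sense of condition $(a)$ of \ref{definition of semistability} does not follow from maximality alone: a destabilizing reduction could a priori have slope \emph{incomparable} to $\phi_G(\lambdach_G)$. The paper closes both holes simultaneously with a direct argument valid for every $P$ including $P=G$: for a reduction to a maximal proper sub-parabolic $P'\subset P$, Lemma \ref{compatibility of slope maps} gives $\phi_{P'}(\lambdach_{P'})-\phi_P(\lambdach_P)=b\cdot\phi_{P'}(\alphach_i)$, a rational multiple of a single element of $\Lambdach_G^{\BQ,pos}$ (Proposition \ref{phialpha}), hence automatically comparable to $0$; maximality forces $b\le 0$, and Lemma \ref{P-semistability} then yields $F_P\in\Bun_P^{ss}$. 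Your detour through the inductive hypothesis and the comparison theorem applied inside the Levi is correct for $P\subsetneq G$, but you still need this (or an equivalent) argument to handle the top case.
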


\medskip

\begin{proof}
We only need to prove existence, as uniqueness is immediate from Theorem \ref{comparison theorem}. Given a $G$-bundle $F_G$ on $X$ we associate to every reduction $F_P \in \Bun_{P, \lambdach_P}$ of~$F_G$ its slope, i.e., the element $\phi_P(\lambdach_P) \in \Lambdach_G^\BQ$. Ranging over all possible reductions of $F_G$ to all parabolics of $G$ we obtain a subset of $\Lambdach_G^\BQ$ which we denote by~$\Phi$.

\medskip

We claim that $\Phi$ contains maximal elements with respect to the usual partial ordering on $\Lambdach_G^\BQ$ given by~$\leq$. To prove this, it suffices to show that for each weight $\lambda_i$ as defined above the numbers $\langle \phi_P(\lambdach_P) , \lambda_i \rangle$ remain bounded from above as $\phi_P(\lambdach_P)$ ranges over $\Phi$. To find such a bound, first recall that for any vector bundle on the curve $X$ the set of all possible slopes of subbundles is bounded from above. Now since every reduction $F_P$ of $F_G$ induces a subbundle
$$V^{\lambda_i}[\lambda_i + \BZ R_M]_{F_M} \ \longinto \ V^{\lambda_i}_{F_G}$$
whose slope is
$$\mu \bigl( V^{\lambda_i}[\lambda_i + \BZ R_M]_{F_M} \bigr) \ = \ \langle \phi_P(\lambdach_P) , \lambda_i \rangle$$
by Proposition \ref{slope-of-associated-bundles} $(c)$, applying the previous remark to the vector bundle~$V^{\lambda_i}_{F_G}$ yields an upper bound on the values of all $\langle \phi_P(\lambdach_P) , \lambda_i \rangle$.

\medskip

Let now $\check\tau$ be some maximal element of $\Phi$. Among all reductions $(P, F_{P})$ of $F_G$ such that $\phi_{P}(\lambdach_{P}) = \check\tau$ we choose one with $P$ of maximal dimension. We then claim that this pair $(P,F_P)$ has the desired properties.

\medskip

We first show that $F_P$ lies in $\Bun_P^{ss}$. By Lemma \ref{P-semistability} it suffices to show that if $F_P$ admits a reduction $F_{P'} \in \Bun_{P', \lambdach_{P'}}$ to any maximal proper sub-parabolic $P' \subset P$, then $\phi_{P'}(\lambdach_{P'}) \leq \phi_P(\lambdach_P)$.
To prove this, let $\CI_M \setminus \{i\}$ be the subset of $\CI_M$ corresponding to $P'$. Using Lemma \ref{compatibility of slope maps} we find that
$$\phi_{P'}(\lambdach_{P'}) - \phi_P(\lambdach_P) \ = \ b \cdot \phi_{P'}(\alphach_i)$$
for some rational number $b \in \BQ$. But since the element $\phi_{P'}(\alphach_i)$ lies in $\Lambdach_G^{\BQ,pos}$ by part $(a)$ of Proposition \ref{phialpha}, the choice of $\phi_P(\lambdach_P)$ as maximal implies that $b \leq 0$ and thus $\phi_{P'}(\lambdach_{P'}) \ \leq \ \phi_P(\lambdach_P)$ as desired.

\medskip

Next we show that $\lambdach_P$ is dominant $P$-regular, i.e., given any $j \in \CI \setminus \CI_M$ we need to show that $\langle \phi_P(\lambdach_P) , \alpha_j \rangle > 0$. To do so, let $P''$ be the parabolic corresponding to the subset $\CI_M \cup \{ j \}$ and let $F_{P''} \in \Bun_{P'', \lambdach_{P''}}$ be obtained from $F_P$ by extension of structure group along the inclusion $P \subset P''$. Then similarly to above we find that
$$\phi_P(\lambdach_P) - \phi_{P''}(\lambdach_{P''}) \ = \ c \cdot \phi_P(\alphach_j)$$
for some $c \in \BQ$. We claim that $c>0$. Indeed, since $\phi_P(\alphach_j)$ lies in $\Lambdach_G^{\BQ, pos}$ by part $(a)$ of Proposition \ref{phialpha}, the choice of $\phi_P(\lambdach_P)$ as maximal forces $c \geq 0$, and since $P \subsetneq P''$ the choice of $P$ as of maximal dimension forces $c>0$.

\medskip

We now pair both sides of the last equation with the root $\alpha_j$. Since $\langle \phi_{P''}(\lambdach_{P''}), \alpha_j \rangle = 0$ by construction and since $\langle \phi_P(\alphach_j), \alpha_j \rangle > 0$ by part $(b)$ of Proposition~\ref{phialpha} we conclude that $\langle \phi_P (\lambdach_P) , \alpha_j \rangle > 0$, establishing that~$\lambdach_P$ is dominant $P$-regular.
\end{proof}

\medskip\medskip

Tautologically this yields:

\medskip

\begin{proof}[Proof of Theorem \ref{main theorem} $(c)$]
Combine part $(a)$ of Theorem \ref{main theorem} with Proposition \ref{existence and uniqueness of the canonical reduction} above.
\end{proof}

\bigskip\bigskip\bigskip\bigskip\bigskip

\ssec{Proof of $(d)$, $(e)$, $(f)$, $(g)$}

\mbox{} \\

We now establish the remaining parts of the main theorem.

\medskip

\begin{proof}[Proof of Theorem \ref{main theorem} $(d)$]
Immediate from part $(a)$ and Theorem \ref{comparison theorem}.
\end{proof}

\medskip

\begin{proof}[Proof of Theorem \ref{main theorem} $(e)$]

Using the notation from Section \ref{omega notation} above, we construct an open cover $(\CU_n)_{n \in \BZ_{\geq 0}}$ of a fixed connected component $\Bun_{G, \lambdach_G}$ as follows. Choose an ample line bundle $\CO(1)$ on the curve $X$, and fix for each~$i \in \CI$ a $G$-representation of highest weight $\lambda_i$. For notational concreteness only, let us choose the Weyl modules $V^{\lambda_i}$.

\medskip

We then define the open substack $\CU_n$ of $\Bun_{G, \lambdach_G}$ as the locus where, for every $i \in \CI$, the $n$-th twist $V^{\lambda_i}_{F_G}(n)$ of the associated vector bundle $V^{\lambda_i}_{F_G}$ is generated by global sections. We now verify that this cover satisfies the desired properties.

\medskip

First, it is clear that the $\CU_n$ cover all of $\Bun_{G, \lambdach_G}$. Next, fix $n$ and let $\Psi$ be the subset of $\Lambdach_G^{\BQ,+}$ consisting of all elements $\phi_P(\lambdach_P)$ such that $\Bun_G^{P,\lambdach_P}$ meets $\CU_n$. Then we need to show that the subset $\Psi$ is finite, or equivalently, bounded. Since by part $(a)$ of Lemma \ref{difference lemma for two parabolics} every element in $\Psi$ is of the form
$$\phi_P(\lambdach_P) \ = \ \sum_{i \in \CI} c_i \check\omega_i \ + \ \phi_G (\lambdach_G)$$
for certain $c_i \in \BQ_{\geq 0}$, it is enough to show that the numbers $c_i$ arising in this way remain bounded from above. Equivalently,
it suffices to prove that the numbers $\langle \phi_P(\lambdach_P) , \lambda_i \rangle$ remain bounded from above as $\phi_P(\lambdach_P)$ ranges over~$\Psi$.

\medskip

To prove the latter for a fixed $i \in \CI$, we first claim that there exists a uniform upper bound on all possible slopes of subbundles of the associated bundles $V^{\lambda_i}_{F_G}$ as $F_G$ varies in $\CU_n$. Indeed, if we denote by $H^0(X,V^{\lambda_i}_{F_G}(n))$ the space of global sections of the twisted bundle $V^{\lambda_i}_{F_G}(n)$, then by definition of~$\CU_n$ every $V^{\lambda_i}_{F_G}$ comes equipped with a surjection
$$H^0(X,V^{\lambda_i}_{F_G}(n)) \otimes \CO(-n) \ \longonto \ V^{\lambda_i}_{F_G} \, ,$$
and thus every quotient bundle of $V^{\lambda_i}_{F_G}$ is also a quotient of $H^0(X,V^{\lambda_i}_{F_G}(n)) \otimes \CO(-n)$. Since the latter vector bundle is semistable of slope $-n$ for any $F_G$, we find that $-n$ is a uniform lower bound on all possible slopes of quotient bundles of the $V^{\lambda_i}_{F_G}$. But since the slopes of the bundles $V^{\lambda_i}_{F_G}$ are computed as
$$\mu (V^{\lambda_i}_{F_G}) \ = \ \langle \phi_G (\lambdach_G) , \lambda_i \rangle$$
by Proposition \ref{slope-of-associated-bundles} $(b)$ and are thus independent of $F_G$, the existence of a uniform lower bound on the slopes of quotients is equivalent to the existence of a uniform upper bound on the slopes of subbundles of the $V^{\lambda_i}_{F_G}$, as desired.

\medskip

Let now $N \in \BZ$ be such an upper bound. We claim that then
$$\langle \phi_P(\lambdach_P) , \lambda_i \rangle \ \leq \ N$$
for any given $\phi_P(\lambdach_P) \in \Psi$. To see this, let $F_G$ be a $G$-bundle in the intersection of $\Bun_G^{P,\lambdach_P}$ and $\CU_n$. Then by construction the canonical reduction~$F_P$ of $F_G$ lies in $\Bun_{P,\lambdach_P}$, and induces the subbundle
$$V^{\lambda_i}[\lambda_i + \BZ R_M]_{F_M} \ \longinto \ V^{\lambda_i}_{F_G} \, .$$
Since the slope of this subbundle is
$$\mu \bigl( V^{\lambda_i}[\lambda_i + \BZ R_M]_{F_M} \bigr) \ = \ \langle \phi_P(\lambdach_P) , \lambda_i \rangle$$
by Proposition \ref{slope-of-associated-bundles} $(c)$, the previous discussion shows that
$$\langle \phi_P(\lambdach_P) , \lambda_i \rangle \ \leq \ N \, ,$$
finishing the proof.
\end{proof}

\medskip

\begin{proof}[Proof of Theorem \ref{main theorem} $(f)$]
By construction the closure of $\Bun_G^{P,\lambdach_P}$ in $\Bun_G$ is equal to the image $\Fpb (\bBun_{P, \lambdach_P})$. But in light of the commutative diagram~\ref{the maps j theta quick}, the stratification \ref{bBunP stratification quick} of $\bBun_{P, \lambdach_P}$ implies that
$$\Fpb \bigl( \bBun_{P, \lambdach_P} \bigr) \ = \ \bigcup_{\check\theta \in \Lambdach_{G,P}^{pos}} \Fp_P \bigl( \Bun_{P, \lambdach_P + \check\theta} \bigr)$$
on the level of $k$-points.
\end{proof}

\medskip

\begin{proof}[Proof of Theorem \ref{main theorem} $(g)$]
A counterexample demonstrating the first assertion is given in Section \ref{An example of strata closure} below. Regarding the second assertion, the formula for the closure of the stratum $\Bun_G^{P,\lambdach_P}$ in part $(f)$ of the theorem shows that the stratum $\Bun_G^{P,\lambdach_P'}$ meets the image $\Fp_P(\Bun_{P, \lambdach_P + \check\theta})$ for some element $\check\theta \in \Lambdach_{G,P}^{pos}$. Therefore part~$(d)$ of the theorem implies that
$$\phi_P(\lambdach_P') \ \geq \ \phi_P(\lambdach_P + \check\theta)$$
and hence also
$$\phi_P(\lambdach_P') \ \geq \ \phi_P(\lambdach_P)$$
by part $(a)$ of Proposition \ref{phialpha}. As the elements $\lambdach_P$ and $\lambdach_P'$ map to the same element of $\Lambdach_{G,G}$, the last inequality in turn forces that $\lambdach_P' \geq \lambdach_P$ in~$\Lambdach_{G,P}$, i.e., that $\lambdach_P'$ lies in the set $\lambdach_P + \Lambdach_{G,P}^{pos}$. In view of the formula for the closure of the stratum $\Bun_G^{P,\lambdach_P}$ in part $(f)$ of the theorem, this yields the claim.
\end{proof}

\bigskip\bigskip\bigskip\bigskip

\ssec{An example of strata closure}
\label{An example of strata closure}

\mbox{} \\

We conclude this section with an example showing that the closure of a stratum in Theorem \ref{main theorem} need not be a union of strata. We continue to use the notation and conventions from Section \ref{The main theorem}.

\medskip

Assume that the genus of the curve is at least $2$. Let $G = \GL_3$ and let $B$ denote the standard Borel subgroup of $\GL_3$. Consider the stratum $\Bun_{\GL_3}^{B,(2,1,0)}$ of the moduli stack $\Bun_{\GL_3}$ of vector bundles of rank $3$ on $X$. This stratum consists precisely of those vector bundles $E$ whose Harder-Narasimhan flag
$$0 \neq L \subsetneq F \subsetneq E$$
is complete and satisfies
$$\deg(L) = 2, \ \ \deg(F/L) = 1, \ \ \text{and} \ \deg(E/F) = 0.$$

\medskip

We claim that the closure $\overline{\Bun_{\GL_3}^{B,(2,1,0)}}$ of this stratum is not a union of strata. To prove this, we show that there exists another stratum which meets this closure but is not contained in it. Namely, we claim that this holds true for the stratum $\Bun_{\GL_3}^{P, (3,0)}$ consisting of those vector bundles $E$ whose Harder-Narasimhan flag is of the form
$$0 \neq L \subsetneq E$$
for $L$ a line bundle with
$$\deg(L)=3 \ \ \text{and} \ \deg(E/L)=0.$$

\medskip

To see this, first observe that by the formula for the strata closure in part $(f)$ of Theorem \ref{main theorem} the closure $\overline{\Bun_{\GL_3}^{B,(2,1,0)}}$ consists of precisely those vector bundles $E$ of degree $3$ which admit a complete flag
$$0 \neq L \subsetneq F \subsetneq E$$
such that
$$\deg(L) \geq 2 \ \ \text{and} \ \deg(E/F) \leq 0.$$

\medskip

This description of the closure shows that, given a line bundle $L_0$ of degree~$3$, the closure contains the vector bundle $L_0 \oplus \CO \oplus \CO$. Since the latter bundle also lies in the stratum $\Bun_{\GL_3}^{P, (3,0)}$, we see that $\overline{\Bun_{\GL_3}^{B,(2,1,0)}}$ and $\Bun_{\GL_3}^{P, (3,0)}$ indeed intersect non-trivially.

\medskip

To complete the argument we now construct a vector bundle which lies in the stratum $\Bun_{\GL_3}^{P, (3,0)}$ but not in the closure $\overline{\Bun_{\GL_3}^{B,(2,1,0)}}$. Recall first that a vector bundle is called \textit{stable} if every proper subbundle has strictly smaller slope than the bundle itself.

\medskip

By our assumption on the genus of the curve $X$ we can choose a stable vector bundle $F_0$ of rank $2$ and of degree $0$.
Let $L_0$ again denote a line bundle of degree $3$, and consider the direct sum
$$E_0 := L_0 \oplus F_0.$$
Then $E_0$ is clearly contained in the stratum $\Bun_{\GL_3}^{P, (3,0)}$. To prove that $E_0$ does not lie in the closure $\overline{\Bun_{\GL_3}^{B,(2,1,0)}}$, we need to show that it does not admit a complete flag
$$0 \neq L \subsetneq F \subsetneq E_0$$
such that
$$\deg(L) \geq 2 \ \ \text{and} \ \deg(E_0/F) \leq 0.$$

\medskip

First, given any complete flag with these properties, one sees easily that the line subbundle $L \subsetneq E_0$ must already be equal to the first summand $L_0$ of $E_0$. We thus obtain the short exact sequence
$$0 \longto F/L_0 \longto F_0 \longto E_0/F \longto 0.$$
But then the stability of $F_0$ forces the quotient $E_0/F$ to be of strictly positive slope, in contradiction to the requirement that $\deg(E_0/F) \leq 0$.

\bigskip\bigskip\bigskip\bigskip\bigskip

\section{Drinfeld's compactifications for an arbitrary reductive group}
\label{drinfeld}

\medskip

This section can be read independently from the rest of the article. Its only purpose towards the proof of the main theorem, Theorem \ref{main theorem}, is to provide justification for not requiring that the derived group $[G,G]$ of $G$ is simply connected in the brief discussion of Drinfeld's compactification $\bBun_P$ in Section \ref{Drinfeld overview}. We continue to use the notation and conventions from Section~\ref{The setting}.

\medskip

\ssec{The original definition}
\label{drinfeld-old}

\mbox{} \\

In this section we recall the definition of Drinfeld's compactification $\bBun_P^{or}$ from \cite{GeometricEisenstein}, the superscript ``or'' standing for ``original''. Throughout the article~\cite{GeometricEisenstein} the derived group $[G,G]$ of $G$ is assumed to be simply connected, since otherwise the definition of $\bBun_P^{or}$ given there does not guarantee that $\Bun_P$ is a dense substack of $\bBun_P^{or}$. This will be remedied by the ``new'' definition of $\bBun_P$ in Section \ref{drinfeld-new} below. However, in that section we will also make use of the stack $\bBun_P^{or}$ defined in the present section, but in the case of an arbitrary reductive group $G$. For this reason we do not assume that the derived group $[G,G]$ is simply connected in this section either, necessitating the superscript ``or''.

\medskip

\sssec{Mapping stacks}

Let $G$ be a reductive group and let $P$ be a parabolic subgroup. To define $\bBun_P^{or}$ in a form which will be convenient for the generalization in Section \ref{drinfeld-new} below, we introduce the following bit of terminology. Recall that given an algebraic stack $\CY$, the sheaf of groupoids $\Maps (X, \CY)$ of maps from the curve $X$ to $\CY$ is defined to have $S$-points
$$\Maps (X, \CY)(S) \ := \ \CY (X \times S).$$

\noindent Taking $\CY$ to be the classifying stack $\cdot / P$ we recover $\Bun_P$ as
$$\Bun_P \ = \ \Maps (X, \cdot / P).$$

\medskip

Let now $\overset{\circ}{\CY} \subset \CY$ be an open substack. We then define another sheaf of groupoids 
$$\Maps_{gen}(X, \CY \supset \overset{\circ}{\CY})$$
by associating to a scheme $S$ the full sub-groupoid of $\Maps (X, \CY)(S)$ consisting of those maps $X \times S \to \CY$ satisfying the following property: We require that for every geometric point $\bar{s} \to S$ there exists an open dense subset $U \subset X \times \bar{s}$ such that the restriction of the map $X \times S \to \CY$ to $U$ factors through the open substack $\overset{\circ}{\CY} \subset \CY$. 

\medskip

It is immediate from the definition that $\Maps_{gen}(X, \CY \supset \overset{\circ}{\CY})$ is indeed a subsheaf of $\Maps (X, \CY)$. Furthermore, there is a natural inclusion of sheaves
$$\Maps (X, \overset{\circ}{\CY}) \ \longinto \ \Maps_{gen}(X, \CY \supset \overset{\circ}{\CY}).$$

\bigskip

\sssec{The target stack $\CY^{or}$}

We now construct an algebraic stack $\CY^{or}$ containing $\cdot / P$ as an open dense substack, and will then define

$$\bBun_P^{or} \ := \ \Maps_{gen}(X, \CY^{or} \supset \cdot / P).$$

\medskip

\noindent To define $\CY^{or}$, recall that a scheme Z over $k$ is called {\it strongly quasi-affine} if its ring of global functions $\Gamma(Z, \CO_Z)$ is a finitely generated $k$-algebra and if the natural map
$$Z \ \longto \ \overline{Z} \ := \ \Spec (\Gamma(Z, \CO_Z))$$
is an open immersion. If $Z$ is strongly quasi-affine we will call $\overline{Z}$ its {\it affine closure}.

\medskip

It is shown in \cite[Thm. 1.1.2]{GeometricEisenstein} that the quotient $G/[P,P]$ of $G$ by the derived group $[P,P]$ of $P$ is strongly quasi-affine. By definition of the affine closure, the left action of $G$ and the right action of $T_M := P/[P,P]$ on $G/[P,P]$ extend to actions on $\overline{G/[P,P]}$. We then define $\CY^{or}$ as the double stack quotient
$$\CY^{or} \ := \ G \backslash (\overline{G/[P,P]}) /T_M.$$
It naturally contains
$$\overset{\circ}{\CY^{or}} \ := \ \cdot / P \ = \ G \backslash (G/[P,P]) /T_M$$
as a dense open substack.

\medskip

\sssec{Definition and first properties of $\bBun_P^{or}$}
\label{Definition and first properties of BunP bar original}

We can now define
$$\bBun_P^{or} \ := \ \Maps_{gen}(X, \CY^{or} \supset \cdot / P ).$$

\medskip

\noindent It is proven in \cite[Sec. 1.3.2]{GeometricEisenstein} that $\bBun_P^{or}$ is indeed an algebraic stack, and that the natural map $\Bun_P \into \bBun_P^{or}$ realizes $\Bun_P$ as an open substack.

\medskip

Next consider the forgetful map
$$\CY^{or} \ = \ G\backslash (\overline{G/[P,P]}) /T_M \ \longto \ G\backslash \cdot \ .$$
It induces a morphism 
$$\bar \Fp_P^{or}: \bBun_P^{or} \ \longto \ \Bun_G$$
whose composition with the natural map $\Bun_P \into \bBun_P^{or}$ equals the projection $\Fp_P$. It is shown in {\it loc. cit.} that the map $\bar \Fp_P^{or}$ is schematic, and is proper when restricted to any connected component of $\bBun_P^{or}$.

\medskip

\sssec{The case of simply connected derived group}

Assume now that the derived group $[G,G]$ of $G$ is simply connected. Under this hypothesis it is shown in \cite[Prop. 1.3.8]{GeometricEisenstein} that $\Bun_P$ is in fact dense in $\bBun_P^{or}$, and that the inclusion of $\Bun_P$ into $\bBun_P^{or}$ induces a bijection on the level of connected components:
$$\pi_0 (\bBun_P^{or}) \ = \ \pi_0 (\Bun_P) \ = \ \Lambdach_{G,P} \ .$$

\noindent Furthermore, it is proven in \cite[Sec. 1.3.3]{GeometricEisenstein} that under the above hypothesis the stack $\bBun_P^{or}$ possesses the stratification which was already described in sections \ref{the maps j theta quick} and \ref{bBunP stratification quick} above.

\medskip\medskip

\ssec{The case of an arbitrary reductive group}
\label{drinfeld-new}

\mbox{} \\

In this section we define Drinfeld's compactification $\bBun_P$ for an arbitrary reductive group. As before let $G$ be any reductive group, and let $P$ be a parabolic subgroup. To define $\bBun_P$ we will proceed exactly as above, the only difference being that we will use a new target stack $\CY$, which agrees with the original target stack $\CY^{or}$ if $[G,G]$ is simply connected but differs in general.

\medskip

\sssec{Two lemmas}

We will need the following two facts from the theory of reductive groups. Both can be easily proven on the level of root data.

\medskip

\begin{lemma}
\label{grouplemma1}
Let $G$ be a reductive group. Then there exists a short exact sequence
$$ 1 \longto Z \longto \tilde{G} \longto G \longto 1$$
where $Z$ is a connected torus which is central in $\tilde{G}$, and $\tilde{G}$ is a reductive group whose derived group $[\tilde{G}, \tilde{G}]$ is simply connected.
\end{lemma}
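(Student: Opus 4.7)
The plan is to construct $\tilde{G}$ as a quotient of the form $(Z_0(G) \times G^{sc} \times S)/K$, where $G^{sc} \to [G,G]$ is the simply connected cover of the derived group, $S$ is an auxiliary torus, and $K$ is a central finite group scheme of multiplicative type embedded anti-diagonally. I expect that the only non-formal input will be the fact that any finite group scheme of multiplicative type embeds into a torus; everything else should be a routine verification.

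More precisely, I would proceed in three steps. First, let $\rho: G^{sc} \to [G,G]$ be the simply connected isogeny of the derived group, so that $\Ker(\rho)$ is a finite central group scheme of multiplicative type in $G^{sc}$. Set $Z := Z_0(G)$, which is a torus since $G$ is reductive, and recall that $G = Z \cdot [G,G]$. The multiplication map $m: Z \times G^{sc} \to G$, $(z,g) \mapsto z \cdot \rho(g)$, is therefore surjective; its kernel $K$ is a finite group scheme of multiplicative type which, via the second projection, identifies with the finite central subgroup $\rho^{-1}(Z \cap [G,G]) \subset G^{sc}$.

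Second, since $K$ is of multiplicative type its character group is a finitely generated abelian group, hence a quotient of a finite-rank free abelian group, which dually gives an embedding $\iota: K \longinto S$ into some torus $S$. I would then define $\tilde{G} := (Z \times G^{sc} \times S)/K$, where $K$ is embedded anti-diagonally: via the canonical inclusion $K \longinto Z \times G^{sc}$ on the first two factors, and via $\iota$ on the third. Extending $m$ trivially in the $S$-coordinate gives a map $Z \times G^{sc} \times S \to G$ containing $K$ in its kernel, hence a surjection $\tilde{G} \longonto G$.

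Third, I would verify the required properties. The quotient $\tilde{G}$ is connected reductive, being the quotient of such a group by a finite central subgroup. The kernel of $\tilde{G} \longonto G$ is the image of $\{1\} \times \{1\} \times S$ in $\tilde{G}$; by injectivity of $\iota$ this image meets $K$ trivially and is thus isomorphic to the connected torus $S$, which is central in $\tilde{G}$ since $S \subset Z \times G^{sc} \times S$ already lies in the centralizer of $G^{sc}$. Finally, the derived subgroup $[\tilde{G},\tilde{G}]$ equals the image of $G^{sc} \times \{1\} \times \{1\}$; the same injectivity of $\iota$ shows that this copy of $G^{sc}$ meets $K$ trivially, so $[\tilde{G},\tilde{G}] \cong G^{sc}$ is simply connected, as required.
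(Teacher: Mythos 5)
Your construction is correct and complete. The paper itself offers no argument for this lemma beyond the remark that it ``can be easily proven on the level of root data'' --- the intended proof there is to modify the cocharacter lattice of the root datum of $G$ so that the coroots span a direct summand, and then read off the corresponding central extension. You instead give the standard explicit ``$z$-extension'' construction: $\tilde G = (Z_0(G)\times G^{sc}\times S)/K$ with $K$ embedded anti-diagonally via an embedding $\iota\colon K\into S$ into an auxiliary torus. This is a genuinely different (and more hands-on) route, and it buys you transparency: the two nontrivial points --- that a finite group scheme of multiplicative type embeds into a torus, and that the injectivity of $\iota$ is exactly what forces the copy of $G^{sc}$ to meet the anti-diagonal $K$ trivially, so that $[\tilde G,\tilde G]\cong G^{sc}$ --- are isolated and verified. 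All the individual steps check out: $K\cong\rho^{-1}\bigl(Z_0(G)\cap[G,G]\bigr)$ is indeed finite, central and of multiplicative type; the kernel of $\tilde G\onto G$ is $(K\times S)/K$, which your map $(k,s)\mapsto s\,\iota(k)^{-1}$ identifies with $S$; and the quotient of a connected reductive group by a finite central subgroup scheme is again connected reductive, also in positive characteristic where $K$ may be infinitesimal. The only blemish is a typo: the derived group is the image of $\{1\}\times G^{sc}\times\{1\}$, not $G^{sc}\times\{1\}\times\{1\}$, given your ordering of the factors.
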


\medskip

\begin{lemma}
\label{grouplemma2}
Let $G$ be a reductive group. Then the derived group $[G,G]$ of~$G$ is simply connected if and only if every short exact sequence of the form
$$ 1 \longto Z \longto \tilde{G} \longto G \longto 1$$
splits, where $Z$ is a connected torus which is central in $\tilde{G}$.
\end{lemma}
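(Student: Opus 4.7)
My plan is to prove the two directions separately. The implication ``splits implies $[G,G]$ simply connected'' follows quickly from Lemma \ref{grouplemma1}: if $[G,G]$ were not simply connected, Lemma \ref{grouplemma1} would produce an extension $1 \to Z \to \tilde{G} \to G \to 1$ with $Z$ a connected central torus and $[\tilde{G}, \tilde{G}]$ simply connected. Any splitting $s: G \to \tilde{G}$ would yield an isomorphism $\tilde{G} \cong G \times Z$ (the direct product structure coming from centrality of $Z$, via the map $\tilde{g} \mapsto (\pi(\tilde{g}), \tilde{g} \cdot s(\pi(\tilde{g}))^{-1})$), whence $[\tilde{G}, \tilde{G}] \cong [G, G]$, contradicting that $[\tilde{G}, \tilde{G}]$ is simply connected while $[G,G]$ is not.

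For the forward implication, assume $[G,G]$ is simply connected and fix an extension $1 \to Z \to \tilde{G} \to G \to 1$ as above. The strategy is to construct splittings over the derived group and over the connected center of $G$ separately, and to reconcile them on the finite central subgroup where they overlap. First, the restriction $\pi: [\tilde{G}, \tilde{G}] \to [G,G]$ is an isogeny of connected semisimple groups: it is surjective, and its kernel $[\tilde{G}, \tilde{G}] \cap Z$ is contained in the (finite) center of the semisimple group $[\tilde{G}, \tilde{G}]$ because $Z$ is central. Since $[G,G]$ is simply connected, this isogeny must be an isomorphism, whose inverse is the splitting $s_1: [G,G] \to [\tilde{G}, \tilde{G}] \subset \tilde{G}$. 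Second, a rank calculation (using that $\pi$ identifies maximal tori up to $Z$ and that the isogeny above preserves semisimple rank) shows that $Z(\tilde{G})^0 \to Z(G)^0$ is surjective with kernel exactly $Z$. The resulting short exact sequence of tori $1 \to Z \to Z(\tilde{G})^0 \to Z(G)^0 \to 1$ splits, by splitting the dual sequence of free abelian character lattices, producing $s_2^\circ: Z(G)^0 \to Z(\tilde{G})^0 \subset \tilde{G}$.

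The two splittings $s_1$ and $s_2^\circ$ need not coincide on $\mu_G := [G,G] \cap Z(G)^0$; however, their ratio is a well-defined homomorphism $\phi: \mu_G \to Z$ (landing in $Z$ because both lifts project to the identity, and well-behaved because $s_2^\circ(g) \in Z(\tilde{G})^0$ is central). Since $Z(G)^0/\mu_G$ is again a torus and $\Ext^1$ between tori in the category of commutative algebraic groups vanishes (being computed by $\Ext^1$ between the free abelian character lattices), $\phi$ extends to a homomorphism $\Phi: Z(G)^0 \to Z$, and the modified splitting $s_2 := s_2^\circ \cdot \Phi^{-1}$ agrees with $s_1$ on $\mu_G$. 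Finally, using that $G \cong ([G,G] \times Z(G)^0)/\mu_G$ as an algebraic group for reductive $G$, the map $(g,t) \mapsto s_1(g) s_2(t)$ factors through this quotient (by the matching on $\mu_G$) to yield the desired splitting $s: G \to \tilde{G}$; it is a homomorphism because $s_2$ lands in the central subgroup $Z(\tilde{G})^0$, so $s_2(t)$ commutes with everything.

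The main obstacle I expect is verifying the matching condition on the finite subgroup $\mu_G$: this is where the vanishing of $\Ext^1$ between tori does the essential work, and where the entire construction would collapse if $[G,G]$ were not simply connected -- since then the isogeny $[\tilde{G}, \tilde{G}] \to [G,G]$ could fail to be an isomorphism and the canonical splitting $s_1$ would not exist.
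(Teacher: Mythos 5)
Your proof is correct. Note that the paper itself gives no argument for Lemma \ref{grouplemma2} (or for Lemma \ref{grouplemma1}): it only remarks that both ``can be easily proven on the level of root data,'' i.e., by translating the central extension into an extension of (co)character lattices compatible with roots and coroots and splitting it there, using that simple connectedness of $[G,G]$ makes the coroot lattice a direct summand. Your argument fills this in by working with the groups directly: you split over $[\tilde G,\tilde G]\to[G,G]$ (an isomorphism because a central isogeny onto a simply connected group is an isomorphism), split the surjection of tori $Z(\tilde G)^0\to Z(G)^0$ via character lattices, and glue along $\mu_G=[G,G]\cap Z(G)^0$ using the presentation $G\cong([G,G]\times Z(G)^0)/\mu_G$; the converse direction via Lemma \ref{grouplemma1} is also right. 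This is a legitimate and arguably more transparent alternative to the root-datum computation. Two small points to tidy, neither a gap: (i) with $\phi:=s_1\cdot(s_2^\circ)^{-1}$ the corrected splitting should be $s_2:=s_2^\circ\cdot\Phi$ rather than $s_2^\circ\cdot\Phi^{-1}$ (or take the ratio the other way around) --- a harmless sign since everything in sight is central; (ii) in positive characteristic the kernels $[\tilde G,\tilde G]\cap Z$ and $\mu_G$ may be non-reduced finite group schemes, so the extension step should be phrased, as you essentially do, through the anti-equivalence between diagonalizable group schemes and finitely generated abelian groups (the restriction $\Hom(Z(G)^0,Z)\to\Hom(\mu_G,Z)$ is surjective because the character lattice of $Z$ is free), and the isogeny $[\tilde G,\tilde G]\to[G,G]$ is an isomorphism because its kernel is central and of multiplicative type, so the character-lattice argument applies scheme-theoretically.
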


\medskip

\sssec{Definition of the ``correct'' target stack $\CY$}

\medskip

To define the new target stack $\CY$, choose a short exact sequence
$$ 1 \longto Z \longto \tilde{G} \longto G \longto 1$$
as in Lemma \ref{grouplemma1}. Let $\tilde{P}$ denote the inverse image of $P$ in $\tilde{G}$, and let $T_{\tilde M}$ denote the torus
$$T_{\tilde{M}} \ := \ \tilde{P}/[\tilde{P},\tilde{P}].$$

\medskip

\noindent Then as before the quotient $\tilde{G}/[\tilde{P},\tilde{P}]$ is a strongly quasi-affine variety, and the left action of $\tilde{G}$ and the right action of $T_{\tilde{M}}$ on $\tilde{G}/[\tilde{P},\tilde{P}]$ naturally extend to actions on the affine closure $\overline{\tilde{G}/[\tilde{P},\tilde{P}]}$. 
Furthermore, since the left action of the central torus $Z$ on $\tilde{G}/[\tilde{P},\tilde{P}]$ agrees with its right action via the map $Z \to T_{\tilde{M}}$, the same holds true for the induced left and right actions of $Z$ on the affine closure $\overline{\tilde{G}/[\tilde{P},\tilde{P}]}$.
We can thus define the new candidate for the target stack as
$$\CY \ := \ G \backslash \overline{\tilde{G}/[\tilde{P},\tilde{P}]} / T_{\tilde{M}} \, .$$

\medskip

\noindent As before, the stack $\CY$ naturally contains the classifying stack
$$\cdot / P \ = \ G \backslash \bigl( \tilde{G}/[\tilde{P},\tilde{P}] \bigr) / T_{\tilde{M}}$$
as a dense open substack. We furthermore have:

\medskip

\begin{proposition}
\label{independent}
The stack $\CY$ and the inclusion $\cdot / P \into \CY$ are canonically independent of the choice of $\tilde{G}$.
\end{proposition}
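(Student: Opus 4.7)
The plan is to compare two choices $\tilde{G}_1$ and $\tilde{G}_2$ by dominating both by a third common choice. More precisely, I would first prove the following key lemma, and then deduce the proposition from it.

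\smallskip

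\textbf{Key Lemma.} \emph{Suppose $\tilde{G}' \twoheadrightarrow \tilde{G}$ is a surjection of reductive groups (both with simply connected derived group) fitting into a commutative diagram over $G$, with kernel $Z'$ a connected central torus in $\tilde{G}'$, so that both define admissible choices. Then the natural map of stacks $\CY' \to \CY$ is an isomorphism, canonically compatible with the inclusions of $\cdot/P$.}

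\smallskip

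To prove the key lemma, let $\tilde{P}'$ denote the inverse image of $\tilde{P}$ in $\tilde{G}'$. Since both derived groups $[\tilde{G}',\tilde{G}']$ and $[\tilde{G},\tilde{G}]$ are connected simply connected semisimple groups with identical root data (namely that of the common adjoint group $G_{\ad}$), the natural map $[\tilde{G}',\tilde{G}'] \to [\tilde{G},\tilde{G}]$ is an isomorphism; in particular $Z' \cap [\tilde{G}',\tilde{G}'] = 1$. Consequently $[\tilde{P}',\tilde{P}']$ maps isomorphically to $[\tilde{P},\tilde{P}]$, the quotient $\tilde{G}'/[\tilde{P}',\tilde{P}'] \to \tilde{G}/[\tilde{P},\tilde{P}]$ is a $Z'$-torsor, and passing to affine closures still yields a $Z'$-torsor
$$\overline{\tilde{G}'/[\tilde{P}',\tilde{P}']} \ \longto \ \overline{\tilde{G}/[\tilde{P},\tilde{P}]}.$$
By centrality of $Z'$, its left action via $\tilde{G}'$ and its right action via the inclusion $Z' \hookrightarrow T_{\tilde{M}'}$ coincide, and it is precisely this diagonal $Z'$-action which exhibits the torsor structure. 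Thus forming the double quotient $\CY'$ factors as: first quotient by the right $Z'$-action (identifying the source with $\overline{\tilde{G}/[\tilde{P},\tilde{P}]}$), then quotient by the residual right action of $T_{\tilde{M}'}/Z' = T_{\tilde{M}}$, and finally quotient by the left $\tilde{G}'/Z' = \tilde{G}$-action; this produces $\CY$. The induced map $\CY' \to \CY$ is therefore an isomorphism, and it plainly restricts to the identity on the open substack $\cdot/P$ common to both.

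\smallskip

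Now, given two admissible choices $\tilde{G}_1 \to G$ and $\tilde{G}_2 \to G$, I would form the fiber product $\tilde{G}_3 := \tilde{G}_1 \times_G \tilde{G}_2$, which is an extension of $G$ by the connected central torus $Z_1 \times Z_2$. The argument with root data above shows that $[\tilde{G}_3, \tilde{G}_3]$ maps isomorphically to each $[\tilde{G}_i, \tilde{G}_i]$ and is therefore simply connected, so $\tilde{G}_3$ is itself admissible, and each projection $\tilde{G}_3 \to \tilde{G}_i$ is of the kind considered in the key lemma (its kernel is the connected central torus $Z_{3-i}$). Applying the key lemma to both projections produces canonical isomorphisms $\CY_1 \cong \CY_3 \cong \CY_2$ fixing $\cdot/P$. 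To check that this resulting identification is independent of the choice of $\tilde{G}_3$, I would use that any two dominating groups can themselves be dominated by a common fourth choice (again by fiber product), and apply the key lemma one more level up; the resulting commutative diagram of canonical isomorphisms yields the required cocycle condition.

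\smallskip

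The main obstacle I anticipate is the verification, inside the key lemma, that $[\tilde{G}', \tilde{G}'] \to [\tilde{G},\tilde{G}]$ is an isomorphism, which in turn underlies the identification $[\tilde{P}', \tilde{P}'] = [\tilde{P},\tilde{P}]$ needed to produce the torsor. Although this is essentially a statement about root data (extensions by central tori enlarge the character lattice without changing coroots, so the simply connected cover of the semisimple quotient is unchanged), some care is required to ensure the identification is canonical and compatible with the embedding in $\tilde{P}$; once this is established, the compatibility of the $Z'$-actions from left and right---which is the geometric heart of the argument---follows directly from centrality of $Z'$, and the rest of the argument is essentially formal manipulation of double quotient stacks.
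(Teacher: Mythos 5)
Your overall architecture is the same as the paper's: dominate $\tilde{G}_1$ and $\tilde{G}_2$ by the fiber product $\tilde{G}_3 = \tilde{G}_1 \times_G \tilde{G}_2$ and show that each projection induces an isomorphism of target stacks. However, there is a genuine gap at the heart of your Key Lemma: you assert that because $\tilde{G}'/[\tilde{P}',\tilde{P}'] \to \tilde{G}/[\tilde{P},\tilde{P}]$ is a $Z'$-torsor, ``passing to affine closures still yields a $Z'$-torsor,'' and everything downstream (the factorization of the double quotient) rests on this. That step is precisely the delicate point and is not automatic: the affine closure adds boundary points on which the torus action can fail to be free, and the paper's own example in the very next subsubsection (the map $\bigl(\overline{\tilde G/\tilde N}\bigr)/Z \to \overline{G/N}$ for $G = \PGL_2$, $\tilde G = \GL_2$) exhibits exactly this failure. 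The only difference between that counterexample and your situation is that in your situation the \emph{target} group has simply connected derived group, and you never use that hypothesis to control the affine closure.

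The way the paper closes this gap is worth internalizing. Since $[\tilde{G},\tilde{G}]$ is simply connected, Lemma \ref{grouplemma2} says the extension $1 \to Z' \to \tilde{G}' \to \tilde{G} \to 1$ splits; hence the $Z'$-bundle $\tilde{G}'/[\tilde{P}',\tilde{P}'] \to \tilde{G}/[\tilde{P},\tilde{P}]$ is \emph{trivial}, i.e.\ the source is $Z' \times \bigl(\tilde{G}/[\tilde{P},\tilde{P}]\bigr)$. Because the ring of global functions on a product of varieties is the tensor product of the rings of global functions on the factors, and $Z'$ is affine, one gets $\overline{Z' \times Y} = Z' \times \overline{Y}$, so the map of affine closures is again a (trivial) $Z'$-bundle. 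This is the one non-formal input, and it is exactly what your proposal skips. (Your preliminary reduction via $[\tilde{G}',\tilde{G}'] \cong [\tilde{G},\tilde{G}]$ is correct but is not needed once one invokes the splitting, and your closing remarks about verifying a cocycle condition over different choices of dominating group are fine but tangential, since the fiber product is itself a canonical choice.)
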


\begin{proof}
Given two short exact sequences
$$ 1 \longto Z_1 \longto G_1 \longto G \longto 1$$
and
$$ 1 \longto Z_2 \longto G_2 \longto G \longto 1$$
as in Lemma \ref{grouplemma1}, we define $\CY_1$ and $\CY_2$ as above and construct a canonical isomorphism $\CY_1 \cong \CY_2$ which restricts to the identity on the open substack~$\cdot / P$.

\medskip

First, let $G_3$ be the fiber product of groups $G_3 := G_1 \times_G G_2$. Then we obtain a third short exact sequence
$$ 1 \longto Z_1\times Z_2 \longto G_3 \longto G \longto 1$$
as in Lemma \ref{grouplemma1}, and we define $\CY_3$ as above. We then claim that the projection maps
$$ G_1 \longleftarrow G_3 \longto G_2$$
induce isomorphisms
$$ \CY_1 \ \stackrel{\cong}{\longleftarrow} \ \CY_3 \ \stackrel{\cong}{\longto} \ \CY_2 \, .$$

\medskip

\noindent To see this, note first that the short exact sequence
$$ 1 \longto Z_1 \longto G_3 \longto G_2 \longto 1$$
splits by Lemma \ref{grouplemma2}, and hence the map
$$G_3/[P_3, P_3] \ \longto \ G_2/[P_2, P_2]$$
is a trivial $Z_1$-bundle. But since the ring of global functions on a product of two varieties is equal to the tensor product of the rings of global functions on each factor,
the induced map between the affine closures
$$\overline{G_3/[P_3, P_3]} \ \longto \ \overline{G_2/[P_2, P_2]}$$
is again a $Z_1$-bundle. It hence induces an isomorphism
$$ G \backslash \overline{G_3/[P_3, P_3]} / T_{M_3} \ = \ \CY_3 \ \stackrel{\cong}{\longto} \ \CY_2 \ = \ G \backslash \overline{G_2/[P_2, P_2]} / T_{M_2}$$
which restricts to the identity on $\cdot / P$, as desired. For $\CY_1$ one proceeds identically.
\end{proof}

\medskip

\sssec{Example}

To see that the natural map $\CY \to \CY^{or}$ can fail to be an isomorphism if the derived group $[G,G]$ is not simply connected, consider the simplest example $G = \PGL_2$ with $P = B$ and $N:=[B,B]$, and let $\tilde G = \GL_2$ with $Z = Z_0(GL_2) = \BG_m$. Then the natural map
$$\bigl( \overline{\tilde G / \tilde N} \bigr)/Z \ \longto \ \overline{G/N}$$
cannot be an isomorphism since the torus $Z$ does not act freely on the boundary of $\overline{\tilde G / \tilde N}$. Namely, the quotient $\tilde G / \tilde N$ is isomorphic to the open subset of affine $3$-space~$\BA^3$ obtained by removing a plane and a line not contained in the plane.
The boundary of the affine closure is then equal to the missing line with the origin removed, and the torus $Z$ acts on it by a quadratic character.

\medskip

\sssec{Definition of $\bBun_P$}

Using the ``corrected'' version $\CY$ of the target stack, we define Drinfeld's compactification $\bBun_P$ for an arbitrary reductive group $G$ as the sheaf of groupoids
$$\bBun_P \ := \ \Maps_{gen}(X, \CY \supset \cdot / P) \, .$$

\medskip

\noindent As before the forgetful map $\CY \to G \backslash \cdot$ induces a morphism
$$\Fpb: \ \bBun_P \ \longto \ \Bun_G$$
whose composition with the natural inclusion $\Bun_P \into \bBun_P$ is equal to the projection $\Fp_P$.

\medskip

Below we will show that $\bBun_P$ is indeed an algebraic stack, with the properties listed in Section \ref{Drinfeld overview}. Before doing so, we can already observe:

\medskip

\begin{corollary}
If the derived group $[G,G]$ of $G$ is simply connected, then the new definition of Drinfeld's compactification agrees with the previous definition in Section \ref{drinfeld-old}:
$$\bBun_P \ = \ \bBun_P^{or} \, .$$
\end{corollary}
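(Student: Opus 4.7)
The plan is to deduce the corollary directly from Proposition \ref{independent}. Namely, under the hypothesis that the derived group $[G,G]$ is simply connected, the group $G$ itself qualifies as a choice of $\tilde{G}$ in Lemma \ref{grouplemma1}: take the short exact sequence
\[
1 \longto 1 \longto G \longto G \longto 1,
\]
where the central torus $Z$ is the trivial (connected, zero-dimensional) torus. Since $[G,G]$ is simply connected by hypothesis, this is a legitimate choice for the construction of $\CY$.

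With this choice we have $\tilde{G}=G$, $\tilde{P}=P$, $[\tilde{P},\tilde{P}]=[P,P]$, and $T_{\tilde{M}}=T_M$. Hence directly from the definitions of $\CY$ and $\CY^{or}$ we obtain
\[
\CY \ = \ G\backslash \overline{\tilde{G}/[\tilde{P},\tilde{P}]} / T_{\tilde{M}} \ = \ G\backslash \overline{G/[P,P]} / T_M \ = \ \CY^{or},
\]
and the inclusion $\cdot/P \into \CY$ agrees with $\cdot/P \into \CY^{or}$. By Proposition \ref{independent}, the pair $(\CY, \cdot/P \into \CY)$ is canonically independent of the choice of $\tilde{G}$, so this identification is in fact canonical regardless of which $\tilde{G}$ one uses to define $\CY$.

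Applying the functor $\Maps_{gen}(X, - \supset \cdot/P)$ to the canonical identification $\CY \cong \CY^{or}$ then yields
\[
\bBun_P \ = \ \Maps_{gen}(X, \CY \supset \cdot/P) \ = \ \Maps_{gen}(X, \CY^{or} \supset \cdot/P) \ = \ \bBun_P^{or},
\]
as desired. There is no real obstacle here: the whole content sits inside Proposition \ref{independent}, which was already established, together with the observation that the simply connected hypothesis makes $G$ itself an admissible choice of $\tilde{G}$.
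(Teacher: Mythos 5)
Your proposal is correct and matches the paper's intent exactly: the paper's proof is the one-liner ``Immediate from Proposition \ref{independent}'', and your argument — that the simply connected hypothesis lets one take $\tilde{G}=G$ with $Z$ trivial, so that $\CY=\CY^{or}$ on the nose, and then invoke the canonical independence of $\CY$ from the choice of $\tilde{G}$ — is precisely the content being left implicit there.
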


\begin{proof}
Immediate from Proposition \ref{independent}.
\end{proof}

\medskip\medskip\medskip

\ssec{Verification of the main properties}

\medskip

\sssec{The setup}

The basic idea in proving that $\bBun_P$ is an algebraic stack and satisfies the desired properties is as follows. Choose a short exact sequence
$$1 \longto Z \longto \tilde{G} \longto G \longto 1$$
as in Lemma \ref{grouplemma1}, and as before let $\tilde P$ denote the inverse image of $P$ in $\tilde G$. Then we will descend the desired properties from $\bBun_{\tilde P}^{or}$, in which case they are already established, to $\bBun_P$. So in addition to the target stack
$$\CY \ = \ G \backslash \overline{\tilde{G}/[\tilde{P},\tilde{P}]} / T_{\tilde{M}}$$
we also consider the double quotient stack
$$\tilde{\CY}^{or} \ := \ \tilde{G} \backslash \overline{\tilde{G}/[\tilde{P},\tilde{P}]} / T_{\tilde{M}} \, .$$
By definition we have
$$\bBun_{\tilde{P}}^{or} \ = \ \Maps_{gen}(X, \tilde{\CY}^{or} \supset \cdot / \tilde{P}) \, ,$$
and the natural map $\tilde \CY \to \CY$ induces a map
$$\bBun_{\tilde{P}}^{or} \ \longto \ \bBun_P.$$
Moreover, the smooth group stack $\cdot / Z$ naturally acts on the classifying stacks~$\cdot / \tilde G$ and $\cdot / \tilde P$ and on the stack $\tilde{\CY}^{or}$. This in turn induces actions of the smooth group stack
$$\Maps (X, \cdot / Z) \ = \ \Bun_Z$$

\medskip

\noindent on the moduli stacks $\Bun_{\tilde G}$ and $\Bun_{\tilde P}$ and the compactification $\bBun_{\tilde{P}}^{or}$. Finally, each of the natural maps
$\Bun_{\tilde G} \to \Bun_G$ and $\Bun_{\tilde P} \to \Bun_P$ and $\bBun_{\tilde{P}}^{or} \to \bBun_P$
is invariant under this action.

\medskip

In this setup, the desired properties of $\bBun_P$ will easily follow from the following proposition.

\medskip

\begin{proposition}
\label{bun-torsor}
All three vertical arrows in the commutative diagram
$$\xymatrix@+10pt{
\Bun_{\tilde{P}} \ar[d]  \ar@{^{ (}->}[r]   &   \bBun_{\tilde{P}}^{or}   \ar[d] \ar[r] &  \Bun_{\tilde{G}}  \ar[d]    \\ 
\Bun_P \ar@{^{ (}->}[r]                            &           \bBun_P   \ar[r]                 &  \Bun_G                             \\
}$$
are torsors in the etale topology for the smooth group stack $\Bun_Z$. Furthermore, both squares are cartesian.
\end{proposition}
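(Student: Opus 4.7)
The plan is to establish both squares as Cartesian already at the level of target stacks, and then apply $\Maps_{gen}$, at which point the torsor claim reduces to the classical statement that $\Bun_{\tilde G} \to \Bun_G$ is a $\Bun_Z$-torsor in the etale topology. This last fact follows from the short exact sequence $1 \to Z \to \tilde G \to G \to 1$ together with the vanishing of $R^2 \pi_* Z$ etale-locally on $S$ for the proper map $\pi: X \times S \to S$, which holds because $Z$ is a torus and the geometric fibers of $\pi$ are smooth projective curves over an algebraically closed field (Tsen--Grothendieck).

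Setting $Y := \overline{\tilde G/[\tilde P, \tilde P]}/T_{\tilde M}$, the first main step is to verify the Cartesian squares of stacks
$$\xymatrix@+10pt{
\cdot/\tilde P \ar[d]  \ar@{^{ (}->}[r]   &  \tilde \CY^{or} \ar[d] \ar[r] &  \cdot/\tilde G  \ar[d]    \\
\cdot/P \ar@{^{ (}->}[r]                            &  \CY   \ar[r]                 &  \cdot/G \ .                             \\
}$$
The right square is Cartesian because, as already observed in constructing $\CY$, the $Z$-action on $Y$ is trivial: the left and right $Z$-actions on $\overline{\tilde G/[\tilde P, \tilde P]}$ coincide via $Z \to T_{\tilde M}$ and are thus killed upon quotienting by $T_{\tilde M}$. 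Hence a $\tilde G$-equivariant map from a $\tilde G$-bundle to $Y$ is the same data as a $G$-equivariant map from the associated $G$-bundle together with the underlying $\tilde G$-lift, which is precisely $\CY \times_{\cdot/G} \cdot/\tilde G$. The left square is tautological, since $\tilde G/[\tilde P, \tilde P]$ is the preimage of itself under the quotient by $\tilde G$ versus $G$.

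The second main step is to pass to $\Maps_{gen}$. The functor $\Maps$ commutes with limits, and the left target-stack Cartesian square ensures that a map $X \times S \to \tilde \CY^{or}$ factors generically through $\cdot/\tilde P$ if and only if its composite with $\tilde \CY^{or} \to \CY$ factors generically through $\cdot/P$. Consequently the Cartesian squares of target stacks induce Cartesian squares of $\Maps_{gen}$-stacks, recovering exactly the two squares of the proposition. The classical torsor property of $\Bun_{\tilde G} \to \Bun_G$ then propagates by base change, yielding the $\Bun_Z$-torsor property for $\bBun_{\tilde P}^{or} \to \bBun_P$ (via the right square) and for $\Bun_{\tilde P} \to \Bun_P$ (via the left square).

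The main obstacle I anticipate is the compatibility of $\Maps_{gen}$ with the relevant Cartesian squares -- more precisely, verifying that the lifts of a generic-$\cdot/P$ map to $\tilde \CY^{or}$ form precisely the generic-$\cdot/\tilde P$ maps, with the correct algebraic stack structure on the induced fiber product (and not merely a matching of $S$-points). This amounts to unwinding the left target-stack Cartesian square over the open locus where the generic condition holds, and is somewhat delicate but does not require fundamentally new input.
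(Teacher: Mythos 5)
Your proposal is correct and follows essentially the same route as the paper: establish the cartesian squares at the level of the target stacks (using that $Z$ acts trivially on $\overline{\tilde G/[\tilde P,\tilde P]}/T_{\tilde M}$ so that $\tilde\CY^{or}\cong\CY\times_{\cdot/G}\cdot/\tilde G$), transport them through the mapping-stack functor, which commutes with limits, check that the generic-factorization conditions match, and reduce the torsor claim to $\Bun_{\tilde G}\to\Bun_G$. The paper makes the middle step explicit by inserting the full mapping stacks $\Maps(X,\tilde\CY^{or})$ and $\Maps(X,\CY)$ as an intermediate column, and justifies etale-local triviality of $\Bun_{\tilde G}\to\Bun_G$ via vanishing of $H^2_{et}(X,Z)$ plus smoothness of the map rather than your $R^2\pi_*Z=0$ formulation, but these are the same argument.
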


\medskip

\begin{proof}
We first show that both squares are cartesian by showing that in fact all three squares of the following extended diagram of sheaves of groupoids are cartesian:

$$\xymatrix@+10pt{
\Bun_{\tilde{P}} \ar[d]  \ar@{^{ (}->}[r]   &   \bBun_{\tilde{P}}^{or}   \ar[d]  \ar@{^{ (}->}[r]  &  \Maps (X, \tilde{\CY}^{or})  \ar[d] \ar[r] &  \Bun_{\tilde{G}}  \ar[d]    \\ 
\Bun_P \ar@{^{ (}->}[r]                            &           \bBun_P   \ar@{^{ (}->}[r]                   &   \Maps (X, \CY)   \ar[r]                    &   \Bun_G                             \\
}$$

\medskip

\noindent To see that the right square is cartesian, consider first the cartesian square
$$\xymatrix@+10pt{
\tilde{\CY}^{or} \ar[r] \ar[d]   &   \tilde{G} \backslash \cdot \ar[d]   \\
\CY \ar[r]                        &   G \backslash \cdot                         \\
}$$
Applying the functor
$$\Maps (X, -): \ \text{(sheaves of groupoids)} \ \longto \text{(sheaves of groupoids)}$$
$$ \CZ \ \longmapsto \ \Maps (X, \CZ )$$
to this cartesian square yields the right square in the above diagram, and since the functor $\Maps (X, -)$ commutes with all homotopy limits, the right square is again cartesian, as desired.

\medskip

Similarly, the fact that the left square in the diagram is cartesian follows easily after applying $\Maps (X, -)$ to the cartesian square
$$\xymatrix@+10pt{
\cdot / \tilde{P} \ \ar@{^{ (}->}[r] \ar[d]  &   \ \tilde{\CY}^{or} \ar[d]  \\
\cdot / P \ \ar@{^{ (}->}[r]        &     \ \CY         \\
}$$

\medskip

Finally, combining the fact that this last square is cartesian with the definitions of $\bBun_P$ and $\bBun_{\tilde{P}}^{or}$ yields that the middle square of the above diagram is cartesian as well.

\medskip

Knowing that both squares in the proposition are cartesian, it now suffices to show that the right vertical arrow $\Bun_{\tilde{G}} \to \Bun_G$ is a torsor for $\Bun_Z$. To see this, recall first that the obstruction to the existence of a reduction of a given $G$-bundle on $X$ to $\tilde{G}$ lies in the second etale cohomology group $H^2_{et}(X, Z)$ with values in $Z$, which vanishes since $Z$ is a torus. This shows that the map under consideration is surjective.

\medskip

Next, the fact that $\tilde{G}$ surjects onto $G$ implies that the map is smooth.
It implies furthermore that the diagram of classifying stacks
$$\xymatrix@+10pt{
\cdot / Z \ \times \ \cdot / \tilde{G} \ \ar[r]^{\ \ \ act} \ar[d]^{pr_2}  &   \ \cdot / \tilde{G} \ar[d]  \\
\cdot / \tilde{G} \ \ar[r]        &     \ \cdot / G         \\
}$$
is cartesian. Now the torsor property of the map $\Bun_{\tilde G} \to \Bun_G$ follows by applying the functor $\Maps(X,-)$ to this cartesian square, again using that $\Maps(X,-)$ commutes with all homotopy limits.
\end{proof}

\medskip\medskip

\sssec{Algebraicity}

From Proposition \ref{bun-torsor} we immediately obtain:

\medskip

\begin{corollary}
The sheaf $\bBun_P$ is an algebraic stack.
\end{corollary}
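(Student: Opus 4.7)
The plan is to descend algebraicity from $\bBun_{\tilde{P}}^{or}$ to $\bBun_P$ using the torsor structure provided by Proposition \ref{bun-torsor}. The key preliminary observation is that $\bBun_{\tilde{P}}^{or}$ is already known to be an algebraic stack: by our choice of the extension $1\to Z \to \tilde G \to G \to 1$ from Lemma \ref{grouplemma1}, the derived group $[\tilde G,\tilde G]$ is simply connected, and hence the result recalled in Section \ref{Definition and first properties of BunP bar original} applies. Similarly, $\Bun_Z$ is a smooth algebraic group stack since $Z$ is a connected torus (so $\Bun_Z$ is a $\pi_0$--twisted form of the Picard stack of $X$).

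The actual argument will then proceed as follows. By Proposition \ref{bun-torsor}, the morphism $\bBun_{\tilde P}^{or}\to\bBun_P$ is an etale $\Bun_Z$--torsor. Algebraicity of a sheaf of groupoids is a property that descends along etale surjections, so it suffices to produce an etale surjection onto $\bBun_P$ from an algebraic stack. Choose an etale cover $\CV \to \bBun_P$ over which the torsor trivializes: pulling back, we get an isomorphism $\CV \times_{\bBun_P} \bBun_{\tilde P}^{or} \cong \CV \times \Bun_Z$, from which we read off that $\CV \cong \bigl(\CV \times_{\bBun_P} \bBun_{\tilde P}^{or}\bigr)/\Bun_Z$ presents $\CV$ as a quotient of an algebraic stack. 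Since the composition $\CV \to \bBun_P$ is etale and surjective, and since the fiber product $\bBun_{\tilde P}^{or}\times_{\bBun_P}\bBun_{\tilde P}^{or} \cong \bBun_{\tilde P}^{or}\times \Bun_Z$ is algebraic, the standard descent criterion for algebraic stacks (one has a smooth presentation after passing through the torsor) yields that $\bBun_P$ itself is algebraic.

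Equivalently and more cleanly, one may simply rephrase the conclusion of Proposition \ref{bun-torsor} as the identification
$$\bBun_P \ \cong \ \bigl[\,\bBun_{\tilde P}^{or} \,/\, \Bun_Z\,\bigr]$$
as stacks in the etale topology, where $\Bun_Z$ acts on $\bBun_{\tilde P}^{or}$ through the action inherited from the action of $\cdot/Z$ on $\tilde\CY^{or}$. A quotient of an algebraic stack by a smooth action of an algebraic group stack is again algebraic, yielding the corollary.

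I do not anticipate any substantive obstacle: the real content was already extracted in Proposition \ref{bun-torsor}, and what remains is a formal descent argument. The only minor care needed is in verifying that $\Bun_Z$ is itself algebraic and smooth over $\Spec(k)$, which follows from $Z$ being a torus.
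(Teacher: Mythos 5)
Your route differs from the paper's, and the step on which it pivots is not a black box. You want to descend algebraicity along the $\Bun_Z$-torsor $\bBun_{\tilde{P}}^{or} \to \bBun_P$, but the standard bootstrap statement (algebraicity descends along a smooth surjection from an algebraic stack) requires that surjection to be \emph{representable by algebraic spaces}, and a torsor under the group \emph{stack} $\Bun_Z$ is not: its fibers are copies of $\Bun_Z$, which has the nontrivial automorphism group $Z = H^0(X,Z)$ at every point. For the same reason your closing principle --- ``a quotient of an algebraic stack by a smooth action of an algebraic group stack is again algebraic'' --- is false as stated: the quotient of $\Spec(k)$ by the trivial action of $B\mathbb{G}_m$ is a $2$-stack, not an algebraic $1$-stack. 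What actually needs to be verified, and what your argument never addresses, is the representability of the diagonal of $\bBun_P$, i.e.\ that $\Isom$-sheaves of objects of $\bBun_P$ are algebraic spaces; concretely this comes down to the fact that the inertia $Z$ of $\Bun_Z$ acts \emph{freely} on objects of $\bBun_{\tilde{P}}^{or}$ via the central embedding $Z \into \tilde{P}$, so that no extra automorphisms survive in the quotient. Your middle paragraph (choosing an ``etale cover $\CV \to \bBun_P$'' and reading off that $\CV$ is a quotient) is also circular in flavor, since it presupposes access to covers of the very object whose algebraicity is in question.

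The paper sidesteps all of this by using the \emph{other} leg of the cartesian diagram in Proposition \ref{bun-torsor}: the base change of $\Fpb: \bBun_P \to \Bun_G$ along the (etale-locally split) surjection $\Bun_{\tilde{G}} \to \Bun_G$ is the map $\bBun_{\tilde{P}}^{or} \to \Bun_{\tilde{G}}$, which is already known to be schematic; since representability by algebraic spaces can be checked locally on the target, $\Fpb$ is representable by algebraic spaces, and a sheaf of groupoids admitting such a morphism to the algebraic stack $\Bun_G$ is itself algebraic (its diagonal factors through the representable relative diagonal and the pulled-back diagonal of $\Bun_G$). Your approach can be repaired by supplying the freeness-of-inertia argument above, but as written the decisive step is missing.
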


\begin{proof}
Proposition \ref{bun-torsor} implies that the map $\Fpb: \bBun_P \to \Bun_G$ is representable by algebraic spaces, since this can be checked smooth-locally on the target and since the map $\bBun_{\tilde P}^{or} \to \Bun_{\tilde G}$ is already known to be schematic. But mapping representably to the algebraic stack $\Bun_G$, the sheaf $\bBun_P$ must be an algebraic stack as well.
\end{proof}

\medskip

In fact we have:

\medskip

\begin{proposition}
\label{schematic}
The map $\Fpb: \bBun_P \to \Bun_G$ is schematic.
\end{proposition}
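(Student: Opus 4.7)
The plan is to deduce the schematic property of $\Fpb$ from the corresponding property of $\bar\Fp_P^{or}: \bBun_{\tilde P}^{or} \to \Bun_{\tilde G}$, recalled in Section~\ref{Definition and first properties of BunP bar original}, by exploiting the cartesian right square of Proposition~\ref{bun-torsor}. The previous corollary already guarantees that $\Fpb$ is representable by algebraic spaces, so it only remains to upgrade each fiber product with a scheme to a scheme.

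Concretely, let $S$ be a scheme equipped with a morphism $S \to \Bun_G$; the goal is to show that $\bBun_P \times_{\Bun_G} S$ is a scheme. First I would use that $\Bun_{\tilde G} \to \Bun_G$ is smooth and surjective (being a torsor for the smooth group stack $\Bun_Z$) to produce a smooth surjection $\tilde S \to S$ from a scheme $\tilde S$ equipped with a compatible lift $\tilde S \to \Bun_{\tilde G}$: it is enough to choose a smooth atlas of the algebraic stack $\Bun_{\tilde G} \times_{\Bun_G} S$. The cartesian right square of Proposition~\ref{bun-torsor} then gives the identification
\[
(\bBun_P \times_{\Bun_G} S) \times_S \tilde S \;=\; \bBun_{\tilde P}^{or} \times_{\Bun_{\tilde G}} \tilde S,
\]
and the right-hand side is a scheme by the schematic property of $\bar\Fp_P^{or}$. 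Thus after the smooth surjective base change $\tilde S \to S$ the algebraic space $\bBun_P \times_{\Bun_G} S$ becomes a scheme.

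The hard part will be to descend the property of being a scheme along the smooth surjection $\tilde S \to S$, which is not formal in general. I plan to handle this by exploiting the additional structure available in our setting. Namely, the cover $\tilde S \to S$ is not an arbitrary smooth surjection but arises as an atlas of the $\Bun_Z$-torsor $\Bun_{\tilde G} \times_{\Bun_G} S$; in particular, after passing to etale neighborhoods on $S$ the torsor acquires sections, reducing the descent problem to one controlled by the smooth group stack $\Bun_Z$. Combined with the fact that $\bar\Fp_P^{or}$ is proper on each connected component, and that its local model is built from the quasi-affine variety $\overline{\tilde G/[\tilde P,\tilde P]}$, the base change $\bBun_{\tilde P}^{or} \times_{\Bun_{\tilde G}} \tilde S \to \tilde S$ is locally quasi-projective over $\tilde S$. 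One can then invoke effective descent for (locally) quasi-projective schemes along faithfully flat quasi-compact covers, in the spirit of Raynaud--Gruson, to conclude that $\bBun_P \times_{\Bun_G} S$ is itself a scheme, as required.
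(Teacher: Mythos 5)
Your reduction to $\bBun_{\tilde P}^{or}$ via the cartesian right square of Proposition~\ref{bun-torsor} is fine, but the proof has a genuine gap exactly where you flag it: descending the property of being a scheme along the smooth surjection $\tilde S \to S$. Being a scheme is not a smooth-local (or even \'etale-local) property on the base for algebraic spaces, so finding \'etale-local sections of the $\Bun_Z$-torsor does not by itself finish the argument. Effective descent of schemes along an fppf cover requires either that the morphism be quasi-affine, or that one exhibit a relatively ample invertible sheaf \emph{together with a descent datum} compatible with the descent datum on the space (SGA~1, Exp.~VIII); ``locally quasi-projective'' is precisely the class for which effectivity fails in general (the standard non-scheme algebraic spaces are \'etale-locally schemes). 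Your proposal supplies neither the ampleness nor the equivariance of a line bundle under the $\Bun_Z$-action, and there is an additional wrinkle: the fiber $\bBun_{\tilde P}^{or}\times_{\Bun_{\tilde G}}\tilde S$ has infinitely many connected components (indexed by the lifts $\lambdach_{\tilde P}$ of a fixed $\lambdach_P$), which the descent datum permutes, so one cannot simply work component by component with the properness of $\bar\Fp_P^{or}$.

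The paper avoids this descent problem altogether by a different factorization. The stack $\bBun_P^{or}$ of Section~\ref{drinfeld-old} is defined for $G$ itself (with no hypothesis on $[G,G]$), and the map $\bar\Fp_P^{or}\colon \bBun_P^{or}\to\Bun_G$ is already schematic by \cite{GeometricEisenstein}. One then factors $\Fpb$ as $\bBun_P\to\bBun_P^{or}\to\Bun_G$ and checks that the first arrow is schematic directly from the definitions: it is induced by the map of target stacks $\CY\to\CY^{or}$, which comes from the affine morphism $\overline{\tilde G/[\tilde P,\tilde P]}\to\overline{G/[P,P]}$, so the relevant fiber products are spaces of lifts along an affine morphism and are schemes. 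If you want to keep your route through $\Bun_{\tilde G}$, you would need to replace the appeal to Raynaud--Gruson by an actual construction of a $\Bun_Z$-equivariant relatively ample sheaf on $\bBun_{\tilde P}^{or}$ over $\Bun_{\tilde G}$, which is more work than the paper's factorization.
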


\begin{proof}
We use the original stack $\bBun_P^{or}$ from Section \ref{drinfeld-old} as an intermediate step. Since the map $\bBun_P^{or} \to \Bun_G$ is already known to be schematic, it suffices to show that the natural map $\bBun_P \to \bBun_P^{or}$ is schematic as well. The latter can be deduced from the definitions using standard arguments.
\end{proof}

\sssec{Remark} One might guess that the map $\bBun_P \to \bBun_P^{or}$ from the ``corrected'' version to the original version of Drinfeld's compactification is a closed immersion. This is however not the case in general. While one can show that the map is always radicial, it is not hard to construct examples in positive characteristic showing that the map need not be a monomorphism. However, in characteristic~$0$ the map is always a closed immersion.

\medskip
\bigskip

\sssec{Main properties}

We now record the main properties of $\bBun_P$ in the following corollaries to Proposition \ref{bun-torsor}:

\medskip

\begin{corollary}
The natural inclusion $\Bun_P \into \bBun_P$ realizes $\Bun_P$ as a dense open substack of $\bBun_P$. On the level of connected components the inclusion induces a bijection
$$\pi_0 (\bBun_P) \ = \ \pi_0 (\Bun_P) \ = \ \Lambdach_{G,P} \, .$$
\end{corollary}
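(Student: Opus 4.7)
The strategy is to descend both statements from Drinfeld's original compactification $\bBun_{\tilde{P}}^{or}$ for the auxiliary group $\tilde G$ (where $[\tilde G, \tilde G]$ is simply connected and the properties are already established in Section~\ref{Definition and first properties of BunP bar original}) to $\bBun_P$, using the $\Bun_Z$-torsor structure and the cartesian diagram supplied by Proposition~\ref{bun-torsor}.

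For openness, recall that by Proposition~\ref{bun-torsor} the natural map $\bBun_{\tilde{P}}^{or} \to \bBun_P$ is a $\Bun_Z$-torsor in the etale topology, hence smooth and surjective, and the pullback of $\Bun_P \subset \bBun_P$ along this map is precisely $\Bun_{\tilde{P}} \subset \bBun_{\tilde{P}}^{or}$. Since openness of a substack can be checked smooth-locally on the target, and since $\Bun_{\tilde{P}}$ is open in $\bBun_{\tilde{P}}^{or}$ by the results of Section~\ref{drinfeld-old}, we conclude that $\Bun_P$ is open in $\bBun_P$. Density is argued analogously: the complement of $\Bun_P$ in $\bBun_P$ pulls back under the faithfully flat map $\bBun_{\tilde{P}}^{or} \to \bBun_P$ to the complement of $\Bun_{\tilde{P}}$ in $\bBun_{\tilde{P}}^{or}$, which has empty interior; hence the same holds for the original complement.

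For the statement about connected components, I would first note that the equality $\pi_0(\Bun_P) = \Lambdach_{G,P}$ is the well-known one recalled at the start of Section~\ref{Semistability of $G$-bundles}, and one reduces to identifying $\pi_0(\bBun_P)$ with $\pi_0(\Bun_P)$. Applying $\pi_0$ to each column of the diagram of Proposition~\ref{bun-torsor} and using that, for any $\Bun_Z$-torsor $\CX \to \CY$, the set $\pi_0(\CY)$ is the quotient of $\pi_0(\CX)$ by the action of $\pi_0(\Bun_Z) = \Lambdach_Z$, one obtains
\[
\pi_0(\bBun_P) \ = \ \pi_0(\bBun_{\tilde P}^{or}) / \Lambdach_Z \ = \ \Lambdach_{\tilde G, \tilde P} / \Lambdach_Z
\]
and similarly $\pi_0(\Bun_P) = \Lambdach_{\tilde G, \tilde P} / \Lambdach_Z$, where in the middle equality we invoke the known description $\pi_0(\bBun_{\tilde P}^{or}) = \Lambdach_{\tilde G, \tilde P}$ from Section~\ref{drinfeld-old} for the simply connected case. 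The cartesianness of the left square of Proposition~\ref{bun-torsor} shows that these identifications are compatible with the inclusion $\Bun_P \into \bBun_P$, so the induced map $\pi_0(\Bun_P) \to \pi_0(\bBun_P)$ is a bijection.

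The only step requiring genuine care is verifying that the quotient formula $\pi_0(\CY) = \pi_0(\CX)/\pi_0(\Bun_Z)$ applies in the required generality, since $\Bun_Z$ is not a connected group but has $\pi_0(\Bun_Z) = \Lambdach_Z$; this follows from the fact that each connected component of $\Bun_Z$ is geometrically connected together with the torsor property, but it is the one formal point where one must be a little careful rather than purely symbol-pushing.
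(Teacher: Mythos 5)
Your proposal is correct and follows essentially the same route as the paper: both statements are descended from $\bBun_{\tilde P}^{or}$ via the cartesian squares and the $\Bun_Z$-torsor property of Proposition~\ref{bun-torsor}, with openness checked smooth-locally on the target, density obtained from surjectivity of the vertical maps, and $\pi_0$ computed as the quotient of $\pi_0(\bBun_{\tilde P}^{or})=\Lambdach_{\tilde G,\tilde P}$ by $\pi_0(\Bun_Z)=\Lambdach_Z$. The paper's proof is terser but identical in substance; your explicit identification $\Lambdach_{\tilde G,\tilde P}/\Lambdach_Z=\Lambdach_{G,P}$ and the caveat about $\Bun_Z$ being disconnected are the right points to flag.
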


\medskip

\begin{proof}
Since being an open immersion can be checked smooth-locally on the target, the cartesian square of Proposition \ref{bun-torsor} reduces the question to the case of $\bBun_{\tilde{P}}^{or}$, in which case the assertion is already established. The statement about being dense is immediate from the surjectivity of the vertical maps in Proposition \ref{bun-torsor}. The assertion about the connected components follows from the analogous assertion for $\bBun_{\tilde P}^{or}$ and the torsor property in Proposition \ref{bun-torsor}.
\end{proof}

\medskip

\begin{corollary}
\label{properness-new}

The map $\bBun_P \to \Bun_G$
is proper when restricted to any connected component of $\bBun_P$.
\end{corollary}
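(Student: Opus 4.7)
The plan is to reduce the properness of $\Fpb$ restricted to a fixed component $\bBun_{P,\lambdach_P}$ to the already-established properness for $\bar\Fp_P^{or}$ on components of $\bBun_{\tilde P}^{or}$, exploiting the torsor/cartesian structure of Proposition~\ref{bun-torsor}. Since properness of a morphism of algebraic stacks descends along smooth surjective base change of the target, I would work with the smooth surjective family of charts $\{\Bun_{\tilde G,\tilde\lambdach_{\tilde G}}\to \Bun_G\}_{\tilde\lambdach_{\tilde G}\in\Lambdach_{\tilde G,\tilde G}}$: each chart is smooth as a composition of an open immersion with the smooth $\Bun_Z$-torsor of Proposition~\ref{bun-torsor}, and the family is jointly surjective because $\Bun_{\tilde G}\to\Bun_G$ is surjective on geometric points (using $H^2_{\et}(X,Z)=0$ for the torus $Z$).

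The right cartesian square of Proposition~\ref{bun-torsor} identifies each base change
$$\bBun_{P,\lambdach_P}\ \times_{\Bun_G}\ \Bun_{\tilde G,\tilde\lambdach_{\tilde G}}$$
with the open and closed substack of $\bBun_{\tilde P}^{or}$ whose connected components are those $\bBun_{\tilde P,\tilde\lambdach_{\tilde P}}^{or}$ for which $\tilde\lambdach_{\tilde P}$ maps simultaneously to $\lambdach_P$ under $\Lambdach_{\tilde G,\tilde P}\to\Lambdach_{G,P}$ and to $\tilde\lambdach_{\tilde G}$ under $\Lambdach_{\tilde G,\tilde P}\to\Lambdach_{\tilde G,\tilde G}$. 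I expect the main technical step to be the combinatorial claim that the joint map
$$\Lambdach_{\tilde G,\tilde P}\ \longto\ \Lambdach_{G,P}\,\oplus\,\Lambdach_{\tilde G,\tilde G}$$
is \emph{injective}. Its verification should proceed via the short exact sequence $0\to\Lambdach_Z\to\Lambdach_{\tilde G}\to\Lambdach_G\to 0$ (exact since $Z$ is a torus), the identification $\Lambdach_{[\tilde M,\tilde M]}=\Lambdach_{[M,M]_{sc}}$ inside $\Lambdach_{\tilde G}$ and $\Lambdach_G$ (because $[\tilde G,\tilde G]$ being simply connected forces the isogeny $[\tilde M,\tilde M]\to[M,M]$ to be the simply connected cover), and finally the vanishing $\Lambdach_Z\cap\Lambdach_{[\tilde G,\tilde G]}=0$ inside $\Lambdach_{\tilde G}$ (a consequence of $Z\cap[\tilde G,\tilde G]$ being finite, as $Z$ is central while $[\tilde G,\tilde G]$ has finite center).

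Once the injectivity is in hand, each base change above consists of at most one connected component of $\bBun_{\tilde P}^{or}$. The empty case is trivial. In the nontrivial case, the base change $\bBun_{\tilde P,\tilde\lambdach_{\tilde P}}^{or}\to\Bun_{\tilde G,\tilde\lambdach_{\tilde G}}$ is nothing but the restriction to an open substack of the target of the map $\bBun_{\tilde P,\tilde\lambdach_{\tilde P}}^{or}\to\Bun_{\tilde G}$, which is proper by the properness of $\bar\Fp_P^{or}$ on components recalled in Section~\ref{Definition and first properties of BunP bar original}. Assembling these cases over the atlas yields the corollary.
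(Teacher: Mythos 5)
Your proposal is correct and takes essentially the same route as the paper: both descend properness from the connected components of $\bBun_{\tilde P}^{or}$ via the cartesian square and torsor property of Proposition \ref{bun-torsor}, using that properness can be checked smooth-locally on the target. The only cosmetic difference is that the paper fixes a single lift $\lambdach_{\tilde P}$ of $\lambdach_P$ and uses the resulting $\Bun_{Z,0}$-torsor $\Bun_{\tilde G,\lambdach_{\tilde G}} \to \Bun_{G,\lambdach_G}$ as its smooth surjective chart, whereas you run the same argument over the whole family of charts and make the component-matching combinatorics on cocharacter lattices explicit.
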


\medskip

\begin{proof}
Let $\lambdach_P \in \Lambdach_{G,P}$ and choose an element $\lambdach_{\tilde P} \in \Lambdach_{\tilde{G},\tilde{P}}$ in the preimage of~$\lambdach_P$ under the surjection $\Lambdach_{\tilde{G},\tilde{P}} \onto \Lambdach_{G,P}$. Furthermore denote by $\lambdach_G$ and $\lambdach_{\tilde G}$ the images of $\lambdach_P$ and $\lambdach_{\tilde P}$ in $\Lambdach_{G,G}$ and $\Lambdach_{\tilde{G},\tilde{G}}$, respectively. Then Proposition~\ref{bun-torsor} implies that the square
$$\xymatrix@+10pt{
\bBun_{\tilde{P}, \lambdach_{\tilde P}}^{or}   \ar[d]  \ar[r]  &  \Bun_{\tilde{G}, \lambdach_{\tilde G}} \ar[d]  \\ 
\bBun_{P, \lambdach_P}   \ar[r]                   &   \Bun_{G, \lambdach_G}  \\
}$$
is cartesian and that the vertical arrows are torsors for the identity component $\Bun_{Z,0}$ of the group stack $\Bun_Z$; in particular they are smooth. Since being proper can be checked smooth locally on the target, the properness of the bottom horizontal arrow follows from the properness of the top horizontal arrow.
\end{proof}

\medskip

\sssec{The stratification}
\label{The stratification}

As in the proof of Corollary \ref{properness-new} let $\lambdach_P \in \Lambdach_{G,P}$ and choose an element $\lambdach_{\tilde P} \in \Lambdach_{\tilde G, \tilde P}$ in the preimage of $\lambdach_P$ under the surjection $\Lambdach_{\tilde{G},\tilde{P}} \onto \Lambdach_{G,P}$. We now deduce the stratification result for the connected component $\bBun_{P, \lambdach_P}$ stated in Section \ref{bBunP stratification quick} from the corresponding result for $\bBun_{\tilde P, \lambdach_{\tilde P}}^{or}$.
Namely, in the notation of Section \ref{the maps j theta quick}, consider for any $\check{\theta} \in \Lambdach_{\tilde{G},\tilde{P}}^{pos} = \Lambdach_{G,P}^{pos}$ the locally closed immersion
$$\tilde{j}_{\check\theta}: \ X^{\check\theta} \times \Bun_{\tilde{P}, \lambdach_{\tilde P} + \check\theta} \ \longinto \ \bBun_{\tilde{P}, \lambdach_{\tilde P}}^{or} \, .$$
It is apparent from its definition in \cite[Sec. 1.3.3]{GeometricEisenstein} that the map $\tilde{j}_{\check\theta}$ is equivariant with respect to the action of the identity component $\Bun_{Z,0}$ of the group stack $\Bun_Z$. By Proposition \ref{bun-torsor} it thus descends to a locally closed immersion
$$j_{\check\theta}: \ X^{\check\theta} \times \Bun_{P,\lambdach_P + \check\theta} \ \longinto \ \bBun_{P, \lambdach_P} \, .$$
Furthermore, as before the diagram
\begin{equation}
\begin{aligned}
\xymatrix@+10pt{
  X^{\check\theta} \times \Bun_{P, \lambdach_P + \check\theta}    \ar[d]^{pr_2}  \ar@{^{ (}->}[r]^{ \ \ \ j_{\check\theta}}  &  \bBun_{P, \lambdach_P} \ar[d]^{\Fpb} \\ 
  \Bun_{P, \lambdach_P + \check\theta}      \ar[r]^{\Fp_P}                  &    \Bun_G
  }
\end{aligned}
\tag{\ref{The stratification}}
\end{equation}
commutes. Moreover, for $\check\theta = 0$ the map $j_0$ equals the natural inclusion $\Bun_{P, \lambdach_P} \into \bBun_{P, \lambdach_P}$. Finally, since
$\bBun_{\tilde P, \lambdach_{\tilde P}}^{or}$ is a torsor for $\Bun_{Z,0}$ over $\bBun_{P, \lambdach_P}$ by Proposition \ref{bun-torsor}, the stratification of the former descends to the latter:

\medskip

\begin{corollary}
The collection of locally closed substacks $\bBun_{P, \lambdach_P}^{\check\theta}$ corresponding to the immersions $j_{\check{\theta}}$ defines a stratification of $\bBun_{P, \lambdach_P}$ in the sense that on the level of $k$-points $\bBun_{P, \lambdach_P}$ is equal to the disjoint union
$$\bBun_{P, \lambdach_P} \ = \ \bigcup_{\check\theta \in \Lambdach_{G,P}^{pos}} \bBun_{P, \lambdach_P}^{\check\theta} \, .$$
\end{corollary}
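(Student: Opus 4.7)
The plan is to deduce this corollary by descending the analogous stratification of $\bBun_{\tilde P, \lambdach_{\tilde P}}^{or}$ along the $\Bun_{Z,0}$-torsor
$$\bBun_{\tilde P, \lambdach_{\tilde P}}^{or} \ \longto \ \bBun_{P, \lambdach_P}$$
furnished by Proposition \ref{bun-torsor} (restricted to the connected components chosen as in the proof of Corollary \ref{properness-new}). Since the definition of the $j_{\check\theta}$ was itself obtained by descending the $\tilde j_{\check\theta}$ along this torsor (using the $\Bun_{Z,0}$-equivariance already observed in Section \ref{The stratification}), the two stratifications are compatible under the torsor projection, and essentially everything will follow formally.

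Concretely, I would carry out the argument in three steps. First, record that the known stratification result for $\bBun_{\tilde P, \lambdach_{\tilde P}}^{or}$ from \cite[Sec.~1.3.3]{GeometricEisenstein} expresses its set of $k$-points as the disjoint union of the $\bBun_{\tilde P, \lambdach_{\tilde P}}^{or,\check\theta}$ over $\check\theta \in \Lambdach_{G,P}^{pos} = \Lambdach_{\tilde G, \tilde P}^{pos}$. Second, observe that each locally closed substack $\bBun_{\tilde P, \lambdach_{\tilde P}}^{or,\check\theta}$ is stable under the $\Bun_{Z,0}$-action: this is immediate from the $\Bun_{Z,0}$-equivariance of $\tilde j_{\check\theta}$ (with $\Bun_{Z,0}$ acting trivially on the factor $X^{\check\theta}$), which was used to construct the descended immersion $j_{\check\theta}$ in the first place. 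Third, because a $\Bun_{Z,0}$-torsor is in particular faithfully flat and surjective, the images of the $\bBun_{\tilde P, \lambdach_{\tilde P}}^{or,\check\theta}$ in $\bBun_{P, \lambdach_P}$ are exactly the locally closed substacks $\bBun_{P, \lambdach_P}^{\check\theta}$ cut out by the $j_{\check\theta}$; the set of $k$-points of $\bBun_{P, \lambdach_P}$ is the union of their $k$-points because the torsor is surjective on $k$-points, and this union is disjoint because the corresponding partition upstairs is $\Bun_{Z,0}$-invariant.

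The only mild subtlety, which I do not expect to be an actual obstacle, is ensuring that ``locally closed'' for the $\bBun_{P, \lambdach_P}^{\check\theta}$ is properly inherited: this follows since being a locally closed immersion can be checked smooth-locally on the target, and pulling $j_{\check\theta}$ back along the torsor reproduces $\tilde j_{\check\theta}$ up to the quotient by the free $\Bun_{Z,0}$-action. Thus the corollary reduces cleanly to the torsor property of Proposition \ref{bun-torsor} together with the already-known stratification of $\bBun_{\tilde P}^{or}$, with no additional geometric input required.
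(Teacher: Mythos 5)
Your proposal is correct and follows essentially the same route as the paper: Section \ref{The stratification} likewise descends the known stratification of $\bBun_{\tilde P, \lambdach_{\tilde P}}^{or}$ along the $\Bun_{Z,0}$-torsor of Proposition \ref{bun-torsor}, using the $\Bun_{Z,0}$-equivariance of the $\tilde j_{\check\theta}$. Your write-up merely makes explicit the surjectivity on $k$-points, the invariance of the partition upstairs, and the smooth-local check of locally-closedness, all of which the paper leaves implicit.
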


\bigskip\bigskip\bigskip\bigskip

\ssec{Drinfeld's compactification $\widetilde{\Bun}_P$}
\label{Bun_P-tilde}

\mbox{} \\

In addition to the stack $\bBun_P$, the authors of \cite{GeometricEisenstein} also consider another relative compactification $\widetilde{\Bun}_P$ of the map $\Bun_P \to \Bun_G$. The stack $\widetilde \Bun_P$ is defined analogously to $\bBun_P$, using the affine closure of the quotient $G/U(P)$ by the unipotent radical $U(P)$ of $P$ instead of the affine closure of $G/[P,P]$. This compactification is however not used in the present article. As in the case of $\bBun_P$, the stack $\widetilde{\Bun}_P$ as defined in \cite{GeometricEisenstein} has the desired properties only under the assumption that the derived group $[G,G]$ of $G$ is simply connected. However, the exact same strategy as in Section \ref{drinfeld-new} can be carried out in this situation as well. The ``corrected'' definition of $\widetilde{\Bun}_P$ for an arbitrary reductive group $G$ then satisfies the analogue of Proposition~\ref{bun-torsor}, from which all desired properties follow just as in the case of $\bBun_P$.

\newpage

\end{document}